\newtheoremstyle{boldplain}
{9pt}
{9pt}
{\itshape}
{}
{\bfseries}
{.}
{.5em}
{\thmname{#1}\thmnumber{ #2}\thmnote{ (#3)}}%
\newtheoremstyle{bolddefinition}
{9pt}
{9pt}
{}
{}
{\bfseries}
{.}
{.5em}
{\thmname{#1}\thmnumber{ #2}\thmnote{ (#3)}}%
\theoremstyle{boldplain}
\newtheorem{corollary}[equation]{Corollary}
\newtheorem{lemma}[equation]{Lemma}
\newtheorem{proposition}[equation]{Proposition}
\newtheorem{question}[equation]{Question}
\newtheorem{theorem}[equation]{Theorem}
\newtheorem{theoremi}{Theorem}
\newtheorem{corollaryi}[theoremi]{Corollary}
\theoremstyle{bolddefinition}
\newtheorem{example}[equation]{Example}
\newtheorem{remark}[equation]{Remark}
\numberwithin{equation}{section}
\newcommand{\R}{{\mathbb R}}
\newcommand{\Z}{{\mathbb Z}}
\newcommand{\N}{{\mathbb N}}
\newcommand{\acts}{\curvearrowright}
\newcommand{\ora}{\overrightarrow}
\newcommand{\pihalf}{\frac{\pi}{2}}
\newcommand{\tits}{\partial_{T}}
\newcommand{\Si}{\Sigma}
\DeclareMathOperator{\rad}{rad}
\DeclareMathOperator{\inrad}{inrad}
\DeclareMathOperator{\CAT}{CAT}
\begin{document}

\title{Subcomplexes and fixed point sets of isometries of spherical buildings}
\author{Carlos Ramos-Cuevas}

\maketitle

\begin{abstract}
In this paper we study convex subcomplexes of spherical buildings. 
We pay special attention to fixed point sets of type-preserving
isometries of spherical buildings. This sets are also convex subcomplexes
of the natural polyhedral structure of the building.
We show, 
among other things,
that if the fixed point set is top-dimensional
then it is either a subbuilding or it has circumradius $\leq \pihalf$.
If the building is of type $A_n$ or $D_n$, 
we also show that the same conclusion holds for an arbitrary
(top-dimensional in the $D_n$-case) convex subcomplex.
This proves a conjecture of Kleiner-Leeb \cite[Question 1.5]{KleinerLeeb:invconvex}
in these cases.
\end{abstract}

\section{Introduction}\label{sec:intro}

We are interested in the following geometric
question about convex subsets of spherical buildings.

\begin{question}\label{ques:main}
 Let $C$ be a convex subset of a spherical building $B$. 
Is it true that $C$ is either a subbuilding 
or it has circumradius $\leq \pihalf$ (i.e.\ it is contained in a ball of radius $\leq \pihalf$
centered in $C$)?
\end{question}

This question was first asked by Kleiner and Leeb \cite[Question 1.5]{KleinerLeeb:invconvex}
while studying rigidity properties of convex subsets of symmetric spaces of higher rank.
A closely related (and weaker) question is Tits' Center Conjecture,
it is concerned with convex subsets, which are also subcomplexes of the natural polyhedral
structure of a spherical building, and fixed points of their automorphism groups.
We refer to \cite{LeebRamos-Cuevas:centerconj} for more information on the 
Center Conjecture and its relationship with Question~\ref{ques:main}.
This conjecture has been recently proven
(see \cite{MuehlherrTits:centerconj}, \cite{LeebRamos-Cuevas:centerconj}, 
\cite{Ramos-Cuevas:centerconj}, \cite{MuehlherrWeiss:centerconj}).
This recent success with the Center Conjecture suggests that 
there might be better prospects for getting 
an answer to Question~\ref{ques:main} if we restrict our
attention to convex {\em subcomplexes}.

The first two of the main results in this paper investigate general convex subcomplexes.
If the building is of type $A_n$, then we can answer Question~\ref{ques:main} for any convex
subcomplex.

\begin{theoremi}\label{thmintro:An}
Let $B$ be a (not necessarily thick) spherical building of type $A_n$. 
Let $C\subset B$ be a convex subcomplex.
Then either $C$ is a subbuilding or it has circumradius $\leq \pihalf$. 
\end{theoremi}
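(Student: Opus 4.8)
The plan is to induct on the rank $n$, reducing the study of a convex subcomplex $C\subset B$ to that of convex subcomplexes of the links $\mathrm{lk}_B(v)$, $v$ a vertex of $C$, which in type $A_n$ are spherical joins of buildings of types $A_k$ with $k<n$. For this it is convenient to prove the statement simultaneously for the class of buildings that are spherical joins $B_1*\cdots*B_r$ of type-$A$ buildings of total rank $n$. I would first recall the standard reformulation: a convex subcomplex $C$ is a subbuilding if and only if it is top-dimensional and every panel of $C$ is a face of at least two of its chambers, and (since links of convex subcomplexes are convex) this holds precisely when $\mathrm{lk}_C(v)$ is a nonempty subbuilding of $\mathrm{lk}_B(v)$ for every vertex $v$ of $C$. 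Thus Theorem~\ref{thmintro:An} reduces to the implication: if $C$ is not a subbuilding then $C$ lies in a ball of radius $\le\pihalf$ centered at a point of $C$; and we may assume $C$ has at least two vertices, as otherwise $\rad(C)=0$.

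For the join step ($r\ge2$) the key input is a rigidity statement for convex subcomplexes of a spherical join $X*Y$ of two spherical buildings: such a subcomplex either has circumradius $\le\pihalf$, or is itself a join $D_1*D_2$ with $D_i$ a convex subcomplex of $X$, resp.\ of $Y$. Granting this, a direct computation with the join metric shows $\rad(D_1*D_2)\le\pihalf$ as soon as one of the factors does, while $D_1*D_2$ is a subbuilding exactly when both factors are; the reducible case then follows by applying the inductive hypothesis to $B_1,\dots,B_r$. For the link step ($B$ irreducible of type $A_n$, $n\ge2$), if $C$ is not a subbuilding then by the reformulation there is a vertex $v\in C$ for which $\mathrm{lk}_C(v)$ is not a subbuilding of $\mathrm{lk}_B(v)$; the latter is the join of a type-$A_{i-1}$ building and a type-$A_{n-i}$ building (where $i=\mathrm{type}(v)$), so the inductive hypothesis gives $\rad(\mathrm{lk}_C(v))\le\pihalf$. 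One then transfers this bound to $C$: letting $\xi$ be a circumcenter of $\mathrm{lk}_C(v)$ and moving from $v$ into $C$ in the direction $\xi$ to a carefully chosen point $w\in C$, the $\CAT(1)$ comparison inequality at $v$ --- using that every direction of $C$ at $v$ subtends an angle $\le\pihalf$ with $\xi$ --- should force $d(w,c)\le\pihalf$ for all $c\in C$. The base case $n\le1$ is immediate (a type-$A_1$ building is a set of at least two points, so a convex subcomplex is either at least two of them, hence a subbuilding, or a single point), and the degenerate type-$A_0$ factor is a single point, harmless in the join statement.

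The step I expect to be the main obstacle is this last transfer. One wants $d(v,w)$ to be as large as possible --- ideally $\pihalf$ --- while keeping $w$ inside $C$, and when $C$ has a ``short'' boundary in the direction $\xi$ this seems to require an auxiliary folding/retraction argument; this is the same difficulty that, in arbitrary type, lies behind the assertion that a top-dimensional convex subcomplex having a panel contained in a single chamber has circumradius $\le\pihalf$. The join-rigidity lemma is the second essential ingredient, and I would expect to establish it by a first-variation argument in the spirit of Lytchak's work on convex subsets of $\CAT(1)$ spaces. Everything else --- that links of convex subcomplexes are convex, that links in type $A_n$ are joins of smaller type-$A$ buildings, the radius estimates for spherical joins, and the subbuilding reformulation --- I expect to be routine.
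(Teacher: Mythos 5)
Your approach --- induction on rank via links, together with a join--rigidity lemma for reducible buildings --- is genuinely different from the paper's, but both steps you single out as potential obstacles are real gaps that your writeup does not close, and the more serious one would fail as stated. The critical one is the transfer. Suppose $\rad(\Si_vC)\le\pihalf$ with circumcenter $\xi\in\Si_vC$, and let $w$ lie on the geodesic from $v$ in direction $\xi$ at distance $\pihalf$ (assuming such a $w$ even lies in $C$, which is already unclear when $v$ is a boundary vertex). You want $d(w,c)\le\pihalf$ for every $c\in C$ from $\angle_v(\xi,\ora{vc})\le\pihalf$. But the $\CAT(1)$ inequality controls the comparison angle from \emph{below}, $\angle_v\le\tilde\angle_v$, and the spherical law of cosines (with $d(v,w)=\pihalf$) gives $\cos d(w,c)=\sin d(v,c)\cos\tilde\angle_v$; the hypothesis $\angle_v\le\pihalf$ yields no upper bound on $\tilde\angle_v$, hence no lower bound on $\cos d(w,c)$. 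The estimate you want holds in the round sphere, where the angle comparison is an equality, but it does not follow from $\CAT(1)$ comparison alone. So the ``carefully chosen $w$'' and the ``auxiliary folding/retraction argument'' you allude to are not details --- they are the heart of the matter, and you give no indication of how to carry them out nor of what about type $A_n$ would make them tractable; if such a transfer worked for arbitrary convex subcomplexes of spherical buildings, it would settle Question~\ref{ques:main} in far greater generality than is known. The join--rigidity lemma is a second unproved ingredient. A smaller issue: your first reformulation of ``subbuilding'' (top-dimensional, each panel a face of $\ge2$ chambers) is not correct --- subbuildings need not be top-dimensional; a singular sphere is a subbuilding --- though the link characterization you also state is the right one.

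For contrast, the paper never transfers a radius bound from a link to the ambient set and never decomposes along joins. It constructs a single nicely convex function $f$ on $C$ directly: for $x\in C$ choose an apartment $A$ supporting $C$ through $x$, let $K\subset A$ be the minimal top-dimensional subcomplex whose interior contains the interior of $C\cap A$, and set $f(x):=f_K(x)$, the distance-to-boundary function of Section~\ref{sec:weightedincenter}. The one $A_n$-specific input is Lemma~\ref{lem:An}: in a Coxeter complex of type $A_n$, $\sin d(v,\partial\alpha)$ depends only on the type of the vertex $v$, not on the root $\alpha$. Expanding $x$ in barycentric coordinates over the vertices of its carrier face then shows that $f_K(x)$ depends only on which sub-faces of that carrier lie in $\partial C$, hence is independent of the supporting apartment $A$; nice convexity follows apartment by apartment from Lemmas~\ref{lem:supportingapt} and~\ref{lem:distbound}, and Lemma~\ref{lem:radiusminimum} produces the desired center. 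The apartment-independence provided by Lemma~\ref{lem:An} is exactly the $A_n$-specific fact your link-induction framework has no substitute for.
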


If the building is of type $D_n$, then we can treat the case of top-dimensional 
convex subcomplexes.

\begin{theoremi}\label{thmintro:Dn}
Let $B$ be a (not necessarily thick) spherical building of type $D_n$. 
Let $C\subset B$ be a top-dimensional convex subcomplex.
Then either $C$ is a subbuilding or it has circumradius $\leq \pihalf$. 
\end{theoremi}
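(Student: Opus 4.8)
The plan is to reduce, by a dichotomy on the existence of a ``free'' panel and an induction on $n$, to the type $A$ case already settled in Theorem~\ref{thmintro:An}, the nontrivial point being a globalization of circumradius estimates from the links of $C$ to $C$ itself.

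First I would record (or, if not yet available in this paper, prove) the characterization of subbuildings among top-dimensional convex subcomplexes: such a $C$ is a subbuilding if and only if every panel of $C$ is a face of at least two chambers of $C$. Hence, if $C$ is \emph{not} a subbuilding, there is a free panel $\pi_0$ of $C$, i.e.\ a panel contained in a unique chamber $\sigma_0$ of $C$, and it suffices to produce a point $c_0\in C$ with $C\subseteq\overline{B}(c_0,\pihalf)$. The feature of the $D_n$ diagram I would exploit is that a panel omits exactly one node, so $\pi_0$ always has a vertex $w$ whose type is one of the two tips $n-1,n$; removing that node from $D_n$ leaves an $A_{n-1}$ diagram, so $\mathrm{Lk}(w,B)$ is a spherical building of type $A_{n-1}$, while $\mathrm{Lk}(w,C)$ is a top-dimensional convex subcomplex of it which inherits a free panel (namely $\mathrm{Lk}(w,\pi_0)$) and hence is not a subbuilding. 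By Theorem~\ref{thmintro:An}, $\rad\bigl(\mathrm{Lk}(w,C)\bigr)\le\pihalf$: there is a segment $[w\xi]\subset C$ whose initial direction $\overrightarrow{w\xi}$ lies within $\pihalf$ of every direction in $\mathrm{Lk}(w,C)$. This is precisely the step that collapses when $C$ is not top-dimensional, since then no free panel need exist; this is why Theorem~\ref{thmintro:Dn} carries the top-dimensionality hypothesis although Theorem~\ref{thmintro:An} does not.

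To run the induction on $n$ one would also examine the links at the other vertices of $C$. Removing the type-$1$ node of $D_n$ gives type $D_{n-1}$, to which the inductive hypothesis applies (the base cases $n\le 3$ being of type $A$ or a spherical join of $A$-type buildings, by $D_3\simeq A_3$ and $D_2\simeq A_1\ast A_1$), while removing an interior node gives a spherical join $A_{k-1}\ast D_{n-k}$. For the join case I would use the fact that a top-dimensional convex subcomplex of a spherical join $B'\ast B''$ either splits as a join $C'\ast C''$ of convex subcomplexes — in which case, by induction and Theorem~\ref{thmintro:An}, either both factors are subbuildings, so $C'\ast C''$ is a subbuilding, or one factor has circumradius $\le\pihalf$, and then so does $C'\ast C''$ with the same center — or else has circumradius $\le\pihalf$ directly.

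Finally, the globalization, which I expect to be the main obstacle. From the segment $[w\xi]\subset C$ with $\angle_w(\xi,c)\le\pihalf$ for every $c\in C$ (the angle bound holds because $\overrightarrow{wc}\in\mathrm{Lk}(w,C)$ by convexity of $C$), one wants to conclude that a suitable point $c_0$ of $C$ — the natural candidates being $\xi$, or the barycenter of a face of $\sigma_0$, or the vertex of $\sigma_0$ opposite $\pi_0$ — is within $\pihalf$ of all of $C$. This does not follow from the local angle bound alone: the plan is to combine it with the inductive hypothesis applied at the other vertices of $C$ (so as to control the actual distances, not merely the directions, from $c_0$ to points of $C$ far from $\sigma_0$) and with $\CAT(1)$ comparison, propagating the estimate along $C$ chamber by chamber using convexity. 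Identifying the correct center $c_0$ and verifying that it dominates all of $C$, rather than only the part of $C$ near $\sigma_0$, is in my view the crux; the case analysis over the type of $\pi_0$ and the spherical-join links of the interior nodes of the $D_n$ diagram account for most of the remaining work.
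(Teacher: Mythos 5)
Your plan correctly locates the difficulty — you say yourself that the "globalization, \ldots is the crux" — but you do not actually resolve it, and this is a genuine gap rather than a cleanup issue. The local estimate you extract from the link of a vertex $w$ is an angle bound: $\angle_w(\xi,c)\le\pihalf$ for all $c\in C$. By $\CAT(1)$ comparison this gives only $\cos d(\xi,c)\ge\cos d(w,\xi)\cos d(w,c)$, which forces $d(\xi,c)\le\pihalf$ only under additional information about $d(w,c)$ (e.g.\ $d(w,c)\le\pihalf$ for all $c\in C$, which is not available a priori and is essentially what one is trying to prove). Propagating such estimates ``chamber by chamber'' using the inductive hypothesis at the other vertices is not a proof strategy that visibly closes: each link gives a direction-space bound at a different basepoint, and there is no a priori center that makes these coherent. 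Without producing the candidate $c_0$ and a mechanism that converts angle control at several basepoints into a single global distance bound, the argument is incomplete.

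The paper sidesteps this problem entirely by working with a convex \emph{function} rather than with pointwise angle estimates. For a supporting apartment $A$, one has the function $f_{C\cap A}=-\sin d(\cdot,\partial(C\cap A))$ (Section~\ref{sec:weightedincenter}), which is nicely convex on the spherical polytope $C\cap A$ by Lemma~\ref{lem:distbound}. The real content is Lemma~\ref{lem:vertDn} and Theorem~\ref{thm:Dn}: one proves, using the explicit vector realization of the $D_n$ Coxeter complex together with Lemmata~\ref{lem:Dn}, \ref{lem:Dn2} and the link-induction formula of Lemma~\ref{lem:inductionarg}, that $f_{C\cap A}(x)$ is independent of the supporting apartment $A\ni x$. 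This defines a single nicely convex function $f$ on all of $C$ (supporting apartments through any pair of points exist by Lemma~\ref{lem:supportingapt}), and Lemma~\ref{lem:radiusminimum} then produces a unique minimizer $x_0$ (the incenter) with $\rad(C,x_0)\le\pihalf$ in one stroke. The globalization you flagged is thus replaced by a well-definedness statement for $f$, whose proof is a concrete computation in the $D_n$ root system; there is no angle-to-distance bootstrapping at all. The place where links and Theorem~\ref{thm:An} enter the paper's argument is not to get circumradius bounds in the link, but to verify, via Lemma~\ref{lem:inductionarg}, that the two apartment-local convex functions agree on a lower-dimensional face, which is a much weaker and much more tractable statement.

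A secondary point: your stated characterization of subbuildings among top-dimensional convex subcomplexes (every panel in at least two chambers of $C$) is plausible but not recorded in the paper and would need a proof; the paper never needs it, because the dichotomy is built into the convex-function machinery (if $C$ is not a subbuilding then $\partial C\ne\emptyset$, $f$ is nonconstant, and Lemma~\ref{lem:radiusminimum} applies).
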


A prominent example (and Tits' first 
motivation for the Center Conjecture, cf.\ \cite{Tits:firstcenterconj}) 
of convex subcomplexes in spherical buildings 
are fixed point sets of type-preserving isometries.
We want to focus now our attention on these kind of convex subsets.

Our first main theorem about fixed point sets of isometries is a positive
answer to Question~\ref{ques:main} in the top-dimensional case.

\begin{theoremi}\label{thmintro:topdim}
Let $B$ be a spherical building and let $g\in Isom(B)$ be an isometry 
whose fixed point set $Fix(g)\subset B$ is top-dimensional. 
Then either $Fix(g)$ is a subbuilding
or it has circumradius $\leq \pihalf$. 
\end{theoremi}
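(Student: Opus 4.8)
The plan is to use the join decomposition of $B$ to reduce to the irreducible case, dispose of types $A_{n}$ and $D_{n}$ by Theorems~A and~B, and treat the remaining irreducible types by exploiting the one property a fixed point set enjoys over a general convex subcomplex: a geodesic issuing from a fixed point in a $g$-fixed direction is again pointwise $g$-fixed.

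\emph{Reductions.} Since $Fix(g)$ is top-dimensional it contains a chamber, which $g$ fixes pointwise; hence $g$ is type-preserving. I would first pass to the thick reduction of $B$ (a standard step, compatible with taking fixed point sets), and then use that $B$ is the spherical join of its irreducible factors $B_{1},\dots,B_{k}$: a type-preserving isometry preserves each $B_{i}$, so $g$ is the join of isometries $g_{i}$ of the $B_{i}$, $Fix(g)$ is the spherical join of the $Fix(g_{i})$, and $Fix(g)$ is top-dimensional exactly when each $Fix(g_{i})$ is. A spherical join of subbuildings is a subbuilding, while if some factor $Fix(g_{j})$ has a center $m_{j}$ with $\rad_{Fix(g_{j})}(m_{j})\le\pihalf$, then $m_{j}$ is a center of $Fix(g)$ of radius $\le\pihalf$, since the join distance from $m_{j}$ to any point of $Fix(g)$ has non-negative cosine. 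So it suffices to treat $B$ irreducible (the case $\rank B\le 1$ being immediate).

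\emph{The irreducible case.} For $B$ of type $A_{n}$, $Fix(g)$ is a convex subcomplex and Theorem~A applies; for type $D_{n}$ it is a top-dimensional convex subcomplex and Theorem~B applies. For the remaining irreducible types I argue as follows. If $Fix(g)$ is a subbuilding we are done, so assume not. By a standard fact — a top-dimensional convex subcomplex in which every panel meets at least two of its chambers is a subbuilding, since convexity lets one extend a minimal gallery between any two of its chambers to an apartment lying inside it — $Fix(g)$ has a \emph{boundary panel}: a panel $\pi\subset Fix(g)$ contained in a unique chamber $c$ of $Fix(g)$. Let $b$ be the barycenter of $\pi$. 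Then $\Si_{b}B$ is the spherical join of the sphere of directions at $b$ inside $\pi$ with the rank-one building $\mathrm{lk}_{B}(\pi)$; as $g$ fixes $\pi$ pointwise it fixes the first factor pointwise and permutes the (at least two) points of $\mathrm{lk}_{B}(\pi)$, fixing only the point $u$ that corresponds to $c$. Hence $\Si_{b}Fix(g)$ is the join of that direction sphere with $\{u\}$, i.e.\ a closed hemisphere, and so has circumradius $\le\pihalf$ with circumcenter the $g$-fixed direction $u$. (Equivalently, one could take a vertex $v$ of $\pi$ and run an induction on rank on $\mathrm{lk}_{B}(v)$, whose fixed point set $Fix(g|_{\mathrm{lk}_{B}(v)})=\mathrm{lk}_{Fix(g)}(v)$ is again a top-dimensional non-subbuilding fixed point set of smaller rank.)

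Now let $\gamma\colon[0,\pihalf]\to B$ be the unit-speed geodesic with $\gamma(0)=b$ and $\dot\gamma(0)=u$. Since $g$ fixes $b$ and $u$, it maps $\gamma$ onto itself, and an isometry of $[0,\pihalf]$ fixing an endpoint is the identity, so $\gamma$ is pointwise fixed and $z:=\gamma(\pihalf)\in Fix(g)$. For any $x\in Fix(g)$ with $d(b,x)<\pi$, convexity of $Fix(g)$ gives a geodesic $[b,x]\subset Fix(g)$, so $\mathrm{dir}_{b}(x)\in\Si_{b}Fix(g)$ and therefore $\angle_{b}(z,x)=\angle_{b}(u,\mathrm{dir}_{b}x)\le\pihalf$; the $\CAT(1)$ law of cosines for the triangle $b,z,x$ then gives
\[
\cos d(z,x)\ \ge\ \cos\pihalf\,\cos d(b,x)+\sin\pihalf\,\sin d(b,x)\,\cos\angle_{b}(z,x)\ \ge\ 0,
\]
so $d(z,x)\le\pihalf$. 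Thus $Fix(g)$ lies in the closed $\pihalf$-ball about $z$, unless it contains a point antipodal to $b$. The hard part — and the step I expect to cost the most — is precisely this antipodal case: one must rule it out (or extract a good center in its presence). I would try to do so via combinatorial rigidity special to fixed point sets, relating a simplex of $Fix(g)$ opposite to $\pi$, through the projection isomorphism between opposite panel residues, to the chamber structure of $Fix(g)$ at $\pi$; failing a clean such statement, one falls back on the description of convex subsets containing a pair of antipodal points à la Balser--Leeb. For the classical types $A_{n}$ and $D_{n}$ this difficulty is bypassed entirely by Theorems~A and~B, which is why those are invoked separately.
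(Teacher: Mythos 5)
Your reduction to the irreducible case is fine, and invoking Theorems~A and~B for types $A_n$ and $D_n$ is legitimate, but the argument you give for the remaining irreducible types breaks down at several points, and none of them is cosmetic.

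First, the $\CAT(1)$ comparison inequality you use is stated in the wrong direction. For a geodesic triangle with vertex $b$ and opposite side $zx$, the angle comparison $\angle_b(z,x)\le\bar\angle_b(z,x)$ combined with the spherical law of cosines applied to the comparison triangle gives
\[
\cos d(z,x)\;\le\;\cos d(b,z)\cos d(b,x)+\sin d(b,z)\sin d(b,x)\cos\angle_b(z,x),
\]
not $\geq$: the actual side opposite a hinge is at least as long as in the spherical hinge (the $\CAT(1)$ counterpart of the ``reverse law of cosines'' in $\CAT(0)$). So knowing $d(b,z)=\pihalf$ and $\angle_b(z,x)\le\pihalf$ gives a \emph{lower} bound on $d(z,x)$, which is useless for your purpose. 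This inverts the key estimate and the argument does not recover.

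Second, the point $z=\gamma(\pihalf)$ is not well defined, and even if you fix a choice, $g$ need not fix it. A geodesic in a spherical building issuing from $b$ in the direction $u$ is determined only as long as it stays inside a single chamber; it will generically exit $c$ before reaching length $\pihalf$ and can then branch into any of the chambers meeting the exit panel. Since $g$ merely fixes $b$ and $u$, it maps your chosen $\gamma$ to \emph{some} geodesic with the same germ at $b$, not necessarily to $\gamma$ itself. The statement ``an isometry of $[0,\pihalf]$ fixing an endpoint is the identity'' presupposes precisely what needs to be shown, namely $g\gamma=\gamma$ as sets. (Already in a thick $A_2$ building the geodesic leaves the chamber at distance $\pi/3<\pihalf$ and branches.)

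Third, you acknowledge that the case where $Fix(g)$ contains a point antipodal to $b$ is unresolved, and propose to ``try'' something. That is not a minor loose end: it is the entire difficulty of Question~\ref{ques:main}, and the reason the Center Conjecture was hard. No argument that sets this case aside can be regarded as a proof.

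For comparison, the paper does something structurally different. It first establishes Theorem~\ref{thm:unipotentisom}: for a \emph{unipotent} isometry $u$, the weighted-incenter function $f_{Fix(u)\cap A}$ of Section~\ref{sec:weightedincenter} (with weights calibrated on non-reduced roots by whether the coordinate of $u$ lies in $U_{2\alpha}$) is independent of the apartment $A$ through a given point, hence defines a global nicely convex function on $Fix(u)$ whose unique minimum is a $\pihalf$-center; the proof runs through the commutator relations of Theorem~\ref{thm:commutator} and the coordinate description of Proposition~\ref{prop:coordunipotent}. The general top-dimensional case is then reduced to the unipotent case in Theorem~\ref{thm:topdim} by decomposing $g=uh$ (Proposition~\ref{prop:parabgp}) with respect to apartments that ``support'' $g$, and showing the resulting weighted-incenter function is again apartment-independent. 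This avoids both the branching issue and the antipodal case altogether, precisely because the conclusion is extracted from a convex function rather than from a single geodesic emanating from a boundary point.
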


For fixed point sets of groups of isometries we have the following immediate corollary 
(Corollary~\ref{cor:fixedptssubgp}).

\begin{corollaryi}
 Let $H\subset Isom(B)$ be a subgroup of isometries such that 
the fixed point set $Fix(H)$ is top-dimensional. Suppose that there is an element $g\in H$
such that $Fix(g)$ is not a subbuilding. Then $Fix(H)$ has circumradius $\leq \pihalf$. 
\end{corollaryi}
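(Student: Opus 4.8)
The plan is to reduce this immediately to Theorem~\ref{thmintro:topdim}, applied to the given element $g$. The first point is that $Fix(H)=\bigcap_{h\in H}Fix(h)$ is an intersection of convex subcomplexes, hence again a convex subcomplex, and clearly $Fix(H)\subseteq Fix(g)$. Since $Fix(H)$ is top-dimensional it contains a chamber of $B$; that chamber then lies in $Fix(g)$ as well, so $Fix(g)$ is top-dimensional too. Therefore Theorem~\ref{thmintro:topdim} applies to $g$, and because $Fix(g)$ is by hypothesis not a subbuilding we obtain that $Fix(g)$ has circumradius $\leq\pihalf$: there is a point $x\in Fix(g)$ with $Fix(g)\subseteq\bar B(x,\pihalf)$.

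Intersecting with $Fix(H)$ yields $Fix(H)\subseteq\bar B(x,\pihalf)$, so $\rad(Fix(H))\leq\pihalf$. It remains only to promote this to the statement that $Fix(H)$ has circumradius $\leq\pihalf$, i.e.\ that a valid center can be found \emph{inside} $Fix(H)$: a priori $x$ lies only in $Fix(g)$, and one cannot transport it along $H$, since in general $H$ does not stabilize $Fix(g)$ (one has $h\,Fix(g)=Fix(hgh^{-1})$ for $h\in H$). So one must work with $Fix(H)$ itself, which is at least a convex subcomplex of $B$ with radius $\leq\pihalf$.

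For this last step, when $\rad(Fix(H))<\pihalf$ one simply uses that a bounded convex subset of a $\CAT(1)$ space of radius $<\pihalf$ has a unique circumcenter, which lies in the closed convex set $Fix(H)$. The borderline case $\rad(Fix(H))=\pihalf$ is the one genuinely delicate point --- the step I expect to require the most care --- and there I would invoke the theory of circumcenters of convex subsets of spherical buildings (cf.\ Balser--Lytchak), or the corresponding fact used in proving Theorem~\ref{thmintro:topdim}, to produce a circumcenter inside the convex subcomplex $Fix(H)$. Either way $Fix(H)$ has circumradius $\leq\pihalf$; note this also shows a posteriori that $Fix(H)$ is not a positive-dimensional subbuilding, consistently with the hypotheses.
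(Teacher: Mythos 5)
Your overall plan matches the paper's: apply Theorem~\ref{thmintro:topdim} (equivalently Theorem~\ref{thm:topdim}) to $g$, note $Fix(H)\subset Fix(g)$, and then argue that a center can be found inside $Fix(H)$. You also correctly flag the last step as the delicate one. But your treatment of it has a gap, and you do not find the argument the paper uses.

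First, a small but consequential slip: ``$Fix(H)\subseteq\bar B(x,\pihalf)$, so $\rad(Fix(H))\leq\pihalf$'' is not a valid step if $\rad$ is read as in the paper, namely $\rad(C)=\inf_{y\in C}\rad(C,y)$ with the center required to lie \emph{in} $C$. The point $x$ lives in $Fix(g)$ but not a priori in $Fix(H)$, so at this stage you have only an \emph{extrinsic} radius bound. Your subsequent case split on $\rad(Fix(H))<\pihalf$ versus $=\pihalf$ therefore rests on a quantity you have not yet bounded, and the appeal to Balser--Lytchak in the borderline case is circular as you invoke it: their Lemma~3.3 (as cited in this paper's remark after Lemma~\ref{lem:radiusminimum}) \emph{assumes} $\rad(C)\leq\pihalf$ with center in $C$ in order to produce a circumcenter; it does not upgrade an extrinsic bound to an intrinsic one.

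The paper's resolution is different and uses the top-dimensionality of $Fix(H)$ a \emph{second} time, in a way you don't: since $Fix(H)$ is top-dimensional (it contains a chamber), the function $d(x_0,\cdot)$ cannot be constantly equal to $\pihalf$ on $Fix(H)$ (a chamber cannot lie in the equator sphere of $B_{\pihalf}(x_0)$ inside a common apartment). Hence $d(x_0,Fix(H))<\pihalf$, so the nearest-point projection $\bar x_0$ of $x_0$ onto the closed convex set $Fix(H)$ exists and lies in $Fix(H)$, and the standard $\CAT(1)$ projection estimate gives $\rad(Fix(H),\bar x_0)\leq \rad(Fix(H),x_0)\leq\pihalf$. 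This handles the borderline case uniformly, without any case split and without invoking circumcenter existence theorems. You noticed the first use of top-dimensionality (to make Theorem~\ref{thmintro:topdim} applicable to $g$) but missed that it is needed again precisely to make the projection well-defined, which is the whole content of the final step.
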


If the building has no factors of exceptional type, then we can drop the assumption
of top-dimensional fixed point set.

\begin{theoremi}\label{thmintro:poscodim}
Let $B$ be a spherical building without factors of 
type $F_4,E_6,E_7, E_8$ and let $g\in Isom(B)$ be a type-preserving isometry. 
Then either $Fix(g)$ is a subbuilding
or it has circumradius $\leq \pihalf$. 
\end{theoremi}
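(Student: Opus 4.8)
The plan is to reduce first to the case that $B$ is irreducible. Write $B=B_1\circ\dots\circ B_k$ as a spherical join of its irreducible factors. Since the type function of $B$ assigns disjoint sets of nodes to the vertices of two distinct factors (even when those factors are abstractly isomorphic), no type-preserving isometry can interchange factors; hence $g$ preserves each $B_i$ and $Fix(g)=Fix(g|_{B_1})\circ\dots\circ Fix(g|_{B_k})$. A spherical join of convex subcomplexes is a subbuilding if and only if every factor is, and, by the distance formula in a spherical join, it has circumradius $\le\pihalf$ if and only if at least one factor does. As none of the $B_i$ has exceptional type, it suffices to prove the theorem for $B$ irreducible of type $A_n$, $B_n=C_n$, $D_n$, $G_2$, $H_3$, $H_4$, or $I_2(m)$. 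If $B$ has type $A_n$ we are done by Theorem~\ref{thmintro:An}, since $Fix(g)$ is a convex subcomplex; and if $Fix(g)$ is top-dimensional we are done by Theorem~\ref{thmintro:topdim}. So assume $B$ has one of the remaining types and that $C:=Fix(g)$ has positive codimension, whence $\rank B\ge 2$.

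I would now induct on $\rank B$. For the base case $\rank B=2$ (types $G_2=I_2(6)$, $B_2=I_2(4)$, $I_2(m)$) the subcomplex $C$ is $0$-dimensional; since in a rank-$2$ building the convex hull of two distinct vertices already contains an edge, $C$ is a single vertex and $\rad(C)=0$. The types $H_3$ and $H_4$ admit no thick buildings; I would treat them directly, reducing the general (weak) case by the standard construction to thick buildings of smaller, non-exceptional type and otherwise to the thin case, where a convex subcomplex of a Coxeter complex is either the Coxeter complex of a reflection subgroup—hence a subbuilding—or a proper spherical polytope, hence of circumradius $\le\pihalf$. This supplies the base cases of the induction.

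For the inductive step let $\sigma$ be a cell of $C$. Then $g$ fixes $\sigma$ pointwise and induces a type-preserving isometry $g_\sigma$ of the spherical building $\operatorname{lk}(\sigma,B)$, and by convexity of $C$ one has $Fix(g_\sigma)=\operatorname{lk}(\sigma,C)$. The building $\operatorname{lk}(\sigma,B)$ has rank strictly smaller than $\rank B$ and—this is what keeps the hypothesis alive along the induction—has no factor of type $F_4,E_6,E_7,E_8$, because the class of Coxeter diagrams with no such component is closed under passage to sub-diagrams (deleting nodes from $A_n,B_n,D_n,G_2,H_3,H_4,I_2(m)$ never produces one of these four). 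Hence, by the inductive hypothesis applied factorwise together with Theorems~\ref{thmintro:An} and~\ref{thmintro:topdim}, for every cell $\sigma$ of $C$ the link $\operatorname{lk}(\sigma,C)$ is a subbuilding of $\operatorname{lk}(\sigma,B)$ or has circumradius $\le\pihalf$. It remains to globalize: if $\operatorname{lk}(v,C)$ is a subbuilding for every vertex $v$ of $C$, then $C$ is itself a subbuilding, by the local characterization of subbuildings among convex subcomplexes; otherwise $C$ is "small" at some link, and from this one must manufacture a point $c\in C$ with $\rad(C,c)\le\pihalf$.

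I expect this last step to be the main obstacle. Knowing merely that every link is a subbuilding or has small circumradius cannot by itself suffice—otherwise the same induction would also dispose of the exceptional types, whose links are non-exceptional. The globalization has to be carried out using the specific geometry of the type at hand, which is exactly what Theorems~\ref{thmintro:An} and~\ref{thmintro:Dn} provide in the $A$- and $D$-cases and what the direct arguments provide in small rank and for $H_3,H_4$; making it propagate to $B_n$ and $D_n$ in positive codimension, where one must exhibit a genuine global circumcenter, is where the real work lies, and it is here that excluding $F_4,E_6,E_7,E_8$ is used. Concretely, I would pick $\sigma\in C$ of maximal dimension with $\operatorname{lk}(\sigma,C)$ not a subbuilding, so that $\operatorname{lk}(\sigma,C)$ has a circumcenter $\xi$ with $\rad(\operatorname{lk}(\sigma,C),\xi)\le\pihalf$, and then show, using convexity of the set of circumcenters together with the combinatorics of $C$, that following $\xi$ out of $\sigma$ reaches a point that is a circumcenter for all of $C$, yielding $\rad(C)\le\pihalf$.
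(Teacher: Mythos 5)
Your reduction to irreducible factors is correct, as is the disposal of type $A_n$ via Theorem~\ref{thmintro:An} and of the top-dimensional case via Theorem~\ref{thmintro:topdim}. But the gap you candidly flag is genuine and fatal to the proposal as written. Knowing that $\Sigma_\sigma C$ is a subbuilding or has circumradius $\le\pihalf$ for every cell $\sigma$ of $C$ does not suffice (you note this yourself), and the remedy you sketch---take a cell $\sigma$ of maximal dimension whose link is not a subbuilding, take a local circumcenter $\xi\in\Sigma_\sigma C$, and then ``follow $\xi$ out of $\sigma$'' using ``convexity of the set of circumcenters''---is not a proof. There is no control in what you have written on how a circumcenter of a link relates to a circumcenter of $C$ itself, and no argument that a point on a ray through $\xi$ lies in $C$ or bounds all of $C$ within $\pihalf$.

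The paper's actual mechanism (Theorem~\ref{thm:classicalbuild}) supplies exactly the missing ingredient, and it is quite different. One writes $g=uk$ with respect to an apartment $A$ as in Proposition~\ref{prop:parabgp}, with $u$ unipotent and $Fix(k)\cap A$ a singular sphere of the same dimension as $Fix(g)$, and considers the explicit nicely convex function $f_{Fix(u)\cap A}$ of Section~\ref{sec:weightedincenter} on $Fix(g)\cap A$. The work is to prove that this value is independent of the apartment $A$---not for all apartments (that fails), but for those \emph{supporting} $g$ in the refined sense of Section~\ref{sec:classicalbuild}, which on top of supporting the subcomplex $Fix(g)$ also requires maximizing the link-level quantities $\lambda_{g,\tau,\hat\xi}$ over all antipodes $\hat\xi$ of $\Sigma_\tau Fix(g)$. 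Apartment-independence is then proved first at vertices using Lemma~\ref{lem:vertexposcodim}, and propagated to all points of $Fix(g)$ using the root-system-specific Lemmata~\ref{lem:Bn}, \ref{lem:Bn2}, \ref{lem:Dn}, \ref{lem:Dn2}. Lemma~\ref{lem:vertexposcodim} is the precise point where the restriction to non-exceptional types enters: it fails for the $F_4$ Coxeter complex. Once well-definedness is established, Lemma~\ref{lem:radiusminimum} produces the circumcenter. A rank induction does appear, but only to compare the induced link functions $f_{\Sigma_v(\,\cdot\,)}$ inside the apartment-independence proof, not as the overall scaffold; your proposal inverts this and thereby loses the quantitative control that makes the argument close.
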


A main step in the proof of Theorems~\ref{thmintro:topdim} and \ref{thmintro:poscodim}
is to show first the same result for a special kind of isometries with top-dimensional
fixed point sets, namely the unipotent isometries (see Theorem~\ref{thm:unipotentisom}).

As a motivation, 
let us consider first the example of a spherical building $B=\tits X$ which is the Tits boundary
of a symmetric space $X=G/K$ of noncompact type.
An isometry $g\in Isom_0(X)\cong G$ induces an isometry $g_T$ of the building $B$.
If the isometry $g$ is semisimple, then its fixed point set $Fix(g)\subset X$ is a totally geodesic
subspace and its boundary at infinity $\tits Fix(g) = Fix(g_T)$ is a subbuilding of $B$.
If $g$ is parabolic, then $Fix(g_T)$ has circumradius $\leq \pihalf$
(see \cite[Prop.\ 4.1.1]{Eberlein:nonpositive}, \cite[Lemma 3]{BallmannGromovSchroeder}). 
Hence, we obtain 
a positive answer to Question~\ref{ques:main} in this case. 
The fact above about the circumradius of the fixed point set at infinity of a parabolic isometry
holds in more generality, e.g.\ for any $\CAT(0)$ space $X$ of finite dimension,
this is shown in \cite{LytchakCaprace:atinfinity} (see also \cite{FujiwaraNaganoShioya}). 
The proof of this result goes roughly as follows:
consider the displacement function $d_g(x)=d(x,gx)$, $x\in X$ of the parabolic isometry $g$. 
This function is convex and Lipschitz. 
Now we follow in $X$ a path  in the direction of the greatest decrease of the function $d_g$
(e.g.\ if $X$ is a Riemannian manifold, then we just follow a flow line of minus the gradient
of $d_g$). Since $g$ is parabolic, $d_g$ does not attain its infimum in $X$ 
and this path must have an accumulation
point $\xi\in \tits X$ at infinity. One then shows that 
$\xi\in Fix(g_T)$ and
for all $\zeta\in Fix(g_T)$ holds $d(\xi,\zeta)\leq \pihalf$.
Now suppose that the spherical building 
$B=\tits X$ is the Tits boundary of a Euclidean building.
An isometry $g\in Isom(X)$ induces again an isometry of $B$.  
But in contrast to the symmetric space case, a Euclidean building admits no
parabolic isometries (see \cite{Parreau}, \cite{Ramos-Cuevas:displacement}).
Further, the fixed point sets at infinity of semisimple  isometries
are not necessarily subbuildings. 
Hence, we cannot apply the result above to give an answer to 
Question~\ref{ques:main} in this case.

We will nevertheless rescue the main idea in the proof mentioned above, that is, to follow
the direction of the greatest decrease of a convex function.
For this purpose we forget about 
the $\CAT(0)$ space $X$ and work directly with convex functions 
defined in a convex subset $C$ of the building $B$.
For a special family of convex functions, 
which we call {\em nicely convex} in Section~\ref{sec:convexfunction},
we can assure the existence of a unique point
$x\in C$ where the function attains its minimum and for this point holds
$d(x,y)\leq\pihalf$ for all $y\in C$ (Lemma~\ref{lem:radiusminimum}).
Thus, the main work will go into finding such functions for the convex subsets in question.
Actually, a positive answer to Question~\ref{ques:main} is equivalent to the existence
of such convex functions for convex subsets which are not subbuildings
(see Proposition~\ref{prop:equivconj}).
This idea is inspired by the approach to the Center Conjecture initiated in
\cite{BateMartinRohrle:centerconj} using Geometric Invariant Theory.

The functions that we will consider measure essentially 
the negative of the distance of a point to the boundary
of the convex subset (Section~\ref{sec:weightedincenter}). 
It is easy to see that in the case of convex subsets of spheres such
functions satisfy the desired conditions for nice convexity.
However, in general, these functions will not be even convex.
Our strategy will be as follows,
for a given convex subcomplex we find a family of apartments,
which is big enough such that any pair of points of the subcomplex is contained
in an apartment of the family, and small enough such that the value 
of the function above for a given apartment of this family
at a point of the subcomplex
 does not depend on the apartment containing the point.
This will define a convex function on the convex subcomplex which is nicely convex
since its restriction to any apartment of the family (which is just a sphere) is nicely convex. 
For instance, in the case of fixed point sets of unipotent isometries we will
see that we can take the collection of all apartments of the building
(Theorem~\ref{thm:unipotentisom}).

Independently of our interest in Question~\ref{ques:main} another motivation to
study fixed point sets is to investigate the relationship between algebraic properties of
an isometry $g\in Isom(B)$ and the geometry of its fixed point set $Fix(g)\subset B$.
This will be the main subject in Section~\ref{sec:applications}.
For instance, we give a geometric proof of the well known fact (for algebraic groups)
that the product of two commuting unipotent elements is again unipotent 
(see Proposition~\ref{prop:commutingunipotent}).
Another question in this direction is to what extent we can read off the fixed point set 
the Jordan decomposition of an element of an algebraic group. 
The Jordan decomposition will be discussed in Section~\ref{sec:jordan}.

\tableofcontents

\section{Preliminaries}

In this paper we consider spherical buildings from the $\CAT(1)$ 
viewpoint as presented in \cite[Section 3]{KleinerLeeb:quasi-isom},
we refer to it and \cite{Tits:bn-pairs, Weiss:sphbuild, AbramenkoBrown} 
for the basic definitions and facts about 
spherical Coxeter complexes and spherical buildings.
For more information on $\CAT(1)$ spaces in general we refer to \cite{BridsonHaefliger}.

\subsection{Spherical joins}

Consider the planes 
$E_i:=\{x\in \R^{2n}\;|\; x_k=0 \text{ if } k\neq 2i-1,2i\}$ in $\R^{2n}$
for $i=1,\dots, n$, and the corresponding
unit circles $S^1_i:=S^{2n-1}\cap E_i$.
For a point $y$ in the unit round sphere $S^{2n-1}\subset \R^{2n}$, 
let $a_i\geq 0$ be the norm of the projection 
$\pi_i(y)$ to the plane $E_i$ and let $y^i:=\pi_i(y)/a_i\in S^1_i$ if $a_i\neq 0$,
then $y=\sum_i a_i y^i$. Hence $S^{2n-1}$ can be thought of as a kind
of product of $n$ copies of the unit circle $S^1$ induced by the 
metric product $\R^n \cong E_1\times \dots \times E_n$.
The {\em spherical join}
generalizes this construction to metric spaces.

Let $S^n_+ \subset S^n\subset \R^{n+1}$ denote the points on the round unit 
$n$-sphere $S^n$ with all their coordinates non-negative.
Let $Y_1,\dots Y_n$ be metric spaces of diameter $\leq \pi$.
The {\em spherical join} $Y=Y_1\circ\dots\circ Y_n$ is the metric space, that
as a set is $S^n_+\times Y_1\times\dots\times Y_n$ modulo the equivalence relation
that identifies $(a,y_1,\dots,y_n)\sim (a',y_1',\dots,y_n')$ if and only if
$a=a'\in S^n_+$, and $y_i=y_i'$ whenever $a_i=a_i'\neq 0$.
There is a natural identification (as sets) of $S^{2n-1}$ and the spherical join
of $n$ copies of $S^1$ (see above). We define the metric on $S^1\circ\dots\circ S^1$
such that this identification is an isometry and use this metric to define the metric
of general spherical joins. 
Let $y=(a,y_1,\dots,y_n),y'=(a',y_1',\dots,y_n')$ be two
points in $Y=Y_1\circ\dots\circ Y_n$. Choose points $p_i,p_i'\in S^1$
such that $d(y_i,y_i')=d(p_i,p_i')$. Then we define the distance between $y,y'$
as the distance between the points 
$(a,p_1,\dots,p_n),(a',p_1',\dots,p_n')\in S^1\circ\dots\circ S^1\cong S^{2n-1}$.

There are natural isometric embeddings $Y_i\hookrightarrow Y$. 
Thus, we may think of each $Y_i$
as a subspace of $Y$.

Notice that this definition is made 
{\em ad-hoc} such that the Euclidean cone over $Y$ is 
canonically isometric to the product of the Euclidean cones over the $Y_i$.

The next lemma follows directly from the definition.

\begin{lemma}\label{lemma:join}
Let $Y$ be a metric space of diameter $\leq \pi$. Then the diagonal map 
$Y\rightarrow Y\circ \dots \circ Y$ given by
$y\mapsto [((\frac{1}{\sqrt{n}},\dots,\frac{1}{\sqrt{n}}),(y,\dots,y))]$ 
is an isometric embedding. \qed
\end{lemma}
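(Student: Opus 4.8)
The plan is to reduce the statement to the defining special case of a join of circles, where it becomes a one-line inner-product computation in $\R^{2n}$.

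First I would fix two points $y,y'\in Y$ and unwind the definition of the distance in $Y^{\circ n}:=Y\circ\dots\circ Y$ between their images $[((\tfrac{1}{\sqrt n},\dots,\tfrac{1}{\sqrt n}),(y,\dots,y))]$ and $[((\tfrac{1}{\sqrt n},\dots,\tfrac{1}{\sqrt n}),(y',\dots,y'))]$. Since $\diam Y\le\pi$, there exist $p,p'\in S^1$ with $d(p,p')=d(y,y')$, and since every one of the $Y$-coordinates of the first point equals $y$ and every one of the second equals $y'$, we may take $p_i=p$ and $p_i'=p'$ for all $i$ in the definition of the join metric. Hence the distance we want equals the distance in $S^1\circ\dots\circ S^1\cong S^{2n-1}$ between $q:=((\tfrac{1}{\sqrt n},\dots,\tfrac{1}{\sqrt n}),(p,\dots,p))$ and the analogous point $q'$ built from $p'$.

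Next I would compute this distance directly. Under the identification of $S^1\circ\dots\circ S^1$ with $S^{2n-1}\subset\R^{2n}$ recalled above, $q$ corresponds to the unit vector $\sum_i\tfrac{1}{\sqrt n}p^i$, where $p^i\in S^1_i$ is the copy of $p$ in $\R^{2n}$. Writing $p=(\cos\theta,\sin\theta)$ and $p'=(\cos\theta',\sin\theta')$, so that $d(p,p')=|\theta-\theta'|\in[0,\pi]$, the Euclidean inner product of the two unit vectors representing $q$ and $q'$ is $\tfrac1n\sum_{i=1}^n\cos(\theta-\theta')=\cos(\theta-\theta')$, and hence their spherical distance is $\arccos\cos(\theta-\theta')=|\theta-\theta'|=d(p,p')=d(y,y')$. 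Thus the diagonal map preserves all pairwise distances, so it is an isometric embedding.

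A more conceptual variant avoids coordinates: by the ad-hoc choice of the join metric, the Euclidean cone $C(Y^{\circ n})$ is canonically $CY\times\dots\times CY$, and the cone distance formula $d((r,y),(r',y'))^2=r^2+r'^2-2rr'\cos d(y,y')$ shows at once that $(r,y)\mapsto\big((\tfrac r{\sqrt n},y),\dots,(\tfrac r{\sqrt n},y)\big)$ is an isometric embedding $CY\hookrightarrow CY\times\dots\times CY$; restricting to the unit sphere gives precisely the diagonal map in question. Either way there is no real obstacle; the only point that needs a word is the extreme case $d(y,y')=\pi$ and the branch of $\arccos\cos$, both of which are taken care of by the hypothesis $\diam Y\le\pi$ together with the capping convention implicit in the join metric.
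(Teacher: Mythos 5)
Your proof is correct and follows exactly the route the paper intends: the paper states this lemma with no proof beyond ``follows directly from the definition,'' and your computation (reducing to the $S^1\circ\dots\circ S^1\cong S^{2n-1}$ model and checking $\langle q,q'\rangle=\cos d(p,p')$, or equivalently the cone-product argument) is precisely the unwinding of that definition. Both variants you give are valid; the cone version is arguably the cleaner way to phrase it, but they are the same argument at heart.
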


\subsection{Circumradius and inradius}

Let $y$ be a point in a metric space $Y$. 
The {\em circumradius of $Y$ with respect to $y$} is defined as 
$\rad(Y,y):=\sup_{x\in Y}d(x,y)$ and the {\em circumradius} of $Y$ is 
$\rad(Y):=\inf_{y\in Y} \rad(Y,y)$,
that is, $\rad(Y)$ is the infimum of the radii of balls centered at $Y$ and containing it.
A point $y\in Y$, such that $\rad(Y,y)=\rad(Y)$ is called a {\em circumcenter}.

Let $C\subset Y$ be a subset. 
The {\em inradius of $C$ with respect to $y\in C$} is defined as 
$\inrad(C,y):=\sup\{r\geq 0 \;|\; B_r(y)\subset C\}$ and the {\em inradius} of $C$ is 
$\inrad(C):=\sup_{y\in C} \inrad(C,y)$,
that is, $\inrad(C)$ is the supremum of the radii of balls contained in $C$.
A point $y\in Y$, such that $\inrad(Y,y)=\inrad(Y)$ is called an {\em incenter}.

\subsection{$\CAT(1)$ spaces}

A metric space is called a {\em $\CAT(1)$ space} 
if it is $\pi$-geodesic 
and geodesic triangles of perimeter less than $2\pi$
are not {\em thicker} than those in the round unit sphere.

For points $x,y$ in a $\CAT(1)$ space $Y$ at distance $<\pi$, 
we denote by $xy$ the unique segment connecting both points. 
Two points at distance $\geq\pi$ are called {\em antipodal}.
The {\em link} $\Si_y Y$ at a point $y\in Y$ is the space of directions at $y$
with the angle metric. It is again a $\CAT(1)$ space. 
 If $y\neq x$  and $y$ is not antipodal to $x$, we denote
 with $\ora{xy}\in\Si_x X$ the direction at $x$ of the segment $xy$. 

A subset $C$ of a $\CAT(1)$ space is called {\em convex}, 
if for any 
$x,y\in C$ at distance $<\pi$ the segment $xy$ 
is also contained in $C$.
A convex subset of a $\CAT(1)$ space is itself $\CAT(1)$.
The {\em convex hull} $CH(A)$ of a subset $A\subset Y$ 
is the smallest closed convex subset of $Y$ containing $A$.

\subsubsection{Convex functions on $\CAT(1)$ spaces}\label{sec:convexfunction}

Let $Y$ be a $\CAT(1)$ space. A function $f:Y\rightarrow \R$ is said to
be {\em (strictly) convex} if for any geodesic segment $c:[0,L]\rightarrow Y$ of
length $L<\pi$, the function $f\circ c :[0,L]\rightarrow\R$ is (strictly) convex.

Suppose that $f:Y\rightarrow \R$ is a nonpositive function such that
$$ f(x) + f(y) \geq 2\cos(\tfrac{d(x,y)}{2})f(m) $$
for all $x,y \in Y$ at distance $<\pi$, where $m\in Y$ is the midpoint of 
the segment $xy$. Notice that since $f(m)\leq 0$, the inequality already implies
that $f$ is convex and also that $f$ is strictly convex on the convex subset $\{f < 0\}$.
Suppose further that $\{f < 0\}$ does not contain pairs of 
antipodal points. We say that a function with these properties is 
{\em nicely convex}.

\begin{remark}\label{rem:slope}
 Our motivation to consider this special kind of convex functions is the following.
Suppose that $Y'$ is the Tits boundary of a $\CAT(0)$ space $X$. Let 
$h:X\rightarrow\R$ be a continuous convex function. 
Let $\rho$ be a geodesic ray in $X$ with $\rho(\infty)=\xi$.
Then $slope_h(\xi):= \lim_{t\to\infty}\frac{h(\rho(t))}{t}$ defines a function on $Y'$,
whose restriction to the convex subset
$Y=\{slope_h\leq 0\}\subset Y'$ satisfies the properties above.
(cf.\ \cite[Section 3.1]{KapLeebMillson:convex}, 
\cite[Section 4.2]{LytchakCaprace:atinfinity}.)
\end{remark}

Let $f:Y\rightarrow \R$ be nicely convex. If $y_n\in Y$ is a sequence with
$f(y_n)\rightarrow \inf f < 0$, then the inequality implies that $(y_n)$ is Cauchy. 
Hence, if $Y$ is complete, then $f$ attains its infimum at a unique point 
$y_f\in \{f < 0\}$.

\begin{lemma}\label{lem:radiusminimum}
Let $Y$ be a complete connected $\CAT(1)$ space and 
let $f:Y\rightarrow \R$ be a nonconstant nicely convex function.
Then for the unique minimum $y_f\in Y$ of $f$ holds
$\rad(Y,y_f)=\sup_{y\in Y}d(y,y_f) \leq \pihalf$.
\end{lemma}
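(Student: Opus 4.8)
By definition $\rad(Y,y_f)=\sup_{y\in Y}d(y,y_f)$, so the plan is to prove that $d(y,y_f)\le\pihalf$ for every $y\in Y$. Set $\mu:=f(y_f)=\inf f$; since $f$ is nonpositive and nonconstant we have $\mu<0$, so $\mu\le f\le 0$ on all of $Y$. The one computational input I will use repeatedly is that along any geodesic $c:[0,L]\to Y$ of length $L<\pi$ the function $\phi:=f\circ c$ is convex (in particular continuous on the open interval) and satisfies
\[
2\cos\!\big(\tfrac{t-s}{2}\big)\,\phi\!\big(\tfrac{s+t}{2}\big)\ \le\ \phi(s)+\phi(t)\qquad(0\le s<t\le L),
\]
which is just the nicely convex inequality read along $c$.

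\emph{A crude bound.} First I would show that every point of $Y$ lies within $\tfrac{2\pi}{3}$ of $y_f$. If $D:=d(y,y_f)<\pi$, apply the nicely convex inequality to $y_f,y$ with midpoint $m$; using $f(m)\ge\mu$, $\cos(D/2)>0$ and $f(y)\le 0$ one gets $f(y)\ge(2\cos(D/2)-1)\mu$, whence $(2\cos(D/2)-1)\mu\le 0$ and therefore $\cos(D/2)\ge\tfrac12$, i.e.\ $D\le\tfrac{2\pi}{3}$. Thus the open set $U:=\{y:d(y,y_f)<\pi\}$ coincides with the closed ball $\overline B_{2\pi/3}(y_f)$; being nonempty, open and closed, and $Y$ being connected, $U=Y$.

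\emph{The chord inequality (the main obstacle).} Fix $y$ and put $D:=d(y,y_f)\in(0,\tfrac{2\pi}{3}]$; choose a geodesic $c:[0,D]\to Y$ from $y_f$ to $y$ and write $\phi=f\circ c$, so $\phi(0)=\mu$. The heart of the proof is to upgrade the midpoint inequality above to the sharp \emph{chord inequality}
\[
\phi(u)\ \le\ \frac{\sin(D-u)}{\sin D}\,\mu+\frac{\sin u}{\sin D}\,\phi(D)\qquad(0\le u\le D),
\]
i.e.\ that $\phi$ lies below the unique function of the form $a\cos+b\sin$ agreeing with it at the two endpoints. This is the step I expect to be genuinely delicate. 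The argument: put $g:=(\text{right-hand side})-\phi$; then $g$ is continuous, $g(0)=g(D)=0$, and because every function $a\cos+b\sin$ satisfies $2\cos(\tfrac{t-s}{2})h(\tfrac{s+t}{2})=h(s)+h(t)$ identically, $g$ satisfies the \emph{reversed} inequality $2\cos(\tfrac{t-s}{2})g(\tfrac{s+t}{2})\ge g(s)+g(t)$. If $g$ attained a negative minimum at an interior point $u_0$, applying the reversed inequality to the pair $\{0,2u_0\}$ (when $u_0\le D/2$) or $\{2u_0-D,D\}$ (when $u_0\ge D/2$) and using that $g$ vanishes at $0$ (resp.\ at $D$) gives $2\cos(u_0)\,g(u_0)\ge g(u_0)$ (resp.\ $2\cos(D-u_0)\,g(u_0)\ge g(u_0)$); as $g(u_0)<0$ this forces $\cos(u_0)\le\tfrac12$, i.e.\ $u_0\ge\tfrac{\pi}{3}$ (resp.\ $D-u_0\ge\tfrac{\pi}{3}$), contradicting $u_0\le D/2$ once $D<\tfrac{2\pi}{3}$. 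Hence $g\ge 0$. (It is exactly here that the reduction to $D<\tfrac{2\pi}{3}$ is used; the borderline case $D=\tfrac{2\pi}{3}$, as well as any failure of continuity of $\phi$ at the far endpoint, is handled by running this on $[0,D']$ for $D'<D$ and letting $D'\uparrow D$.)

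\emph{Conclusion.} Insert $\phi(u)\ge\mu$ into the chord inequality, multiply through by $\sin D>0$, and simplify using $\sin D-\sin(D-u)=2\cos(D-\tfrac u2)\sin\tfrac u2$ and $\sin u=2\sin\tfrac u2\cos\tfrac u2$; dividing by $2\sin\tfrac u2>0$ this reads
\[
\cos\!\big(\tfrac u2\big)\,\phi(D)\ \ge\ \mu\cos\!\big(D-\tfrac u2\big)\qquad(0<u<D).
\]
If $D>\pihalf$, then for all sufficiently small $u>0$ we have $\tfrac{\pi}{2}<D-\tfrac u2<\pi$, so $\cos(D-\tfrac u2)<0$ and the right-hand side is positive, while the left-hand side is $\le 0$ because $\phi(D)=f(y)\le 0$ — a contradiction. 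Therefore $d(y,y_f)=D\le\pihalf$ for every $y\in Y$, and hence $\rad(Y,y_f)\le\pihalf$.
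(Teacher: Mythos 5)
Your proof is correct, but it takes a genuinely different route from the paper. The paper stays entirely with the midpoint version of the inequality and bootstraps it: it defines $g_0\equiv1$, $g_{n+1}(t)=2\cos(t/2)g_n(t/2)-1$, proves $\bar f(t)\ge\bar f(0)g_n(t)$ by induction, and then shows $g_n\to\cos$ uniformly via the telescoping estimate $|\cos t-g_n(t)|\le 2^n(1-\cos(t/2^n))\to0$; the \emph{lower} bound $\bar f(L)\ge\bar f(0)\cos L$ together with $\bar f(L)\le0$ and $\bar f(0)<0$ then forces $\cos L\ge0$. You instead prove an \emph{upper} bound: a preliminary midpoint argument pins $d(y,y_f)\le\tfrac{2\pi}{3}$, and then a maximum-principle comparison against the spherical interpolant $a\cos+b\sin$ (which saturates the nicely-convex identity) yields the chord inequality, from which the contradiction follows by letting $u\to0^+$ when $D>\tfrac{\pi}{2}$. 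The two strategies are dual comparisons with spherical solutions: the paper extrapolates forward from $t=0$ using that $\bar f(0)$ is the minimum, you interpolate between the two endpoints using convexity. Your route does require two extra pieces of care that you correctly flag --- the crude $\tfrac{2\pi}{3}$ bound is needed strictly to make the maximum-principle step close, and one must run the argument on $[0,D']$, $D'\uparrow D$, to sidestep possible discontinuity of a convex function at the right endpoint --- whereas the paper's iterated bound is purely pointwise and avoids both issues. Both proofs then finish with the same connectedness argument to exclude $d(y,y_f)\ge\pi$.
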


\begin{proof}
Let $y\in Y$ with $d(y,y_f) <\pi$ and
let $c:[0,L]\rightarrow \R$ be the geodesic segment with $c(0)=y_f$
and $c(L)=y$. Write $\bar f:= f\circ c $. The inequality for nice convexity
implies $\bar f(t) + \bar f(0)\geq 2\cos(\frac{t}{2})\bar f(\frac{t}{2})$
 for all $t\in[0,L]$. 

Let $g_0(t):=1$ and $g_{n+1}(t):=2\cos(\frac{t}{2})g_{n}(\frac{t}{2})-1$.
We show inductively that $\bar f(t)\geq \bar f(0)g_n(t)$ for all $n\geq 0$
and $t\in[0,L]$: For $n=0$ this is clear since $\bar f(0)$ is the minimum of $f$.
By induction we have
$\bar f(t) + \bar f(0)\geq 2\cos(\frac{t}{2})\bar f(\frac{t}{2})\geq 
2\cos(\frac{t}{2})\bar f(0)g_{n-1}(\frac{t}{2})$ and this in turn implies that
$\bar f(t) \geq \bar f(0) (2\cos(\frac{t}{2})g_{n-1}(\frac{t}{2})-1 ) = \bar f(0)g_n(t)$.

Now we claim that $g_n(t)\xrightarrow[n\rightarrow \infty]{} \cos(t)$. Indeed, let us 
compute 
\begin{align*}
 |\cos t - g_n(t)| &= |(2\cos^2(\tfrac{t}{2})-1)-(2\cos(\tfrac{t}{2})g_{n-1}(\tfrac{t}{2})-1)|
 =2\cos(\tfrac{t}{2})|\cos(\tfrac{t}{2}) - g_{n-1}(\tfrac{t}{2})|=\dots \\
&= 2^n\cos\tfrac{t}{2}\cos\tfrac{t}{4}\dots\cos(\tfrac{t}{2^{n-1}})(1-\cos\tfrac{t}{2^{n}})
\leq 2^n(1-\cos\tfrac{t}{2^{n}}). 
\end{align*}
But $2^n(1-\cos\tfrac{t}{2^{n}})\rightarrow 0$ as can be seen e.g.\ with L'H\^{o}pital.

Therefore we obtain $0\geq \bar f(L)\geq f(0)\cos L$. Hence, $\cos L \geq 0$ and we
conclude that $L\leq \pihalf$.
Thus, for any $y\in Y$ holds $d(y,y_f) \leq \pihalf$ or $d(y,y_f) \geq \pi$. 
Since $Y$ is connected, we conclude that $\sup_{y\in Y}d(y,y_f) \leq \pihalf$.
\end{proof}

\begin{remark}
Let $Y$ be a complete $\CAT(1)$ space and $C\subset Y$ a connected, closed, convex subset.
Lemma~\ref{lem:radiusminimum} implies that if $C$ admits a 
nonconstant nicely convex function, then 
$\rad(C)=\inf_{x\in C} \rad(C,x) \leq \pihalf$. Conversely, if $\rad(C)\leq \pihalf$
and $C$ has finite dimension, then by \cite[Lemma 3.3]{BalserLytchak:centerbuild}
$C$ has a circumcenter $x_0$, that is, $\rad(C,x_0) = \rad(C) \leq \pihalf$.
The function $-\cos(d(\cdot,x_0))$ is nicely convex in $C$ by Lemma~\ref{lem:distbound}
and triangle comparison. 
\end{remark}

This observation applied to convex subsets of spherical buildings 
generalizes 
\cite[Thm.\ 4.5]{BateMartinRohrle:centerconj}:

\begin{proposition}\label{prop:equivconj}
 A closed convex subset of positive dimension of a spherical building has circumradius
at most $\pihalf$ if and only if it admits a nonconstant nicely convex function. \qed
\end{proposition}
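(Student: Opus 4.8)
The statement is essentially a restatement of the remark preceding it, so the plan is just to assemble the two implications cleanly, the only ingredient beyond that remark being the word \emph{nonconstant}. For ``admits a nonconstant nicely convex function $\Rightarrow\rad(C)\leq\pihalf$'' I will feed the function into Lemma~\ref{lem:radiusminimum}; for the converse I will take a circumcenter $x_0\in C$ and use the function $-\cos(d(\cdot,x_0))$.

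For the first implication, let $f$ be a nonconstant nicely convex function on $C$. Since $C$ is closed in the complete finite-dimensional $\CAT(1)$ space $B$, it is itself complete and finite-dimensional, and it is connected because a positive-dimensional convex subset of a spherical building is connected. Hence Lemma~\ref{lem:radiusminimum} applies to $f$ on $C$ and yields a point $y_f\in C$ with $\sup_{y\in C}d(y,y_f)\leq\pihalf$, so that $\rad(C)\leq\rad(C,y_f)\leq\pihalf$.

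For the converse, suppose $\rad(C)\leq\pihalf$. Since $C$ is complete and finite-dimensional, \cite[Lemma 3.3]{BalserLytchak:centerbuild} provides a circumcenter $x_0\in C$, i.e.\ $d(x,x_0)\leq\pihalf$ for every $x\in C$; put $f:=-\cos(d(\cdot,x_0))$ on $C$. I will check that $f$ is nicely convex: it is nonpositive because $d(\cdot,x_0)\leq\pihalf$; its negative locus $\{f<0\}$ is $C$ intersected with the open $\pihalf$-ball about $x_0$, which by the triangle inequality contains no antipodal pair; and the midpoint inequality in the definition of nice convexity follows from Lemma~\ref{lem:distbound} together with triangle comparison in the round $2$-sphere. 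The one point I expect to require care — and the only place where this goes beyond the preceding remark — is showing that $f$ is \emph{nonconstant}; the resolution is that the circumcenter supplied by \cite[Lemma 3.3]{BalserLytchak:centerbuild} lies \emph{in} $C$, so that $f(x_0)=-1$ while $f(y)>-1$ for any $y\in C\setminus\{x_0\}$, and such a $y$ exists because $\dim C>0$. (In particular this excludes the degenerate possibility that $-\cos(d(\cdot,x_0))$ vanishes identically because $C$ lies in an equator of $x_0$.) Combining the two implications proves the proposition, whose ``only if'' part, read for convex subsets of spherical buildings, is precisely the generalization of \cite[Thm.\ 4.5]{BateMartinRohrle:centerconj} referred to above.
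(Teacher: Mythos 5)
Your proof is correct and follows essentially the same route as the paper, which establishes exactly this in the remark preceding the proposition: Lemma~\ref{lem:radiusminimum} for the forward direction, and the Balser--Lytchak circumcenter together with the nicely convex function $-\cos(d(\cdot,x_0))$ (via Lemma~\ref{lem:distbound} and triangle comparison) for the converse. You add slightly more detail than the paper, in particular spelling out why $-\cos(d(\cdot,x_0))$ is nonconstant and why $C$ is connected; those are genuine but minor fillings-in, not a different argument.
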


Now we see a special example of a nicely convex function on a sphere, 
that we will use later.

\begin{lemma}\label{lem:distbound}
 Let $H\subset S^n\subset \R^{n+1}$ be a hemisphere of the round unit sphere $S^n$.
The function $f:H\rightarrow \R$ given by $f(x)=-\sin(d(x,\partial H))$ is nicely convex.
\end{lemma}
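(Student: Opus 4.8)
The plan is to recognize $f$ explicitly as the negative of a linear functional restricted to $H$, and then to verify the three defining properties of nice convexity directly — the key inequality in fact holding as an equality.

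Fix the pole $p\in S^n$ for which $H=\{x\in S^n:\langle x,p\rangle\ge 0\}$, so that $\partial H$ is the equator $p^{\perp}\cap S^n$. For $x\in H$ I would first record the identity $d(x,\partial H)=\pihalf-d(x,p)$: the foot of the perpendicular from $x$ onto the totally geodesic sphere $\partial H$ lies on the great circle through $p$ and $x$, and from $x$ one reaches $\partial H$ along that circle (away from $p$) after an arc of length $\pihalf-d(x,p)$. Hence $\sin\big(d(x,\partial H)\big)=\cos\big(d(x,p)\big)=\langle x,p\rangle$, so $f(x)=-\langle x,p\rangle$ on $H$. In particular $f\le 0$, with $\{f<0\}$ equal to the open hemisphere $\inter H$, and an open hemisphere contains no pair of antipodal points; two of the three requirements for nice convexity are thus immediate.

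For the midpoint inequality I would invoke the elementary formula for the spherical midpoint: if $x,y\in S^n$ with $\theta:=d(x,y)<\pi$, then $|x+y|^2=2+2\cos\theta=4\cos^2\tfrac{\theta}{2}$ and the midpoint of the segment $xy$ is $m=\tfrac{x+y}{|x+y|}$, whence $x+y=2\cos\big(\tfrac{d(x,y)}{2}\big)\,m$. Pairing with $p$ gives $\langle x,p\rangle+\langle y,p\rangle=2\cos\big(\tfrac{d(x,y)}{2}\big)\langle m,p\rangle$, i.e. $f(x)+f(y)=2\cos\big(\tfrac{d(x,y)}{2}\big)f(m)$, which is exactly the required inequality (realized as an equality). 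Therefore $f$ is nicely convex.

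The computation is short and there is no real obstacle; the only point deserving a moment of care is the identity $\sin d(x,\partial H)=\langle x,p\rangle$, for which the cleanest argument is the one above via the great circle through $p$ and $x$, or alternatively by observing that the nearest point of $\partial H$ to $x$ is the normalization of the orthogonal projection of $x$ onto $p^{\perp}$ and computing the corresponding angle.
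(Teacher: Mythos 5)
Your proposal is correct and is essentially the same as the paper's proof: both identify $f(x)=-\langle x,x_0\rangle$ with $x_0$ the center of $H$, verify the midpoint relation via $x+y=\Vert x+y\Vert\,m=2\cos\bigl(\tfrac{d(x,y)}{2}\bigr)m$, and note that the open hemisphere has no antipodal pairs. You are only slightly more explicit about the identity $\sin d(x,\partial H)=\cos d(x,x_0)$, which the paper takes as immediate.
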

\begin{proof}
 Let $x_0$ be the center of the hemisphere $H$ and let
$\langle,\rangle$ denote the standard scalar product in $\R^{n+1}$.
Then 
$f(x)=-\sin d(x,\partial H) = -\cos d(x,x_0) =-\langle x,x_0\rangle$. 
Let $x,y\in H$ with $d(x,y)<\pi$. Then
$f(x)+f(y)=-\langle x+y , x_0\rangle = -\Vert x+y\Vert\langle 
\frac{x+y}{\Vert x+y\Vert}, x_0\rangle =\Vert x+y\Vert f(m) = 2\cos\frac{d(x,y)}{2}f(m)$, 
where $m\in H$ is the midpoint of the segment $xy\subset S^n$.
And since the interior of $H$ does not contain antipodal points, it follows that $f$ is 
nicely convex.
Alternatively, the lemma follows from Remark~\ref{rem:slope} and the fact that $f$ is the
slope of the convex function $-\langle \cdot,x_0\rangle$ in $\R^{n+1}$.
\end{proof}

\begin{corollary}[][cf.\ {\cite[Lemma 1]{White:capsphere}}]\label{cor:incenter}
 Let $C\subsetneq S^n\subset \R^{n+1}$ be a closed convex subset
with non-empty interior. 
Then $C$ has a unique
incenter. That is, there is a unique $x_0\in C$ such that 
$d(x_0,\partial C) = \sup_{x\in C} d(x,\partial C)$. Moreover, $\rad(C,x_0)\leq \pihalf$.
\end{corollary}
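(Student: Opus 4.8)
The plan is to build a nicely convex function on $C$ whose unique minimum (guaranteed by the discussion preceding Lemma~\ref{lem:radiusminimum}) is exactly the incenter. First I would note that $C$, being a proper closed convex subset of $S^n$ with non-empty interior, is contained in some open hemisphere; this is a standard fact, but one has to check it, and it is the one genuine geometric input needed. Indeed, if $C$ contained a pair of antipodal points $p,-p$, then since $C$ has non-empty interior it would contain a small ball around some interior point, and convexity together with the presence of $p$ and $-p$ would force $C$ to contain a whole great subsphere's worth of directions; chasing this one sees $C=S^n$, contradicting $C\subsetneq S^n$. Hence $\bar C$ lies in a closed hemisphere, and since $C\neq S^n$ in fact $C$ lies in an open hemisphere $H'$, so $\partial C\neq\emptyset$ and all distances $d(x,\partial C)$ for $x\in C$ are $<\pihalf$ (a ball of radius $\geq\pihalf$ in $C\subset H'$ would already be a hemisphere, again forcing $C$ to be too big).

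Next I would consider the function $f\colon C\to\R$, $f(x)=-\sin d(x,\partial C)$. The key step is to show $f$ is nicely convex on $C$. For this, write $\partial C=\bigcap_{\alpha} H_\alpha$ as an intersection of the closed hemispheres $H_\alpha$ supporting $C$ (so $C=\bigcap_\alpha H_\alpha$, each $H_\alpha$ a hemisphere of $S^n$). For each $\alpha$, Lemma~\ref{lem:distbound} gives that $f_\alpha(x):=-\sin d(x,\partial H_\alpha)=-\langle x,x_\alpha\rangle$ is nicely convex on $H_\alpha$, where $x_\alpha$ is the pole of $H_\alpha$. On the interior of $C$ we have $d(x,\partial C)=\inf_\alpha d(x,\partial H_\alpha)$, hence $f(x)=\sup_\alpha f_\alpha(x)$. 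A supremum of functions each satisfying the midpoint inequality $f_\alpha(x)+f_\alpha(y)\geq 2\cos(\tfrac{d(x,y)}{2})f_\alpha(m)$ need not satisfy it in general, so here is where care is needed: I would instead argue directly from the formula $f(x)=-\sin d(x,\partial C)=-\cos d(x,x_0')$ where... — no, $C$ is not a metric ball, so that shortcut fails. The honest route is to use the \emph{spherical} analogue of the fact that $x\mapsto\sin d(x,\partial C)$ is a concave function on a convex body: for $x,y\in C$ with midpoint $m$, pick the nearest points $x'\in\partial C$ to $x$ and $y'\in\partial C$ to $y$ realizing the distances, let $m'$ be the midpoint of $x'y'$ (which lies in $C$ by convexity, and satisfies $d(m,\partial C)\geq d(m,m')$); then spherical triangle comparison applied to the configuration $x,y,m,x',y',m'$ bounds $\sin d(m,m')$ below by the appropriate combination, yielding exactly $f(x)+f(y)\geq 2\cos(\tfrac{d(x,y)}{2})f(m)$. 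The nonpositivity of $f$ is clear, and $\{f<0\}=\operatorname{int}(C)$ contains no antipodal pair by the first step; so $f$ is nicely convex.

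Finally, $C$ is complete and connected, and $f$ is nonconstant since $\operatorname{int}(C)\neq\emptyset$ forces $\inf f<0$ while $f=0$ on $\partial C$. By the remark following the definition of nice convexity, $f$ attains its infimum at a unique point $x_0\in\operatorname{int}(C)$, and $f(x_0)=\inf f$ means $d(x_0,\partial C)=\sup_{x\in C}d(x,\partial C)$, i.e. $x_0$ is the unique incenter. Moreover Lemma~\ref{lem:radiusminimum} applies verbatim to give $\rad(C,x_0)=\sup_{y\in C}d(y,x_0)\leq\pihalf$. I expect the main obstacle to be the verification that $f$ itself satisfies the midpoint inequality on all of $C$ (the triangle-comparison step in the second paragraph), since unlike in Lemma~\ref{lem:distbound} there is no single pole to reduce to a linear functional; everything else is bookkeeping with hemispheres and an appeal to Lemma~\ref{lem:radiusminimum}.
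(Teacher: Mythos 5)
Your overall approach is the paper's: express $f(y)=-\sin d(y,\partial C)$ as a supremum of the supporting-hemisphere functions $f_\alpha=-\sin d(\cdot,\partial H_\alpha)$, invoke Lemma~\ref{lem:distbound} for each, and conclude via Lemma~\ref{lem:radiusminimum}. The genuine gap is precisely where you declare the supremum step suspect and try to replace it. The supremum \emph{does} preserve the midpoint inequality, because the coefficient $2\cos(d(x,y)/2)$ is nonnegative for $d(x,y)<\pi$: given $\varepsilon>0$ pick $\alpha$ with $f_\alpha(m)>f(m)-\varepsilon$ and compute
\[
f(x)+f(y)\ \geq\ f_\alpha(x)+f_\alpha(y)\ \geq\ 2\cos\tfrac{d(x,y)}{2}\,f_\alpha(m)\ >\ 2\cos\tfrac{d(x,y)}{2}\,\bigl(f(m)-\varepsilon\bigr),
\]
then let $\varepsilon\to 0$. (Equivalently, choose a single supporting hemisphere $H$ at the nearest boundary point to $m$, so $d(m,\partial H)=d(m,\partial C)$ while $d(x,\partial H)\geq d(x,\partial C)$ and $d(y,\partial H)\geq d(y,\partial C)$; one application of Lemma~\ref{lem:distbound} for this $H$ gives the inequality for $f$.) That is exactly the paper's one-line proof, so no new triangle-comparison lemma is needed.

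The substitute argument you sketch does not work as written. With $x',y'\in\partial C$ nearest to $x,y$ and $m'$ the midpoint of $x'y'$, the assertion $d(m,\partial C)\geq d(m,m')$ is false in general: $m'$ is merely some point of $C$, typically interior, and its distance from $m$ has no bearing on $d(m,\partial C)$ (take $m$ close to one side of $\partial C$ and $x',y'$ spread out so $m'$ is far away). Two lesser slips: a proper closed convex $C\subset S^n$ with nonempty interior need not lie in an \emph{open} hemisphere—a closed hemisphere is a counterexample and contains antipodal boundary points—so the fact you actually need (and which is true) is that $\inter C$ contains no antipodal pair, which is what makes $\{f<0\}$ antipode-free. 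Likewise $d(x,\partial C)$ can equal $\pihalf$ (same example), so your strict inequality should be $\leq\pihalf$. None of this changes the conclusion, but the heart of the proof is exactly the supremum step you doubted and did not supply a working replacement for.
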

\begin{proof}
 The function $-\sin(d(\cdot, \partial C))=
\sup_{x\in C} \{-\sin(d(\cdot, \partial H_x))\}$, where $H_x$ is a hemisphere
with $x\in \partial H_x$ and $C\subset H_x$, is nicely convex by the previous lemma.
The unique minimum of this function is the incenter of $C$.
\end{proof}

\begin{remark}
 The conclusion of Corollary~\ref{cor:incenter} is not true anymore for convex
subsets of spherical buildings as we will see later. In general, the function
$-\sin(d(\cdot, \partial C))$ is not even convex.
\end{remark}

\subsection{Spherical Coxeter complexes}

A spherical Coxeter complex $(S,W)$ is a pair consisting in
a unit round sphere $S=S^n\subset \R^{n+1}$
together with a finite group of isometries $W$, called
the {\em Weyl group}, generated by linear reflections at hyperplanes.

The spheres of codimension one in $S$, 
that are the fixed point sets of the reflections in $W$ 
are called the {\em walls}.
The {\em Weyl chambers} or just {\em chambers}
are the closures of the connected components of 
$S$ minus the union of all the walls.
A Weyl chamber is a convex spherical polyhedron, 
they are fundamental domains for the action
of the Weyl group on $S$ and therefore isometric to the
{\em model Weyl chamber} $\triangle_{mod}:=S/W$.
A {\em root} is a top-dimensional hemisphere bounded by a wall.
A {\em singular} sphere is an intersection of walls.
A {\em face} is the intersection of a Weyl chamber and a singular sphere.
The codimension one faces
of a Weyl chamber are called {\em panels}.
The center of a root is called a point of {\em root-type}.

The geometry of a spherical Coxeter complex can be encoded in a graph, the so-called
{\em Dynkin diagram}. We say that $(S,W)$ is of {\em simply-laced type} if its 
Dynkin diagram has no loops, that is, if its irreducible factors are of type $A_n,D_n,E_n$.
 A labelling by an index set $I$ of the vertices of the Dynkin diagram induces a labelling of
 the vertices of the model Weyl chamber $\triangle_{mod}$. 
We say that a vertex in $S$ is of  {\em type} $i$ or that it is an $i$-vertex for $i\in I$,
 if its projection under $S\rightarrow S/W = \triangle_{mod}$ has label $i$.

Suppose $(S,W)$ is irreducible (i.e.\ its Dynkin diagram is connected).
If $(S,W)$ is simply-laced, then there is only one $W$-orbit of points of root-type.
Their possible mutual distances are $0,\frac{\pi}{3},\pihalf,\frac{2\pi}{3},\pi$.
If $(S,W)$ is not simply-laced, then there are two $W$-orbits of root-type points.
Root-type points in different orbits have mutual distances $\frac{\pi}{4},\pihalf,\frac{3\pi}{4}$.
If $(S,W)$ is of type $B_n$, then the possible mutual distances between root-type
points in one of the $W$-orbits are $0,\frac{\pi}{3},\pihalf,\frac{2\pi}{3},\pi$, and in the other orbit
are $0,\pihalf,\pi$. If $(S,W)$ is of type $F_4$, then root-type points in the same orbits have mutual
distances $0,\frac{\pi}{3},\pihalf,\frac{2\pi}{3},\pi$.
For more information on possible distances between vertices in a spherical Coxeter complex
we refer to \cite[Section 2.2]{LeebRamos-Cuevas:centerconj} and 
\cite[Section 3]{Ramos-Cuevas:centerconj}.

\subsubsection{Root systems}\label{sec:rootsystem}

Let $\Phi$ be an irreducible root system in $\R^{k}$ (we refer to 
\cite[Chapter VI]{Bourbaki:lie} for the definition). 
A root $\alpha\in \Phi$ is called {\em reduced} if $2\alpha\notin \Phi$. A root system, 
whose roots are all reduced is also called reduced.
The reduced root systems of rank $k\geq3$
are of type $A_k, B_k, C_k, D_k, E_6,E_7, E_8, F_4$.
The non-reduced root systems are of type $BC_k$.
A root $\alpha\in \Phi$ is called {\em divisible} if $\alpha/2\in\Phi$.
Notice that for a root, non-reduced implies indivisible. 
We say that a root 
$\alpha\in \Phi$ is a {\em short} root if its length is minimal among roots in $\Phi$
and a {\em long} root if its length is maximal. 
If $\Phi$ is non-reduced (i.e.\ of type $BC_k$), then
for a short root $\alpha$ it holds that $\alpha$ is non-reduced and
$2\alpha$ is a long root.  

We suppose that $\Phi$ is always so normalized that a short root
$\alpha\in \Phi$ has norm $\Vert \alpha\Vert =1$.
In particular, the possible norms of roots are $\{1,\sqrt{2},2\}$.

The reflections on hyperplanes orthogonal to roots in $\Phi$ generate a finite
group of isometries $W$ of $R^{k}$. If we restrict the action of $W$ to the unit sphere
$S=S^{k-1}\subset\R^k$ we obtain a spherical Coxeter complex $(S,W)$. 

Let $\alpha \in \Phi$ be an indivisible root. We denote also with $\alpha\subset S$ the root 
(i.e.\ the hemisphere) $\{x\in S \;|\; \langle \alpha, \cdot\rangle \geq 0\}\subset S$.
Conversely, if $\alpha\subset S$ is a root, then we denote again with $\alpha\in\Phi$ the 
corresponding indivisible root. There should be no confusion with this abuse of notation. 

\subsubsection{Convex subcomplexes of spherical Coxeter complexes}\label{sec:subcomplex}

Let $K\subset S$ be a convex subcomplex, that is, $K$ is an intersection of roots in $S$.
Let $s\subset S$ be the singular sphere of the same dimension as $K$ containing $K$.
We define $\Lambda_K$ to be the set of the singular hemispheres $h\subset s$ containing $K$.
Then $\Lambda_K$ is the largest set of singular hemispheres such that 
$K=\bigcap_{h\in\Lambda_K}h$.
Let $\Lambda_K^{min}\subset\Lambda_K$ be the set of singular hemispheres $h\in\Lambda_K$ such that 
$(-h)\cap K$ has codimension one in $s$, where $-h$ is the other hemisphere in $s$ with 
$\partial(-h)=\partial h$.
That is, each $h\in\Lambda_K^{min}$ determines a boundary component in $\partial K$
of codimension one, hence, 
$\Lambda_K^{min}$ is the minimal set of singular hemispheres in $s$
such that $K=\bigcap_{h\in\Lambda_K^{min}}h$.
Notice that if $K$ is top-dimensional, then $\Lambda_K$ is a set of roots in $S$.

\subsubsection{Weighted incenter of a top-dimensional subcomplex}
\label{sec:weightedincenter}

Let $\Phi$ be a (not necessarily irreducible) root system in $\R^{n+1}$
and let $(S=S^n,W)$ be its associated spherical Coxeter complex.
We choose weights $\mu_\alpha>0$ for each root $\alpha\subset S$ as follows. 
If the corresponding indivisible root $\alpha\in \Phi$ is reduced, then
we set $\mu_\alpha = \Vert \alpha\Vert$.
If $\alpha\in \Phi$ is non-reduced, then we can choose 
$\mu_\alpha\in\{1,2\}$.
Notice that we only have a flexibility on the choice of the weights $\mu_\alpha$
if the root system $\Phi$ is non-reduced.

For a root $\alpha\subset S$, let $x_\alpha\in S$ denote the center of $\alpha$.
If the corresponding indivisible root $\alpha\in \Phi$ is reduced, then
$\mu_\alpha x_\alpha=\alpha \in \Phi$; if it is non-reduced, then
$\mu_\alpha x_\alpha \in\{\alpha,2\alpha\} \subset \Phi$.

Let $K\subset S$ be a proper top-dimensional convex subcomplex.
We define the function $f_K$ in $K$ as 
$$f_K(x):= \max_{\alpha\in\Lambda_K}\{-\mu_\alpha\sin( d(x,\partial\alpha))\}.$$

The function $f_K$ is nicely convex by Lemma~\ref{lem:distbound} and therefore has a unique
minimum, which we call the {\em weighted incenter} of $K$. Notice that if $(S,W)$ is
of simply-laced type, then $\mu_\alpha=1$ for all roots and 
the weighted incenter is the same as the incenter of $K$.

Notice that 
$\mu_\alpha\sin(d(x,\partial\alpha))
=\mu_\alpha\cos(d(x,x_\alpha))=\langle x,\mu_\alpha x_\alpha\rangle$.
Hence, the function $f_K(x)$ is given by 
$\max_{\alpha\in\Lambda_K}\{-\langle x,\mu_\alpha x_\alpha\rangle\}$.

The next lemma shows that we can define the function $f_K$ using the smaller
set of roots $\Lambda_K^{min}$ (or any set of roots between $\Lambda_K$
and $\Lambda_K^{min}$). 

\begin{lemma}\label{lem:weightedinrad}
With the notation above,
$f_K(x)=\max_{\alpha\in\Lambda_K^{min}}\{-\mu_\alpha\sin( d(x,\partial\alpha))\}$.
\end{lemma}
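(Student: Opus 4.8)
The plan is to establish the two inequalities separately. Since $\Lambda_K^{min}\subseteq\Lambda_K$, the inequality $\max_{\alpha\in\Lambda_K^{min}}\{-\mu_\alpha\sin(d(x,\partial\alpha))\}\leq f_K(x)$ is immediate, so the whole content is the reverse inequality: for every $x\in K$ and every $\alpha\in\Lambda_K$ one must produce some $\beta\in\Lambda_K^{min}$ with $-\mu_\alpha\sin(d(x,\partial\alpha))\leq -\mu_\beta\sin(d(x,\partial\beta))$. Using the identity $\mu_\gamma\sin(d(x,\partial\gamma))=\langle x,\mu_\gamma x_\gamma\rangle$ recorded just before the lemma, this amounts to showing $\langle x,\mu_\alpha x_\alpha\rangle\geq\min_{\beta\in\Lambda_K^{min}}\langle x,\mu_\beta x_\beta\rangle$ for all $x\in K$. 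The case $\alpha\in\Lambda_K^{min}$ is trivial, so assume $\alpha\notin\Lambda_K^{min}$.

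First I would produce a suitable decomposition of the vector $\mu_\alpha x_\alpha$. The Euclidean cone $C_K$ over $K$ is cut out by the inequalities $\langle\,\cdot\,,x_\beta\rangle\geq0$, $\beta\in\Lambda_K^{min}$; since $K$ is top-dimensional, $C_K$ is full-dimensional, and by the very definition of $\Lambda_K^{min}$ each of these inequalities is facet-defining. Hence the dual cone $C_K^{*}$ is the cone generated by the facet normals, that is, by $\{\mu_\beta x_\beta\mid\beta\in\Lambda_K^{min}\}$. As $\alpha\in\Lambda_K$ we have $K\subseteq\alpha$, so $\langle\,\cdot\,,x_\alpha\rangle\geq0$ holds on $C_K$, i.e.\ $\mu_\alpha x_\alpha\in C_K^{*}$; therefore there are $c_\beta\geq0$ with $\mu_\alpha x_\alpha=\sum_{\beta\in\Lambda_K^{min}}c_\beta\,\mu_\beta x_\beta$.

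The crux is to show that one necessarily has $\sum_\beta c_\beta\geq1$; this is exactly the place where the weights $\mu$ cause trouble, since a priori $\mu_\alpha x_\alpha$ could be a short root expressed through longer ones. Here I would pair the displayed identity with the coroot $\alpha^{\vee}$ of the indivisible root $\alpha$. On the one hand $\langle\alpha^{\vee},\mu_\alpha x_\alpha\rangle$ equals $\langle\alpha^{\vee},\alpha\rangle=2$ or $\langle\alpha^{\vee},2\alpha\rangle=4$, so it is $\geq2$. On the other hand, for $\beta\in\Lambda_K^{min}$ the root $\mu_\beta x_\beta$ is not proportional to $\alpha$: the hemispheres $\alpha$ and $\beta$ are distinct (as $\alpha\notin\Lambda_K^{min}$) and cannot be opposite, since both contain the top-dimensional $K$. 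Hence $\langle\alpha^{\vee},\mu_\beta x_\beta\rangle$ is a Cartan integer of a pair of non-proportional roots, and as $\Phi$ has no factor of type $G_2$ (its root norms lie in $\{1,\sqrt2,2\}$) all such Cartan integers are at most $2$ in absolute value, so $\langle\alpha^{\vee},\mu_\beta x_\beta\rangle\leq2$. Substituting the decomposition, $2\leq\langle\alpha^{\vee},\mu_\alpha x_\alpha\rangle=\sum_\beta c_\beta\langle\alpha^{\vee},\mu_\beta x_\beta\rangle\leq2\sum_\beta c_\beta$, whence $\sum_\beta c_\beta\geq1$.

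Finally I would conclude the estimate. For $x\in K$ each $\langle x,\mu_\beta x_\beta\rangle=\mu_\beta\langle x,x_\beta\rangle\geq0$, because $K$ lies in the hemisphere $\beta$, so
$$\langle x,\mu_\alpha x_\alpha\rangle=\sum_\beta c_\beta\langle x,\mu_\beta x_\beta\rangle\ \geq\ \Big(\sum_\beta c_\beta\Big)\min_{\beta\in\Lambda_K^{min}}\langle x,\mu_\beta x_\beta\rangle\ \geq\ \min_{\beta\in\Lambda_K^{min}}\langle x,\mu_\beta x_\beta\rangle,$$
using $\sum_\beta c_\beta\geq1$ and nonnegativity of the minimum. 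Translating back via $\langle x,\mu_\gamma x_\gamma\rangle=\mu_\gamma\sin(d(x,\partial\gamma))$ gives $-\mu_\alpha\sin(d(x,\partial\alpha))\leq\max_{\beta\in\Lambda_K^{min}}\{-\mu_\beta\sin(d(x,\partial\beta))\}$; taking the maximum over $\alpha\in\Lambda_K$ yields $f_K(x)\leq\max_{\beta\in\Lambda_K^{min}}\{-\mu_\beta\sin(d(x,\partial\beta))\}$, and together with the trivial inequality this is the claim. The only delicate step is the coefficient bound $\sum_\beta c_\beta\geq1$ — where the exclusion of $G_2$ enters — everything else being routine convex geometry; and the same argument works verbatim with $\Lambda_K^{min}$ replaced by any set of roots between $\Lambda_K^{min}$ and $\Lambda_K$.
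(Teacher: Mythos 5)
Your proof is correct, but it takes a genuinely different route from the paper's. The paper argues geometrically: for a root $\beta\in\Lambda_K\setminus\Lambda_K^{min}$, either $d(x,\partial\beta)=\tfrac{\pi}{2}$ or the segment from $x$ to its foot on $\partial\beta$ leaves $K$ and therefore crosses some facet wall $\partial\alpha_0$ with $\alpha_0\in\Lambda_K^{min}$; the required inequality $\mu_\beta\sin d(x,\partial\beta)\geq\mu_{\alpha_0}\sin d(x,\partial\alpha_0)$ is then supplied by the separate Lemma~\ref{lem:weightedinrad2}, whose proof is a case analysis on the weight pair $(\mu_\alpha,\mu_\beta)$ and the possible distances of the two root centers, exploiting the root system structure. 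You instead work dually: the cone over $K$ is cut out by the facet inequalities indexed by $\Lambda_K^{min}$, so its dual cone is generated by $\{\mu_\beta x_\beta : \beta\in\Lambda_K^{min}\}$, and any $\mu_\alpha x_\alpha$ with $\alpha\in\Lambda_K$ lies in this dual cone; pairing the resulting decomposition $\mu_\alpha x_\alpha=\sum_\beta c_\beta\,\mu_\beta x_\beta$ with the coroot $\alpha^\vee$ gives $\sum_\beta c_\beta\geq 1$ (using that Cartan integers of non-proportional roots are at most $2$ since there is no $G_2$ factor — consistent with the paper's normalization that root norms lie in $\{1,\sqrt2,2\}$), and the conclusion follows from nonnegativity of the pairings $\langle x,\mu_\beta x_\beta\rangle$ on $K$. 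Your approach trades the geometric segment-crossing picture and the case work of Lemma~\ref{lem:weightedinrad2} for a single uniform coroot estimate; it is arguably cleaner and, as you note, gives the statement for any intermediate set of roots with the same proof, just as the paper remarks.
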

\begin{proof}
Clearly $f_K(x)\geq f_K^{min}(x):=
\max_{\alpha\in\Lambda_K^{min}}\{-\mu_\alpha\sin( d(x,\partial\alpha))\}$.
For any root 
$\beta\in\Lambda_K-\Lambda_K^{min}$ holds that $d(x,\partial\beta) = \pihalf$;
or, if $d(x,\partial\beta)<\pihalf$, then the 
segment between $x$ and its projection to $\partial\beta$ must cross the boundary of $K$,
in particular, it must intersect a wall $\partial\alpha_0$ for some $\alpha_0\in \Lambda_K^{min}$.
The next Lemma implies 
$-\mu_\beta\sin d(x,\partial\beta) \leq -\mu_{\alpha_0}\sin d(x,\partial\alpha_0)$
and therefore $f_K(x)\leq f_K^{min}(x)$.
\end{proof}

\begin{lemma}\label{lem:weightedinrad2}
 Let $x\in S$ and let $\alpha,\beta\subset S$ be two roots containing $x$ such that
$d(x,\partial\beta)=\pihalf$; or, $d(x,\partial\beta)<\pihalf$ 
and the segment between $x$ and its projection to $\partial \beta$ intersects $\partial\alpha$.
Then $\mu_\beta\sin d(x,\partial\beta) \geq \mu_\alpha\sin d(x,\partial\alpha)$.
\end{lemma}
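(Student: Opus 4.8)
The plan is to reduce everything to a two-dimensional (or even one-dimensional) computation, since only the roots $\alpha,\beta$ and the point $x$ are involved. First I would set up coordinates: write $x_\alpha,x_\beta\in S$ for the centers of the roots $\alpha,\beta$, so that $\mu_\alpha x_\alpha, \mu_\beta x_\beta$ are the corresponding (indivisible or divisible) vectors in $\Phi$, and recall from the discussion in Section~\ref{sec:weightedincenter} that $\mu_\gamma\sin d(x,\partial\gamma)=\langle x,\mu_\gamma x_\gamma\rangle$ for any root $\gamma\ni x$. Thus the claim becomes the purely linear-algebraic inequality $\langle x,\mu_\beta x_\beta\rangle\geq\langle x,\mu_\alpha x_\alpha\rangle$, i.e.\ $\langle x,\,\mu_\beta x_\beta-\mu_\alpha x_\alpha\rangle\geq 0$. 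Note that both inner products are $\geq 0$ because $x\in\alpha\cap\beta$.

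Next I would exploit the geometric hypothesis. In the first case, $d(x,\partial\beta)=\pihalf$ means $\langle x,\mu_\beta x_\beta\rangle=\mu_\beta$, which is the maximum possible value of $\langle x,\mu_\beta x_\beta\rangle$ over $x\in S$; since $\langle x,\mu_\alpha x_\alpha\rangle\leq\mu_\alpha\leq\mu_\beta$ would suffice provided $\mu_\alpha\leq\mu_\beta$, but in general $\mu_\alpha$ may exceed $\mu_\beta$, so this case still needs the crossing information implicitly — actually $d(x,\partial\beta)=\pihalf$ forces $x\in\partial\beta$... wait, no: $d(x,\partial\beta)=\pihalf$ with $x\in\beta$ means $x$ is the center $x_\beta$, giving $\langle x,\mu_\beta x_\beta\rangle=\mu_\beta=\|\mu_\beta x_\beta\|$, the largest root-length available, so this degenerate case is easy. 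In the main case, let $x'$ be the nearest-point projection of $x$ to $\partial\beta$ and let $p$ be a point where the segment $xx'$ meets $\partial\alpha$. Then $d(x,x')=d(x,\partial\beta)$ and $d(x,p)=d(x,\partial\alpha)$ with $d(x,p)\leq d(x,x')$, so certainly $\sin d(x,\partial\alpha)\leq\sin d(x,\partial\beta)$ once both are $\leq\pihalf$; this already gives the inequality when $\mu_\alpha\leq\mu_\beta$. The real content is to handle the case $\mu_\alpha>\mu_\beta$, which by the normalization of Section~\ref{sec:rootsystem} can only happen in a non-simply-laced factor, and there one needs the finite list of possible angle configurations between two roots in a rank-$2$ subsystem ($B_2=C_2$, $G_2$, or $BC_2$).

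The key step is therefore: project the whole configuration to the plane $P$ spanned by $x_\alpha$ and $x_\beta$ (equivalently, work in the rank-$2$ root subsystem they generate), parametrize $x$ by an angle, and verify $\mu_\beta\cos\theta_\beta\geq\mu_\alpha\cos\theta_\alpha$ where $\theta_\gamma=d(x,x_\gamma)$, using that the crossing hypothesis pins down the cyclic order of the rays $x_\alpha$, $x_\beta$, $x$ on the circle $S\cap P$ and that $\theta_\alpha\leq\theta_\beta$. Concretely, crossing $\partial\alpha$ before reaching $\partial\beta$ forces the dihedral angle between the walls $\partial\alpha$ and $\partial\beta$ (a submultiple of $\pi$ determined by the Cartan integers) to constrain $\theta_\alpha$ and $\theta_\beta$ so that the weighted cosines compare correctly; one checks this against the at most three rank-$2$ types. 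I expect this finite case analysis — in particular the non-reduced $BC_2$ case where a short root $\beta$ with $\mu_\beta\in\{1,2\}$ sits next to a long root $\alpha$ with $\mu_\alpha=2$, or vice versa — to be the main obstacle, as it requires care with the weight conventions and with which of the two parallel walls of a non-reduced pair is being crossed. Everything else is elementary trigonometry and the reduction to the plane $P$.
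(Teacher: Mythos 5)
Your opening reduction --- $\mu_\gamma\sin d(x,\partial\gamma)=\langle x,\mu_\gamma x_\gamma\rangle$, the easy case $\mu_\alpha\leq\mu_\beta$ via $d(x,\partial\alpha)\leq d(x,\partial\beta)$, and the passage to the rank-$2$ subsystem spanned by $x_\alpha,x_\beta$ --- is the same skeleton the paper uses. But the argument you give for the degenerate case $d(x,\partial\beta)=\pihalf$ (so $x=x_\beta$) does not work as written: $\mu_\beta=\|\mu_\beta x_\beta\|$ is \emph{not} ``the largest root-length available'' --- take $\mu_\alpha=2$, $\mu_\beta=1$ in a $BC_n$ factor. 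What this case actually needs is $\mu_\alpha\cos d(x_\beta,x_\alpha)=\tfrac{\mu_\beta}{2}\langle\mu_\alpha x_\alpha,(\mu_\beta x_\beta)^\vee\rangle\leq\mu_\beta$, i.e.\ that the Cartan integer is at most $2$; this holds only because the normalization (root norms in $\{1,\sqrt2,2\}$) silently excludes $G_2$ subsystems, and without that exclusion the lemma is false: for a $G_2$ long root $\alpha$ at angle $\pi/6$ from a short $\beta$, taking $x=x_\beta$ gives $\mu_\alpha\sin d(x,\partial\alpha)=\sqrt3\cdot\tfrac{\sqrt3}{2}=\tfrac32>1=\mu_\beta$. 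That hypothesis should be named, not glossed. Also, your worry about ``which of the two parallel walls of a non-reduced pair is being crossed'' conflates the spherical and affine pictures: $\alpha$ and $2\alpha$ have the \emph{same} orthogonal hyperplane, hence one wall in $S$, so there is nothing to choose.

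For the main case the plan stops exactly where you flag the ``main obstacle'', and the paper's route around the trigonometric casework is worth absorbing. After deducing $0<d(x_\alpha,x_\beta)<\pihalf$ from $x\in\alpha\cap C$ where $C:=\{y\in\beta\mid d(y,\alpha\cap\partial\beta)\geq d(y,(-\alpha)\cap\partial\beta)\}$ (the $d(x_\alpha,x_\beta)=\pihalf$ endpoint gives $d(x,\partial\alpha)=0$ for free, and the pair $(\mu_\alpha,\mu_\beta)=(2,1)$ is ruled out because two distinct short roots of $BC_n$ sit at mutual distance $0$, $\pihalf$ or $\pi$), the surviving possibilities force $\mu_\alpha=\sqrt2\,\mu_\beta$ and $d(x_\alpha,x_\beta)=\tfrac{\pi}{4}$, whence $\gamma:=\mu_\beta x_\beta-\mu_\alpha x_\alpha$ is itself a long root of $\Phi$ and $C=\gamma\cap\beta$. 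The crossing hypothesis is precisely $x\in C\subset\gamma$, so $\langle x,\gamma\rangle\geq0$, which \emph{is} the inequality $\mu_\beta\sin d(x,\partial\beta)\geq\mu_\alpha\sin d(x,\partial\alpha)$. Recognizing $\gamma$ as a root collapses your proposed angle-by-angle verification to one line; without it, you would still have to carry out the $B_2$ and $BC_2$ computations you postponed.
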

\begin{proof}
It follows from the conditions that $d(x,\partial\beta)\geq d(x,\partial\alpha)$.
If $\mu_\beta \geq \mu_{\alpha}$, then the assertion follows.
So suppose that
$\mu_\beta < \mu_{\alpha}$. In particular, $(S,W)$ is not of simply-laced type.
If its root system $\Phi$ is reduced this implies that $(\mu_{\alpha},\mu_\beta)=(\sqrt{2},1)$ and
if $\Phi$ is non-reduced, then $(\mu_{\alpha},\mu_\beta)=(2,\sqrt{2}),(2,1),(\sqrt{2},1)$.

Notice that since the segment between 
$x$ and its projection to $\partial \beta$ intersects $\partial\alpha$
we must have 
$d(x,\alpha\cap\partial\beta)\geq d(x,(-\alpha)\cap\partial\beta)$.
Let $C:= \{y\in\beta\;|\;d(y,\alpha\cap\partial\beta)\geq d(y,(-\alpha)\cap\partial\beta)\}$. 
Then $x\in \alpha \cap C \neq \emptyset$. This implies that 
$0<d(x_{\alpha},x_\beta)\leq \pihalf$ (where $x_{\alpha},x_\beta$ are the centers of the respective
roots). 
If $d(x_{\alpha},x_\beta) = \pihalf$, then $x\in\alpha \cap C \subset\partial\alpha$ 
and the assertion follows because $\mu_\alpha\sin d(x,\partial\alpha)=0$.
Thus, we may assume $0<d(x_{\alpha},x_\beta)\ < \pihalf$.
This in particular excludes the case $(\mu_{\alpha},\mu_\beta)=(2,1)$ because
in this case $\Phi$ has a factor of type $BC_n$ and $\alpha,\beta\in\Phi$ are short roots,
which all have mutual distances in $\{0,\pihalf,\pi\}$.
In the remaining cases we have that $\mu_{\alpha} = \sqrt{2}\mu_\beta$ and
$\alpha$ and $\beta$ are of different type, hence,
$d(x_\beta,x_{\alpha})\in\{\frac{\pi}{4},\pihalf,\frac{3\pi}{4}\}$.
It follows that $d(x_{\alpha},x_\beta)=\frac{\pi}{4}$.
Then $\mu_{\alpha}x_{\alpha}\in\Phi$ and
$\gamma := \mu_\beta x_\beta - \mu_{\alpha} x_{\alpha}
= \mu_\beta (x_\beta - \sqrt{2} x_{\alpha})\in \Phi$
are long roots as can be seen in the root system. Also notice that $C=\gamma\cap \beta$.

Since $x\in C\subset \gamma\subset A$, we obtain $0\leq \langle x, \gamma\rangle =
\langle x, \mu_\beta x_\beta - \mu_{\alpha} x_{\alpha} \rangle$.
This in turn implies
$\mu_\beta\sin d(x,\partial\beta) = \langle x, \mu_\beta x_\beta\rangle\geq 
\langle x,\mu_{\alpha} x_{\alpha} \rangle = \mu_\alpha\sin d(x,\partial\alpha)$.
\end{proof}

The following observation will be used in the proof of Theorem~\ref{thm:topdim}.

\begin{lemma}\label{lem:inclsubcompl}
Let $K_1\supset K_2$
be two top-dimensional subcomplexes, then 
$f_{K_1}(x) \leq f_{K_2}(x)$ for all $x\in K_2$
whenever the weights defining the functions $f_{K_i}$ coincide. 
This occurs in particular if the root system is reduced.

On the other hand, if $f_{K_1}(x) > f_{K_2}(x)$ for some $x\in K_2$
and $\alpha\in\Lambda_{K_1}^{min}$ is a root such that
$f_{K_1}(x) = -\mu_\alpha\sin d(x,\partial\alpha)$, then $\alpha\in \Lambda_{K_2}^{min}$
and the weight $\mu_{i,\alpha}$ 
for the root $\alpha$ corresponding to the function $f_{K_i}$ must be 
$\mu_{i,\alpha}=i$.
\end{lemma}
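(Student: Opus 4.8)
The plan is to establish the two assertions separately.

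\emph{First assertion.} Since $K_1\supseteq K_2$, every singular hemisphere containing $K_1$ contains $K_2$, so $\Lambda_{K_1}\subseteq\Lambda_{K_2}$. If the weights chosen for $f_{K_1}$ and $f_{K_2}$ agree, then for $x\in K_2\subseteq K_1$ both values are maxima of the same family of numbers $-\mu_\alpha\sin(d(x,\partial\alpha))$, taken over $\Lambda_{K_1}$ respectively $\Lambda_{K_2}$; enlarging the index set can only enlarge the maximum, so $f_{K_1}(x)\le f_{K_2}(x)$. As the weight of a root is undetermined only for non-reduced roots, the weights necessarily agree when $\Phi$ is reduced.

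\emph{Second assertion, the weights.} Assume $f_{K_1}(x)>f_{K_2}(x)$ for some $x\in K_2$ and let $\alpha\in\Lambda_{K_1}^{min}$ realize $f_{K_1}(x)=-\mu_{1,\alpha}\sin(d(x,\partial\alpha))$. Since $\alpha\in\Lambda_{K_1}\subseteq\Lambda_{K_2}$, the $\alpha$-term occurs in the maximum defining $f_{K_2}$, so $f_{K_2}(x)\ge-\mu_{2,\alpha}\sin(d(x,\partial\alpha))$; combining with the hypothesis forces $\sin(d(x,\partial\alpha))>0$ and $\mu_{1,\alpha}<\mu_{2,\alpha}$. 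A reduced root carries a fixed weight, so $\alpha$ is non-reduced; then both weights lie in $\{1,2\}$, hence $\mu_{1,\alpha}=1$ and $\mu_{2,\alpha}=2$. In particular $\alpha$ is a short root of a factor of type $BC$ and $x_\alpha\in\Phi$ has norm $1$; this proves the statement about the weights.

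\emph{Second assertion, $\alpha\in\Lambda_{K_2}^{min}$.} Suppose not, i.e.\ $\alpha\in\Lambda_{K_2}\setminus\Lambda_{K_2}^{min}$; put $a:=d(x,\partial\alpha)\in(0,\pihalf]$ and $v_\gamma:=\mu_{2,\gamma}x_\gamma\in\Phi$. For every $\gamma\in\Lambda_{K_2}^{min}$ one has $\gamma\ne\alpha$, and also $x_\gamma\ne-x_\alpha$ (else $K_2\subseteq\alpha\cap(-\alpha)=\partial\alpha$ would not be top-dimensional); so $x_\alpha,x_\gamma$ are distinct root-type points with $\alpha$ in a $BC$-factor, and the list of admissible distances between root-type points gives $\langle x_\alpha,v_\gamma\rangle=\mu_{2,\gamma}\cos d(x_\alpha,x_\gamma)\le 1=\|x_\alpha\|^2$. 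If $x$ lay on a wall $\partial\gamma$ with $\gamma\in\Lambda_{K_2}^{min}$, then $\mu_{2,\gamma}\sin(d(x,\partial\gamma))=0$ and $f_{K_2}(x)\ge 0>f_{K_1}(x)$, a contradiction; hence $x\in\inter K_2$. Let $p\in\partial\alpha$ be nearest to $x$. As $K_2\subseteq\alpha$, we have $p\notin\inter K_2$, so the segment $xp$ meets $\partial K_2$, hence a wall $\partial\gamma$ with $\gamma\in\Lambda_{K_2}^{min}$, in a point $q\in[x,p]$. Following $\langle\,\cdot\,,x_\gamma\rangle$ along the great circle through $x_\alpha,x,p$ — positive at $x\in\inter\gamma$, zero at $q$, and with $p$ lying past $q$ — yields $\langle p,x_\gamma\rangle\le 0$. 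Since $d(x_\alpha,x)=\pihalf-a$, $d(x_\alpha,p)=\pihalf$ and $x_\alpha\perp p$, we obtain $x=\sin a\cdot x_\alpha+\cos a\cdot p$, so
$$\langle x,v_\gamma\rangle=\sin a\,\langle x_\alpha,v_\gamma\rangle+\cos a\,\langle p,v_\gamma\rangle\le\sin a=\langle x,x_\alpha\rangle=-f_{K_1}(x).$$
Hence $f_{K_2}(x)\ge-\langle x,v_\gamma\rangle\ge f_{K_1}(x)$, contradicting the hypothesis. Therefore $\alpha\in\Lambda_{K_2}^{min}$.

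I expect the geometric core of the last paragraph to be the main obstacle: extracting from $\alpha\notin\Lambda_{K_2}^{min}$ a root $\gamma\in\Lambda_{K_2}^{min}$ whose wall is met by the minimizing segment from $x$ to $\partial\alpha$, and checking the sign $\langle p,x_\gamma\rangle\le 0$, requires reprising — and handling the boundary cases of — the geometric argument in the proof of Lemma~\ref{lem:weightedinrad}. Once that is in hand, the contradiction follows mechanically from the decomposition $x=\sin a\cdot x_\alpha+\cos a\cdot p$ and the root-system inequality $\langle x_\alpha,v_\gamma\rangle\le\|x_\alpha\|^2$.
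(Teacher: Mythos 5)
Your proof is correct, but your route to the key point ($\alpha\in\Lambda_{K_2}^{\min}$) is genuinely different from, and less economical than, the paper's. The paper introduces an auxiliary function $\bar f_{K_2}$ on $K_2$, defined with the \emph{hybrid} weight system $\mu_\beta:=\mu_{2,\beta}$ for $\beta\neq\alpha$ and $\mu_\alpha:=\mu_{1,\alpha}$; since $\alpha\in\Lambda_{K_1}\subset\Lambda_{K_2}$ one gets $f_{K_1}(x)\le\bar f_{K_2}(x)$ for free, and then Lemma~\ref{lem:weightedinrad} (applied to this hybrid weight system, which is still admissible) lets one rewrite $\bar f_{K_2}(x)$ as a max over $\Lambda_{K_2}^{\min}$. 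Comparing this max with the max over $\Lambda_{K_2}^{\min}$ defining $f_{K_2}(x)$, and noting the two collections of weights differ only at $\alpha$, immediately forces $\alpha\in\Lambda_{K_2}^{\min}$ and $\mu_{1,\alpha}<\mu_{2,\alpha}$. Your argument instead re-derives the geometric content behind Lemma~\ref{lem:weightedinrad} from scratch: you decompose $x=\sin a\cdot x_\alpha+\cos a\cdot p$, find a root $\gamma\in\Lambda_{K_2}^{\min}$ whose wall meets the segment $xp$, and bound $\langle x,\mu_{2,\gamma}x_\gamma\rangle$ via the root-system inequality $\langle x_\alpha,\mu_{2,\gamma}x_\gamma\rangle\le 1$ and the sign argument $\langle p,x_\gamma\rangle\le 0$. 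This is correct but buys you nothing the paper doesn't already have packaged into Lemmata~\ref{lem:weightedinrad} and \ref{lem:weightedinrad2}, and it carries extra bookkeeping: the degenerate case $a=\pihalf$ (where $p$ is not unique and $\cos a=0$ makes the argument trivialize) should be split off, and the sinusoid argument for $\langle p,x_\gamma\rangle\le 0$ needs the quantitative fact $d(q,p)<\pihalf$ to exclude the sinusoid returning to positivity before $p$ — you wave at both of these, and you correctly flag them as the fragile spots. In short: same contradiction strategy, but where the paper cites Lemma~\ref{lem:weightedinrad} on a cleverly modified weight system, you re-prove its substance inline, with the corresponding overhead. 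Your treatment of the weight conclusion ($\mu_{i,\alpha}=i$), by directly comparing the $\alpha$-term inside $f_{K_2}$ with $f_{K_1}(x)$, is a nice, slightly more direct observation than the paper's, which gets it as a byproduct of the same max-comparison.
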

\begin{proof}
 The first assertion follows directly from $\Lambda_{K_1}\subset \Lambda_{K_2}$.

For the second assertion let $\mu_{i,\beta}$ be the weights for the roots $\beta\subset S$
defining the function $f_{K_i}$.
Let $\bar f_{K_2}$ be the function in $K_2$ defined by the weights $\mu_\beta:= \mu_{2,\beta}$
if $\beta\neq \alpha$ and $\mu_\alpha := \mu_{1,\alpha}$. 
Notice that $\alpha\in \Lambda_{K_1}\subset \Lambda_{K_2}$.
Then $f_{K_1}(x) = -\mu_{\alpha}\sin d(x,\partial\alpha) \leq
\max_{\beta\in\Lambda_{K_2}}\{-\mu_\beta\sin d(x,\partial\beta)\}=\bar f_{K_2}(x)$.
Lemma~\ref{lem:weightedinrad} 
implies
$$\max_{\beta\in\Lambda_{K_2}^{min}}\{-\mu_{\beta}\sin d(x,\partial\beta)\}=
\bar f_{K_2}(x)\geq f_{K_1}(x) > 
f_{K_2}(x) =\max_{\beta\in\Lambda_{K_2}^{min}}\{-\mu_{2,\beta}\sin d(x,\partial\beta)\}.$$ 
Since $\mu_\beta = \mu_{2,\beta}$ for all roots but $\alpha$, it follows that
$\alpha\in \Lambda_{K_2}^{min}$ and $\mu_\alpha = \mu_{1,\alpha} < \mu_{2,\alpha}$.
In particular, $\mu_{i,\alpha} = i$.
\end{proof}

Let $\alpha\subset S$ be a root and let $\tau\subset\partial \alpha$ be a face.
Let $y\in \alpha$ and let $z$ be the projection of $y$ to $\partial\alpha$. 
Let $x$ be the projection of $y$ to the singular sphere $s\subset S$ spanned by $\tau$.
Then the sine rule of spherical triangles
applied to the triangle $(x,y,z)$ implies 
$\sin d(y,\partial\alpha) = \sin d(y,z) = \sin d(y,x)\sin \angle_x(y,z) =
\sin d(y,s)\sin d(\ora{sy},\partial(\Si_s\alpha))$.
We apply this observation in the following situation. 
This will be used for an induction argument in Sections~\ref{sec:subcomplexbuilding}
and \ref{sec:classicalbuild}.

Let $\tau$ be a face in the boundary of the subcomplex $K$. 
The weights $\mu_\alpha$ induce weights in the spherical Coxeter 
complex $(\Si_\tau S, Stab_W(\tau))$.
Thus, we have an induced convex function $f_{\Si_\tau K}$ in $\Si_\tau K$.

\begin{lemma}\label{lem:inductionarg}
 Let $\tau$ be a face in the boundary of the subcomplex $K$ and let 
 $s\subset S$ be the singular sphere spanned by $\tau$.
Let $y\in K$. Then $f_{CH(K,s)}(y) = \sin d(y,s)f_{\Si_s K}(\ora{sy})$.
\end{lemma}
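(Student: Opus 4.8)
The plan is to reduce everything to the sine rule computation that was set up in the paragraph immediately preceding the statement. Recall that $\operatorname{CH}(K,s)$ is a top-dimensional subcomplex of $S$ whose defining set of singular hemispheres is $\Lambda_{\operatorname{CH}(K,s)} = \{\alpha \in \Lambda_K : s \subset \alpha\}$, i.e.\ exactly those roots in $\Lambda_K$ whose bounding wall passes through $s$ (equivalently, $\tau \subset \partial\alpha$ after passing to the face $\tau$ spanning $s$). For such a root $\alpha$, the wall $\partial\alpha$ contains $s$, so $\Sigma_s\alpha$ is a well-defined root of the link Coxeter complex $(\Sigma_s S, \operatorname{Stab}_W(\tau))$, and the induced weight $\mu_{\Sigma_s\alpha}$ equals $\mu_\alpha$ by the construction of the induced weights. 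The other roots of $\Lambda_K$, whose walls do not contain $s$, are precisely the ones we throw away in forming $\operatorname{CH}(K,s)$; the claim is that passing to the link exactly implements this truncation together with the sine-rule rescaling by $\sin d(y,s)$.

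First I would fix $y \in K$ and write out the left-hand side using the formula $f_{\operatorname{CH}(K,s)}(y) = \max_{\alpha \in \Lambda_{\operatorname{CH}(K,s)}}\{-\mu_\alpha \sin d(y,\partial\alpha)\}$. For each such $\alpha$, apply the sine rule identity established just above the lemma, with $\tau$ a face of $\partial\alpha$ spanning $s$: it gives $\sin d(y,\partial\alpha) = \sin d(y,s)\,\sin d(\overrightarrow{sy}, \partial(\Sigma_s\alpha))$. (One should note this identity is derived for $y \in \alpha$, and here $y \in K \subset \alpha$; also $y \notin s$ may be assumed since if $y \in s$ both sides vanish because $d(y,s)=0$ and every $\partial\alpha \supset s$, so $d(y,\partial\alpha)=0$.) Substituting and pulling the common nonnegative factor $\sin d(y,s)$ out of the maximum yields
$$
f_{\operatorname{CH}(K,s)}(y) = \sin d(y,s)\,\max_{\alpha \in \Lambda_{\operatorname{CH}(K,s)}}\{-\mu_\alpha \sin d(\overrightarrow{sy},\partial(\Sigma_s\alpha))\}.
$$

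The remaining step is to identify the maximum on the right with $f_{\Sigma_s K}(\overrightarrow{sy})$, i.e.\ to check that $\alpha \mapsto \Sigma_s\alpha$ is a bijection from $\Lambda_{\operatorname{CH}(K,s)}$ onto $\Lambda_{\Sigma_s K}$ preserving weights. Surjectivity and injectivity follow because singular hemispheres of $\Sigma_s S$ containing $\Sigma_s K$ correspond under taking links at $s$ to singular hemispheres of $S$ that contain $s$ and contain $K$ (take the link commuting with convex-hull, $\Sigma_s(\bigcap_h h) = \bigcap_h \Sigma_s h$ over the relevant hemispheres); weight preservation is the definition of the induced weights $\mu_\alpha$ on the link. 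With this bijection in hand the last displayed maximum is literally $\max_{\bar\alpha \in \Lambda_{\Sigma_s K}}\{-\mu_{\bar\alpha}\sin d(\overrightarrow{sy},\partial\bar\alpha)\} = f_{\Sigma_s K}(\overrightarrow{sy})$, and the proof is complete.

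The main obstacle I anticipate is bookkeeping rather than conceptual: one must be careful that $\Lambda_{\operatorname{CH}(K,s)}$ really is the set of roots of $\Lambda_K$ whose walls contain $s$ (as opposed to, say, needing to work with $\Lambda_K^{\min}$), and that the link operation $\Sigma_s(\cdot)$ sets up a weight-preserving bijection between those and $\Lambda_{\Sigma_s K}$ without losing or gaining hemispheres — in particular that no root of $\Lambda_K$ whose wall fails to contain $s$ can contribute, and none whose wall contains $s$ can be omitted. Once this identification of index sets is nailed down, everything else is the single application of the spherical sine rule already displayed in the text, factored out of a finite maximum.
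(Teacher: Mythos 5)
Your proof is correct and follows essentially the same route as the paper, whose proof is the one-line observation that $CH(K,s)$ is the intersection of the roots of $\Lambda_K^{\min}$ whose boundaries contain $s$, combined with the sine-rule identity displayed just before the lemma. You have simply spelled out the bookkeeping explicitly: identifying $\Lambda_{CH(K,s)}$ with $\{\alpha\in\Lambda_K : s\subset\partial\alpha\}$ (using that $s$ is a singular sphere, hence closed under antipodes, so $s\subset\alpha$ forces $s\subset\partial\alpha$), factoring $\sin d(y,s)$ out of the maximum, and checking that passing to the link gives a weight-preserving bijection onto $\Lambda_{\Sigma_s K}$.
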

\begin{proof}
 It follows directly from the observation above after noticing that $CH(K,s)$ is the intersection
of the roots in $\Lambda_K^{min}$ containing $s$ in their boundaries.
\end{proof}

The next lemmata describe the possible values of the function $f_K$ on vertices for the
Coxeter complexes of non-exceptional type. 
These results will be used in Sections~\ref{sec:An}, \ref{sec:Dn} and \ref{sec:classicalbuild}.

\begin{lemma}\label{lem:An}
 Let $(S,W)$ be the spherical Coxeter complex of type $A_n$.
Let $\alpha\subset S$ be a root and let $v\in \alpha $  be a vertex in its interior.
Then $\lambda_v := \sin d(v,\partial\alpha)= \cos d(v,x_\alpha)=\langle v,x_\alpha\rangle>0$ 
depends only on the type of the vertex $v$. 
In particular, if $K\subset S$ is a top-dimensional subcomplex containing
$v$ in its interior, then $f_K(v)=-\lambda_v$ is independent of $K$.
\end{lemma}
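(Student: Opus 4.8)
The plan is to work in the standard coordinate realization of the type-$A_n$ root system and to observe that, once the vertex $v$ is required to lie in the \emph{interior} of the root $\alpha$, the pairing $\langle v,x_\alpha\rangle$ is pinned to a single value that depends only on the type (equivalently, the $W$-orbit) of $v$.

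First I would fix the model: realize $\Phi$ of type $A_n$ in the hyperplane $\{x\in\R^{n+1}\mid\sum_i x_i=0\}$, with roots the scalar multiples of $e_i-e_j$ ($i\neq j$) and $W$ the group permuting coordinates. As $A_n$ is simply-laced, all roots have the same norm, so the center $x_\alpha$ of a root $\alpha\subset S$ is just the unit vector in the root direction; writing $x_\alpha=\tfrac{1}{\sqrt2}(e_i-e_j)$ we have $\alpha=\{x\in S\mid x_i\ge x_j\}$. The vertices of $S$ of a fixed type $k$ form one $W$-orbit: for each $k$-element subset $T\subset\{1,\dots,n+1\}$ there is a type-$k$ vertex $v_T$ proportional to the vector with entry $n+1-k$ in the positions of $T$ and entry $-k$ elsewhere, and every type-$k$ vertex is of this form; the normalizing length of this vector depends only on $k$.

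Next I would run the short computation: $\langle v_T,e_i-e_j\rangle$ is the difference of the $i$-th and $j$-th entries of the defining vector of $v_T$, hence equals $0$ if $i,j$ are both in $T$ or both outside $T$, equals $n+1$ if $i\in T$ and $j\notin T$, and equals $-(n+1)$ otherwise. Thus $v_T$ lies in $\inter(\alpha)$ precisely when $i\in T$ and $j\notin T$, and then $\langle v_T,e_i-e_j\rangle=n+1$ regardless of $T$. Dividing by $\sqrt2$ and by the $k$-dependent normalization of $v_T$ shows that $\lambda_v=\langle v,x_\alpha\rangle$ is one fixed positive number depending only on the type of $v$, independently of $\alpha$ and of the particular type-$k$ vertex in its interior. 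A coordinate-free restatement of the same fact: $Stab_W(\alpha)$, the group permuting the $n-1$ coordinates other than $i,j$, acts transitively on the type-$k$ vertices in $\inter(\alpha)$, and $W$ acts transitively on roots, so $W$-invariance of $\langle\cdot,\cdot\rangle$ gives the claim.

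For the final assertion, suppose $K$ is a (proper) top-dimensional subcomplex with $v$ in its interior; then $\Lambda_K$ is a set of roots, and $v$ lies in $\inter(\alpha)$ for every $\alpha\in\Lambda_K$, since otherwise the wall $\partial\alpha$ would pass through $\inter(K)$, contradicting $K\subset\alpha$. Because $A_n$ is simply-laced, $\mu_\alpha=1$ for all roots, so $f_K(v)=\max_{\alpha\in\Lambda_K}\{-\sin d(v,\partial\alpha)\}=-\lambda_v$, which does not depend on $K$. I do not expect any serious obstacle: the only care needed is in the normalization bookkeeping and in checking that the strict interior condition selects exactly the configuration $i\in T$, $j\notin T$; the real content is simply that $A_n$ has a single $W$-orbit of roots together with a large root-stabilizer.
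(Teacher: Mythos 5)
Your proof is correct and follows essentially the same route as the paper: both work in the standard coordinate realization of the $A_n$ Coxeter complex (vertices proportional to vectors with entries $n+1-k$ on a $k$-set and $-k$ off it, roots centered at $\tfrac{1}{\sqrt2}(e_i-e_j)$), compute $\langle v,x_\alpha\rangle = n+1$ up to normalization when $v\in\inter(\alpha)$, and conclude $\lambda_v=\sqrt{\tfrac{n+1}{2k(n+1-k)}}$ depends only on the type. You are slightly more explicit than the paper in two places — identifying the interior condition with $i\in T$, $j\notin T$, and checking that every $\alpha\in\Lambda_K$ has $v$ in its interior — but these are details the paper leaves implicit rather than a different method.
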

\begin{proof}
 We use the vector space realization of the Coxeter complex of type $A_n$ given in
\cite[Section 2.2.4]{LeebRamos-Cuevas:centerconj}. Then modulo the action of the Weyl group,
a vertex of type $n-k$ is $v=\frac{1}{\sqrt{k(n+1-k)(n+1)}}(n+1-k, \dots, n+1-k,-k,\dots,-k)$ 
and the center of a root is 
$x_\alpha=\frac{1}{\sqrt{2}}(e_i-e_j)$ with $i\neq j$ 
and where $(e_i)$ is the standard basis of $\R^{n+1}$.
If $0\leq d(v,x_\alpha)< \pihalf$, it readily follows that
$\lambda_v = \langle v,x_\alpha\rangle=\sqrt{\frac{n+1}{2k(n+1-k)}}$.
\end{proof}

For $n\geq 4$ we consider the Dynkin diagram of type $D_n$ with the following labelling 
\hpic{\includegraphics[scale=0.4]{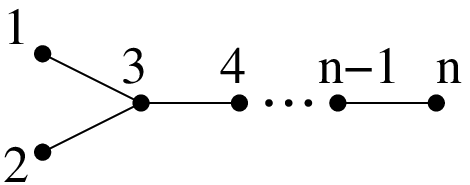}}.
Let $\lambda_i:= \frac{1}{\sqrt{2(n+1-i)}}$ for $i\neq 2$ and $\lambda_2:=\lambda_1$.

\begin{lemma}\label{lem:Dn}
 Let $(S,W)$ be the spherical Coxeter complex of type $D_n$.
Let $\alpha\subset S$ be a root and let $v\in \alpha $  be a vertex of type $i$ in its interior.
Then $\sin d(v,\partial\alpha)= \cos d(v,x_\alpha)
=\langle v,x_\alpha\rangle \in \{\lambda_i,2\lambda_i\}$. Moreover,
if $i=n$, then $\sin d(v,\partial\alpha)=\lambda_n$ and if $i\in\{1,2\}$, then
$\sin d(v,\partial\alpha)=2\lambda_1 = 2\lambda_2$.
\end{lemma}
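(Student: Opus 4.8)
The plan is to use an explicit vector-space realization of the Coxeter complex of type $D_n$, exactly as in the proof of Lemma~\ref{lem:An}, and compute the scalar products $\langle v, x_\alpha\rangle$ directly. In the standard model, $W$ acts on $\R^n$ by all permutations of coordinates together with all even sign changes; the roots are $\pm e_i \pm e_j$ ($i\neq j$), all of the same length $\sqrt 2$, so the center of a root is $x_\alpha = \tfrac1{\sqrt2}(\pm e_i \pm e_j)$. One then writes down representatives modulo $W$ for the vertices of each type $i$: for the "tail" vertices $i\in\{3,\dots,n-2\}$ (with the labelling in the figure, so that $i$ counts nodes from the branch end) a vertex of type $i$ is a multiple of $(1,\dots,1,0,\dots,0)$ with $i$ ones; the two branch vertices of type $n-1,n$ are multiples of $(1,\dots,1,\pm1)$; and the vertices of type $1,2$ at the fork are multiples of $e_1$. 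Normalizing each to unit length produces the constants $\lambda_i$.

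The core computation is then the following. Fix a unit vertex vector $v$ of type $i$ and run over all root centers $x_\alpha = \tfrac1{\sqrt2}(\pm e_j \pm e_k)$ with $\langle v,x_\alpha\rangle > 0$ (i.e.\ those $\alpha$ with $v$ in the interior of $\alpha$, which is the hypothesis). Because $v$ has a very simple coordinate pattern — a block of equal positive entries, possibly one entry of opposite sign for the branch vertices, zeros elsewhere — the inner product $\tfrac1{\sqrt2}(\pm v_j \pm v_k)$ takes only very few positive values, and a short case check shows these are exactly $\lambda_i$ and $2\lambda_i$: the value $2\lambda_i$ occurs when both $e_j,e_k$ land in the positive block with matching signs, and $\lambda_i$ when exactly one of them does. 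For the branch vertices $i=n$ (and symmetrically the vertex of type $n-1$, using the even-sign-change symmetry), the special coordinate pattern $(1,\dots,1,\pm1)/\sqrt{\,2(n+1-i)\,}$ with the fork structure forces the larger value $2\lambda_n$ never to be achieved for a root containing $v$ in its interior, leaving only $\lambda_n$; this is the one genuinely diagram-specific point. For the fork vertices $i\in\{1,2\}$, $v$ is proportional to $e_1$, so $\langle v, x_\alpha\rangle>0$ forces $x_\alpha = \tfrac1{\sqrt2}(e_1\pm e_k)$ and the inner product is $\tfrac1{\sqrt2}\cdot\tfrac1{\sqrt2}\cdot(\text{norm of }v) = 2\lambda_1 = 2\lambda_2$, the unique value.

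Finally, the identities $\sin d(v,\partial\alpha) = \cos d(v,x_\alpha) = \langle v, x_\alpha\rangle$ are just the relation between a hemisphere, its boundary wall, and its center on the round sphere (already used in Lemma~\ref{lem:distbound} and Lemma~\ref{lem:An}), valid whenever $0\le d(v,x_\alpha) < \pihalf$, i.e.\ whenever $v$ lies in the interior of $\alpha$. Assembling the cases yields the stated dichotomy $\{\lambda_i, 2\lambda_i\}$ together with the two refinements for $i=n$ and $i\in\{1,2\}$.

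The main obstacle I anticipate is purely bookkeeping: getting the labelling convention of the $D_n$ Dynkin diagram to match the figure and the definition $\lambda_i = \tfrac1{\sqrt{2(n+1-i)}}$ (with $\lambda_2 := \lambda_1$), and making sure the "interior of $\alpha$" hypothesis is used correctly to discard the roots giving nonpositive or the spuriously-larger inner products — in particular ruling out $2\lambda_n$ at the branch vertex. There is no hard idea here, only a disciplined enumeration of sign and coordinate-support patterns in the standard root system of type $D_n$.
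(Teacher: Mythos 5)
Your method is exactly the paper's: pick the standard realization of the $D_n$ Coxeter complex in $\R^n$, write down Weyl-orbit representatives for the vertices of each type, and compute $\langle v, x_\alpha\rangle$ for root centers $x_\alpha = \tfrac1{\sqrt2}(\pm e_j \pm e_k)$ directly. So the general idea is sound. However, you have reversed the paper's labelling convention, and this reversal propagates into wrong conclusions in exactly the two special cases that constitute the ``moreover'' clause --- the substantive refinement of the lemma.

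In the paper's convention (consistent with the definition $\lambda_i = \tfrac1{\sqrt{2(n+1-i)}}$ stated just before the lemma), a vertex of type $i$ with $i \neq 2$ is $v_i = \tfrac1{\sqrt{n+1-i}}(e_i + \dots + e_n)$, so it has $n+1-i$ nonzero coordinates. Thus \emph{type $n$} is the coordinate vertex $e_n$ (the long-branch terminal node), and \emph{types $1,2$} are the two half-spinor vertices $\tfrac1{\sqrt n}(\pm e_1 + e_2 + \dots + e_n)$ (the fork tips). You have assigned types $1,2$ to multiples of $e_1$ and types $n-1,n$ to the half-spinors, i.e.\ the opposite way round. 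With the labelling corrected: for a coordinate vertex $e_n$ every $x_\alpha=\tfrac1{\sqrt2}(\pm e_j\pm e_k)$ with $j\neq k$ has at most one index in its support, so $\langle v_n, x_\alpha\rangle \in \{0,\pm\tfrac1{\sqrt2}\}$ and the only positive value is $\lambda_n = \tfrac1{\sqrt2}$, never $2\lambda_n$ --- your claim that the coordinate vertex always yields $2\lambda$ is backwards. For a half-spinor vertex all coordinates are nonzero, so the only positive inner products come from roots with both signs matching $v$, giving $\tfrac2{\sqrt{2n}} = 2\lambda_1$ always, never $\lambda_1$ --- again the opposite of what you assert for your ``branch vertices''. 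Your arithmetic $\tfrac1{\sqrt2}\cdot\tfrac1{\sqrt2}\cdot\|v\| = 2\lambda_1$ is also off: with $v$ a unit multiple of $e_1$ the inner product is $\tfrac1{\sqrt2}$, not $\tfrac12$, and neither equals $2\lambda_1 = \sqrt{2/n}$ for general $n$. You flagged the labelling as ``the main obstacle'' and ``purely bookkeeping'', which is fair as a diagnosis, but the bookkeeping here is the whole point: the lemma's two special assertions are precisely the distinction between the coordinate vertex and the half-spinor vertices, and getting them swapped means your proof establishes the negation of the ``moreover'' clause rather than the clause itself.
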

\begin{proof}
 We use the vector space realization of the Coxeter complex of type $D_n$ given in
\cite[Section 2.2.4]{LeebRamos-Cuevas:centerconj}. 
Let $(e_i)$ be the standard basis of $\R^{n}$. Then modulo the action of the Weyl group,
a vertex of type $i$ is $v=\frac{1}{\sqrt{n+1-i}}(e_i+e_{i+1}+\dots +e_n)$ if $i\neq 2$ and
$v=\frac{1}{\sqrt{n}}(-e_1+e_{2} +e_3+\dots+ e_n)$ if $i=2$.
The center of a root is a vertex of type $n-1$ and therefore has the form 
$x_\alpha =\frac{1}{\sqrt{2}} ( \pm e_j \pm e_k )$ for $j\neq k$.
Since $\langle v, x_\alpha\rangle > 0$, the assertion follows.
\end{proof}

\begin{lemma}\label{lem:Dn2}
Let $(S,W)$ be the spherical Coxeter complex of type $D_n$.
Let $\alpha\subset S$ be a root and let $v_i,v_j\in \alpha $  be two adjacent vertices of type $i<j$.
If $j\geq 3$ and $\sin d(v_j,\partial\alpha)=2\lambda_j$, then $\sin d(v_i,\partial\alpha)=2\lambda_i$.
\end{lemma}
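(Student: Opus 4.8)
The plan is to use the explicit vector-space realization of the Coxeter complex of type $D_n$ from \cite[Section 2.2.4]{LeebRamos-Cuevas:centerconj}, exactly as in the proof of Lemma~\ref{lem:Dn}. Working modulo the Weyl group we may assume $x_\alpha = \frac{1}{\sqrt 2}(\varepsilon_p e_p + \varepsilon_q e_q)$ for some $p\neq q$ and signs $\varepsilon_p,\varepsilon_q\in\{\pm 1\}$, and then $\sin d(v,\partial\alpha) = \langle v, x_\alpha\rangle$ for any vertex $v\in\alpha$. The two adjacent vertices $v_i, v_j$ with $i<j$ and $j\geq 3$ both lie in $\alpha$, so $\langle v_i, x_\alpha\rangle, \langle v_j, x_\alpha\rangle > 0$. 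By Lemma~\ref{lem:Dn}, $\langle v_j, x_\alpha\rangle \in \{\lambda_j, 2\lambda_j\}$, and the hypothesis says we are in the case $\langle v_j, x_\alpha\rangle = 2\lambda_j = \frac{1}{\sqrt{2(n+1-j)}}\cdot 2 = \sqrt{\frac{2}{n+1-j}}$; since $v_j$ (for $j\geq 3$) is the vertex $\frac{1}{\sqrt{n+1-j}}(e_j + e_{j+1}+\dots+e_n)$, having $\langle v_j,x_\alpha\rangle$ equal to the larger value $2\lambda_j$ forces both $p,q\in\{j,j+1,\dots,n\}$ and both signs $\varepsilon_p=\varepsilon_q = +1$.

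The key step is then to exploit the adjacency of $v_i$ and $v_j$ together with $i<j$. Since the vertices are of consecutive-or-nearby type along the $D_n$ diagram and $i<j$, the coordinate support of $v_i$ contains that of $v_j$ in the relevant range: concretely, if $i\neq 2$ then $v_i = \frac{1}{\sqrt{n+1-i}}(e_i+e_{i+1}+\dots+e_n)$, whose support $\{i,i+1,\dots,n\}$ contains $\{j,\dots,n\}$ and hence contains both $p$ and $q$; and because the only nonzero entries of $v_i$ are $+\frac{1}{\sqrt{n+1-i}}$ on that support, we get $\langle v_i, x_\alpha\rangle = \frac{1}{\sqrt{n+1-i}}\cdot\frac{1}{\sqrt 2}(1+1) = \sqrt{\frac{2}{n+1-i}} = 2\lambda_i$. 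The case $i=2$ (only possible when $j=3$, by adjacency) uses $v_2 = \frac{1}{\sqrt n}(-e_1+e_2+\dots+e_n)$: here $p,q\in\{3,\dots,n\}$, on which $v_2$ has entries $+\frac{1}{\sqrt n}$, so again $\langle v_2,x_\alpha\rangle = \sqrt{\frac 2 n} = 2\lambda_1 = 2\lambda_2$, which is the ``$2\lambda_i$'' value for $i=2$. In all cases $\sin d(v_i,\partial\alpha) = \langle v_i,x_\alpha\rangle = 2\lambda_i$, as claimed.

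The main obstacle I anticipate is a bookkeeping one: one must check that ``adjacent'' in the $D_n$ Dynkin diagram, combined with $i<j$ and $j\geq 3$, genuinely forces the support-containment $\{j,\dots,n\}\subset\mathrm{supp}(v_i)$ (and handles the forked vertices $1,2$ correctly — e.g. $v_1$ is adjacent to $v_3$, not $v_2$, and the label $2$ is attached to the alternative fork). This requires being careful with the specific labelling convention fixed before Lemma~\ref{lem:Dn} and with the two special vertices at the fork. Once the realization and the labelling are pinned down, the computation of the inner products is immediate and the case $\langle v_j,x_\alpha\rangle = 2\lambda_j$ propagates down to $v_i$ essentially for free; the asymmetry $i<j$ is exactly what makes $\mathrm{supp}(v_i)\supseteq\mathrm{supp}(v_j)$ rather than the reverse.
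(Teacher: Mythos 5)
Your proposal is correct and matches the paper's own (very brief) proof: both normalize $v_j = \tfrac{1}{\sqrt{n+1-j}}(e_j+\dots+e_n)$, observe that $\sin d(v_j,\partial\alpha)=2\lambda_j$ forces $x_\alpha = \tfrac{1}{\sqrt 2}(e_k+e_l)$ with $k,l\geq j>i$, and then read off $\langle v_i,x_\alpha\rangle = 2\lambda_i$. You are slightly more explicit than the paper in spelling out why the signs and supports are forced and in separating out the fork vertex $i=2$ (where $\lambda_2=\lambda_1$), but the argument is the same.
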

\begin{proof}
 With the notation in the proof of Lemma~\ref{lem:Dn}, the hypotheses imply
$v_j=\frac{1}{\sqrt{n+1-j}}(e_j+\dots +e_n)$ and
$x_\alpha =\frac{1}{\sqrt{2}} (e_k + e_l )$ with $k,l\geq j >i$.
It follows: $\sin d(v_i,\partial\alpha)=2\lambda_i$.
\end{proof}

For $n\geq 2$ we consider the Dynkin diagram of type $B_n$ with the following labelling 
\hpic{\includegraphics[scale=0.4]{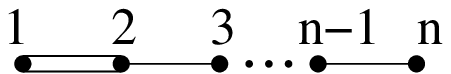}}. We have analogous results to
Lemmata~\ref{lem:Dn} and \ref{lem:Dn2}
for the Coxeter complex of type $B_n$. 
Their proofs are similar and we omit them here.

\begin{lemma}\label{lem:Bn}
Let $\Phi$ be the root system of type $B_n$, $C_n$ or $BC_n$
and let $(S,W)$ be its associated spherical Coxeter complex.
Let $\alpha\subset S$ be a root and let $v\in \alpha $  be a vertex of type $i$ in its interior.
Then $\mu_\alpha\sin d(v,\partial\alpha)=\mu_\alpha \cos d(v,x_\alpha)
=\langle v,\mu_\alpha x_\alpha\rangle \in \{\lambda_i,2\lambda_i\}$. 
Where $\lambda_i$ depends only on the type of the root system.
\qed
\end{lemma}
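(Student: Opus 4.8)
The plan is to run the proof of Lemma~\ref{lem:Dn} once more, this time in a realization of the Coxeter complex of type $B_n$. The Weyl groups of the root systems of type $B_n$, $C_n$ and $BC_n$ all coincide with the group of signed coordinate permutations of $\R^n$, so these three root systems carry one and the same spherical Coxeter complex $(S,W)$, with the same walls, singular spheres, chambers and vertices; only the weights $\mu_\alpha$ differ, and a single realization handles all three cases at once. Accordingly I would fix, as in \cite[Section 2.2.4]{LeebRamos-Cuevas:centerconj}, the standard basis $(e_1,\dots,e_n)$ of $\R^n$, with walls the fixed spheres of the reflections in the vectors $e_j\pm e_k$ ($j\neq k$) and $e_j$, and the roots $\alpha\subset S$ the hemispheres $\{\langle\cdot,x_\alpha\rangle\geq 0\}$, where $x_\alpha$ is a unit vector proportional to one of these. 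For such an $\alpha$, $\mu_\alpha x_\alpha$ is the corresponding indivisible root of $\Phi$, up to the choice $\mu_\alpha\in\{1,2\}$ permitted for non-reduced roots; concretely $\mu_\alpha x_\alpha=\kappa\,w$, where $w$ has entries in $\{0,\pm1,\pm2\}$ and equals one of $\pm e_j\pm e_k$, $e_j$ or $2e_j$, and $\kappa=1$ for types $B_n,BC_n$ while $\kappa=1/\sqrt2$ for type $C_n$; only a non-reduced root of $BC_n$ admits a genuine choice here, namely $w=e_j$ versus $w=2e_j$.

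Next I would record the vertices. As in the $D_n$ case, modulo $W$ a vertex of type $i$ is a staircase unit vector $v=\frac{1}{\sqrt{m_i}}(\varepsilon_1 e_{j_1}+\dots+\varepsilon_{m_i}e_{j_{m_i}})$ with distinct $e_{j_\ell}$ and signs $\varepsilon_\ell\in\{\pm1\}$, where $m_i\in\{1,\dots,n\}$ depends only on $i$ (it is the number of basis vectors appearing); there is one distinguished type whose vertex is $\frac{1}{\sqrt n}(e_1+\dots+e_n)$ up to $W$, so that $m_i=n$ and all its coordinates are equal. With these two lists in hand the lemma becomes a short computation: since $v$ lies in the interior of $\alpha$ we have $\langle v,\mu_\alpha x_\alpha\rangle>0$, while by the description of $\mu_\alpha x_\alpha$ one has $\langle v,\mu_\alpha x_\alpha\rangle=\frac{\kappa}{\sqrt{m_i}}\langle\sqrt{m_i}\,v,w\rangle$, and the pairing of the $\{0,\pm1\}$-valued vector $\sqrt{m_i}\,v$ with $w$ is an integer in $\{-2,-1,0,1,2\}$. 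Discarding the non-positive outcomes leaves exactly $\frac{\kappa}{\sqrt{m_i}}$ and $\frac{2\kappa}{\sqrt{m_i}}$, so $\langle v,\mu_\alpha x_\alpha\rangle\in\{\lambda_i,2\lambda_i\}$ with $\lambda_i:=\frac{\kappa}{\sqrt{m_i}}$ --- a number depending only on $i$ and on the isomorphism type of the Coxeter complex, as asserted. In the $BC_n$ case the two admissible values of $\mu_\alpha$ on a non-reduced root merely toggle the corresponding term between $\lambda_i$ and $2\lambda_i$, so the choice does not affect the statement.

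The one point that needs a little care --- this is the bookkeeping behind the special assertions in Lemmata~\ref{lem:Dn} and \ref{lem:Dn2} --- is the distinguished vertex with all coordinates equal to $\pm\frac{1}{\sqrt n}$: its pairing with any $\pm e_j\pm e_k$ is $0$ or $\pm\frac{2}{\sqrt n}$, never $\pm\frac1{\sqrt n}$, so against a long root its value is $2\lambda_i$ and against a short root it is $\lambda_i$, in exact analogy with the $i=n$ and $i\in\{1,2\}$ cases of Lemma~\ref{lem:Dn}. One also has to keep the two $W$-orbits of root-type points apart in the non-simply-laced diagrams so that $\mu_\alpha$ is read off correctly, but this is immediate from the list of possible distances between root-type points recalled above. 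I do not anticipate any real obstacle: the argument is purely the explicit $B_n$/$C_n$/$BC_n$ root-system combinatorics, parallel to the $D_n$ computation already carried out, and the monotonicity analogue of Lemma~\ref{lem:Dn2} --- if $v_j,v_i$ are adjacent vertices of types $j>i$ and the value at $v_j$ is $2\lambda_j$, then the value at $v_i$ is $2\lambda_i$ --- drops out of the same substitution argument once the realization is fixed.
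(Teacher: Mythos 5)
Your argument is correct, and it carries out exactly what the paper indicates: the paper states this lemma with the remark that its proof is similar to that of Lemma~\ref{lem:Dn} and is omitted, and you faithfully reproduce that $D_n$ computation in the standard realization of the $B_n/C_n/BC_n$ Coxeter complex, with the appropriate normalizing factor $\kappa$ keeping track of the different weights $\mu_\alpha$ of the three root systems. The bookkeeping with the $\{0,\pm1\}$-valued vector $\sqrt{m_i}\,v$, the interiority condition $\langle v,\mu_\alpha x_\alpha\rangle>0$, and the final conclusion $\lambda_i=\kappa/\sqrt{m_i}$ all check out.
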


\begin{lemma}\label{lem:Bn2}
Let $\Phi$ be the root system of type $B_n$, $C_n$ or $BC_n$
and let $(S,W)$ be its associated spherical Coxeter complex.
Let $\alpha\subset S$ be a root and let $v_i,v_j\in \alpha $  be two adjacent vertices of type $i<j$.
If $\mu_\alpha\sin d(v_j,\partial\alpha)=2\lambda_j$, then 
$\mu_\alpha\sin d(v_i,\partial\alpha)=2\lambda_i$.
\qed
\end{lemma}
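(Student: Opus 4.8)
The plan is to repeat, \emph{mutatis mutandis}, the proof of Lemma~\ref{lem:Dn2}, working in the vector space realization of $(S,W)$ attached to the root system of type $B_n$, $C_n$ or $BC_n$ as in \cite[Section 2.2.4]{LeebRamos-Cuevas:centerconj}. As recorded in Section~\ref{sec:weightedincenter}, for any root $\alpha\subset S$ and any $v\in\alpha$ one has $\mu_\alpha\sin d(v,\partial\alpha)=\langle v,\mu_\alpha x_\alpha\rangle$, where $\mu_\alpha x_\alpha$ is the element of $\Phi$ of norm $\mu_\alpha$ lying on the ray through the center $x_\alpha$. So everything reduces to linear algebra in $\R^n$ with its standard basis $e_1,\dots,e_n$, and the hypothesis $\mu_\alpha\sin d(v_j,\partial\alpha)=2\lambda_j>0$ in particular places $v_j$ in the interior of $\alpha$, so Lemma~\ref{lem:Bn} applies to it.

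First I would set up the coordinates. The three root systems $B_n,C_n,BC_n$ share the same Coxeter complex, and one reads off from the figure labelling the $B_n$ Dynkin diagram that a vertex of type $k$ is, modulo $W$, of the form $v_k=\tfrac{1}{\sqrt{m_k}}(e_1+\dots+e_{m_k})$ with $m_1\ge m_2\ge\dots\ge m_n\ge 1$; every coordinate of $v_k$ is nonnegative, and this is exactly why no analogue of the hypothesis ``$j\ge 3$'' of Lemma~\ref{lem:Dn2} is needed (in type $D_n$ the vertex $v_2$ has a coordinate equal to $-1$, which is what makes the pair $i=1,\ j=2$ exceptional there). Since $v_i$ and $v_j$ are \emph{adjacent} they lie in a common chamber, so after a single element of $W$ we may assume both are in this standard form simultaneously; as $i<j$ this gives $\operatorname{supp}(v_j)=\{1,\dots,m_j\}\subseteq\{1,\dots,m_i\}=\operatorname{supp}(v_i)$, with $v_i$ strictly positive on that set. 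Finally I would enumerate the possibilities for $\mu_\alpha x_\alpha$: depending on the root system (and, in the non-reduced case $BC_n$, on the chosen value $\mu_\alpha\in\{1,2\}$ on short hemispheres) it is one of $\pm e_p$, $\pm 2e_p$, $\pm\sqrt2\,e_p$, $\pm e_p\pm e_q$, $\pm\tfrac{1}{\sqrt2}(e_p\pm e_q)$. Evaluating $\langle v_k,\mu_\alpha x_\alpha\rangle$ on the standard $v_k$ in each case shows that the positive values taken are precisely $\lambda_k$ and $2\lambda_k$ (which reproves Lemma~\ref{lem:Bn}), and — the observation we really need — that the larger value $2\lambda_k$ occurs only when every nonzero coordinate of $\mu_\alpha x_\alpha$ is positive and lies in $\operatorname{supp}(v_k)$.

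To conclude: $\mu_\alpha\sin d(v_j,\partial\alpha)=2\lambda_j$ forces every nonzero coordinate of $\mu_\alpha x_\alpha$ to be positive and to lie in $\operatorname{supp}(v_j)$, hence in $\operatorname{supp}(v_i)$; since $v_i$ is positive on $\operatorname{supp}(v_i)$ as well, the same evaluation applied to $v_i$ gives $\mu_\alpha\sin d(v_i,\partial\alpha)=\langle v_i,\mu_\alpha x_\alpha\rangle=2\lambda_i$. I do not expect any conceptual difficulty here — the statement runs ``in parallel'' with Lemma~\ref{lem:Dn2}; the only real work is the bookkeeping: matching the figure's labelling to the coordinate model, and verifying the displayed observation uniformly over types $B_n$, $C_n$ and $BC_n$ (and over both admissible weight choices in the $BC_n$ case). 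That same case inspection also establishes Lemma~\ref{lem:Bn}.
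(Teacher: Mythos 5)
Your proof is correct and follows the same approach the paper indicates for this lemma (namely, arguing by analogy with Lemma~\ref{lem:Dn2}, via an inner-product computation with $\mu_\alpha x_\alpha$ in the coordinate realization of the Coxeter complex, after one Weyl group element places the common chamber of $v_i$ and $v_j$ in standard position). You have also correctly identified why the hypothesis $j\geq 3$ of Lemma~\ref{lem:Dn2} has no analogue here, and your support-based observation handles the types $B_n$, $C_n$, $BC_n$ and both admissible weight choices uniformly.
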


The next Lemma provides a generalization of the Lemma~\ref{lem:inclsubcompl} in the case of 
vertices in subcomplexes of positive codimension of spherical buildings of non-exceptional type.
This Lemma will be used in the proof of Theorem~\ref{thm:classicalbuild}.

\begin{lemma}\label{lem:vertexposcodim}
 Let $\Phi$ be the root system of type $A_n$, $B_n$, $C_n$, $D_n$ or $BC_n$ 
and let $(S,W)$ be its associated Coxeter complex. Let $s\subset S$ be a singular sphere.
Let $K_1,K_2\neq S$ be two top-dimensional convex subcomplexes, such that 
the interior of $K_i\cap s$ lies in the interior of $K_i$ and 
$K_2\cap s\subset K_1\cap s$.
Let $f_{K_j}$ for $j=1,2$ be the functions defined above (with possibly distinct
set of weights $\{\mu_{j,\alpha}\}$ for the roots of $S$ if $\Phi$ is non-reduced).

Let $x$ be a vertex in $K_2\cap s$. 
If $f_{K_2}(x) < f_{K_1}(x)$ and $\alpha\in\Lambda_{K_1}^{min}$ is a root 
such that $f_{K_1}(x) = -\mu_{1,\alpha}\sin d(x,\partial\alpha)$, 
then $\alpha\cap s \in \Lambda_{K_1\cap s}^{min}\cap \Lambda_{K_2\cap s}^{min}$.
\end{lemma}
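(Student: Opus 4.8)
The plan is to reduce the statement to the analogous fact for the link at the face $\tau$ spanned by a suitable wall, and iterate, mimicking the way Lemma~\ref{lem:inclsubcompl} is used but keeping track of the singular sphere $s$. First I would observe that since $x\in K_2\cap s\subset K_1\cap s$, the root $\alpha\in\Lambda_{K_1}^{min}$ realizing $f_{K_1}(x)=-\mu_{1,\alpha}\sin d(x,\partial\alpha)$ certainly lies in $\Lambda_{K_1}$, and hence in $\Lambda_{K_2}$ as well (because $\Lambda_{K_1}\subset\Lambda_{K_2}$ when $K_2\subset K_1$), so in particular $\alpha$ contains $K_2$. So the content of the statement is twofold: that $\alpha$ is \emph{minimal} for both $K_1\cap s$ and $K_2\cap s$, i.e.\ that $\alpha\cap s$ actually cuts out a codimension-one face of $\partial(K_i\cap s)$ inside $s$, and that $\alpha\cap s$ is itself a genuine (proper) root of the Coxeter complex spanned by $s$.

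The key step is to rule out the degenerate possibilities for $d(x,\partial\alpha)$. Since $x$ is a vertex in $K_2\cap s$ and the root system is one of $A_n,B_n,C_n,D_n,BC_n$, Lemmata~\ref{lem:An}, \ref{lem:Dn}, \ref{lem:Bn} tell us that $\mu_{1,\alpha}\sin d(x,\partial\alpha)$ is either $0$ (if $x\in\partial\alpha$) or one of the strictly positive values $\lambda_i,2\lambda_i$ depending only on the type $i$ of $x$. The hypothesis $f_{K_1}(x)>f_{K_2}(x)$ forces $-\mu_{1,\alpha}\sin d(x,\partial\alpha)>f_{K_2}(x)\ge -\mu_{2,\alpha}\sin d(x,\partial\alpha)$ whenever $\alpha\in\Lambda_{K_2}$ (which we just checked), so $\mu_{1,\alpha}<\mu_{2,\alpha}$ and in particular $\sin d(x,\partial\alpha)>0$, i.e.\ $x\notin\partial\alpha$. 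This already shows $x$ lies in the interior of $\alpha$, hence $x$ is an interior vertex of $\alpha\cap s$ in $s$ (the hypothesis that $\inter(K_i\cap s)\subset\inter(K_i)$ is exactly what lets us transport ``interior of $\alpha$'' from $S$ down to $s$), so $\alpha\cap s$ is a proper hemisphere of $s$ whose interior meets $K_i\cap s$; in other words $\alpha\cap s\in\Lambda_{K_i\cap s}$.

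To upgrade ``$\in\Lambda$'' to ``$\in\Lambda^{min}$'' for each $i$, I would argue as in Lemma~\ref{lem:weightedinrad}: if $\alpha\cap s$ were \emph{not} minimal for $K_i\cap s$, then the segment from $x$ to its projection onto $\partial\alpha$, computed inside $s$, would cross $\partial K_i$, hence meet some wall $\partial\gamma_0$ with $\gamma_0\cap s\in\Lambda_{K_i\cap s}^{min}$; but then Lemma~\ref{lem:weightedinrad2} (applied in $s$, or equivalently Lemma~\ref{lem:inductionarg} to pass from $S$ to the link and back) gives $\mu_{\gamma_0}\sin d(x,\partial\gamma_0)\ge\mu_\alpha\sin d(x,\partial\alpha)$, which for $i=1$ contradicts the maximality of $\alpha$ in the definition of $f_{K_1}(x)$, and for $i=2$ we use the just-derived equality $\mu_{1,\alpha}=1<\mu_{2,\alpha}$ together with Lemma~\ref{lem:inclsubcompl}'s conclusion that forces $\alpha\in\Lambda_{K_2}^{min}$ directly. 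The main obstacle I expect is precisely the bookkeeping of weights in the non-reduced ($BC_n$) case: one must check that the inequality $\mu_{1,\alpha}<\mu_{2,\alpha}$ is genuinely available — this is where the hypothesis that the interiors of $K_i\cap s$ sit in the interiors of $K_i$ matters, guaranteeing $\alpha$ is ``seen'' the same way from $s$ as from $S$ — and that the projections and crossings used above stay inside $s$ rather than wandering into $S\setminus s$; once that is pinned down the distance computations are routine via the sine-rule observation preceding Lemma~\ref{lem:inductionarg}.
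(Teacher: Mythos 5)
Your proposal has a genuine gap at the very first step. You assert that $\alpha\in\Lambda_{K_2}$ "because $\Lambda_{K_1}\subset\Lambda_{K_2}$ when $K_2\subset K_1$" — but the lemma does \emph{not} assume $K_2\subset K_1$. It only assumes $K_2\cap s\subset K_1\cap s$, and this is exactly the generality that is needed when the lemma is applied in the proof of Theorem~\ref{thm:classicalbuild} (where $K_1=Fix(u')\cap A'$, $K_2=Fix(u)\cap A'$, and only their intersections with $s'$ are shown to be nested). Without $\alpha\in\Lambda_{K_2}$, the chain of inequalities $-\mu_{1,\alpha}\sin d(x,\partial\alpha)>f_{K_2}(x)\ge -\mu_{2,\alpha}\sin d(x,\partial\alpha)$ has no justification, so you cannot extract $\mu_{1,\alpha}<\mu_{2,\alpha}$ nor $x\notin\partial\alpha$, and the rest of the argument collapses.

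The paper's proof sidesteps this entirely. It never needs $\alpha\in\Lambda_{K_2}$. Instead it introduces an auxiliary subcomplex $M=\bigcap_{\gamma\in\Pi}\gamma$, where $\Pi$ is the explicitly described set of roots $\gamma$ with $\mu_{2,\gamma}\cos d(x,x_\gamma)=2\lambda_i$; the condition $f_{K_2}(x)=-2\lambda_i$ then forces $M\subset K_2$, which is what produces the crucial chain of inclusions $M\cap s\subset K_2\cap s\subset K_1\cap s\subset\alpha\cap s$. It then computes concretely, in the standard vector-space realization, that there is a root $\beta\in\Pi$ with $\beta\cap s=\alpha\cap s$, shows $\alpha\cap s$ is a \emph{proper} hemisphere of $s$ (your sketch never rules out $s\subset\alpha$), and shows $\beta\cap s\in\Lambda_{M\cap s}^{min}$; minimality for the larger $K_1\cap s$ and $K_2\cap s$ then follows from the inclusion chain. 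Your attempted upgrade to $\Lambda^{\min}$ via Lemma~\ref{lem:weightedinrad2} "applied in $s$" also glosses over a real issue: that lemma is stated for roots and projections in $S$, and one cannot simply identify $\sin d_S(x,\partial\alpha)$ with the analogous quantity inside $s$ — that is precisely why the paper works with explicit coordinates and intersections with $s$ throughout. In short, the key missing ideas are the auxiliary subcomplex $M$ (which plays the role your false inclusion $K_2\subset K_1$ was meant to play) and the explicit identification of $\alpha\cap s$ with a minimal root of the smaller complex $M\cap s$.
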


\begin{proof}
First notice that for $\Phi$ of type $A_n$, 
the inequality $f_{K_2}(x) < f_{K_1}(x)$ cannot hold by Lemma~\ref{lem:An}.
Hence we may assume that $\Phi$ is not of type $A_n$.

Let $i$ be the type of the vertex $x$.
The inequality $f_{K_2}(x) < f_{K_1}(x)$ implies $f_{K_j}(x) = -j\lambda_i$, where
$\lambda_i$ takes the respective value depending on the type of $\Phi$.
We use again the usual identification of $S$ with the unit round sphere in $\R^n$.
Then modulo the action of the Weyl group and normalization of the vector
(for simplicity we will always omit the normalization factors of the corresponding vectors), we have
$x=e_i+\dots + e_n$.
Let $x_\alpha$ be the center of the root $\alpha$. 
Then $f_{K_1}(x) =  -\mu_{1,\alpha}\cos d(x,x_\alpha) = -\lambda_i$ 
implies that $x_\alpha = \varepsilon e_{\alpha_1} + e_{\alpha_2}$
with $\alpha_1 < i \leq \alpha_2$ and $\varepsilon = 0,\pm 1$ ($\varepsilon = 0$ occurs only in 
the cases $B_n,BC_n$).
After multiplying with an element of the Weyl group, we may assume
that $x_\alpha = \varepsilon e_{i-1} + e_{i}$ and $\varepsilon = 0, 1$.

Consider the sets of roots $\Pi= \{ \gamma\subset S \;|\; \mu_{2,\gamma}\cos d(x,x_\gamma) 
= 2\lambda_i\}$ and $\Upsilon = \{ \gamma\subset S \;|\;  s\subset \gamma\}$.
Then $s = \bigcap_{\gamma\in\Upsilon}\gamma$.
Let $M:=\bigcap_{\gamma\in\Pi}\gamma$.
Since $f_{K_2}(x) = -2\lambda_i$, we obtain $M\subset K_2$,
which in turn implies the inclusions 
$s\cap M \subset s\cap K_2 \subset s\cap K_1 \subset \alpha$.
Observe that if $\gamma\in\Pi$, then $x_\gamma$ is of the form $e_j$ or $e_j+e_k$ for $i\leq j<k$
and if $\gamma\in\Upsilon$, then $\langle x, x_\gamma\rangle =0$ and therefore
$x_\gamma$ is of the form $e_j$ or $\pm(e_j\pm e_k)$ for $j<k<i$ or $\pm(e_j-e_k)$ for $i\leq j<k$.

Suppose first that $\varepsilon =1$, that is, $x_\alpha = e_{i-1}+e_i$. 
Consider a point in $S$ with 
$(i-1)$-coordinate negative and let $y\neq 0$ be its projection to the subspace 
$\{z\in\R^n\;|\;\langle z,e_j\rangle = 0, j\geq i\}$. 
Then $y\notin \alpha$ and $y\in M$, therefore
$y\notin s$. 
It follows that for any point in $s$, its $(i-1)$-coordinate must be zero.
Hence $\alpha\cap s = \{\langle e_i,\cdot\rangle \geq 0\}\cap s$.

Let now $y=-e_i+e_{i+1}+\dots+e_n$, then $y\notin \alpha$.
It follows that $y\notin s$ or $y\notin M$.
In the former case, there is a root in $\Upsilon$ with center $e_i-e_j$ for some $j>i$, then
for the root $\beta\in \Pi$ centered at $e_i+e_j$ holds
$\beta\cap s = \{\langle e_i+e_j,\cdot\rangle \geq 0\}\cap s
= \{\langle e_i+e_i,\cdot\rangle \geq 0\}\cap s= \{\langle e_i,\cdot\rangle \geq 0\}\cap s
=\alpha\cap s$. 
In the latter case, there must be a root in $\Pi$ not containing $y$, the only possibility is 
the root $\beta$ centered at $e_i$. In this case
we also get $\beta\cap s = \{\langle e_i,\cdot\rangle \geq 0\}\cap s =\alpha\cap s$.

We have found a root $\beta \in \Pi$ such that 
$\beta\cap s = \alpha\cap s$. 
Let now $y$ be a point with $j$-coordinate equal 0 if $j\leq i$, or 
if $e_i-e_j\in\Upsilon$ and $j>i$, and with all other coordinates equal 1.
Suppose $y\neq 0$, that is, it defines a point in $S$. 
It follows that $y\in s\cap M$ and $y\in\{\langle e_i,\cdot\rangle = 0\}\cap s = \partial(\beta\cap s)$.
If there is another root $\gamma\in \Pi$ such that 
$y\in \partial(\gamma\cap s)$, then it must hold $\partial(\gamma\cap s) 
= \{\langle e_i,\cdot\rangle = 0\}\cap s = \partial(\beta\cap s)$. 
Therefore $\beta\cap s \in \Lambda_{M\cap s}^{min}$.
If $y=0$, then $\partial(\gamma\cap s) 
= \{\langle e_i,\cdot\rangle = 0\}\cap s = \partial(\beta\cap s)$ holds for any root $\gamma\in\Pi$.
We can again conclude that $\beta\cap s \in \Lambda_{M\cap s}^{min}$.

Since $\alpha\cap s =\beta\cap s \in \Lambda_{M\cap s}^{min} $ and we have the inclusions
$M\cap s \subset K_2\cap s\subset K_1\cap s\subset \alpha\cap s$ it follows that 
$\alpha\cap s =\beta\cap s \in\Lambda_{K_1\cap s}^{min}\cap \Lambda_{K_2\cap s}^{min}$.
\end{proof}

\subsection{Spherical buildings}

A spherical building $B$ modelled on a spherical
Coxeter complex $(S,W)$ is a $\CAT(1)$ space
together with an atlas of isometric embeddings $S \hookrightarrow B$
(the images of these embeddings are called {\em apartments})
with the following properties: any two points in $B$ are contained in a common
apartment, the atlas is closed under precomposition with isometries in $W$
and the coordinate changes are restrictions of isometries in $W$. 
We consider the empty set as a spherical building.

The objects (walls, roots,... ) defined for spherical Coxeter complexes
can be defined for the building $B$ as the corresponding images in $B$. 

A spherical building has a unique decomposition as a join of 
spherical buildings and a sphere,
whose buildings factors cannot be decomposed further. We say that the building is
{\em irreducible} if it is not a sphere and this decomposition is trivial.

A building is called {\em thick} if every wall is the boundary of at least 
three different roots.
A spherical building has a canonical 
thick structure (depending only on its isometry type)
which results from restricting to a subgroup of its Weyl group
(\cite[Sec.\ 3.7]{KleinerLeeb:quasi-isom}).

We say that an isometry of a spherical building is {\em type preserving}
if it induces the identity on the model Weyl chamber with respect to its thick structure.
We denote with $Isom_0(B)$ the group of type preserving isometries. It is a normal
subgroup of the isometry group 
$Isom(B)$ and the quotient group $Isom(B)/Isom_0(B)$ 
naturally embeds as a subgroup
of the isometry group of the model Weyl chamber (in particular, it is finite
if $B$ does not split off a spherical factor).

A {\em subbuilding} is a convex subset $B'$ of a building, such that any
two points in $B'$ are contained in a convex sphere 
$s\subset B'$ of the same dimension as $B'$.
A subbuilding carries a natural structure as a spherical building
induced by its ambient building 
(cf. \cite[Proposition 2.13]{LeebRamos-Cuevas:centerconj}).

For any point $x\in B$, the link $\Si_x B$
is again a spherical building. It decomposes as the join of a  sphere of
dimension $\dim(\tau)-1$, where $\tau$ is the smallest face of $B$ containing $x$,
and a spherical building $\Si_\tau B$ (which we call the {\em link of the face} 
$\tau$).  

Let $K\subset B$ be a top-dimensional convex subcomplex (e.g.\ an apartment)
and let $\tau\subset K$ be a face.
We denote with $St_\tau(K) \subset K$ the union of all chambers in $K$ containing $\tau$.
We call $St_\tau(K)$ the {\em star of $\tau$ in $K$}.

A point $x\in C\subset B$ in a convex subset of a spherical building is said to be an 
{\em interior point} if $\Si_x C \subset \Si_x B$ is a subbuilding and a {\em boundary point} otherwise.
The set of boundary points is denoted with $\partial C$.

\subsubsection{Root groups}

Let $B$ be a spherical building and let $\alpha\subset B$ be a root. 
The {\em root group} $U_\alpha$ associated to $\alpha$ is the group
of isometries of $B$ fixing $\alpha$ pointwise and every chamber $\sigma$
such that $\sigma\cap \alpha$ is a panel not contained in the boundary wall of $\alpha$.
Notice that $U_\alpha$ consists on type preserving isometries.

The building $B$ is called {\em Moufang} if for all roots $\alpha\subset B$,
the root group $U_\alpha$ acts transitively on apartments
containing $\alpha$.  

It is a fundamental result of Tits \cite{Tits:bn-pairs} that irreducible spherical
buildings of dimension at least $2$ are Moufang. In this case, the root group
$U_\alpha$ acts {\em simply} transitively on apartments containing $\alpha$.

Let $\sigma\subset \partial\alpha$ be a face in the boundary wall of a root 
$\alpha\subset B$. The set $\Si_\sigma \alpha$ is a root of the building $\Si_\sigma B$.
Then there is a natural restriction homomorphism 
$U_\alpha\rightarrow U_{\Si_\sigma \alpha}$. 
This homomorphism implies that the links of Moufang buildings are again Moufang.
If $B$ is irreducible, 
then by the simply transitivity of the action of $U_\alpha$ on apartments 
containing $\alpha$, this homomorphism must be injective. 
If $\Si_\sigma B$ is irreducible, 
then by the simply transitivity of the action of $U_{\Si_\sigma \alpha}$
the homomorphism must be surjective. 
In particular, if both $B$ and $\Si_\sigma B$ are irreducible,
then the root groups $U_\alpha$ and $U_{\Si_\sigma \alpha}$ are canonically isomorphic.

\subsubsection{Commutator relations}

The fact that root groups of $B$ can be canonically identified with the root groups
of the links of $B$ allows us to translate computations on the root groups in 
computations on root groups of buildings of lower dimension. In particular, we can use 
the commutator relations given in \cite{Tits:rootdata} for Moufang polygons to
deduce the commutator relations of root groups of irreducible spherical buildings
of dimension $\geq 2$. These relations also follow from the classification of spherical
buildings, but this is a much stronger result.

Let $B$ be a spherical building with associated spherical Coxeter complex $(S=S^n,W)$. 
Let $\Phi$ in $\R^{n+1}$ be a root system with the same 
associated Coxeter complex $(S,W)$.
Let $\alpha \in \Phi$ be an indivisible root.
Given a chart $(S,W)\stackrel{\iota}{\hookrightarrow} B$ for an apartment 
$A=\iota(S)\subset B$, we also denote with $\alpha$ the root $\iota(\alpha)\subset B$
(cf.\ Section~\ref{sec:rootsystem}).
Conversely, if $\alpha\subset B$ is a root, then we denote again with $\alpha$ the 
corresponding indivisible root.
There should be no confusion with this abuse of notation. 

We can now explain the commutator relations for the root groups of $B$ 
(cf.\ \cite{Tits:rootdata} and \cite[Section 3]{Timmesfeld:lie-type}).

\begin{theorem}\label{thm:commutator}
 For an irreducible spherical building $B$ of dimension $n\geq 2$, there exists a
(possibly non-reduced) 
root system $\Phi$ with the same associated Coxeter complex $(S,W)$ as $B$, such that 
the following holds. If $(S,W)\hookrightarrow B$ is a chart for an apartment $A\subset B$,
and $\alpha,\beta\in \Phi$ are two roots, then 
$$[U_\alpha, U_\beta] \subset \langle U_\gamma \;|\; 
\gamma = a\alpha + b\beta\in \Phi,\, a,b\in\N\rangle.$$
If the root system $\Phi$ is non-reduced (i.e.\ $\Phi$
is of type $BC_n$ and $(S,W)$ is of type $B_n$) and
$\alpha\in \Phi$ is a non-reduced root, then part of the assertion of the Theorem is
the existence of a subgroup $U_{2\alpha}$
of the root group $U_\alpha$ such that 
$1\neq [U_\alpha, U_\alpha]\subset U_{2\alpha}\subset Z(U_\alpha)$,
where $Z(U_\alpha)$ denotes the center of $U_\alpha$.
\end{theorem}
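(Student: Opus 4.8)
The statement to prove is Theorem~\ref{thm:commutator}, the commutator relations for root groups of an irreducible spherical building $B$ of dimension $n\ge 2$. The plan is to reduce everything to the rank-$2$ (Moufang polygon) case, where the commutator relations are already known from Tits' classification of Moufang polygons \cite{Tits:rootdata}, and then to propagate these relations through the building using the canonical isomorphisms between root groups of $B$ and root groups of its links established in the section on root groups. The key structural input is that $B$ is Moufang (Tits' theorem, since $\dim B\ge 2$), that each $U_\alpha$ acts simply transitively on apartments containing $\alpha$, and that for a face $\sigma\subset\partial\alpha$ the restriction map $U_\alpha\to U_{\Sigma_\sigma\alpha}$ is an isomorphism whenever both $B$ and $\Sigma_\sigma B$ are irreducible.

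**Steps.** First I would fix the root system $\Phi$: take $\Phi$ to be the (possibly non-reduced) root system attached to the thick structure of $B$, so that $\Phi$ has the same Coxeter complex $(S,W)$ as $B$; the only subtlety is the $B_n$/$BC_n$ dichotomy, which is detected by whether a rank-$2$ residue of type $B_2$ is a Moufang quadrangle whose root groups are non-abelian — this is exactly the data recorded in the last sentence of the theorem. Second, given two roots $\alpha,\beta\in\Phi$, consider the singular sphere $s=\partial\alpha\cap\partial\beta$ (equivalently, pass to the face $\sigma$ of $A$ spanned by a generic point of that intersection) of codimension $2$; then $\Sigma_\sigma B$ is an irreducible spherical building of dimension $1$, i.e.\ a Moufang polygon, containing the roots $\Sigma_\sigma\alpha$ and $\Sigma_\sigma\beta$. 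Third, invoke the commutator relations for Moufang polygons from \cite{Tits:rootdata}: in $\Sigma_\sigma B$ one has $[U_{\Sigma_\sigma\alpha},U_{\Sigma_\sigma\beta}]\subset\langle U_{\Sigma_\sigma\gamma}\mid \gamma=a\alpha+b\beta\in\Phi,\ a,b\in\N\rangle$, together with, in the non-reduced case, the existence of $U_{2(\Sigma_\sigma\alpha)}$ with $1\ne[U_{\Sigma_\sigma\alpha},U_{\Sigma_\sigma\alpha}]\subset U_{2(\Sigma_\sigma\alpha)}\subset Z(U_{\Sigma_\sigma\alpha})$. Fourth, transport this identity back to $B$ via the canonical isomorphisms $U_\alpha\cong U_{\Sigma_\sigma\alpha}$, $U_\beta\cong U_{\Sigma_\sigma\beta}$, and $U_\gamma\cong U_{\Sigma_\sigma\gamma}$ for each $\gamma$ in the relevant $\N$-span; these isomorphisms are compatible because they are all restrictions of the same "take directions at $\sigma$" operation, and the subgroups generated correspond. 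This yields the commutator relation in $B$, and in the non-reduced case defines $U_{2\alpha}\subset U_\alpha$ as the preimage of $U_{2(\Sigma_\sigma\alpha)}$, with the center and nontriviality conditions inherited.

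**Main obstacle.** The real work is in justifying that the isomorphisms $U_\alpha\cong U_{\Sigma_\sigma\alpha}$ for the various roots $\gamma=a\alpha+b\beta$ can be chosen simultaneously and coherently, i.e.\ that the commutator computation downstairs really is the image of the one upstairs and not merely an isomorphic copy of an unrelated relation. Concretely, one must check that for a root $\gamma$ in the $\N$-span of $\alpha,\beta$, the codimension-$2$ face $\sigma$ spanned by $\partial\alpha\cap\partial\beta$ also lies in $\partial\gamma$ (so that $\Sigma_\sigma\gamma$ is defined and the restriction $U_\gamma\to U_{\Sigma_\sigma\gamma}$ is an isomorphism), and that the restriction homomorphism is a homomorphism of the "big" group generated by all these $U_\gamma$'s — this is where one uses that restriction to a link is induced by an isometric embedding of links and hence respects compositions. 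Verifying $\sigma\subset\partial\gamma$ is a short linear-algebra check in $(S,W)$ (any wall through $\partial\alpha\cap\partial\beta$ that is a positive combination of $\alpha,\beta$ contains that intersection), but the coherence of the homomorphisms across all $\gamma$ is the point that needs care. One should also handle the degenerate possibility that $\alpha,\beta$ already bound a common wall or are opposite, where the relation is vacuous or trivial, and note that the choice of $\Phi$ is forced by requiring the relations to close up consistently across all rank-$2$ residues — different residues of the same $B_2$-type must agree on reduced-vs-non-reduced, which follows from the connectedness of the Dynkin diagram and the fact that the thick structure is global.
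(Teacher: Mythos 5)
Your proposal is correct and follows exactly the route the paper sketches: the paper does not give a detailed proof but rather states (in the paragraph preceding the theorem) that one reduces to rank-two links, where the commutator relations are those of Moufang polygons from Tits' work, and transports them back through the canonical isomorphisms $U_\alpha\cong U_{\Sigma_\sigma\alpha}$ established in the Root Groups subsection (with a pointer to Timmesfeld for details). Your elaboration of the coherence of these restriction maps --- in particular that restriction to $\Sigma_\sigma B$ is a single group homomorphism on the stabilizer of $\sigma$, hence respects all the commutators at once --- is precisely the detail the paper leaves implicit.
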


\begin{remark}
As an application of our results we will see that we can define $U_{2\alpha}\subset U_\alpha$
geometrically as the pointwise stabilizer of the ball of radius $\pihalf$ containing $\alpha\subset B$.
(See Proposition~\ref{prop:pihalfball}.)
\end{remark}

\subsubsection{Parabolic and unipotent subgroups}

For the rest of this section, 
let $B$ be an irreducible spherical building of dimension $\geq 2$. We denote
with $G=G_B$ the group of isometries generated by the root groups of $B$.
Then $G$ acts transitively on pairs $(\sigma, A)$, where $\sigma$ is a chamber
contained in the apartment $A\subset B$.
Moreover, $G$ is normal in $Isom_0(B)$ and $Isom_0(B) = G\cdot \hat H$, where
$\hat H = Fix_{Isom(B)}(A)$ is the pointwise stabilizer in $Isom_0(B)$ of an 
apartment $A\subset B$ (see e.g.\
\cite[3.15]{Timmesfeld:lie-type}, \cite[Thm.\ 11.36]{Weiss:sphbuild}).

Let $\sigma\subset B$ be a Weyl chamber and $A\subset B$ an apartment containing $\sigma$.
The set of {\em positive roots} $\Lambda_+$ with respect to $(\sigma, A)$ is the set
of roots contained in $A$ containing $\sigma$. For a root $\alpha\subset A$, 
we denote with $-\alpha$ the other root in $A$ with the same boundary wall as $\alpha$.
Let now $\tau\subset \sigma$ be a face. 
We call the stabilizer $P_\tau:=Stab_G(\tau)$ of $\tau$ in $G$ the 
{\em parabolic subgroup} associated to $\tau$. We denote the group
$U_\tau:=\langle U_\alpha \;|\; \tau\subset \alpha,\, \tau\not\subset (-\alpha) \rangle 
\subset P_\tau$ 
the {\em unipotent subgroup} associated to $\tau$. The unipotent subgroup
is independent of the chosen apartment $A$.
Notice that $U_\sigma=\langle U_\alpha \;|\; \alpha\in\Lambda_+ \rangle $.
The unipotent subgroup $U_\sigma$ acts simply transitively on the apartments 
containing $\sigma$.
Further, let 
$L_\tau = \langle U_\alpha, U_{-\alpha}\;|\; \tau\subset \alpha\cap(-\alpha)\rangle
\subset P_\tau$. Observe that $L_\sigma$ is trivial.
Finally, 
let $\hat P_\tau:=Stab_{Isom_0(B)}(\tau)$ be the stabilizer of $\tau$ in $Isom_0(B)$.
We have the following result relating these subgroups (see \cite[3.12]{Timmesfeld:lie-type}).

\begin{proposition}\label{prop:parabgp}
 For a face $\tau$ of a Weyl chamber $\sigma$ in an apartment $A\subset B$ holds
\begin{enumerate}[(i)]
 \item $U_\tau$ is normal in $\hat P_\tau$;
 \item $P_\tau = U_\tau L_\tau H$, in particular, $P_\sigma = U_\sigma H$;
 \item $\hat P_\tau = U_\tau L_\tau \hat H$, in particular, 
$\hat P_\sigma = U_\sigma \hat H$,
\end{enumerate}
where $H=Fix_{G}(A)$ and $\hat H = Fix_{Isom(B)}(A)$.
\end{proposition}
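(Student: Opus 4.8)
The plan is to reduce everything to the rank-one and rank-two situation via the link machinery developed in the preceding subsections, and then invoke the structure theory of Moufang polygons. First I would fix the apartment $A$, the chamber $\sigma$, the face $\tau\subset\sigma$, and the root system $\Phi$ furnished by Theorem~\ref{thm:commutator}. The starting point is the well-known decomposition $Isom_0(B)=G\cdot\hat H$ with $G$ acting transitively on pairs (chamber, apartment), together with the simply transitive action of $U_\sigma$ on apartments containing $\sigma$; these are quoted in the excerpt. From this, a Bruhat-type argument gives $G=\bigcup_{w\in W}B_\sigma\, w\, B_\sigma$ where $B_\sigma=P_\sigma$, and more generally the parabolic $P_\tau$ is the union of those double cosets indexed by the parabolic subgroup $W_\tau=Stab_W(\tau)$. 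The first real step is therefore to prove (ii) in the special case $\tau=\sigma$, i.e.\ $P_\sigma=U_\sigma H$: an element $g\in P_\sigma$ fixes $\sigma$, hence fixes some apartment $A'$ through $\sigma$; by simple transitivity pick the unique $u\in U_\sigma$ with $uA=A'$, then $u^{-1}g$ fixes $A$ pointwise, so $u^{-1}g\in H$. The same argument with $Isom_0(B)$ in place of $G$ gives $\hat P_\sigma=U_\sigma\hat H$, which is the ``in particular'' clause of (iii).

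Next I would handle the general face $\tau$. The clean way is induction on $\dim B$ using the link $\Si_\tau B$, which by the excerpt is again an irreducible spherical building (when $B$ is irreducible and $\dim\ge 2$; the reducible link case splits as a join and is handled factor by factor), and whose root groups are canonically identified with the $U_\alpha$ for roots $\alpha$ with $\tau\subset\alpha$, $\tau\not\subset-\alpha$ — this is exactly the content of the restriction homomorphism discussion. Under this identification $U_\tau$ becomes the full unipotent radical $U_{\bar\sigma}$ of a chamber $\bar\sigma=\Si_\tau\sigma$ in the link, and $L_\tau$ becomes the group $G_{\Si_\tau B}$ generated by all root groups of the link. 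So (ii) for $\tau$ in $B$ is equivalent to the statement $P_{\bar\sigma}=U_{\bar\sigma}\cdot G_{\Si_\tau B}\cdot H$ in the building $\Si_\tau B$, combined with the already-proven $P_{\bar\sigma}=U_{\bar\sigma}H_{\Si_\tau B}$ for a chamber. One has to be a little careful: $L_\tau$ is generated by the $U_\alpha,U_{-\alpha}$ with $\tau\subset\alpha\cap(-\alpha)$, i.e.\ the roots whose wall contains $\tau$, and these are precisely the roots of $\Si_\tau B$; so $L_\tau=G_{\Si_\tau B}$ and $P_\tau=U_\tau L_\tau H$ follows from the (base case, lower-dimensional) statement $Isom_0=G\cdot\hat H$ applied inside $\Si_\tau B$ together with the transitivity of $L_\tau$ on chambers $\Si_\tau$-opposite configurations. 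Claim (iii) is identical with $\hat H$ replacing $H$.

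For the normality statement (i), that $U_\tau\trianglelefteq\hat P_\tau$, I would argue directly from the commutator relations of Theorem~\ref{thm:commutator} together with (iii). Write an arbitrary element of $\hat P_\tau$ as $u\ell h$ with $u\in U_\tau$, $\ell\in L_\tau$, $h\in\hat H$; since $U_\tau$ is a group it suffices to check that $L_\tau$ and $\hat H$ normalize $U_\tau$. For $\hat H$ this is immediate: $\hat H$ fixes $A$ pointwise, hence permutes roots of $A$ and in fact fixes each root (it is type-preserving), so it normalizes each $U_\alpha$ and hence $U_\tau$. For $L_\tau$: an element of $L_\tau$ is a product of $U_{\pm\alpha}$ with the wall of $\alpha$ through $\tau$, and conjugating $U_\beta$ (with $\tau\subset\beta$, $\tau\not\subset-\beta$) by such a $U_{\pm\alpha}$ stays inside $\langle U_\gamma: \gamma=a(\pm\alpha)+b\beta\in\Phi\rangle$ by the commutator relations; every such $\gamma$ still satisfies $\tau\subset\gamma$, $\tau\not\subset-\gamma$ because $\tau$ lies in the wall of $\alpha$ and on the positive side of $\beta$, so the conjugate lands back in $U_\tau$. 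The main obstacle I anticipate is getting the bookkeeping of positive roots right in this last step — precisely, verifying that $\tau\subset\gamma$ and $\tau\not\subset(-\gamma)$ are preserved under $\gamma=a\alpha+b\beta$ when $\alpha$ ranges over roots whose wall contains $\tau$; this is a combinatorial check about cones in the Coxeter complex but it is where an incautious argument could break, especially in the non-reduced $BC_n$ case where one must additionally track the subgroup $U_{2\alpha}\subset Z(U_\alpha)$ from Theorem~\ref{thm:commutator}. Everything else is a matter of assembling the Bruhat decomposition and the link identifications already set up in the paper.
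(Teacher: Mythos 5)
The paper does not prove Proposition~\ref{prop:parabgp} itself; it quotes it from \cite[3.12]{Timmesfeld:lie-type}. Evaluating your attempt on its own merits, the central reduction step contains a genuine error.

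The claim that under restriction to $\Si_\tau B$ the group $U_\tau$ ``becomes the full unipotent radical $U_{\bar\sigma}$ of a chamber $\bar\sigma=\Si_\tau\sigma$ in the link'' is false. $U_\tau$ is generated by the $U_\alpha$ for roots $\alpha$ with $\tau$ in the \emph{interior} of $\alpha$ (i.e.\ $\tau\subset\alpha$, $\tau\not\subset\partial\alpha$). Such a root element fixes the whole star $St_\tau B$ and therefore acts \emph{trivially} on $\Si_\tau B$; in other words $U_\tau$ lies in the kernel of the restriction homomorphism $\hat P_\tau\to Isom(\Si_\tau B)$. The unipotent radical $U_{\bar\sigma}$ inside the link, by contrast, is built from root groups of $\Si_\tau B$, and those correspond under the restriction maps to roots $\alpha$ of $B$ with $\tau\subset\partial\alpha$ — which are precisely the generators of $L_\tau$, not of $U_\tau$. (You in fact say this correctly in the very next sentence when you identify $L_\tau$ with the group generated by the link's root groups, which is internally inconsistent with the claimed identification of $U_\tau$.) Consequently the asserted equivalence of (ii) with ``$P_{\bar\sigma}=U_{\bar\sigma}\cdot G_{\Si_\tau B}\cdot H$ in $\Si_\tau B$'' is not set up correctly: as written, $U_{\bar\sigma}\subset G_{\Si_\tau B}$, so the right-hand side is just $G_{\Si_\tau B}\cdot H$, and the induction does not close.

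The standard route to (ii)--(iii), and I expect the one in \cite[3.12]{Timmesfeld:lie-type}, avoids the link at this stage. One first shows that $U_\tau$ acts simply transitively on the set of faces opposite to $\tau$ (your $\tau=\sigma$ base case is exactly this statement for chambers, since apartments through $\sigma$ correspond bijectively to chambers opposite $\sigma$). Given $g\in\hat P_\tau$, take the unique $u\in U_\tau$ with $u\breve\tau=g\breve\tau$, where $\breve\tau$ is the face opposite $\tau$ in $A$; then $u^{-1}g$ stabilizes the opposite pair $(\tau,\breve\tau)$, hence fixes pointwise the singular sphere $s$ they span, and one peels off the $L_\tau\hat H$-part from the action of $u^{-1}g$ on $\Si_s B$. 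Your base case $\tau=\sigma$ is essentially right (the sentence ``fixes some apartment $A'$ through $\sigma$'' should read ``carries $A$ to some apartment $A'$ through $\sigma$''), and your verification of (i) via the commutator relations is sound as far as it goes; but (i) is derived from (iii), so the gap above propagates.
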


\begin{remark}\label{rem:unipotentpart}
The product decomposition $g = ulh \in\hat P_\tau= U_\tau L_\tau \hat H $
depends on the apartment $A$. 
We can read off the factor 
$u\in U_\tau$ from the action of $g$ on $A$ as follows. Let $\breve{\tau} \subset A$ be the face
in $A$ antipodal to $\tau$.
By the definition of $U_\tau$ we see that $St_\tau(A) \subset Fix(u)$. Hence, the convex hull of
$g\breve\tau = ulh\breve\tau = u \breve\tau$ and $St_\tau(A)$ is the apartment $A'=uA$.
Then $u$ is the unique element in $U_\tau\subset U_\sigma$ mapping the apartment $A$ 
to $CH(St_\tau(A), g\breve\tau)$.
\end{remark}

Consider now a unipotent isometry $g$, that is, $g\in U_\sigma$ for some chamber 
$\sigma \subset B$. Let again $A\subset B$ be an apartment containing $\sigma$ and
let $\hat\sigma\subset A$ be the chamber in $A$ antipodal to $\sigma$.
Let $\Gamma=(\sigma_0=\sigma, \sigma_1,\dots,\sigma_d=\hat \sigma)$
be a minimal gallery between $\sigma$ and $\hat\sigma$, that is, a sequence of chambers
of minimal length such that $\sigma_i\cap\sigma_{i+1}$ is a panel. The chambers 
$\sigma_i$ must be all contained in $A$. Then there is a unique representation
$g=g_1\dots g_d$ as the product of $g_i\in U_i:=U_{\alpha_i}$, where
$\alpha_i\subset A$ is the positive root such that  the panel $\sigma_{i-1}\cap\sigma_i$
is contained in $\partial\alpha_i$ (see \cite[Prop.\ 11.11]{Weiss:sphbuild}).
We can say more about this product representation of $g$ if we consider its fixed point set,
cf.\ Proposition~\ref{prop:coordunipotent}.

Let $K\subset A$ be a proper top-dimensional convex subcomplex.
Recall the definitions of the sets of roots $\Lambda_K^{min}\subset \Lambda_K$
in Section~\ref{sec:subcomplex}.

\begin{proposition}\label{prop:coordunipotent}
 Let $g\in U_\sigma$ be a unipotent element and let 
$A\subset B$ be an apartment containing $\sigma$.
Then
$g\in U_{\Lambda_{Fix(g)\cap A}}:=\langle U_\alpha \;|\; \alpha \in \Lambda_{Fix(g)\cap A} \rangle$.
More precisely, if $g=g_1\dots g_d$ is the product representation with
respect to the minimal gallery $\Gamma$ (cf.\ above), 
then $g_j=1$ if $\alpha_j \notin \Lambda_{Fix(g)\cap A}$.
Moreover, if $\alpha_i\in\Lambda_{Fix(g)\cap A}^{min}$ then
we can read off the $i$-coordinate $g_i$ from the action of $g$ on $\Sigma_\mu B$,
where $\mu$ is any panel contained in $\partial\alpha_i\cap Fix(g)$.
In particular, the $i$-coordinate $g_i$ is independent of the chosen minimal gallery 
$\Gamma$.
\end{proposition}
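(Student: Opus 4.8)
The plan is to analyze the product representation $g=g_1\cdots g_d$ along a minimal gallery $\Gamma=(\sigma_0,\dots,\sigma_d)$ from $\sigma$ to $\hat\sigma$, and track which chambers of the gallery lie in $Fix(g)\cap A$. First I would set up the standard induction on the length of the gallery: recall that each partial product $h_j := g_1\cdots g_j$ maps the apartment $A$ to an apartment $A_j$ agreeing with $A$ on the ``half'' cut out by the walls $\partial\alpha_1,\dots,\partial\alpha_j$, and that $g_j$ is determined by the action of $g$ on the panel $\sigma_{j-1}\cap\sigma_j$ together with the knowledge of $g_1,\dots,g_{j-1}$ (this is \cite[Prop.\ 11.11]{Weiss:sphbuild}, quoted above). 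The key point is the interaction of $Fix(g)$ with the gallery: I claim that if $\alpha_j\notin\Lambda_{Fix(g)\cap A}$, meaning the root $\alpha_j$ does \emph{not} contain $Fix(g)\cap A$, then in fact $g_j=1$. To prove this, note that $g$ fixes every chamber of $Fix(g)\cap A$ pointwise; if $\alpha_j$ does not contain $Fix(g)\cap A$, there is a chamber $\rho\subset Fix(g)\cap A$ with $\rho\not\subset\alpha_j$, and one shows that the partial products $g_1,\dots,g_{j-1}$ (each lying in a root group $U_{\alpha_i}$ with $\alpha_i\in\Lambda_{Fix(g)\cap A}$, by induction) already fix $\rho$, so $g_j$ must fix $\rho$ as well; but $g_j\in U_{\alpha_j}$ fixes $\rho$ only when $\rho\subset\alpha_j$, a contradiction — hence $g_j=1$. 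Running this argument inductively along $j=1,\dots,d$ simultaneously establishes both $g_j=1$ whenever $\alpha_j\notin\Lambda_{Fix(g)\cap A}$ and the inclusion $g\in U_{\Lambda_{Fix(g)\cap A}}$.

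Next I would address the ``read-off'' statement for roots $\alpha_i\in\Lambda_{Fix(g)\cap A}^{min}$. Fix such an $i$ and a panel $\mu\subset\partial\alpha_i\cap Fix(g)$; since $\alpha_i\in\Lambda_{Fix(g)\cap A}^{min}$, the root $-\alpha_i$ meets $Fix(g)\cap A$ in codimension one, so such a panel $\mu$ exists and has chambers of $Fix(g)\cap A$ on \emph{both} sides of it within $A$. The idea is that the canonical restriction isomorphism $U_{\alpha_i}\xrightarrow{\ \sim\ }U_{\Sigma_\mu\alpha_i}$ (from the subsection on root groups above, using that $B$ and $\Sigma_\mu B$ are irreducible of dimension $\geq 1$) intertwines the action of $g$-coordinates with the induced action on $\Sigma_\mu B$. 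Concretely, I would reorder the minimal gallery so that the panel-crossing corresponding to $\alpha_i$ comes last among the crossings that ``see'' $\mu$, using the standard fact that any two reduced words / minimal galleries between $\sigma$ and $\hat\sigma$ are related by elementary homotopies, combined with the commutator relations (Theorem~\ref{thm:commutator}): moving $g_i$ past the other factors only changes them by elements of root groups $U_\gamma$ with $\gamma$ in the span of the roots involved, and those all still fix the relevant part of $Fix(g)$. Passing to the link $\Sigma_\mu B$, all the other coordinates either restrict trivially or fix $\Sigma_\mu\alpha_i$, so the induced isometry $g|_{\Sigma_\mu B}$ acts as precisely the image of $g_i$; inverting the restriction isomorphism recovers $g_i$, and since $\Sigma_\mu B$ and $\mu$ depend only on $Fix(g)$ and not on $\Gamma$, the coordinate $g_i$ is gallery-independent.

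The main obstacle I anticipate is the bookkeeping in the reordering/commutator step: one must check that when $g_i$ is commuted to a convenient position in the product, the ``error terms'' produced by the commutator relations — which a priori land in root groups $U_\gamma$ for $\gamma=a\alpha_j+b\alpha_i$ — do not spoil the identification in the link $\Sigma_\mu B$. The way around this is to observe that any such $\gamma$ is either again a root in $\Lambda_{Fix(g)\cap A}$ (so its root-group elements fix $Fix(g)\cap A$, hence restrict to something fixing $\Sigma_\mu\alpha_i$), or its boundary wall does not pass through $\mu$ at all (so it restricts trivially to $\Sigma_\mu B$); in either case the error terms are invisible in $\Sigma_\mu B$. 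A secondary, more routine point is verifying that the partial products $g_1\cdots g_{j-1}$ genuinely fix a chamber $\rho\subset Fix(g)\cap A$ not in $\alpha_j$ — this follows because each $g_i$ with $i<j$ lies in $U_{\alpha_i}$ for $\alpha_i\in\Lambda_{Fix(g)\cap A}$, hence fixes all of $Fix(g)\cap A$ pointwise, so their product does too. Once these verifications are in place, the proposition follows.
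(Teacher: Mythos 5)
The first half of your proof has a genuine gap. You argue by forward induction along $j=1,\dots,d$: assuming $g_1,\dots,g_{j-1}$ fix the chamber $\rho\subset Fix(g)\cap A$ with $\rho\not\subset\alpha_j$, you write ``so $g_j$ must fix $\rho$ as well.'' That step does not follow. From $g\rho=\rho$ and $g_1\cdots g_{j-1}\rho=\rho$ you only get $g_j\cdots g_d\,\rho=\rho$; to peel off $g_j$ you would also need to know that $g_{j+1}\cdots g_d$ fixes $\rho$, and nothing in a forward induction supplies that --- those later factors are exactly the ones you have not yet examined. Running the induction downward from $j=d$ gives you control of the \emph{later} factors but not the earlier ones, so the same problem appears on the other side. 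The factors $g_j$ do not commute, and that is precisely what blocks the naive argument.

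The paper's proof faces this difficulty squarely. It picks the \emph{largest} bad index $k$, so that $g_{k+1},\dots,g_d\in U_{\Lambda_{Fix(g)\cap A}}$ fix $\nu$, yielding $g_1\cdots g_k\nu=\nu$ while $g_k\nu=\nu'\neq\nu$, $\nu'\not\subset A$. The hard part is that $g_1,\dots,g_{k-1}$ are still uncontrolled; the paper resolves this with a second, auxiliary minimal gallery $\Psi$ from $\nu$ to $\sigma$ and a careful descending induction on $l$ tracking the length $s_l$ of the overlap $g_l\cdots g_k\Psi\cap\Psi$, culminating in the contradiction $0=r_1\geq r_k=1$. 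This gallery bookkeeping is the real content of the proposition and is missing from your argument. Without it the claim $g_j=1$ (for $\alpha_j\notin\Lambda_{Fix(g)\cap A}$) is not established, and the rest of your proof, including the claim $g\in U_{\Lambda_{Fix(g)\cap A}}$, inherits the gap.

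Your treatment of the ``read-off'' statement is essentially correct but unnecessarily heavy. You reorder the gallery via elementary homotopies and then control the commutator error terms; the key observation you make --- that roots $\gamma=a\alpha_j+b\alpha_i$ with $a\geq1$ have $\mu$ interior to $\gamma$, so $U_\gamma$ is invisible in $\Sigma_\mu B$ --- already shows that \emph{no} reordering is needed. The paper uses exactly that observation directly: for $j\neq i$, $g_j$ fixes $St_\mu B$ pointwise (either because $g_j=1$, or because $\alpha_j\in\Lambda_{Fix(g)\cap A}$ forces $\mu$ into the interior of $\alpha_j$, the boundary walls being distinct), and hence $g\omega=g_i\omega$ for the chamber $\omega$ with $\omega\cap(Fix(g)\cap A)=\mu$. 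This is both shorter and sidesteps the question of whether your elementary-homotopy reordering genuinely preserves the stated factorization, which you have not verified. Note also that the second part presupposes the first part of the proposition, so the gap there must be repaired before the read-off argument can even start.
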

\begin{proof}
 Let $k\geq 1$ be the largest number 
such that $g_k\notin U_{\Lambda_{Fix(g)\cap A}}$. In particular,
$\alpha_k\notin\Lambda_{Fix(g)\cap A}$ and $g_k\neq 1$. 
It follows that there exists a chamber $\nu\subset Fix(g)\cap A$
such that $\nu\cap\alpha_k$ is a panel. 
This implies that $g_k\dots g_d \nu = g_k\nu =: \nu'\not\subset A$. 
Let $\Psi = (\nu_0=\nu,\nu_1,\dots,\nu_r=\sigma)$ be a minimal gallery with 
$\nu\cap\nu_1 \subset \partial \alpha_k$. 
Since $g_1\dots g_{k} \sigma =\sigma$ and $g_1\dots g_{k}\nu = g\nu =\nu$
we deduce that $g_1\dots g_{k} \Psi =\Psi$.

For $l=1,\dots,k$ let $s_l$ be the length of the chain $g_l\dots g_{k}\Psi\cap\Psi$
and write $r_l:=r-s_l$.
Observe that $g_l\dots g_{k}\Psi\cap\Psi = (\nu_{r_l},\dots, \nu_r)$.
We have just seen that $r_1=0$ and since $g_k\Psi = (\nu',\nu_1,\dots\nu_r)$, 
we have $r_k = 1$.
We show now inductively that $\nu_{r_l-1}\cap \nu_{r_l} \subset \partial \alpha_t$
for some $t\geq l$ and in the way we also show that $r_{l-1}\geq r_l$.
This yields a contradiction to $r_1=0,r_k=1$. 
Let us prove the claim. As induction basis we take
$l=k$. In this case we have $r_k=1$ and 
$\nu_0\cap\nu_1 \subset \partial \alpha_k$. 
For the induction step let us consider $l-1$.
If $\alpha_{l-1}$ contains the chamber $\nu_{r_l-1}$ then it also contains 
$\nu_{r_l}$ and the isometry $g_{l-1}\in U_{l-1}$ must fix 
$g_l\dots g_{k}\nu_{r_l-1}\not\subset A$. 
In this case, it follows that $r_{l-1}=r_l$
and $\nu_{r_{l-1}-1}\cap \nu_{r_{l-1}} = \nu_{r_l-1}\cap \nu_{r_l} 
\subset \partial \alpha_t$ with $t\geq l > l-1$ by induction.
If $\alpha_{l-1}$ does not contain the chamber $\nu_{r_l-1}$, then
there is a $m$ with $r_l-1\leq m<r$ such that $\nu_m\not\subset \alpha_{l-1}$
and $\nu_{m+1}\subset\alpha_{l-1}$, in particular,
$\nu_m\cap\nu_{m+1}\subset \partial\alpha_{l-1}$.
By induction $\nu_{r_l-1}\cap \nu_{r_l}\subset \partial \alpha_t$ with $t\geq l$, thus,
$r_l\leq m$.
It follows that $g_{l-1}g_l\dots g_{k}\nu_m = g_{l-1}\nu_m \neq \nu_m$ and therefore
$g_{l-1}g_l\dots g_{k}\Psi\cap\Psi= (\nu_{m+1},\dots, \nu_r)$.
In this case, it follows that $r_{l-1}=m+1>r_l$ and
$\nu_{r_{l-1}-1}\cap \nu_{r_{l-1}} = \nu_{m+1}\cap \nu_{m} 
\subset \partial \alpha_{l-1}$. 
This proves the claim and the first assertion of the 
proposition, that is, 
$g_k\in U_{\Lambda_{Fix(g)\cap A}}$ for all $k=1,\dots,d$ and $g\in U_{\Lambda_{Fix(g)\cap A}}$.

For the second assertion, let $\omega$ be the chamber in $A$ such that
$\omega\cap (Fix(g)\cap A) = \mu$. Then $\alpha_i$ is the only root
in $\Lambda_{Fix(g)\cap A}$ that does not contain $\omega$. 
The first part of the proposition implies that if $j\neq i$, then
$g_j$ fixes every chamber in $B$ having $\mu$ as a face.
Hence, $g\omega = g_1\dots g_i\dots g_d \omega = 
(g_1\dots g_{i-1})g_i \omega = g_i\omega$.
Therefore $g_i\in U_{\alpha_i}$ is the unique element of the root group $U_{\alpha_i}$
sending the apartment $A$ to the unique apartment containing $\alpha_i \cup g\omega$.
\end{proof}

\begin{corollary}\label{cor:fixunipotent}
 Let $g\in U_\sigma$ be a unipotent isometry. Then $g\in U_{\sigma'}$ for all chambers
$\sigma'\in Fix(g)$.
\end{corollary}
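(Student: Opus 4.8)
The plan is to deduce everything from Proposition~\ref{prop:coordunipotent} by producing a single apartment containing both $\sigma$ and the prescribed chamber $\sigma'\subset Fix(g)$, and then comparing the relevant generating sets of root groups.

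First I would fix a chamber $\sigma'\subset Fix(g)$ and choose an apartment $A\subset B$ containing both $\sigma$ and $\sigma'$. Concretely, pick points $p$ in the interior of $\sigma$ and $q$ in the interior of $\sigma'$; by the building axioms there is an apartment through $p$ and $q$, and since an apartment is a subcomplex it must then contain the chambers $\sigma$ and $\sigma'$ themselves. (If one prefers, this is just the standard fact that any two chambers of a building lie in a common apartment.) I expect the only point here requiring a word of care is precisely this passage from ``contains interior points'' to ``contains the chambers'', which is immediate once one recalls that apartments are subcomplexes and chambers are maximal faces.

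Next I would analyze $K:=Fix(g)\cap A$. Since $Fix(g)$ is a convex subcomplex of $B$ and $A$ is an apartment, $K$ is a convex subcomplex of the sphere $A$; as it contains the chamber $\sigma'$, it is top-dimensional, so by the remark at the end of Section~\ref{sec:subcomplex} the set $\Lambda_K$ consists of roots of $A$. Every $\alpha\in\Lambda_K$ satisfies $\sigma'\subset K\subset\alpha$, and because $\sigma'$ is top-dimensional it is not contained in the wall $\partial\alpha$, hence not in $-\alpha$; therefore each such $U_\alpha$ is one of the generators of $U_{\sigma'}=\langle U_\alpha \mid \sigma'\subset\alpha,\ \sigma'\not\subset(-\alpha)\rangle$. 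Consequently $U_{\Lambda_K}=\langle U_\alpha \mid \alpha\in\Lambda_K\rangle\subset U_{\sigma'}$.

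Finally, Proposition~\ref{prop:coordunipotent} applied to $g$ and the apartment $A$ gives $g\in U_{\Lambda_{Fix(g)\cap A}}=U_{\Lambda_K}\subset U_{\sigma'}$, which is the assertion. The only degenerate situation is $K=A$, i.e.\ $g$ stabilizing $A$ setwise; since $U_\sigma$ acts simply transitively on the apartments containing $\sigma\subset A$, this forces $g=1$, and $1\in U_{\sigma'}$ trivially. No genuine obstacle is expected: all the real work is contained in Proposition~\ref{prop:coordunipotent}, and the corollary is essentially a bookkeeping argument about which roots of a chosen apartment contain $\sigma'$.
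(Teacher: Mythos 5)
Your proof is correct and follows essentially the same route as the paper: take an apartment $A$ containing both $\sigma$ and $\sigma'$, apply Proposition~\ref{prop:coordunipotent} to get $g\in U_{\Lambda_{Fix(g)\cap A}}$, and observe that this group is contained in $U_{\sigma'}$. The paper states the key inclusion $U_{\Lambda_{Fix(g)\cap A}}\subset U_\sigma\cap U_{\sigma'}$ without comment; your write-up simply unpacks why each generator $U_\alpha$ with $\alpha\in\Lambda_{Fix(g)\cap A}$ is among the defining generators of $U_{\sigma'}$, and covers the degenerate case $Fix(g)\cap A=A$, both of which are fine but omitted as obvious in the paper.
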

\begin{proof}
 Take an apartment $A$ containing $\sigma$ and $\sigma'$. 
Then $g\in U_{\Lambda_{Fix(g)\cap A}}\subset U_\sigma\cap U_{\sigma'}$ 
by Proposition~\ref{prop:coordunipotent}.
\end{proof}

\begin{corollary}\label{cor:star}
 Let $g$ be a unipotent isometry. Then whenever
$\tau\subset Fix(g)$ is a panel not contained in the boundary of $Fix(g)$, it holds
$St_\tau B \subset Fix(g)$.
\end{corollary}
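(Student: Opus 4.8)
The plan is to reduce the statement to Corollary~\ref{cor:fixunipotent} via a dimension count on the link of the panel $\tau$. Let $\tau\subset Fix(g)$ be a panel which is not contained in $\partial Fix(g)$; we must show every chamber of $B$ having $\tau$ as a face is fixed by $g$. First I would pick a chamber $\sigma'\subset Fix(g)$ having $\tau$ as a face — this exists because $\tau\notin\partial Fix(g)$, so $\Sigma_\tau Fix(g)$ is a subbuilding of the $0$-dimensional building $\Sigma_\tau B$ and hence, being a nonempty convex subset, contains at least one point (one chamber of $B$ through $\tau$); in fact since it is a subbuilding of a $0$-dimensional building it contains at least two. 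By Corollary~\ref{cor:fixunipotent}, $g\in U_{\sigma'}$. Now the defining property of the root group is the key tool: recall $U_{\sigma'}=\langle U_\alpha\mid \alpha\in\Lambda_+\rangle$ where $\Lambda_+$ are the positive roots for $(\sigma',A)$ in a chosen apartment $A\supset\sigma'$, and each $U_\alpha$ fixes pointwise every chamber $\rho$ with $\rho\cap\alpha$ a panel not lying in the boundary wall of $\alpha$.

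The heart of the argument is the following observation about links at $\tau$. Passing to $\Sigma_\tau B$, which is a $0$-dimensional spherical building (a "generalized $0$-gon", i.e.\ a discrete set with at least two points playing the role of an apartment), the chambers of $B$ through $\tau$ correspond to the points of $\Sigma_\tau B$, and $g$ induces an isometry $g_\tau$ of $\Sigma_\tau B$. The chamber $\sigma'$ corresponds to a point $p\in\Sigma_\tau B$ fixed by $g_\tau$; I want to show $g_\tau=\mathrm{id}$. For this I would use Proposition~\ref{prop:coordunipotent}: choosing $A\supset\sigma'$ and writing $g=g_1\cdots g_d$ along a minimal gallery from $\sigma'$ to the antipodal chamber, the proposition tells us $g_j=1$ whenever $\alpha_j\notin\Lambda_{Fix(g)\cap A}$. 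Since $\tau\subset Fix(g)$ and $\tau\notin\partial Fix(g)$, every root $\alpha\in\Lambda_{Fix(g)\cap A}$ that is "active" near $\tau$ — i.e.\ whose boundary wall passes through $\tau$ — actually must contain $\tau$ in its interior wall in a way forcing, via Corollary~\ref{cor:star}'s hypothesis, that the associated $g_j$ fixes all of $St_\tau(A)$ and more. Concretely: a root $\alpha$ with $\partial\alpha\ni\tau$ contributes a nontrivial element of $U_{\Sigma_\tau\alpha}$ in $\Sigma_\tau B$ only if its wall $\partial\alpha$ separates $\sigma'$ from some chamber through $\tau$; but $\tau\notin\partial Fix(g)$ means $Fix(g)$ is not cut by any such wall near $\tau$, so by Proposition~\ref{prop:coordunipotent} the corresponding coordinate vanishes.

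Therefore $g_\tau$ fixes $\Sigma_\tau B$ pointwise, which says precisely $St_\tau B\subset Fix(g)$. I expect the main obstacle to be the bookkeeping in the previous paragraph: making precise that "$\tau\notin\partial Fix(g)$" forces every root in $\Lambda_{Fix(g)\cap A}$ whose wall contains $\tau$ to in fact contain $St_\tau(A)$ (equivalently, $Fix(g)\cap A$ is a neighborhood of $\tau$ inside $A$), so that all the relevant $g_j$ either are trivial or lie in root groups $U_{\alpha_j}$ fixing all chambers through $\tau$. Once that neighborhood statement is in hand, every factor $g_j$ in the gallery expansion fixes every chamber of $B$ containing $\tau$, hence so does their product $g$, completing the proof.
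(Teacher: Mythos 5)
Your overall strategy follows the same route as the paper's: the essential step is to use Proposition~\ref{prop:coordunipotent} to write $g$ as a product of root elements $g_j\in U_{\alpha_j}$ with $\alpha_j\in\Lambda_{Fix(g)\cap A}$, and then to observe that every such $\alpha_j$ satisfies $\tau\not\subset\partial\alpha_j$, so that each $g_j$ (and hence $g$) fixes $St_\tau B$ by the very definition of a root group. Routing the last step through the $0$-dimensional link $\Sigma_\tau B$ and invoking Corollary~\ref{cor:fixunipotent} beforehand are harmless detours; the paper goes directly.

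There is, however, a real gap in the middle, which you half-acknowledge. The phrase that a root in $\Lambda_{Fix(g)\cap A}$ ``whose boundary wall passes through $\tau$ \dots must contain $\tau$ in its interior wall'' is contradictory (a root whose boundary contains $\tau$ cannot have $\tau$ interior), and the obstacle you identify is misstated: a root whose wall contains $\tau$ can never ``contain $St_\tau(A)$'', since $St_\tau(A)$ has one chamber on each side of that wall. What you actually need is that \emph{no} $\alpha\in\Lambda_{Fix(g)\cap A}$ has $\tau\subset\partial\alpha$, i.e.\ $\tau\notin\partial(Fix(g)\cap A)$. This is \emph{not} automatic for an arbitrary apartment $A\supset\sigma'$: the chamber of $A$ adjacent to $\sigma'$ across $\tau$ may fail to lie in $Fix(g)$ even though $\Sigma_\tau Fix(g)$ has other points (outside $A$); then $\tau$ sits in the boundary of $Fix(g)\cap A$, some $\alpha_j\in\Lambda_{Fix(g)\cap A}$ has $\tau\subset\partial\alpha_j$, and nothing forces the corresponding coordinate $g_j$ to vanish or to fix $St_\tau B$. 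The fix is to choose $A$ more carefully: you already observed that $\Sigma_\tau Fix(g)$ has at least two points, so pick two distinct chambers $\sigma',\sigma''\subset Fix(g)$ through $\tau$ and an apartment $A$ containing both (equivalently, use Lemma~\ref{lem:supportingapt} to choose $A$ supporting $Fix(g)$ and containing $\tau$). Then $St_\tau(A)\subset Fix(g)\cap A$, so every root in $\Lambda_{Fix(g)\cap A}$ contains $St_\tau(A)$ and hence has $\tau$ in its interior, and the rest of your argument goes through as in the paper.
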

\begin{proof}
The desired property follows from Proposition~\ref{prop:coordunipotent}
because $g$ is a product of root elements for roots $\alpha$ such that $\tau\not\subset \partial\alpha$
and each root element fixes $St_\tau B$ by definition.
\end{proof}

\section{Reducing to the irreducible case}

Let $B=B_1\circ \dots \circ B_n$ be the decomposition of the spherical building $B$
as a join of its irreducible components. 
Notice that some of the factors of $B$ may be isometric
and can be permuted by an isometry of $B$.

Let $g\in Isom(B)$ be an isometry and let $k\geq 1$ the smallest integer
with $g^k(B_1)=B_1$. Then $g$ induces an isometry of 
$B' = B_1\circ g(B_1)\circ\dots\circ g^{k-1}(B_1)\subset B$.
Let $x\in B'$ be a fixed point of $g$ and let $(a,x_0,\dots,x_{k-1})$ be its 
representation as element of the spherical join. Then $x=gx=\dots = g^{k-1}x$ implies
that $a_j=\frac{1}{\sqrt{k}}$ for $j=1,\dots,k$,
 $g^ix_0 = x_i$ for $i=0,\dots,k-1$ and $g^k x_0 = x_0$.
In particular, $x_0\in B_1$ is a fixed point of $g^k$.
It follows from Lemma~\ref{lemma:join} that the fixed point set of $g$ in $B'$
is isometric to the fixed point set of $g^k$ in $B_1$. This shows the following proposition,
which allows us to restrict our attention to irreducible spherical buildings.

\begin{lemma}
 Let $g$ be an isometry of a spherical building $B$. Then the fixed point set 
$Fix(g)\subset B$ decomposes as a spherical join, whose factors are isometric to fixed
point sets of isometries of irreducible spherical buildings. \qed
\end{lemma}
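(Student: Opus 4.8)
The statement is essentially a bookkeeping argument organizing the material developed just above it in the section. The plan is to let $B = B_1 \circ \dots \circ B_n$ be the decomposition of $B$ into irreducible factors (including a possible spherical factor). The isometry $g$ permutes these factors up to isometry; more precisely, $g$ permutes the factors $B_i$, since an isometry of a spherical building permutes the canonical join factors. First I would partition the index set $\{1,\dots,n\}$ into $g$-orbits: for each orbit $O$, pick a representative index, say $1$ after reindexing, and let $k = |O|$ be the smallest positive integer with $g^k(B_1) = B_1$. The sub-join $B' = B_1 \circ g(B_1) \circ \dots \circ g^{k-1}(B_1)$ is $g$-invariant, and the whole fixed point set $Fix(g)$ is the spherical join of the fixed point sets $Fix(g|_{B'})$ over all orbits $O$ (this uses that a join decomposition is preserved by $g$, so a point is fixed iff each of its join components lies in a $g$-invariant sub-join and is fixed there).

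The second step is the analysis of a single orbit, which is exactly the computation carried out in the paragraph preceding the statement: for $x \in B'$ with join-coordinates $(a, x_0, \dots, x_{k-1})$, the condition $gx = x$ forces $a_j = 1/\sqrt{k}$ for all $j$ (since $g$ cyclically permutes the join coordinates and $a$ must be $g$-invariant in $S^{k-1}_+$, whose only point invariant under a cyclic permutation is the barycenter), forces $x_i = g^i x_0$, and forces $x_0 \in B_1$ to be a fixed point of $g^k$. Conversely any fixed point $x_0$ of $g^k$ in $B_1$ yields a fixed point of $g$ in $B'$ via this formula. Invoking Lemma~\ref{lemma:join}, the diagonal-type map $x_0 \mapsto [((\tfrac{1}{\sqrt{k}},\dots,\tfrac{1}{\sqrt{k}}),(x_0, g x_0, \dots, g^{k-1} x_0))]$ is an isometric embedding from $Fix(g^k) \cap B_1$ onto $Fix(g) \cap B'$. (Strictly, Lemma~\ref{lemma:join} is stated for the diagonal of a single space, but the same computation with the distance formula for spherical joins shows this twisted diagonal is an isometry, since $d(g^i x_0, g^i x_0') = d(x_0, x_0')$ as $g$ is an isometry.) Hence $Fix(g)\cap B'$ is isometric to the fixed point set of the isometry $g^k$ of the irreducible building $B_1$.

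Finally, I would assemble these pieces: running over all $g$-orbits of the factors, we get $Fix(g) = \bigcirc_{O} (Fix(g)\cap B'_O)$, and each factor $Fix(g)\cap B'_O$ is isometric to a fixed point set of an isometry of an irreducible spherical building. That is precisely the claimed decomposition. A minor point to address is the spherical factor of $B$ (the factor of the join that is a round sphere rather than an irreducible building): if $g$ permutes copies of spheres, the same argument applies and the resulting fixed set is again a sphere, which one may regard as the fixed point set of the identity on an irreducible (degenerate) building, or simply absorb it into the spherical factor of $Fix(g)$; this is a harmless bookkeeping remark rather than a real difficulty.

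I do not expect any genuine obstacle here: the only step requiring minor care is verifying that the twisted diagonal map above is an honest isometry, which follows directly from the explicit distance formula for spherical joins together with the fact that $g$ acts isometrically on each factor; everything else is the orbit decomposition of the join factors under $g$, which was already set up in the preceding paragraph.
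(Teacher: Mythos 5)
Your proposal is correct and follows essentially the same route as the paper: decompose $B$ into irreducible join factors, group them into $g$-orbits, reduce the analysis of a single orbit of length $k$ to the fixed points of $g^k$ on a representative factor via the join-coordinate computation, and invoke Lemma~\ref{lemma:join} (correctly noting that the map is a ``twisted diagonal,'' which the distance formula for joins handles because $g$ is an isometry). Your write-up is somewhat more explicit than the paper's compressed paragraph --- in particular you spell out that one must range over all orbits and assemble the factors --- but the underlying argument is identical.
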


In particular, if $Fix(g)$ is not a subbuilding, then at least 
one of the factors given by the 
Proposition is not a subbuilding either. 
On the other hand, if one of these factors has circumradius 
$\leq \pihalf$, then the same is true for $Fix(g)$.

\section{Convex subcomplexes of spherical buildings}\label{sec:subcomplexbuilding}

Let $C\subset B$ be a convex subcomplex of a spherical building.
We say that an apartment $A \subset B$ {\em supports} $C$ 
if $\dim(C\cap A)=\dim C$ and $\partial(C\cap A) = \partial C\cap A$.

\begin{lemma}\label{lem:supportingapt}
 Let $C\subset B$ be a convex subcomplex and let 
$C'\subset C$ be a spherical convex subset. Then there is an apartment $A$
supporting $C$ such that $C'\subset A$. In particular, any two points in $C$ are
contained in an apartment supporting $C$.
\end{lemma}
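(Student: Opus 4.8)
\emph{The last clause is a special case of the first:} given $p,q\in C$, set $C':=\CH(\{p,q\})$, which is the segment $pq$ when $d(p,q)<\pi$ and the (vacuously convex, and obviously spherical) two‑point set $\{p,q\}$ otherwise. In either case $C'$ is a spherical convex subset of $C$, and an apartment supporting $C$ with $C'\subset A$ contains $p$ and $q$. So it suffices to prove the first assertion.

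\emph{Plan for the first assertion.} I would obtain $A$ by extending a maximal ``inscribed'' piece. First note that a convex subcomplex is pure of dimension $k:=\dim C$: any face of $C$ and any $k$‑face of $C$ lie in a common apartment $A_0$, and $C\cap A_0$ is a convex subcomplex of the Coxeter complex $A_0$, hence pure, so every face of $C$ lies in a $k$‑face. Let $\mathcal D$ be the set of spherical convex subcomplexes $D$ with $C'\subset D\subset C$. I claim $\mathcal D$ has a maximal element: along a chain the dimension is non‑decreasing and bounded by $k$, hence eventually constant equal to some $d\le k$; once constant, all members of the chain are $d$‑dimensional convex subcomplexes, and since such a subcomplex spans a well‑defined singular $d$‑sphere (unique when $d<\dim B$) they all lie in a single singular $d$‑sphere $s$, which is a finite Coxeter complex, so the chain stabilizes (the case $d=\dim B$ is handled the same way inside a fixed apartment). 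Zorn then gives a maximal $D\in\mathcal D$; let $s$ be a singular sphere of dimension $\dim D$ with $D\subset s$, and let $A\supset s$ be any apartment of $B$. Then $D\subset A\cap C\in\mathcal D$, so maximality forces $A\cap C=D$; one also has to see that this maximal $D$ is top‑dimensional in $C$, i.e.\ $\dim D=\dim C$, by showing that a spherical convex subcomplex of $C$ of dimension $<\dim C$ can always be enlarged inside $C$ to a strictly larger spherical one (locally, near a point where $C$ leaves $s$, using that apartments of links lift to apartments of $B$). Granting this, $\dim(A\cap C)=\dim C$, the first condition in the definition of ``supports''.

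\emph{The boundary condition.} It remains to check $\partial(A\cap C)=\partial C\cap A$, equivalently, since $A\cap C=D\subset s$, that $\partial(C\cap s)=\partial C\cap s$. I would prove this by induction on $\dim B$ (the case $\dim B=0$ being immediate), localising at a point $x\in D$: writing $\bar A:=\Si_x A$, one has $\Si_x D=\Si_x A\cap\Si_x C=\Si_x(A\cap C)$, and I would show that $\Si_x D$ is maximal among spherical convex subcomplexes of $\Si_x C\subset\Si_x B$ between $\Si_x C'$ and $\Si_x C$ (again by lifting apartments of $\Si_x B$ through $x$ to apartments of $B$ containing $s$ and invoking maximality of $D$). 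Applying Lemma~\ref{lem:supportingapt} inductively in the lower‑dimensional building $\Si_x B$ to $\Si_x C$ with the chosen subset $\Si_x D$, one gets that $\Si_x D$ is top‑dimensional in $\Si_x C$ and $\partial(\Si_x D)=\partial(\Si_x C)\cap\Si_x D$. Combining this with the characterisation that a top‑dimensional convex subcomplex of a building is a subbuilding exactly when its boundary is empty (and that a convex subcomplex containing a top‑dimensional subbuilding is a subbuilding), one deduces for $x\in D$ that $x\in\partial D\iff\Si_x D$ is not a subbuilding $\iff\Si_x C$ is not a subbuilding $\iff x\in\partial C$, which is the required equality.

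\emph{Main obstacle.} The two delicate points are (i) showing the maximal $D$ is top‑dimensional in $C$ and that $\Si_x D$ inherits maximality — both reducing to the fact that any apartment of a link through $x$ lifts to an apartment of $B$, but one must lift it so that it still contains $s$ (or at least does not decrease $D$); and (ii) the passage, at the level of links, between ``$\Si_x D$ is a subbuilding'' and ``$\Si_x C$ is a subbuilding'', which is exactly where the inductive use of the lemma in smaller‑dimensional buildings is needed. I expect (i), the control of which apartments one may lift, to be the real technical heart of the argument.
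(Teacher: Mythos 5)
Your reduction of the last clause to the first and your device of picking a maximal spherical convex subcomplex $D$ with $C'\subset D\subset C$ match the paper exactly; the paper even takes the existence of such a maximal $D$ for granted, so your chain argument is a harmless extra. Where you diverge — and where the argument is genuinely incomplete, as you yourself flag — is in verifying the two conditions in the definition of ``supports $C$''. You propose an induction on $\dim B$ via links, together with an apartment-lifting argument subject to the extra constraint that the lifted apartment still contain the singular sphere $s$ (equivalently, still contain $D$). That constraint is precisely the obstruction: the standard lifting statement lets you prescribe one face/chamber at $x$ together with the apartment of $\Sigma_x B$, but $D$ is in general not contained in $St_x B$, so there is no ready-made tool to enforce $D\subset A'$ while lifting an arbitrary apartment of $\Sigma_x B$. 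Without that, neither the top-dimensionality of $D$ nor the maximality of $\Sigma_x D$ in the link follows, and the induction does not close.

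The paper avoids links entirely with a direct extension argument driven by maximality of $D$. For top-dimensionality: $D$ lies in a singular sphere $s$ with $\dim s=\dim D=:k$; if $k<m:=\dim C$, there is an $m$-dimensional face $\sigma\subset C$ with $s\cap\sigma=D\cap\sigma$ of dimension $k$, and the subset $s\cup\sigma$ lies in a single apartment $A'$; then $A'\cap C$ is a spherical convex subcomplex of $C$ containing $C'$ and strictly containing $D$ (it contains $\sigma$, which has too-high dimension to lie in $D$), contradicting maximality. For the boundary condition $\partial(C\cap A)=\partial C\cap A$: if $\partial D\not\subset\partial C$, one finds a singular hemisphere $h\subset s$ with $D\subset h$ and an $m$-dimensional face $\sigma\subset C$ with $h\cap\sigma=D\cap\sigma$ of dimension $m-1$; again $h\cup\sigma$ lies in one apartment $A'$, and $A'\cap C\supsetneq D$, contradiction. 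Both steps rest only on the elementary fact that a singular (hemi)sphere and a face meeting it in a face of the expected dimension lie in a common apartment — a far weaker and more robust input than the constrained link-lifting you were aiming for. In short: the skeleton (maximal $D$, conclude by contradicting maximality) is right, but the two delicate points you isolated should be handled by directly manufacturing the bigger spherical subcomplex in a new apartment, not by passing to links.
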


\begin{proof}
Let $D\subset C$ be a maximal (under inclusion) spherical convex subcomplex containing $C'$.
We claim that any apartment $A$ containing $D$ supports $C$: First observe
that $C\cap A = D$ by maximality.
Since $D$ is spherical, there is a singular sphere $s$ of dimension $k:=\dim D$ containing $D$.
If $k < \dim C=:m$,  there exists
a $m$-dimensional face $\sigma\subset C$ such that $D\cap \sigma = s\cap \sigma$
has dimension $k$. The subset $s\cup\sigma\subset C$ is contained in an apartment $A'$ and
$D \subsetneq A'\cap C$, contradicting the maximality of $D$. Thus $k=m$.
If $\partial D \not\subset \partial C$, then there is a singular hemisphere $h\subset s$
containing $D$ and a $m$-dimensional face $\sigma\subset C$ such that
$h\cap\sigma = D\cap \sigma$ has dimension $m-1$.
There is an apartment $A'$ containing the subset $h\cup\sigma$. 
and $D \subsetneq A'\cap C$, contradicting again maximality.
\end{proof}

\subsection{Buildings of type $A_n$}\label{sec:An}

In the case of buildings of type $A_n$ we are able to prove that any subcomplex 
(not just a fixed point set) which is not
a subbuilding has circumradius $\leq \pihalf$.

\begin{theorem}\label{thm:An}
 Let $B$ be a (not necessarily thick) spherical building of type $A_n$. 
Let $C\subset B$ be a convex subcomplex.
Then either $C$ is a subbuilding or it has circumradius $\leq \pihalf$. 
\end{theorem}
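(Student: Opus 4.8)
The plan is to reduce to the irreducible case, which by the preceding section amounts to showing that if $C$ is an irreducible convex subcomplex of an irreducible type-$A_n$ building $B$, then $C$ being not a subbuilding forces $\rad(C)\le\pihalf$. By Proposition~\ref{prop:equivconj}, it suffices to produce a nonconstant nicely convex function on $C$ (assuming $\dim C>0$; the case $\dim C=0$, a single vertex, is trivially a subbuilding). The natural candidate is the "incenter function" built out of the functions $f_K$ from Section~\ref{sec:weightedincenter}: for each apartment $A$ supporting $C$, Lemma~\ref{lem:supportingapt} gives us that $C\cap A$ is a top-dimensional convex subcomplex of the sphere $A$ (top-dimensional \emph{inside} the singular sphere it spans), and $f_{C\cap A}$ is nicely convex on $C\cap A$ by Lemma~\ref{lem:distbound}. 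Since the type is $A_n$, all weights $\mu_\alpha$ equal $1$, and $f_{C\cap A}=f_{C\cap A}^{\min}$.

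The key point, and the reason type $A_n$ is special, is Lemma~\ref{lem:An}: for a vertex $v$ in the interior of a root $\alpha$, the quantity $\sin d(v,\partial\alpha)$ depends only on the type of $v$. More generally I expect to show, using the induction setup around Lemma~\ref{lem:inductionarg} together with Lemma~\ref{lem:An}, that for any two apartments $A,A'$ supporting $C$ and any point $x\in C\cap A\cap A'$, one has $f_{C\cap A}(x)=f_{C\cap A'}(x)$. Granting this, the functions $f_{C\cap A}$ glue to a single well-defined function $f_C\colon C\to\R$ (every point of $C$ lies in such an apartment, and every pair of points lies in one, by Lemma~\ref{lem:supportingapt}). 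This $f_C$ is then nicely convex: convexity and the midpoint inequality are checked on geodesic segments, each of which lies in a single supporting apartment where $f_C$ restricts to the nicely convex $f_{C\cap A}$; and $\{f_C<0\}$ contains no antipodal pair since within each apartment $\{f_{C\cap A}<0\}$ avoids antipodes.

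Finally I need to rule out the degenerate possibility that $f_C$ is constant, i.e.\ $f_C\equiv 0$, for then the argument yields nothing. If $f_C\equiv0$ then for every supporting apartment $A$ and every $x\in C\cap A$ we have $d(x,\partial(C\cap A))=0$, i.e.\ $C\cap A$ has empty interior in the singular sphere it spans — equivalently $C\cap A$ \emph{is} that singular sphere, a sphere of dimension $\dim C$ contained in $C$. Since any two points of $C$ lie in such an apartment, this exactly says $C$ is a subbuilding (using the definition of subbuilding as a convex subset any two of whose points lie in a convex sphere of full dimension in it). So either $C$ is a subbuilding, or $f_C$ is a nonconstant nicely convex function and $\rad(C)\le\pihalf$ by Lemma~\ref{lem:radiusminimum} (after restricting to a connected component, or noting $C$ is connected as a convex subset of a $\CAT(1)$ space of diameter $\le\pi$).

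\textbf{Main obstacle.} The heart of the matter is the well-definedness claim $f_{C\cap A}(x)=f_{C\cap A'}(x)$ for $x$ in two supporting apartments. The intuition is that $f_{C\cap A}(x)=-\sin d(x,\partial(C\cap A))$ and $\partial(C\cap A)=\partial C\cap A$ is "the same boundary seen from $A$ or $A'$", but turning this into a proof requires care: one must show the nearest boundary point and the relevant wall can be taken consistently, and this is where Lemma~\ref{lem:An} (the value $\lambda_v$ depends only on vertex type, not on the root) is essential — it is false in general types, which is why Theorems~\ref{thmintro:Dn} and \ref{thmintro:poscodim} need extra hypotheses. I would carry this out by an induction on $\dim C$ via Lemma~\ref{lem:inductionarg}: localize at a boundary face $\tau$, pass to $\Si_\tau B$ (again of type $A_{n'}$), and use that the value of $f$ at a vertex is pinned down by Lemma~\ref{lem:An} to get the base case, propagating the agreement outward. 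Handling non-thick buildings and the possibility that $C$ is not full-dimensional in $B$ (so that $C\cap A$ lives in a proper singular sphere of $A$) requires only notational bookkeeping, since all the cited lemmas are stated for subcomplexes of the ambient singular sphere they span.
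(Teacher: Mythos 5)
Your overall strategy coincides with the paper's: reduce to showing that a nonconstant nicely convex function exists, build it as an ``incenter'' function out of the $f_K$'s from Section~\ref{sec:weightedincenter}, use Lemma~\ref{lem:supportingapt} to get supporting apartments and Lemma~\ref{lem:radiusminimum} to conclude, with Lemma~\ref{lem:An} as the reason type $A_n$ is special. However, there are two places where what you call bookkeeping is in fact where the content lives.

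First, the positive-codimension case. You write $f_{C\cap A}$ as if $C\cap A$ were one of the top-dimensional subcomplexes for which the function of Section~\ref{sec:weightedincenter} is defined, and dismiss the codimension issue as notational. It is not: $f_K$ is defined only for $K$ top-dimensional in the Coxeter complex $(S,W)$, via roots of the $A_n$ root system. If $C\cap A$ spans a proper singular sphere $s\subset A$, then the walls of $s$ (intersections $\partial\alpha\cap s$) carry an induced reflection group that is in general \emph{not} of type $A$, so Lemma~\ref{lem:An} does not apply to an intrinsically defined $-\sin d_s(\cdot,\partial(C\cap A))$; moreover $d_s(x,\partial\alpha\cap s)$ and $d_A(x,\partial\alpha)$ are genuinely different quantities. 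The paper sidesteps this by replacing $L:=C\cap A$ with the smallest top-dimensional subcomplex $K\subset A$ whose interior contains the interior of $L$; then $\Lambda_K^{\min}$ consists of honest $A_n$-roots of $A$, Lemma~\ref{lem:An} applies verbatim, and one uses $f_K|_L$. This thickening is a step you would need to introduce.

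Second, the well-definedness itself, which you flag as your ``main obstacle,'' is handled in the paper by a one-line computation rather than the induction you sketch. Writing $x=\sum_i a_i v_i$ over the vertices $v_i$ of the carrier face $\tau$ and applying Lemma~\ref{lem:An} to each $v_i$ gives, for $\alpha\in\Lambda_K^{\min}$,
$\sin d(x,\partial\alpha)=\langle x,x_\alpha\rangle=\sum_{v_i\notin\partial\alpha}a_i\lambda_{v_i}$,
so $f_K(x)=\max_F\bigl\{-\sum_{v_i\in V\setminus F}a_i\lambda_{v_i}\bigr\}$, the maximum over maximal subsets $F$ of vertices of $\tau$ spanning a face contained in $\partial C$. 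This expression depends only on $\tau$, the type-dependent constants $\lambda_{v_i}$, and the combinatorics of $\partial C$ near $x$ — manifestly independent of $A$ and $K$. No induction on dimension and no use of Lemma~\ref{lem:inductionarg} are needed; that machinery appears in the paper only for the $D_n$ and mixed classical cases. Your proposed inductive route is not obviously wrong, but it is unproved, and it is substantially longer than what is required here.

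Finally, a small logical slip in your ``$f_C\equiv 0$'' paragraph: if $C\cap A$ equals the singular sphere it spans, then it has \emph{full}, not empty, interior, and $\partial(C\cap A)=\emptyset$, so the premise $d(x,\partial(C\cap A))=0$ is not the right formulation. The correct statement is simply that if $C$ is not a subbuilding then Lemma~\ref{lem:supportingapt} produces a supporting apartment with $C\cap A$ a \emph{proper} top-dimensional subcomplex of the sphere it spans, whence $f$ is strictly negative on the nonempty interior and hence nonconstant; the subbuilding case is excluded from the outset, as the theorem's disjunction permits.
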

\begin{proof}
Let $x\in C$ and let $\tau$ be the face containing $x$ in its interior. 
Hence, $\tau\subset C$ because $C$ is a subcomplex.
Let $V=\{v_1\dots,v_k\}$ be the set of vertices of $\tau$, 
then after identifying $\tau$ with a subset of the unit
round sphere in $\R^k$, we can write $x=\sum_{i=1}^k a_i v_i$ with $a_i\geq 0$.

Let $A$ be an apartment supporting $C$ containing $x$. Let $L:=C\cap A$
and let $K\subset A$ be the smallest top-dimensional 
subcomplex such that the interior of $L$ is contained
in the interior of $K$. 
Then a point $y\in L$ is in the boundary of $L$ if and only if it is in the boundary of $K$
and since $A$ supports $C$, this is also equivalent to $y$ being in the boundary of $C$.

Consider the function $f_K(x)$ defined in Section~\ref{sec:weightedincenter}.
If $\alpha\in\Lambda_K^{min}$, then 
$\sin d(x,\partial\alpha) = \cos d(x,x_\alpha) = \langle x,x_\alpha \rangle = 
\sum_{i=1}^k a_i  \langle v_i,x_\alpha \rangle = \sum_{v_i\notin \partial\alpha}a_i\lambda_i$,
where $\lambda_i:=\lambda_{v_i}$ is the constant given by Lemma~\ref{lem:An}.
It follows that
$f_K(x) =\max\limits_{\alpha\in\Lambda_K^{min}} \{-\sin d(x,\partial\alpha)\}= 
\max\limits_{\alpha\in\Lambda_K^{min}} \{-\!\!\sum\limits_{v_i\notin \partial\alpha}\!a_i\lambda_i\}= 
\max \{-\!\!\!\sum\limits_{v_i\in V-F}\! a_i\lambda_i\}$, 
where the last maximum is taken over all maximal
subsets $F\subset V$ such that the face spanned by the vertices in $F$ is contained in the boundary
of $K$, or equivalently, contained in the boundary of $C$. This implies that the function
$f(x):=f_K(x)$ is independent of the apartment $A$ supporting $C$. 

The function $f$ is nicely convex in $C$ because for any two points 
$x,y\in C$ there is an apartment $A$ supporting $C$
and containing both of them by Lemma~\ref{lem:supportingapt}, the function $f|_{C\cap A}=f_{C\cap A}$
is nicely convex in $A\cap C$ by Lemma~\ref{lem:distbound}.
It follows by Lemma~\ref{lem:radiusminimum} that $f$ has a unique minimum $x_0\in C$
and $\rad C\leq \rad(C,x_0)\leq \pihalf$.
\end{proof}

\begin{corollary}
 Let $B$ be a spherical building of type $A_n$ and $g$ a type-preserving isometry.
Then either $Fix(g)$ is a subbuilding or it has circumradius $\leq \pihalf$.
\end{corollary}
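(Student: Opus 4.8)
The plan is to deduce this corollary immediately from Theorem~\ref{thm:An}, since the fixed point set of a type-preserving isometry of a spherical building is always a convex subcomplex of the natural polyhedral structure. So the only thing to verify is that $Fix(g)$ is convex and is a subcomplex; then Theorem~\ref{thm:An} applies verbatim with $C = Fix(g)$.

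First I would observe that $Fix(g)$ is convex: if $x,y \in Fix(g)$ are at distance $<\pi$, then the segment $xy$ is unique, and $g$ maps it to the segment $g(x)g(y) = xy$, hence $g$ fixes it pointwise (an isometry fixing the endpoints of a unique geodesic fixes the geodesic). Next I would check that $Fix(g)$ is a subcomplex: because $g$ is type-preserving, it maps each face to a face of the same type, and if $g$ fixes a point $x$ lying in the interior of a face $\tau$, then $g\tau$ is a face whose interior meets $\tau$, so $g\tau = \tau$; being type-preserving, $g$ then fixes $\tau$ pointwise (a type-preserving isometry of a Coxeter complex fixing a face setwise fixes it pointwise, as the only such isometry of a simplex respecting the type labelling is the identity). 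Thus $\tau \subset Fix(g)$, so $Fix(g)$ is a union of closed faces, i.e.\ a subcomplex.

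Having established that $C := Fix(g)$ is a convex subcomplex of $B$, I would simply invoke Theorem~\ref{thm:An}: either $C$ is a subbuilding or $\rad(C) \leq \pihalf$. This completes the proof.

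I do not expect any real obstacle here; the content is entirely in Theorem~\ref{thm:An}, and the corollary is the standard packaging step. The only mild subtlety is making sure the type-preserving hypothesis is genuinely used (it is needed both for $Fix(g)$ to be a subcomplex at all and for Theorem~\ref{thm:An}'s framework, which works with the polyhedral structure). If one wanted to be thorough one could also note that when $B$ is reducible of type $A_n$ — which does not happen, since $A_n$ is irreducible — one would first reduce to irreducible factors, but for type $A_n$ this is moot.
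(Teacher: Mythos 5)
Your proof is correct and takes exactly the route the paper intends: the corollary is stated without proof precisely because it is the immediate specialization of Theorem~\ref{thm:An} to $C = Fix(g)$, once one notes (as you do, correctly) that the fixed point set of a type-preserving isometry is a convex subcomplex. The two verifications you spell out (convexity via uniqueness of geodesics of length $<\pi$ in $\CAT(1)$, subcomplex property via type-preservation forcing a setwise-fixed face to be pointwise-fixed) are the standard and correct justifications.
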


\subsection{Buildings of type $D_n$}\label{sec:Dn}

In this section we will show that a top-dimensional subcomplex of  a building of type $D_n$
is either a subbuilding or has circumradius $\leq \pihalf$. More precisely, we prove that
such a subcomplex, if it is not a subbuilding, then it has a unique incenter. 
Unfortunately, the argument used in Theorem~\ref{thm:An} for subcomplexes of positive codimension
of buildings of type $A_n$
does not work in this case as we will illustrate in Example~\ref{ex:Dn}.

\begin{lemma}\label{lem:vertDn}
 Let $B$ be a (not necessarily thick) spherical building of type $D_n$. 
Let $C\subset B$ be a top-dimensional convex subcomplex which is not a subbuilding.
Let $x\in C$ be a vertex and let $A\subset B$ be an apartment supporting $C$ and $x\in A$. 
Then the function $f(x) := f_{C\cap A} (x)$ as defined 
in Section~\ref{sec:weightedincenter} does not depend on the choice of the apartment $A$.
\end{lemma}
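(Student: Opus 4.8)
The plan is to imitate the argument of Theorem~\ref{thm:An}, reducing the independence of $f(x)$ from the supporting apartment $A$ to a statement about the value of $f_{C\cap A}$ at the vertex $x$, and then to control that value using Lemmata~\ref{lem:Dn} and \ref{lem:Dn2}. Write $K:=C\cap A$, let $i$ be the type of the vertex $x$, and recall from Lemma~\ref{lem:weightedinrad} that
$$f(x)=f_K(x)=\max_{\alpha\in\Lambda_K^{min}}\{-\sin d(x,\partial\alpha)\}=-\min_{\alpha\in\Lambda_K^{min}}\langle x,x_\alpha\rangle.$$
By Lemma~\ref{lem:Dn} each value $\langle x,x_\alpha\rangle$ for a root $\alpha\supset x$ lies in $\{\lambda_i,2\lambda_i\}$, and equals $\lambda_i$ automatically when $i=n$, and equals $2\lambda_i$ when $i\in\{1,2\}$. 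So in the boundary cases $i=n$ and $i\in\{1,2\}$ the value $f(x)=-\lambda_i$ resp.\ $f(x)=-2\lambda_i$ is forced, independently of $A$, exactly as in Lemma~\ref{lem:An}. The content of the lemma is therefore in the "middle" types $3\le i\le n-1$, where $f(x)\in\{-\lambda_i,-2\lambda_i\}$ and we must show which of the two occurs does not depend on $A$.

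First I would set up the combinatorial reformulation. Since $A$ supports $C$, a face in $K$ lies in $\partial K$ iff it lies in $\partial C$; in particular the set of panels of $\partial K$ through faces touching $x$ is determined by $C$, not by $A$. The dichotomy $f(x)=-\lambda_i$ versus $f(x)=-2\lambda_i$ is: $f(x)=-\lambda_i$ iff there exists $\alpha\in\Lambda_K^{min}$ with $x\in\alpha$ and $\langle x,x_\alpha\rangle=\lambda_i$, i.e.\ a codimension-one boundary wall $\partial\alpha$ of $K$ at "small distance" $d(x,\partial\alpha)=\arcsin\lambda_i$ from $x$; otherwise every boundary wall of $K$ through a neighborhood of $x$ is at the "large distance" $\arcsin(2\lambda_i)$ (or $\pi/2$, i.e.\ passes through $x$), giving $f(x)=-2\lambda_i$ (or $f(x)=0$, but $x\in\partial\alpha$ for some $\alpha\in\Lambda_K$ cannot happen for all of $\Lambda_K$ if $x\in\operatorname{int}K$; and if $x\in\partial K$ one handles $f(x)=-2\lambda_i$ or $0$ by the same type analysis). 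So I must show: the existence of a "small-distance" boundary wall of $C$ at $x$ is independent of the supporting apartment.

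The key step is to pass to the link $\Sigma_x B$ and use the hypothesis that $C$ is not a subbuilding. The link $\Sigma_x B$ is a spherical building; $\Sigma_x A$ is an apartment in it supporting $\Sigma_x C$ (by Lemma~\ref{lem:supportingapt} applied in $\Sigma_x B$, after checking supporting passes to links), and a root $\alpha\in\Lambda_K^{min}$ through $x$ with $\langle x,x_\alpha\rangle=\lambda_i$ corresponds, via the sine-rule observation preceding Lemma~\ref{lem:inductionarg}, to a boundary wall of $\Sigma_x C$ of a prescribed "$\lambda$-type". Translating Lemma~\ref{lem:Dn2} (adjacent vertices $v_i,v_j$ with $j\ge 3$: if the $j$-wall is "large" then so is the $i$-wall) into the link language, it says that the "small-distance" walls at $x$ can only occur for the extreme types, and the obstruction to $f(x)$ being independent of $A$ propagates to the type-$n$ (resp. type-$1,2$) vertices adjacent in $K$, where by Lemma~\ref{lem:Dn} the value is forced. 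Concretely: if for one supporting apartment $A$ we had $f_{C\cap A}(x)=-\lambda_i$ and for another $A'$ we had $f_{C\cap A'}(x)=-2\lambda_i$, then comparing the two top-dimensional subcomplexes $C\cap A$ and $C\cap A'$ in the spirit of Lemma~\ref{lem:inclsubcompl} / Lemma~\ref{lem:vertexposcodim} forces a structural coincidence — a boundary wall of $C$ near $x$ that, combined with $D_n$-geometry, makes $\Sigma_x C$ (hence, spreading out, $C$ itself) locally a subbuilding at every chamber through $x$; iterating/connectedness then contradicts "$C$ is not a subbuilding". The main obstacle I anticipate is precisely this last implication: squeezing "the value at one vertex depends on the apartment" into "$C$ is a subbuilding". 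I expect to handle it by a local-to-global argument: show that if $f$ is ambiguous at $x$ then the star $St_x(C)$ contains, with each of its chambers, a full apartment's worth of chambers (a subbuilding condition at $x$), and then use that the set of vertices where $C$ fails to be "locally a subbuilding" is both closed under adjacency-propagation (via Lemma~\ref{lem:Dn2}) and, if empty, forces $C$ to be a genuine subbuilding — contradicting the hypothesis. This is the step that uses the full strength of "not a subbuilding" and is where the $D_n$ (as opposed to $A_n$) combinatorics, and the restriction to top-dimensional $C$, are essential.
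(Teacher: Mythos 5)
The proposal does not contain a proof; it contains a plan whose pivotal step you yourself flag as ``the main obstacle'' and leave unresolved, and that step is not the route the paper takes. Let me be concrete about the gap and how the paper actually argues.

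Your reduction to middle types $3\le i\le n-1$ and to the dichotomy $f(x)\in\{-\lambda_i,-2\lambda_i\}$ via Lemma~\ref{lem:Dn} is correct, and you rightly observe that the task is to show the dichotomy is apartment-independent. But from there you propose to pass to the link $\Sigma_x B$, invoke Lemma~\ref{lem:Dn2} to propagate small-distance walls outward, and then run a local-to-global argument: ambiguity of $f$ at $x$ should force $\Sigma_x C$ to be a subbuilding, and ``iterating/connectedness'' should force $C$ to be a subbuilding, contradicting the hypothesis. None of that is carried out, and there is a real reason to doubt it can be: the paper's proof of this very lemma does \emph{not} use the ``not a subbuilding'' hypothesis in any essential way and does \emph{not} use Lemma~\ref{lem:Dn2} at all (that lemma enters only in the subsequent Theorem~\ref{thm:Dn}). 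So the engine you are hoping to power the argument with is not the one that actually drives it.

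What the paper does instead is purely local and explicit. First, one reduces (using Lemma~\ref{lem:supportingapt} to build a chain of supporting apartments) to the case where $A$ and $A'$ are two supporting apartments through $x$ with $A\cap A'$ equal to a single root $\alpha$. Assuming $f_{C\cap A'}(x)=-\lambda_i > -2\lambda_i = f_{C\cap A}(x)$, one identifies both apartments with the unit sphere in $\R^n$ compatibly on $\alpha$, with $x$ corresponding (up to scale) to $e_i+\dots+e_n$. The value $-\lambda_i$ in $A'$ produces a root $\beta\in\Lambda_{C\cap A'}$ whose center has the form $\pm e_r+e_s$ with $r<i\le s$; the value $-2\lambda_i$ in $A$ forces the convex cone $\bigcap_{\gamma\in\Pi_i}\gamma$ (with $\Pi_i$ the roots centered at $e_j+e_k$, $i\le j<k$) to lie inside $C\cap A$. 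Intersecting with $\alpha$ gives a subset that must lie in $C\cap A\cap A'\subset C\cap A'\subset\beta$; but after noting $\alpha\neq\beta$, one can exhibit an explicit point $y$ of $\alpha\cap\bigcap_{\gamma\in\Pi_i}\gamma$ (with $r$-coordinate $-2$ and $i,\dots,n$-coordinates $1$) which violates $\langle y,x_\beta\rangle\ge 0$. This direct contradiction finishes the proof. So the key missing idea in your plan is the reduction to two supporting apartments differing by a single root together with the coordinate computation; the ``subbuilding'' local-to-global argument you sketch is a detour that is neither completed nor, as far as one can tell, needed or true in the form you state it.
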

\begin{proof}
If $x\in \partial C$, then $f(x)=0$ and the assertion follows. So we assume that $x$ is in
the interior of $C$.
Let $A'\subset B$ be another apartment containing $x$ and supporting $C$. 
By Lemma~\ref{lem:supportingapt}, we can find a sequence $A=A_0,\dots, A_m=A'$ of
apartments supporting $C$ and containing $x$ such that $A_i\cap A_{i+1}$ is a root.
Thus, we may assume that $\alpha:=A\cap A'$ is a root.

Suppose that $f_{C\cap A'} (x) > f_{C\cap A}(x)$. Let $i$ be the type of the
vertex $x$. 
Then by Lemma~\ref{lem:Dn}, $f_{C\cap A'} (x) = -\lambda_i > -2\lambda_i = f_{C\cap A}(x)$
and $3\leq i \leq n-1$.
We identity the apartments $A,A'$ simultaneously with the unit sphere $S^{n-1}\subset \R^n$
such that the centers of roots correspond to points
$\frac{1}{\sqrt{2}} ( \pm e_j \pm e_k )$,
 the identifications coincide
in $\alpha=A\cap A'$ and $x$ corresponds to the point 
$\frac{1}{\sqrt{n+1-i}}(e_i+e_{i+1}+\dots +e_n)$. 
To simplify the notation, we omit the normalizing factor, so we write
$x = e_i+e_{i+1}+\dots +e_n$.
Observe that $x\in\alpha$ implies that the center $x_\alpha$ of $\alpha$ 
is of the form $e_j\pm e_k$ for $i\leq j,k$ and $j\neq k$;
$\pm e_j + e_k$ for $j<i<k$; or
$\pm e_j \pm e_k$ for $j<k<i$.
Since $f_{C\cap A'} (x) = -\lambda_i $, there must be a root $\beta\in\Lambda_{C\cap A'}$
centered at $x_\beta$ such that $\cos d(x,x_\beta) = \lambda_i $.
It follows that $x_\beta$ has the form $\pm e_r + e_s$ with $1\leq r <i$ and $i\leq s \leq n$.
Suppose $x_\beta=e_r + e_s$, the other case is similar.
On the other hand, $f_{C\cap A} (x) = -2\lambda_i $ implies that
$\bigcap_{\gamma\in\Pi_i} \gamma \subset C\cap A $ for 
$ \Pi_i:=\{\langle e_j+e_k, \cdot \rangle \geq 0 \;|\; i\leq j< k\}$. 
It follows that 
$\alpha \cap (\bigcap_{\gamma\in\Pi_i} \gamma) \subset C\cap A \cap A'  \subset C\cap A' \subset\beta$.
Notice that since $A$ and $A'$ both support the subcomplex $C$, 
the roots $\alpha$ and $\beta$ must be different.
This implies (see the different possibilities for $x_\alpha$ above) that there exists a point 
$y\in\alpha\cap \{ (v_1,\dots,v_n)\in S^{n-1} \;|\; v_r=-2, v_i=v_{i+1}=\dots=v_n=1\} \neq \emptyset$. 
Then $y\notin \beta$ and $y\in \alpha\cap(\bigcap_{\gamma\in\Pi_i} \gamma)$. 
We get a contradiction. Hence, $f_{C\cap A'} (x) \leq f_{C\cap A}(x)$. 
Interchanging the roles of $A$ and $A'$ we obtain the equality $f_{C\cap A'} (x) = f_{C\cap A}(x)$.
\end{proof}

\begin{theorem}\label{thm:Dn}
Let $B$ be a (not necessarily thick) spherical building of type $D_n$. 
Let $C\subset B$ be a top-dimensional convex subcomplex which is not a subbuilding.
Then $-\sin d(\cdot,\partial C)$ is a nicely convex function on $K$ and $K$ has a 
unique incenter $x_0$. In particular, $C$ has circumradius $\leq \pihalf$.
\end{theorem}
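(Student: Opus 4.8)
The plan is to build a nicely convex function on $C$ whose restriction to each supporting apartment is the weighted incenter function $f_{C\cap A}$, and then to apply Lemma~\ref{lem:radiusminimum}. By Lemma~\ref{lem:supportingapt}, any two points of $C$ lie in a common supporting apartment $A$, and on $A\cap C$ the function $f_{C\cap A}=-\sin d(\cdot,\partial(C\cap A))$ is nicely convex by Lemma~\ref{lem:distbound}. Since $A$ supports $C$, the boundary of $C\cap A$ in $A$ agrees with $\partial C\cap A$, so $f_{C\cap A}(x)=-\sin d(x,\partial C)$ for $x\in A\cap C$. Thus the only thing to check is \emph{well-definedness}: that for $x\in C$ the value $f_{C\cap A}(x)$ does not depend on the choice of supporting apartment $A\ni x$. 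Once this is established, the resulting global function $f:=-\sin d(\cdot,\partial C)$ on $C$ is automatically nicely convex (each pair of points sees a supporting apartment on which $f$ restricts to a nicely convex function), hence by Lemma~\ref{lem:radiusminimum} it has a unique minimum $x_0$, which is therefore the unique incenter of $C$, and $\rad(C,x_0)\leq\pihalf$.

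The heart of the argument, then, is the independence of $f_{C\cap A}(x)$ from $A$. For a \emph{vertex} $x$ this is exactly Lemma~\ref{lem:vertDn}, which is where the hypothesis that $C$ is not a subbuilding gets used (it rules out the configuration that would otherwise let the incenter value jump between $-\lambda_i$ and $-2\lambda_i$). The remaining task is to upgrade this from vertices to arbitrary points of $C$. The natural way is an induction on the codimension of the face $\tau$ carrying $x$ in its interior: I would pass to the link. Let $s$ be the singular sphere spanned by $\tau$. By Lemma~\ref{lem:inductionarg}, $f_{CH(C\cap A,\,s)}(x)=\sin d(x,s)\cdot f_{\Si_s(C\cap A)}(\ora{sx})$, and $\Si_s B$ is again a building of type obtained by deleting the nodes of $\tau$ from the $D_n$-diagram — a join of buildings of type $A$ and $D$ (and possibly a sphere factor). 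One checks that $\Si_s C$ is a top-dimensional convex subcomplex of $\Si_s B$ which, because $C$ is not a subbuilding, is not a subbuilding in the relevant factor. Applying Theorem~\ref{thm:An} in the $A$-factors and the inductive case of the present theorem in the $D$-factor (the induction being on dimension), the value $f_{\Si_s(C\cap A)}(\ora{sx})$ is independent of $A$; combined with the vertex case to control the discrepancy between $f_{C\cap A}$ and $f_{CH(C\cap A,s)}$ along faces, this forces $f_{C\cap A}(x)$ to be independent of $A$ as well.

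I expect the main obstacle to be precisely the bookkeeping in this reduction to links: verifying that $\Si_s C$ inherits the property of not being a subbuilding (in the appropriate irreducible factor), handling the join decomposition of $\Si_s B$ cleanly, and reconciling the weighted-incenter function of $C\cap A$ with that of $CH(C\cap A,s)$ via $\Lambda^{\min}$ and Lemma~\ref{lem:inclsubcompl}. The spherical-join and sphere factors need care, but since everything is of simply-laced type the weights $\mu_\alpha$ are all $1$, so the ``weighted'' incenter is the ordinary incenter throughout and Lemma~\ref{lem:inclsubcompl}'s first assertion ($f_{K_1}\le f_{K_2}$ for $K_1\supset K_2$) applies without the weight caveat. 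With the independence in hand, the rest — nice convexity, existence and uniqueness of the incenter, and the circumradius bound — is immediate from Lemmata~\ref{lem:distbound}, \ref{lem:radiusminimum} and \ref{lem:supportingapt}.
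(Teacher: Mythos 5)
Your overall plan — reduce to showing that $f_{C\cap A}(x)$ is independent of the supporting apartment $A$, handle vertices via Lemma~\ref{lem:vertDn}, and promote the result to general points by passing to links — matches the paper's strategy at a high level, but the way you propose to do the promotion has a genuine flaw, and a key half of the paper's argument is missing.

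The problem with passing to the link of the singular sphere $s$ spanned by the face $\tau$ carrying $x$ is twofold. First, since $x$ lies in the interior of $\tau\subset s$, we have $d(x,s)=0$ and $\ora{sx}$ is undefined; Lemma~\ref{lem:inductionarg} then says $0=0\cdot(\text{undefined})$, which gives no information. Second, Lemma~\ref{lem:inductionarg} requires $\tau$ to lie in the boundary of the subcomplex, which certainly fails when $x$ is an interior point of $C$. So the reduction you propose does not actually reduce anything. What the paper does instead is pick a root $\alpha\in\Lambda_{C\cap A'}^{min}$ achieving $f_{C\cap A'}(x)$, and split into two cases: (1) if some \emph{vertex} $v$ of $\tau$ lies on $\partial\alpha$, then $v\in\partial(C\cap A')$ and one passes to the link of $B$ at the single vertex $v$ (not at the span of the whole face), using induction on $n$ and Theorem~\ref{thm:An} in the resulting $A_k\circ D_m$ link; (2) if $\tau$ lies entirely in the interior of $\alpha$, the link argument is unavailable and one must argue directly.

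Case (2) is where the real $D_n$-specific work happens, and your proposal does not address it. The paper uses Lemma~\ref{lem:Dn}, which says that for a vertex $v$ of type $i$ in the interior of a root, $\sin d(v,\partial\alpha)\in\{\lambda_i,2\lambda_i\}$, and Lemma~\ref{lem:Dn2}, which propagates the value $2\lambda$ down to adjacent vertices of smaller type. Letting $r$ be the smallest index with $\sin d(v_{i_r},\partial\alpha)=\lambda_{i_r}$, a counting argument shows every $\beta\in\Lambda_{C\cap A}$ must hit $2\lambda_{i_j}$ at some $j\geq r$, whence by Lemma~\ref{lem:Dn2} $\sin d(v_{i_r},\partial\beta)=2\lambda_{i_r}$ for all such $\beta$; this forces $f_{C\cap A}(v_{i_r})=-2\lambda_{i_r}<-\lambda_{i_r}\leq f_{C\cap A'}(v_{i_r})$, contradicting the vertex case (Lemma~\ref{lem:vertDn}). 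Without this second branch your induction cannot close, because the troublesome root $\alpha$ may have no wall through any vertex of $\tau$ — exactly the situation Example~\ref{ex:Dn} is designed to highlight. Your suggestion that the vertex case would somehow ``control the discrepancy'' along faces is not a proof; it is precisely the hard step, and it requires the $\lambda_i$-vs-$2\lambda_i$ analysis specific to type $D_n$.
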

\begin{proof}
Let $x\in C$ and let $\tau\subset C$ be the face containing $x$ in its interior.
Let $v_{i_1},\dots, v_{i_k}$ be the vertices of $\tau$ with $v_{i_j}$ of type $i_j$
and $i_1<\dots<i_k$. After identifying $\tau$ with a subset of the unit
round sphere in $\R^k$, we can write $x=\sum_{j=1}^k a_j v_{i_j}$ with $a_j\geq 0$.

Let $A\subset B$ be an apartment supporting $C$ and $x\in A$. 
We want to prove that the function $f(x) := f_{C\cap A} (x)$ as defined 
in Section~\ref{sec:weightedincenter} does not depend on the choice of the apartment $A$. 
The conclusion of the theorem then follows from Lemmata~\ref{lem:supportingapt}
and \ref{lem:distbound}.
So, let $A'$ be another such apartment and suppose $f_{C\cap A} (x) < f_{C\cap A'} (x)$.

Let $\alpha\in \Lambda_{C\cap A'}$ be a root such that $f_{C\cap A'} (x) = -\sin d(x,\partial\alpha)$.
Suppose first that there is a vertex $v=v_{i_j}$ of $\tau$ with $v\in \partial\alpha$.
Then by Lemma~\ref{lem:inductionarg}, we have
$f_{C\cap A'}=\sin d(x,v)f_{\Si_v(C\cap A')}(\ora{vx})$.
Since the links of a building of type $D_n$ are spherical joins of buildings of type $D$ and $A$,
using induction on $n$ and Theorem~\ref{thm:An}, we obtain
$f_{\Si_v(C\cap A')}(\ora{vx}) = f_{\Si_v(C\cap A)}(\ora{vx})$.
It follows again by Lemma~\ref{lem:inductionarg} that
$f_{C\cap A}(x) \geq \sin d(x,v)f_{\Si_v(C\cap A)}(\ora{vx}) =
\sin d(x,v)f_{\Si_v(C\cap A')}(\ora{vx}) = f_{C\cap A'}(x)$, 
contradicting our first assumption $f_{C\cap A} (x) < f_{C\cap A'} (x)$.
Hence, $\tau$ is contained in the interior of $\alpha$.

Recall that by Lemma~\ref{lem:Dn}, $\sin d(v_{i_j},\partial\alpha)$ can only take at most the 
two values $\lambda_{i_j}, 2\lambda_{i_j}$.
Let $r$ be the smallest number such that $\sin d(v_{i_r},\partial\alpha)=\lambda_{i_r}$.
In particular, $i_r\geq 3$ by Lemma~\ref{lem:Dn}.
Then by Lemma~\ref{lem:Dn2}, $\sin d(v_{i_j},\partial\alpha)=\lambda_{i_j}$ for all $j\geq r$.
Therefore
\begin{align*}
 f_{C\cap A'}(x) &= -\sin d(x,\partial\alpha)= -\sum_{j=1}^k a_j\sin d(v_{i_j},\partial\alpha) 
=-2\sum_{j=1}^{r-1}a_j\lambda_{i_j}  -\sum_{i=r}^ka_j\lambda_{i_j} \\
& > f_{C\cap A}(x) = \max_{\beta\in\Lambda_{C\cap A}}\{-\sin d(x,\partial\beta)\} =
\max_{\beta\in\Lambda_{C\cap A}}\{-\sum_{j=1}^k a_j\sin d(v_{i_j},\partial\beta)  \}.
\end{align*}
It follows that for each $\beta\in\Lambda_{C\cap A}$, there must be a $j\geq r$ such that
$\sin d(v_{i_j},\partial\beta) = 2\lambda_{i_j}$.
Then again by Lemma~\ref{lem:Dn2}, $\sin d(v_{i_r},\partial\beta) = 2\lambda_{i_r}$
for all $\beta\in\Lambda_{C\cap A}$. 
Thus, $f_{C\cap A}(v_{i_r}) = -2\lambda_{i_r} < -\lambda_{i_r} = 
-\sin d(v_{i_j},\partial\alpha) \leq f_{C\cap A'}(v_{i_r})$, contradicting Lemma~\ref{lem:vertDn}.
\end{proof}

\begin{example}\label{ex:Dn}
 Consider a building of type $D_4$ and the convex subcomplex $C$ consisting of a segment 
$c_1$ with vertices of type $31313$ and a segment $c_2$
with vertices $131$, which intersect in their midpoints. Let $x$ be their common midpoint. 
Let $A_i$ be apartments containing $c_i$. Then $A_i$ supports $C$.
Let $K_i\subset A_i$ be the smallest top-dimensional subcomplex such
 that the interior of $c_i$ is contained in the interior of $K_i$. 
Then $K_1$ is a root and $x$ is its center. It follows that $f_{K_1}(x) = -1 < f_{K_2}(x)$.
\end{example}

\section{Fixed point sets of unipotent isometries}
\label{sec:unipotentisom}

Let $B$ be an irreducible spherical building of dimension at least $2$ and
let $g\neq 1$ be a unipotent isometry. Let $A\subset B$ be an apartment
such that $Fix(g)\cap A$ is a top-dimensional subset. 
Then by Corollary~\ref{cor:incenter}
$Fix(g)\cap A$ has a unique incenter. If $B$ is of simply-laced type we will
prove that $Fix(g)$ also has a unique incenter. However, this is no longer true for
other types of buildings. Nevertheless, we will show that $Fix(g)$ has always
a unique {\em weighted} incenter in the sense of Section~\ref{sec:weightedincenter}.

Let $g$ be an unipotent isometry of $B$.
Let $\Phi$ the root system associated to $B$ by Theorem~\ref{thm:commutator}.
We consider now the top-dimensional convex subcomplex 
$K:=Fix(g)\cap A\subset A$ for some apartment $A\subset B$.
We want to define the weighted incenter of $K$ as in Section~\ref{sec:weightedincenter}.
For this, we have to define the corresponding weights for non-reduced roots.
Let $\alpha\subset A$  be a root such that the corresponding indivisible root
$\alpha\in \Phi$ is non-reduced.
We define the weight $\mu_\alpha$ in dependency on $g$ as follows.
First, if $\alpha\notin\Lambda_K$, we set $\mu_\alpha:=2=\Vert{2\alpha}\Vert$; and
if $\alpha\in\Lambda_K - \Lambda_K^{min}$, we set $\mu_\alpha:=1=\Vert{\alpha}\Vert$.
Finally,
if $\alpha\in\Lambda_K^{min}$, we set the weight $\mu_\alpha:=2$, if
the $\alpha$-coordinate $g_\alpha$ of $g$ (see Proposition~\ref{prop:coordunipotent}) 
is in $U_{2\alpha}$ (see Theorem~\ref{thm:commutator}); and we set 
$\mu_\alpha:=1$, if $g_\alpha \in U_\alpha - U_{2\alpha}$.
Notice that by Proposition~\ref{prop:coordunipotent}, the weights $\mu_\alpha$ do not
depend on the apartment $A$ containing $\alpha$.

\begin{theorem}\label{thm:unipotentisom}
Let $B$ be an irreducible spherical building of dimension at least $2$ and
let $g\neq 1$ be a unipotent isometry. Let $x\in Fix(g)$ and	 let $A\subset B$ be an apartment
containing $x$. Then the function $f(x) := f_{Fix(g)\cap A} (x)$ as defined 
in Section~\ref{sec:weightedincenter} with the weights given above does not
depend on the choice of the apartment $A$. In particular, $f$ defines a nicely convex function
in $Fix(g)$ and it has a unique minimum $x_0\in Fix(g)$, the {\em weighted incenter} of 
$Fix(g)$. Moreover, $\rad(Fix(g),x_0)\leq \pihalf$.
\end{theorem}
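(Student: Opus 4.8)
The plan is to reduce everything to showing the apartment-independence of the function $f$, since the remaining conclusions are then formal consequences of the machinery already developed: once $f$ is well defined on $Fix(g)$, its restriction to any apartment $A$ supporting $Fix(g)$ equals $f_{Fix(g)\cap A}$, which is nicely convex by Lemma~\ref{lem:distbound}; by Lemma~\ref{lem:supportingapt} any two points of $Fix(g)$ lie in a common supporting apartment, so $f$ is nicely convex on all of $Fix(g)$; and Lemma~\ref{lem:radiusminimum} then gives the unique minimum $x_0$ with $\rad(Fix(g),x_0)\le\pihalf$. So the real content is the well-definedness statement, and I would isolate it as the goal of the proof.

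To prove apartment-independence I would argue as in the proofs of Theorem~\ref{thm:An}, Theorem~\ref{thm:Dn} and Lemma~\ref{lem:vertDn}: fix $x\in Fix(g)$, let $\tau$ be the smallest face containing $x$ in its interior (so $\tau\subset Fix(g)$), and let $A,A'$ be two apartments supporting $Fix(g)$ and containing $x$. By Lemma~\ref{lem:supportingapt} I can connect $A$ to $A'$ by a chain of supporting apartments through $x$, consecutive ones meeting in a root, so I may assume $\alpha_0:=A\cap A'$ is a root. Suppose for contradiction $f_{Fix(g)\cap A}(x)<f_{Fix(g)\cap A'}(x)$ and pick $\alpha\in\Lambda_{Fix(g)\cap A'}^{\min}$ realizing $f_{Fix(g)\cap A'}(x)=-\mu_\alpha\sin d(x,\partial\alpha)$. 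If some vertex $v$ of $\tau$ lies in $\partial\alpha$, then Lemma~\ref{lem:inductionarg} lets me pass to the link $\Si_v B$ — which is a join of an irreducible building of strictly smaller dimension with a sphere — and conclude by induction on dimension (using that the weights descend to $\Si_v B$ via the canonical root-group isomorphisms $U_\alpha\cong U_{\Si_\sigma\alpha}$, which by Proposition~\ref{prop:coordunipotent} carry the $\alpha$-coordinate of $g$ to the $(\Si_v\alpha)$-coordinate of the induced isometry, hence preserve the recipe defining $\mu_\alpha$). So I may assume $\tau$ lies in the interior of $\alpha$, i.e.\ $\alpha\in\Lambda_{Fix(g)\cap A'}$ in a way that involves $\tau$ nontrivially.

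The heart of the matter is then the case where $\alpha$ and $\alpha_0=A\cap A'$ are distinct and the discrepancy comes from a genuinely different \emph{weight} assignment rather than a different wall — this is exactly the phenomenon Lemma~\ref{lem:inclsubcompl} and Lemma~\ref{lem:vertexposcodim} were built to control in the reduced case, and here I must also feed in the definition of $\mu_\alpha$ in terms of the $\alpha$-coordinate $g_\alpha\in U_\alpha$. The key point is that $Fix(g)\cap A$ and $Fix(g)\cap A'$ are not arbitrary subcomplexes: by Proposition~\ref{prop:coordunipotent} the isometry $g$ lies in $U_{\Lambda_{Fix(g)\cap A}}$ and in $U_{\Lambda_{Fix(g)\cap A'}}$, and for a root $\alpha\in\Lambda_{Fix(g)\cap A}^{\min}\cap\Lambda_{Fix(g)\cap A'}^{\min}$ the coordinate $g_\alpha$ is read off from the action of $g$ on $\Si_\mu B$ for a panel $\mu\subset\partial\alpha\cap Fix(g)$, hence is the \emph{same} whether computed in $A$ or in $A'$; so on the common part $\Lambda_{Fix(g)\cap A}^{\min}\cap\Lambda_{Fix(g)\cap A'}^{\min}$ the weights automatically agree. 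Thus a discrepancy in weights can only occur for a wall of $\partial\alpha$ that is not a defining wall on both sides, and I would combine Lemma~\ref{lem:inclsubcompl} (applied to $K_1=CH(\alpha)$, say, versus the relevant $Fix(g)\cap A$) with the structure of $\Lambda_K$ versus $\Lambda_K^{\min}$ to derive, as in Lemma~\ref{lem:vertexposcodim}, a point $y$ in one of the subcomplexes violating a required inclusion. I expect the main obstacle to be precisely this bookkeeping: tracking, for a non-reduced root $\alpha$, which of $\mu_\alpha\in\{1,2\}$ is forced by the position of $\tau$ relative to $\partial\alpha$ and $\Lambda_{Fix(g)\cap A}^{\min}$ on each side, and checking that the $U_{2\alpha}$-membership of $g_\alpha$ — the only apartment-independent piece of data distinguishing the two weights — is consistent with both. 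Once the non-reduced bookkeeping is done, reducing a vertex-level contradiction to Lemma~\ref{lem:inclsubcompl}/Lemma~\ref{lem:vertexposcodim} finishes the argument, exactly parallel to the end of the proof of Theorem~\ref{thm:Dn}.
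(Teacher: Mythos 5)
Your reduction to the apartment‐independence statement is correct, and your observation that the weights agree on the common part $\Lambda_{Fix(g)\cap A}^{\min}\cap\Lambda_{Fix(g)\cap A'}^{\min}$ (via Proposition~\ref{prop:coordunipotent}) is exactly the remark the paper makes right before the theorem. But the strategy you propose for the heart of the argument — pass to links when $\tau$ has a vertex on $\partial\alpha$, otherwise mimic Lemma~\ref{lem:vertexposcodim} and Lemma~\ref{lem:inclsubcompl} — is the strategy the paper uses for Theorems~\ref{thm:Dn} and \ref{thm:classicalbuild}, and it cannot work here. Theorem~\ref{thm:unipotentisom} is stated for \emph{every} irreducible spherical building, including those of type $F_4$, $E_6$, $E_7$, $E_8$, while Lemma~\ref{lem:vertexposcodim} and the explicit vertex computations (Lemmata~\ref{lem:An}, \ref{lem:Dn}, \ref{lem:Dn2}, \ref{lem:Bn}, \ref{lem:Bn2}) are proved only for the non‐exceptional root systems; the paper explicitly warns in Section~\ref{sec:classicalbuild} that Lemma~\ref{lem:vertexposcodim} fails for $F_4$. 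So your proposed contradiction-by-vertex argument has no base to stand on in the exceptional cases. A secondary issue: Lemma~\ref{lem:inclsubcompl} compares two nested subcomplexes in the \emph{same} apartment, but $Fix(g)\cap A$ and $Fix(g)\cap A'$ live in different apartments and neither contains the other, so it does not apply directly as you suggest.

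The paper's actual proof is algebraic rather than combinatorial, and this is what makes it type-uniform. It takes a chamber $\sigma\subset Fix(g)$ with $x\in\sigma\subset A\cap A'$, lets $u\in U_\sigma$ be the unipotent element with $uA=A'$, reduces to $u=g_0\in U_{\alpha_0}$ for a single positive root $\alpha_0\subset A$, and uses the commutator relations of Theorem~\ref{thm:commutator} to compute $u^{-1}g_iu$ and hence produce an explicit set of positive roots $\Pi'\subset A'$ with $g\in U_{\Pi'}$. Setting $M:=\bigcap_{\alpha\in\Pi'}\alpha\subset Fix(g)\cap A'$, the chain $f_K(x)\geq f_M(x)\geq f_{K'}(x)$ is then verified by a direct inner-product computation in the root system (writing $\gamma=a\beta_0+b\beta_j$ and checking positivity of $\langle x,\,ca\beta_0+(cb-c')\beta_j\rangle$), with a separate bookkeeping step for the divisible-root weights using Proposition~\ref{prop:coordunipotent}. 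Nothing in this argument depends on the type of $\Phi$ beyond the commutator relations, which is why it covers the exceptional buildings. If you want to salvage your approach, you would have to replace the reliance on Lemma~\ref{lem:vertexposcodim} by an argument valid for all types, and the commutator-relation method is essentially the only tool the paper develops for that purpose.
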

\begin{proof}
 Let $A'\subset B$ be another apartment containing $x$. We may assume that
there is a chamber $\sigma\subset Fix(g)$ with $x\in\sigma\subset A\cap A'$. 
Then there is a unipotent element $u\in U_\sigma$ such that $u A = A'$.
Let $\Lambda_K$ be the set of the positive roots in $A$ containing $K:=Fix(g)\cap A$ and let
$\Lambda_{K'}$ be the set of the positive roots in $A'$ containing $K':=Fix(g)\cap A'$.

Let $\Pi'$ be a set of positive roots in $A'$ such that 
$g\in U_{\Pi'}=\langle U_\alpha\;|\;\alpha\in\Pi'\rangle$.
Then $M=\bigcap_{\alpha\in\Pi'}\alpha\subset Fix(g)\cap A' = K'$ and by 
Lemma~\ref{lem:weightedinrad},
\begin{align*}
 f_{M}(x) = \max_{\alpha\in \Lambda_M^{min}}\{-\mu_\alpha\sin d(x,\partial\alpha)\} =
\max_{\alpha\in \Pi'}\{-\mu_\alpha\sin d(x,\partial\alpha)\} &\\
 =\max_{\alpha\in \Lambda_M}\{-\mu_\alpha\sin d(x,\partial\alpha)\} &\geq f_{K'}(x).
\end{align*}

 Our goal is to find a $\Pi'$ as above such that $f_K(x)\geq -\mu_\alpha\sin d(x,\partial\alpha)$ for
all $\alpha\in\Lambda_M^{min}\subset \Pi'$. 
From this, it follows that $f_K(x)\geq  f_M(x)\geq f_{K'}(x)$. Switching the roles of $A,A'$
we also deduce $f_{K'}(x)\geq  f_{K}(x)$ and therefore we obtain the equality $f_K(x) = f_{K'}(x)$.

Without loss of generality we may assume that $u=:g_0\in U_{\alpha_0}$ for some positive root 
$\alpha_0\subset A$. Choose some minimal gallery from $\sigma$ to its antipodal chamber in $A$ and
let $g=g_1\dots g_d$ with $g_i\in U_{\alpha_i}$ 
be the product representation of $g$ with respect to this gallery.
Then Proposition~\ref{prop:coordunipotent} implies that for $i=1,\dots,d$,
if $\alpha_i\notin \Lambda_K$ then $g_i=1$.
For $i=0,1,\dots,d$
write $\beta_i=2\alpha_i$ if $\alpha_i$ is non-reduced and $g_i\in U_{2\alpha_i}$
(see Theorem~\ref{thm:commutator}) and $\beta_i=\alpha_i$ otherwise.
By Theorem~\ref{thm:commutator} we have
$u^{-1}g_iu = g_ih_i$ with $h_i\in\langle U_\gamma \;|\; 
\gamma = a\beta_0 + b\beta_i \in \Phi,\, a,b\in\N\rangle$.
Let $\Pi$ be the set of roots $\delta\subset A$ such that for the corresponding indivisible
root $\delta\in \Phi$ holds that $\delta$ or $2\delta$ is in
$\{ \gamma\in \Phi \;|\;\gamma = a\beta_0 + b\beta_i \in \Phi;\; 
\alpha_i\in\Lambda_K;\; a\in \Z_{\geq 0},\; b\in\N \}$.
It follows that $u^{-1}gu\in \langle U_\gamma \;|\; \gamma \in \Pi\rangle$.
Notice that $\Lambda_K\subset \Pi$.

Let $\Pi'=\{u\gamma\;|\; \gamma\in\Pi\}$. Then $\Pi'$ is a set  of roots in $A'$
and since $U_{u\gamma}=uU_\gamma u^{-1}$, we obtain
$g\in U_{\Pi' }=\langle U_\alpha\;|\;\alpha\in\Pi'\rangle$.
We now verify that $\Pi'$ has the desired properties, that is,
$f_K(x)\geq -\mu_\alpha\sin d(x,\partial\alpha)$ for
all $\alpha\in\Lambda_M^{min}\subset \Pi'$. 

We give first the argument for $\Phi$
reduced because it is much simpler, although we could just omit it, since the argument in the 
non-reduced case works in general. So suppose $\Phi$ is reduced.
Take $u\gamma\in \Pi'$. Let $x_\gamma,x_{\alpha_i}$ be the centers of the respective
roots. In the reduced case we have $\beta_i=\alpha_i$ and 
$\gamma = a\alpha_0 + b\alpha_j\in\Phi$ for some $\alpha_j\in\Lambda_K$.
Further, $\mu_{u\gamma}=\mu_\gamma$. Identify as usual the apartment $A$ with the unit sphere.
Then $\mu_{u\gamma}x_\gamma = \gamma, \mu_{\alpha_j}x_{\alpha_j} = \alpha_j\in \Phi$.
It follows that $\langle x,\mu_{u\gamma}x_\gamma \rangle -\langle x, \mu_{\alpha_j}x_{\alpha_j}\rangle
= \langle x , \gamma -\alpha_j \rangle = \langle x, a\alpha_0 + (b-1)\alpha_j\rangle \geq 0$
because $\alpha_0,\alpha_j$ are positive roots and $a\geq 0,b\geq 1$. 
This implies that
$f_K(x)\geq -\mu_{\alpha_j}\sin d(x,\partial\alpha_j) = -\langle x, \mu_{\alpha_j}x_{\alpha_j}\rangle
\geq -\langle x,\mu_{u\gamma}x_\gamma \rangle 
= -\mu_{u\gamma}\sin d(x,\partial\gamma)=-\mu_{u\gamma}\sin d(x,\partial(u\gamma))$.

We consider now the general case. 
Take $u\gamma\in \Lambda_M^{min}\subset \Pi'$ and let 
$x_\gamma,x_{\alpha_i}$ be the centers of the respective roots.
In this case we have several possibilities:
$\mu_{u\gamma}x_\gamma,a\beta_0 + b\beta_j\in\{\gamma,2\gamma\}\subset \Phi$
for some $\alpha_j\in\Lambda_K$
and $\mu_{\alpha_j}x_{\alpha_j},\beta_j\in \{\alpha_j,2\alpha_j\}\subset \Phi$.
It follows that $\mu_{u\gamma}x_\gamma = c(a\beta_0 + b\beta_j)$ and
$\mu_{\alpha_j}x_{\alpha_j} = c'\beta_j$ for some $c,c'\in\{\frac{1}{2},1,2\}$.
Hence,
$f_K(x) + \mu_{u\gamma}\sin d(x,\partial(u\gamma)) \geq 
-\mu_{\alpha_j}\sin d(x,\partial\alpha_j)+ \mu_{u\gamma}\sin d(x,\partial(u\gamma))=
\langle x,\mu_{u\gamma}x_\gamma \rangle -\langle x, \mu_{\alpha_j}x_{\alpha_j}\rangle
= \langle x, ca\beta_0 + (cb-c')\beta_j\rangle$.
Since $\alpha_0,\alpha_j$ are positive roots and $a\geq 0,b\geq 1$, it suffices to show
that $cb-c'\geq 0$.

Suppose first that $c'=2$. Then $\alpha_j\in\Phi$ is non-reduced and
$\mu_{\alpha_j}x_{\alpha_j}=2\beta_j=2\alpha_j$. 
In particular, $\mu_{\alpha_j}=2$.
By the definition of the weights, it follows that $\alpha_j\notin \Lambda_K$; or,
$\alpha_j\in\Lambda_K^{min}$ and $g_{\alpha_j}=g_j\in U_{2\alpha_j}$.
The former cannot happen by the definition of $\Pi$ and the latter implies 
$\beta_j=2\alpha_j$ by the definition of $\beta_j$. We get a contradiction, thus,
$c'\leq 1$.

Suppose now that $c=\frac{1}{2}$. Then $\gamma\in\Phi$ is non-reduced and
$\mu_{u\gamma}x_\gamma = \gamma = \frac{1}{2}(a\beta_0 + b\beta_j)$.
In particular, $\mu_{u\gamma}=1$. This implies that
$u\gamma\in \Lambda_{K'} - \Lambda_{K'}^{min}$; or,
$u\gamma\in \Lambda_{K'}^{min}$ and $g_{u\gamma}\in U_{u\gamma}-U_{2u\gamma}$.
The former cannot happen because $u\gamma\in \Lambda_M^{min}$ and $M\subset K'$.
Thus, $u\gamma \in \Lambda_M^{min}\cap \Lambda_{K'}^{min}$.
Let $\tau\subset A'$ be a panel in $(-u\gamma)\cap(M\cap K')$.
Then by Proposition~\ref{prop:coordunipotent}, we can read off the element $g_{u\gamma}$
from the action of $g$ on $\Sigma_\tau B$. 
Since $\tau\subset (-u\gamma)\cap M$ lies on the boundary of $M$ 
and $M=\bigcap_{\alpha\in\Pi'}\alpha$,
the only root group $U_\delta$ for $\delta\in\Pi'$ that acts non-trivially  on $\Sigma_\tau B$ is
$U_{u\gamma}$.
Recall that $g\in\langle uU_\delta u^{-1}\;|\; \delta=p\beta_0+q\beta_i\in\Phi;\; \alpha_i\in\Lambda_K;\;
p\in\Z_{\geq 0};\; q\in\N\rangle$.
Suppose that for all $k$ such that $p\beta_0+q\beta_k\in\{\gamma, 2\gamma\}$ for some 
$p\geq0,q\geq1$ follows that $p\beta_0+q\beta_k = 2\gamma$.
This would imply that $g$ is a product of elements in $U_\delta$ for $\delta\in\Pi'-\{u\gamma\}$ and
elements in $U_{2u\gamma}$. This in turn would imply that
the action of $g$ on $\Sigma_\tau B$ is given by the action of an element in $U_{2u\gamma}$ on
$\Sigma_\tau B$. This contradicts the fact $g_{u\gamma}\in U_{u\gamma}-U_{2u\gamma}$.
Hence, there is a $k$ such that $p\beta_0+q\beta_k = \gamma$. 
From this we see that after replacing $j$ with $k$ we may assume that $c\geq 1$.

Finally we can see that $cb-c'\geq 1b-1\geq 1-1 =0$ and conclude that
$f_K(x) + \mu_{u\gamma}\sin d(x,\partial(u\gamma))\geq 0$. This is what remained to be proved.

The remaining assertions of the theorem are just a consequence of Lemma~\ref{lem:radiusminimum}.
\end{proof}

\begin{remark}\label{rem:polygons}
Although we are mainly interested in buildings of dimension $\geq 2$, the results of this section
remain valid for Moufang generalized triangles and quadrangles with the proofs unchanged.
The reason is that the commutator relations are still valid in these cases (\cite{Tits:rootdata},
\cite{Timmesfeld:lie-type}).
\end{remark}

\section{Top-dimensional fixed point sets}\label{sec:topdim}

In this section let again $B$ be an irreducible spherical building of dimension at least $2$.
Let $g\in Isom(B)$ be an isometry such that $Fix(g)\subset B$ is a top-dimensional
subcomplex which is not a subbuilding. Let $A\subset B$ be an apartment such that 
$Fix(g)\cap A$ is top-dimensional. Then we can define the function
$f_{Fix(g)\cap A}$ in $Fix(g)\cap A$ as in Section~\ref{sec:unipotentisom}, but it is no longer true
that $f_{Fix(g)\cap A}(x)$ does not depend on the apartment $A$ containing $x\in Fix(g)$.
However, we can rescue the argument if we consider only some specials apartments.

If $A$ is an apartment with $Fix(g)\cap A$ top-dimensional,
we say that $A$ {\em supports} $g$ if $A$ supports $Fix(g)$ 
(cf.\ Section~\ref{sec:subcomplexbuilding})
and additionally the following holds:
if $\alpha\subset A$ is a non-reduced root in $\Lambda_{Fix(g)\cap A}^{min}$ and
$\tau\subset \partial\alpha\cap Fix(g)$ is a panel, then
if there is an apartment $A'$ containing $\alpha$
such that the unique element in $U_\alpha$ sending 
$\Si_\tau A'$ to $\Si_\tau gA'$ lies in $U_{2\alpha}$, then
the unique element in $U_\alpha$ sending $\Si_\tau A$ to $\Si_\tau gA$ also lies in $U_{2\alpha}$.
Lemma~\ref{lem:supportingapt} readily generalizes to apartments supporting $g$, since
in the notation above clearly $A'$ also supports $Fix(g)$.

Observe that if $u$ is a unipotent isometry, then any apartment, which intersects
$Fix(u)$ in a top-dimensional set, supports $u$.

Before we continue, let us explain our motivation to consider this special kind of apartments.
Let $X=G/K$ be a symmetric space of noncompact type and let $g\in G=Isom_0(X)$ be an isometry.
Then $g$ has a Jordan decomposition $g=su$ such that $s$ is semisimple, $u$ is unipotent and
they commute with each other. 
The minimal set $Min(s)$ (i.e.\ the set where the displacement function of $s$ attains its minimum)
is a totally geodesic subspace. 
The boundary at infinity $\tits Min(s)$ is a subbuilding of $\tits X$
and $u$ acts on $\tits Min(s)$ as a unipotent isometry. Further,
$Fix_\infty(g)=Fix_\infty(s)\cap Fix_\infty(u)=\tits Min(s)\cap Fix_\infty(u)$,
 where $Fix_\infty$ denotes the fixed point set in $\tits X$.
(see \cite[Proposition 4.1.5]{Eberlein:nonpositive} and the discussion in Section~\ref{sec:jordan}). 
We can then apply Theorem~\ref{thm:unipotentisom} to the action of $u$ on 
the building $\tits Min(s)$ to conclude
that $Fix_\infty(g)$ has circumradius $\leq \pihalf$.
 In general, we do not have a Jordan decomposition, so we use the union of the apartments 
supporting $g$ as a substitute of the subbuilding $\tits Min(s)$.

 We now return to our original discussion.
Let now $A$ be an apartment supporting $g$.
By Proposition~\ref{prop:parabgp} we can write $g=uh$ with $u\in U_\sigma$ 
for all $\sigma\subset Fix(g)\cap A$ and $h\in \hat H = Fix_{Isom(B)}(A)$.
In particular, $Fix(g)\cap A = Fix(u)\cap A$ and $gA = uA$.
Now if $\alpha\subset A$ is a non-reduced root, then we define the weight $\mu_\alpha$
as the corresponding one for the unipotent isometry $u$.
With these weights we obtain the functions 
$f_{Fix(g)\cap A} (x)$ as defined in Section~\ref{sec:weightedincenter}.

\begin{theorem}\label{thm:topdim}
Let $B$ be an irreducible spherical building of dimension at least $2$ and
let $g\in Isom(B)$ be an isometry such that $Fix(g)\subset B$ is a top-dimensional
subcomplex which is not a subbuilding.
Let $x\in Fix(g)$ and let $A\subset B$ be an apartment
containing $x$ and supporting $g$. 
Then the function $f(x) := f_{Fix(g)\cap A} (x)$ 
 does not depend on the choice of the apartment $A$ supporting $g$. 
In particular, $f$ defines a nicely convex function
in $Fix(g)$ and it has a unique minimum $x_0\in Fix(g)$, the {\em weighted incenter} of 
$Fix(g)$. Moreover, $\rad(Fix(g),x_0)\leq \pihalf$. 
\end{theorem}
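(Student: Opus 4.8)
The plan is to run the argument of Theorem~\ref{thm:unipotentisom}, the essential new difficulty being that the unipotent part of $g$ changes when the apartment changes. As there, it is enough to prove that $f_{Fix(g)\cap A}(x)=f_{Fix(g)\cap A'}(x)$ for any two apartments $A,A'$ supporting $g$ and containing $x$; the nice convexity of the resulting function $f$ on $Fix(g)$ then follows from Lemma~\ref{lem:distbound} (each restriction $f|_{Fix(g)\cap A}=f_{Fix(g)\cap A}$ is nicely convex and every pair of points of $Fix(g)$ lies in a supporting apartment), and the unique minimum and the bound $\rad(Fix(g),x_0)\leq\pihalf$ follow from Lemma~\ref{lem:radiusminimum}. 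First I would reduce: using the extension of Lemma~\ref{lem:supportingapt} to apartments supporting $g$, join $A$ to $A'$ by a chain of apartments supporting $g$, each containing $x$ and meeting its neighbour in a root, so that we may assume $\alpha_0:=A\cap A'$ is a root; then fix a chamber $\sigma\subset Fix(g)\cap A\cap A'$ with $x\in\sigma$, and since $B$ is Moufang ($\dim B\geq 2$) and $U_{\alpha_0}\subset U_\sigma$ acts transitively on apartments containing $\alpha_0$, write $A'=vA$ with $v\in U_{\alpha_0}$. By Proposition~\ref{prop:parabgp} write $g=uh$ with respect to $A$, $u\in U_\sigma$ unipotent, $h\in\hat H=Fix_{Isom(B)}(A)$; then $Fix(g)\cap A=Fix(u)\cap A=:K$ and $f_{Fix(g)\cap A}=f_K$ with the intrinsic weights of $u$. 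Since $gA=uA$ and $h$ normalizes $U_\sigma$, a short computation gives $g=u'h'$ with respect to $A'$, where $h'=vhv^{-1}$ fixes $A'$ and $u':=u\,(hvh^{-1})v^{-1}\in U_\sigma$ is unipotent; hence $Fix(g)\cap A'=Fix(u')\cap A'=:K'$ and $f_{Fix(g)\cap A'}=f_{K'}$ with the intrinsic weights of $u'$. By the symmetry of $A$ and $A'$ it then suffices to prove $f_K(x)\geq f_{K'}(x)$.

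For this I would follow Theorem~\ref{thm:unipotentisom}: produce a set $\Pi'$ of roots of $A'$ with $u'\in U_{\Pi'}$, so that $M':=\bigcap_{\beta\in\Pi'}\beta\subset Fix(u')\cap A'=K'$ and hence $f_{M'}(x)\geq f_{K'}(x)$; it then remains to check $f_K(x)\geq -\mu_\beta\sin d(x,\partial\beta)$ for every $\beta\in\Lambda_{M'}^{min}$, which with Lemma~\ref{lem:weightedinrad} yields $f_K(x)\geq f_{M'}(x)\geq f_{K'}(x)$. Writing $u=g_1\cdots g_d$ along a minimal gallery in $A$, with $g_i\in U_{\alpha_i}$ and $g_i=1$ unless $\alpha_i\in\Lambda_K$, and $v=g_0\in U_{\alpha_0}$, the element $u'$ read inside $A'=g_0A$ corresponds to $g_0^{-1}(g_1\cdots g_d)(hg_0h^{-1})$; expanding this with the commutator relations of Theorem~\ref{thm:commutator}, pushing $g_0^{-1}$ to the right, exhibits it as a product of root elements for roots in a set $\Pi$ of $A$ consisting of the roots of $\Lambda_K$, of roots obtained from $a\alpha_0+b\alpha_i\in\Phi$ with $\alpha_i\in\Lambda_K$, $a\geq 0$, $b\geq 1$, and — the new feature, absent for unipotent $g$ — of the root $\alpha_0$ itself, since the leading $g_0^{-1}$ and the trailing $hg_0h^{-1}\in U_{\alpha_0}$ no longer cancel. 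Putting $\Pi':=\{g_0\beta:\beta\in\Pi\}$ gives $u'\in U_{\Pi'}$.

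It then remains to verify, for each $\beta\in\Lambda_{M'}^{min}$, that $-\mu_\beta\sin d(x,\partial\beta)\leq f_K(x)$. For $\beta$ coming from a combination $a\alpha_0+b\alpha_i$ with $b\geq 1$, this is the estimate already performed in Theorem~\ref{thm:unipotentisom} (using $\alpha_i\in\Lambda_K$, that $\alpha_0$ is a positive root, and the analysis of the weights through Proposition~\ref{prop:coordunipotent} and Theorem~\ref{thm:commutator}). The genuinely new case is $\beta=\alpha_0$; here $x\in\sigma\subset\alpha_0$, so $\sin d(x,\partial\alpha_0)\geq 0$. When $\alpha_0\notin\Lambda_K$ and $d(x,\partial\alpha_0)<\pihalf$, the geodesic from $x$ to its projection onto $\partial\alpha_0$ leaves $K$ and so meets a wall $\partial\gamma$ with $\gamma\in\Lambda_K^{min}$, and Lemma~\ref{lem:weightedinrad2} gives $-\mu_{\alpha_0}\sin d(x,\partial\alpha_0)\leq -\mu_\gamma\sin d(x,\partial\gamma)\leq f_K(x)$; when $\alpha_0\in\Lambda_K$ but $\alpha_0\notin\Lambda_{K'}$ the weight of $\alpha_0$ in $f_{M'}$ is $2$, so $f_K(x)\geq -\sin d(x,\partial\alpha_0)\geq -2\sin d(x,\partial\alpha_0)$.

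The delicate remaining case is $\alpha_0\in\Lambda_K\cap\Lambda_{K'}$; then $\alpha_0\in\Lambda_{K'}^{min}$ (since $(-\alpha_0)\cap K'\supset(-\alpha_0)\cap M'$ has codimension one), and the required inequality reduces to showing that the weight of $\alpha_0$ determined by $u$ on $A$ coincides with the weight of $\alpha_0$ determined by $u'$ on $A'$; that is, when $\alpha_0$ is non-reduced, that the $\alpha_0$-coordinate of $u$ with respect to $A$ lies in $U_{2\alpha_0}$ if and only if the $\alpha_0$-coordinate of $u'$ with respect to $A'$ does. Both coordinates can be read off the action on $\Sigma_\tau B$ for a panel $\tau\subset\partial\alpha_0\cap Fix(g)$ by Proposition~\ref{prop:coordunipotent} — only the root group of $\alpha_0$ acts nontrivially there — and, since $gA=uA$ and $gA'=u'A'$, they are exactly the unique elements of $U_{\alpha_0}$ sending $\Sigma_\tau A$ to $\Sigma_\tau gA$, respectively $\Sigma_\tau A'$ to $\Sigma_\tau gA'$; as $A$ and $A'$ both contain $\alpha_0$ and support $g$, the defining property of ``supporting $g$'' forces precisely this equivalence. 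I expect this last step — carrying the commutator bookkeeping honestly through the $h$-twist and isolating ``$A$ supports $g$'' as exactly the hypothesis that tames the non-reduced weight of the distinguished root $\alpha_0$ — to be the main obstacle; the rest is the verification already carried out for unipotent isometries.
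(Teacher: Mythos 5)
Your approach is genuinely different from the paper's. You expand the unipotent part $u'$ of $g$ with respect to $A'$ via the commutator relations, producing a set $\Pi'$ of roots of $A'$, and then try to verify $-\mu_\beta\sin d(x,\partial\beta)\leq f_K(x)$ for each $\beta\in\Lambda_{M'}^{min}$. The new feature, as you correctly identify, is the appearance of $\alpha_0 = A\cap A'$ itself in $\Pi'$, coming from the factor $g_0^{-1}hg_0h^{-1}\in U_{\alpha_0}$. The paper avoids this issue altogether: it first proves the inclusion $Fix(u)\cap A'\subset Fix(h)\cap A'$ (hence $Fix(u)\cap A'\subset Fix(g)\cap A' = Fix(u')\cap A'$), using Corollary~\ref{cor:star}, the subbuilding property of $Fix(h)$, and the hypothesis that $A'$ supports $Fix(g)$; it then applies Theorem~\ref{thm:unipotentisom} to $u$ --- not $u'$ --- to get $f_{Fix(g)\cap A}(x)=f_{Fix(u)\cap A}(x)=\bar f_{Fix(u)\cap A'}(x)$, and invokes Lemma~\ref{lem:inclsubcompl} together with the inclusion, with the remaining work reduced to a comparison of weights for which the ``supporting $g$'' condition is tailored.

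The gap in your argument is in the subcase $\alpha_0\notin\Lambda_K$. You assert that the geodesic from $x$ to its projection onto $\partial\alpha_0$ leaves $K$ and hence meets a wall $\partial\gamma$ with $\gamma\in\Lambda_K^{min}$. But $\alpha_0\notin\Lambda_K$ only means $K\not\subset\alpha_0$; it says nothing about the location of the projection $z$ of $x$ onto $\partial\alpha_0$. That point can perfectly well lie in the interior of $K$, in which case the segment $xz$ stays in $K$ by convexity and meets no wall of $K$, so Lemma~\ref{lem:weightedinrad2} does not apply, and there is no a priori reason for $\mu_{\alpha_0}\sin d(x,\partial\alpha_0)\geq \mu_\gamma\sin d(x,\partial\gamma)$; if $x$ happens to be close to $\partial\alpha_0$ but far from $\partial K$, that inequality is simply false. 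Whether the bad configuration can actually occur under all of your constraints ($\alpha_0\in\Lambda_{M'}^{min}$, $M'\subset K'$, both apartments supporting $g$) is not clear, but some genuine use of that extra structure is needed, and the cleanest way to harvest it is precisely the paper's inclusion $Fix(u)\cap A'\subset Fix(g)\cap A'$, which your expansion of $u'$ never exploits. Your handling of the final case $\alpha_0\in\Lambda_K\cap\Lambda_{K'}$ via the ``supporting $g$'' condition is in the right spirit; the paper uses that condition at the analogous point, for the same purpose.
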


\begin{proof}
Let $A'$ be another apartment supporting $g$ and containing $x\in Fix(g)$.
We may assume that $A\cap A'$ contains a chamber $x\in\sigma\subset Fix(g)$. 
Write $g=uh$ with $u\in U_\sigma$ and $h\in \hat H = Fix_{Isom(B)}(A)$
and $g=u'h'$ with $u'\in U_\sigma$ and $h'\in \hat H' = Fix_{Isom(B)}(A')$.

Consider $Fix(u)\cap A'$. We claim that it is contained in $Fix(h)\cap A'$:
Suppose not, then there is a positive root $\alpha\subset A'$ such that 
$\alpha\in\Lambda_{Fix(h)\cap A'}^{min}$
and $\alpha\notin\Lambda_{Fix(u)\cap A'}$.
Let $\tau$ be a panel in $\partial\alpha\cap Fix(u)\cap Fix(h)$.
Let $\nu_0, \nu_1$ be the chambers in $-\alpha\subset A'$ and $\alpha$ respectively
containing $\tau$. Then $\nu_1\subset Fix(h)\cap Fix(u)$, thus $\nu_1\subset Fix(g)$.
Further, $\alpha\notin\Lambda_{Fix(u)\cap A'}$ implies that $\nu_0\subset Fix(u)$, this in turn implies 
that $u$ fixes every chamber containing $\tau$ by Corollary~\ref{cor:star}.
Also, $\alpha\in\Lambda_{Fix(h)\cap A'}^{min}$ implies that $\nu_0\not\subset Fix(h)$.
It follows that $g\nu_0 = uh\nu_0 = h\nu_0 \neq \nu_0$ and $\nu_0\not\subset Fix(g)$.
Therefore $\tau\subset \partial(Fix(g)\cap A')\subset \partial Fix(g)$ because $A'$ supports $Fix(g)$.
On the other hand, $Fix(h)\subset B$ is a subbuilding and therefore there exists a chamber
$\nu'\subset Fix(h)$ such that $\nu'\cap\nu_1=\tau$. But $u$ must also fix $\nu'$, hence,
$g=uh$ fixes $\nu'$ as well. This contradicts the fact that $\tau\subset \partial Fix(g)$.
So we conclude that $Fix(u)\cap A' \subset Fix(h)\cap A'$ and in particular
$Fix(u)\cap A' \subset Fix(g)\cap A'$.

In the non-reduced case we have to pay special attention in how the weights are defined.
For $\beta\subset A'$, let $\bar\mu_\beta$ be the weight as defined for the unipotent isometry $u$
and let $\bar f_{Fix(g)\cap A'} (x) : = 
\max_{\beta\in \Lambda_{Fix(g)\cap A'}} \{-\bar\mu_\beta\sin d(x,\partial\beta)\}$. Then
Theorem~\ref{thm:unipotentisom} applied to $u$ implies
$$f_{Fix(g)\cap A}(x) = f_{Fix(u)\cap A}(x) = \bar f_{Fix(u)\cap A'}(x).$$
By Lemma~\ref{lem:weightedinrad} and the fact 
$Fix(u)\cap A' \subset Fix(g)\cap A' = Fix(u')\cap A$
(cf.\ Lemma~\ref{lem:inclsubcompl}), 
if we want to show that  $\bar f_{Fix(u)\cap A'}(x)\geq f_{Fix(g)\cap A'}=f_{Fix(u')\cap A'}$, 
we just have to show
that for every root $\beta\in 
\Lambda_{Fix(u')\cap A'}^{min} \cap  \Lambda_{Fix(u)\cap A'}^{min}$ holds 
$\mu_\beta \geq \bar\mu_\beta$.
This is clear unless $\beta$ is non-reduced. Let us consider this case.
Since $Fix(u)\cap A' \subset Fix(h)\cap A'$, we can take a chamber $\omega\subset Fix(h)$
such that $\omega\cap \beta$ is a panel $\pi$.
Let also $\omega'\subset A'\subset Fix(h')$ be the chamber such that $\omega'\cap \beta = \pi$.
Recall the definition of the weights:
We have $\mu_\beta=2$ if the element $(\omega'\mapsto g\omega' = u'\omega') \in U_\beta$ lies in 
$U_{2\beta}$ and $\mu_\beta=1$ otherwise;
similarly, $\bar\mu_\beta=2$ if the element $(\omega\mapsto g\omega = u\omega) \in U_\beta$ lies in 
$U_{2\beta}$ and $\bar\mu_\beta=1$ otherwise.
Therefore $\bar\mu_\beta=2$ implies $\mu_\beta=2$ by the condition about non-reduced roots
in the definition of an apartment supporting $g$ and because $A'$ supports $g$.
Hence, $\mu_\beta \geq \bar\mu_\beta$.
So we can conclude
$$f_{Fix(g)\cap A} = f_{Fix(u)\cap A} =\bar  f_{Fix(u)\cap A'} \geq f_{Fix(g)\cap A'}.$$
Exchanging the roles of $A,A'$ we obtain $f_{Fix(g)\cap A}=f_{Fix(g)\cap A'}$.
The second part of the assertion follows from Lemmata~\ref{lem:supportingapt} and 
\ref{lem:radiusminimum}.
\end{proof}

Theorem~\ref{thmintro:topdim} from the introduction follows directly from 
Theorem~\ref{thm:topdim} and Lemma~\ref{lemma:join}.

\begin{corollary}\label{cor:fixedptssubgp}
 Let $H\subset Isom(B)$ be a subgroup of isometries such that 
the fixed point set $Fix(H)$ is top-dimensional. Suppose that there is an element $g\in H$
such that $Fix(g)$ is not a subbuilding. Then $\rad (Fix(H)) \leq \pihalf$.
\end{corollary}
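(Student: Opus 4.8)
The plan is to deduce this from Theorem~\ref{thm:topdim} (equivalently, from its global form, Theorem~\ref{thmintro:topdim}) by producing a single nicely convex function on $Fix(H)$. First I would observe that $Fix(H) = \bigcap_{g\in H} Fix(g)$, so $Fix(H)$ is a convex subcomplex of $B$, and it is top-dimensional by hypothesis. Fix the element $g\in H$ with $Fix(g)$ not a subbuilding. Since $Fix(H)\subset Fix(g)$ is top-dimensional, $Fix(g)$ is itself top-dimensional, so Theorem~\ref{thm:topdim} (after reducing to the irreducible factors via Lemma~\ref{lemma:join}, exactly as in the passage deriving Theorem~\ref{thmintro:topdim}) applies: there is a nicely convex function $f$ on $Fix(g)$, namely the weighted-incenter function $f = f_{Fix(g)\cap A}$ for any apartment $A$ supporting $g$, with $\rad(Fix(g))\le\pihalf$.

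The key step is then to restrict this function to the convex subcomplex $Fix(H)$. I would first check that $Fix(H)$ is a connected, closed, convex subset of $Fix(g)$ of positive dimension — connectedness since $Fix(H)$ is top-dimensional and a top-dimensional convex subcomplex of a spherical building is connected, and closedness/convexity being automatic for convex subcomplexes. The restriction $f|_{Fix(H)}$ is again nicely convex: the defining midpoint inequality $f(x)+f(y)\ge 2\cos(\tfrac{d(x,y)}{2})f(m)$ is inherited verbatim, since for $x,y\in Fix(H)$ at distance $<\pi$ the segment $xy$ and its midpoint $m$ lie in $Fix(H)$ (convexity), and the no-antipodal-pair condition on $\{f<0\}$ is inherited because $\{f|_{Fix(H)}<0\}\subset\{f<0\}$. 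If $f|_{Fix(H)}$ is nonconstant, Lemma~\ref{lem:radiusminimum} gives a point $x_0\in Fix(H)$ with $\rad(Fix(H),x_0)\le\pihalf$, hence $\rad(Fix(H))\le\pihalf$. Alternatively one can invoke Proposition~\ref{prop:equivconj} directly once nice convexity of the restriction is established.

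The one point that needs care — and the main (minor) obstacle — is the degenerate case where $f|_{Fix(H)}$ is \emph{constant}. Here I would argue as follows: $f\le 0$ everywhere, and $f(x)=0$ exactly when $x\in\partial Fix(g)$; so if $f$ is constantly $0$ on $Fix(H)$ then $Fix(H)\subset\partial Fix(g)$, forcing $\dim Fix(H) < \dim Fix(g) = \dim B$, contradicting that $Fix(H)$ is top-dimensional. Hence $f|_{Fix(H)}$ takes a negative value; being the restriction of a strictly convex function on $\{f<0\}$ (as noted after the definition of nice convexity) it attains a unique minimum $x_0\in Fix(H)$ and cannot be constant unless $Fix(H)$ is a single point, which is excluded by top-dimensionality (as $\dim B\ge 1$; the zero-dimensional case is trivial or vacuous). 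So Lemma~\ref{lem:radiusminimum} applies with unique minimum $x_0$, and we conclude $\rad(Fix(H))\le\rad(Fix(H),x_0)\le\pihalf$. $\qed$
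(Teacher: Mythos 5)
Your proof is correct, but it takes a genuinely different route from the paper's. The paper's argument is shorter and more geometric: it takes the weighted incenter $x_0$ of $Fix(g)$ (which by Theorem~\ref{thm:topdim} satisfies $\rad(Fix(g),x_0)\le\pihalf$), notes that $Fix(H)\subset B_{\pihalf}(x_0)$, uses top-dimensionality of $Fix(H)$ only to rule out that $d(x_0,\cdot)\equiv\pihalf$ on $Fix(H)$ (so that the nearest-point projection $\bar x_0$ of $x_0$ to $Fix(H)$ is well defined and lies at distance $<\pihalf$), and then invokes the standard $\CAT(1)$ projection inequality to get $\rad(Fix(H),\bar x_0)\le\pihalf$. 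You instead stay entirely inside the nicely-convex-function framework: you restrict $f$ to $Fix(H)$, observe that the midpoint inequality and the no-antipodes condition pass to restrictions on convex subsets for free, and use top-dimensionality to rule out the degenerate cases ($f|_{Fix(H)}\equiv 0$ would force $Fix(H)\subset\partial Fix(g)$, and a negative constant would force $Fix(H)$ to be a point via strict convexity on $\{f<0\}$), after which Lemma~\ref{lem:radiusminimum} applies directly. Both use the top-dimensionality hypothesis for essentially the same purpose---excluding a degenerate configuration where the minimum/projection does not land inside $Fix(H)$. Your version is a bit longer and has to re-verify that nice convexity is inherited and that the restriction is nonconstant, whereas the paper buys brevity by leaning on the $\CAT(1)$ comparison fact that projecting a point at distance $<\pihalf$ onto a convex set does not increase distances beyond $\pihalf$; on the other hand your version is more uniform with the rest of the paper's machinery, in the spirit of Proposition~\ref{prop:equivconj}. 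One small point worth making explicit in your write-up: the connectedness of $Fix(H)$ (needed for Lemma~\ref{lem:radiusminimum}) does follow from top-dimensionality, since every chamber of $Fix(H)$ contains a point at distance $<\pi$ from any given $x\in Fix(H)$, so convexity links all of $Fix(H)$ to a single chamber.
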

\begin{proof}
 By Theorem~\ref{thm:topdim}, $Fix(g)$ has a circumcenter $x_0$ such that
$\rad(Fix(g),x_0)\leq \pihalf$. Since $Fix(H)\subset Fix(g)$, 
it follows that $Fix(H) \subset B_{\pihalf}(x_0)$.
The distance from $x_0$ to points in $Fix(H)$ cannot be constant $\pihalf$ because $Fix(H)$
is top-dimensional. This implies that the projection $\bar x_0$ of $x_0$ into $Fix(H)$ is well defined
and $\rad(Fix(H))\leq \rad(Fix(H),\bar x_0)\leq \pihalf$.
\end{proof}

\section{Fixed point sets in non-exceptional buildings}\label{sec:classicalbuild}

In this section we consider fixed point sets of any codimension and show that 
fixed point sets of isometries of spherical buildings without factors of exceptional type
are either subbuildings or have circumradius $\leq \pihalf$.
The proof relies on Lemma~\ref{lem:vertexposcodim}. 
This Lemma does not hold for the Coxeter complex of type $F_4$. We have not find
counterexamples for the types $E_k$, $k=6,7,8$, but we also have no reason to believe that
they do not exist. Hence, the proof of Theorem~\ref{thm:classicalbuild} cannot be 
extended for the buildings of exceptional type.

Let again $B$ be an irreducible spherical building of dimension $\geq 2$.
Let $g\in Isom_0(B)$ be a type-preserving isometry with $m$-dimensional fixed point set $Fix(g)$
which is not a subbuilding. 

\begin{lemma}\label{lem:samedecomp}
 Let $g=uk$ with $u$ unipotent be the decomposition
given in Proposition~\ref{prop:parabgp} with respect to an
apartment $A$, that is, $Fix(k)\cap A = s$ is a singular sphere of the same dimension as $Fix(g)$
and $Fix(g)\cap A = Fix(u)\cap s \not\subset \partial Fix(u)$.
Then $g=uk$ is also the decomposition of $g$ with respect to any apartment containing $Fix(k)\cap A$.
\end{lemma}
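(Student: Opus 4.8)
The statement asserts that the unipotent/levi decomposition $g = uk$ from Proposition~\ref{prop:parabgp} is "apartment-independent" in the following restricted sense: if $A'$ is any apartment containing the singular sphere $s = Fix(k)\cap A$, then writing $g = u'k'$ with respect to $A'$ (so $Fix(k')\cap A' = s'$ a singular sphere of the same dimension as $Fix(g)$ and $Fix(g)\cap A' = Fix(u')\cap s'$) we in fact have $u' = u$ and $k' = k$. The plan is to show first that the singular sphere produced from $A'$ is forced to be the same sphere $s$, and then to show that once the "semisimple parts" share the same minimal set $s$, the decomposition is literally the same.

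First I would observe that $k$ fixes $s$ pointwise by hypothesis, so $k \in Fix_{Isom(B)}(s)$, and since $s \subset A'$ we may set up the Proposition~\ref{prop:parabgp}-type decomposition of $g$ relative to $A'$ along a chamber $\sigma' \subset Fix(g)\cap A'$. The key point is that $s$ is a singular sphere of dimension $m = \dim Fix(g)$ contained in $A'$ and fixed pointwise by $k$; I would argue that the singular sphere $s'$ attached to the $A'$-decomposition must equal $s$. Indeed $Fix(g) \subset Fix(k)$, so $Fix(g)\cap A' \subset Fix(k)\cap A'$; since $k$ fixes the $m$-dimensional singular sphere $s \subset A'$ pointwise and $Fix(g)\cap A'$ is $m$-dimensional and not contained in $\partial Fix(u')$, comparing dimensions forces $s' = s$ (there is a unique $m$-dimensional singular sphere in $A'$ containing the $m$-dimensional set $Fix(g)\cap A'$, just as in the definition of $\Lambda_K$ in Section~\ref{sec:subcomplex}). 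Thus the levi factor attached to $A'$ has the same minimal sphere $s$.

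Next, with both decompositions now "centered" on the same $s$, I would use the structure $\hat P_\tau = U_\tau L_\tau \hat H$ of Proposition~\ref{prop:parabgp}, or more directly the uniqueness built into Remark~\ref{rem:unipotentpart}: the unipotent factor is characterized intrinsically as the unique element of the unipotent subgroup $U_\sigma$ whose action on an apartment reproduces the action of $g$ on the appropriate antipodal face, and this characterization does not see $A'$ as long as the relevant star is fixed. Concretely, $u$ is determined by $g$ and $s$ via "$u$ is the unique element of $\langle U_\alpha : \alpha \in \Lambda^{min}_{Fix(g)\cap A}\rangle$ with $u^{-1}g$ fixing $s$ pointwise" (cf.\ Proposition~\ref{prop:coordunipotent}); since the right-hand side depends only on $g$ and on the subcomplex $Fix(g)\cap s$, which is the same whether we read it inside $A$ or inside $A'$, we get $u' = u$, hence $k' = u'^{-1}g = u^{-1}g = k$.

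The main obstacle I expect is the second step: making precise that the unipotent part $u$ really is determined by the pair $(g, s)$ and not by the full apartment $A$. One has to be careful that $U_\tau$ (for $\tau$ a face of a chamber in $Fix(g)\cap A$) is apartment-independent — which is stated right after Proposition~\ref{prop:parabgp} — and that the defining property "$u^{-1}g$ fixes $s$ pointwise" together with membership in $U_\tau$ pins down $u$ uniquely; this last uniqueness follows because $U_\tau$ acts simply transitively on apartments containing $St_\tau(A)$ (again from Proposition~\ref{prop:parabgp} and the simple transitivity of $U_\sigma$), so there is exactly one element of $U_\tau$ carrying $A$ to the apartment $CH(St_\tau(A), g\breve\tau)$, exactly as in Remark~\ref{rem:unipotentpart}. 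Once this is spelled out, equating $u$ with $u'$ and hence $k$ with $k'$ is immediate, and the lemma follows.
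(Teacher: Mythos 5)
Your proof follows essentially the same route as the paper's, which also first notes that $Fix(k')\cap A' = s$ (your step showing $s'=s$) and then concludes $u'=u$ by showing that $u^{-1}u'$ is a unipotent element fixing $s$, hence fixing a whole apartment, hence trivial — this is exactly the content of your uniqueness argument via simple transitivity of $U_\tau$ on apartments through $\tau$ and the characterization $uA = CH(St_\tau(A),\,g\breve\tau)$. One small imprecision to flag: the phrase ``$u$ is the unique element of $\langle U_\alpha : \alpha\in\Lambda^{min}_{Fix(g)\cap A}\rangle$'' does not quite parse, because $Fix(g)\cap A$ has positive codimension in $A$, so the elements of $\Lambda^{min}_{Fix(g)\cap A}$ are singular hemispheres of $s$ rather than roots of $A$ and do not carry root groups; the correct object is $U_\tau$ (or equivalently the set $\Lambda_{Fix(u)\cap A}$ for the top-dimensional subcomplex $Fix(u)\cap A$), which you do switch to in the final paragraph, so the gap is only notational. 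The paper's version is slightly more economical: rather than characterizing $u$ intrinsically, it directly observes that $u^{-1}u'=k(k')^{-1}$ is unipotent (by Corollary~\ref{cor:fixunipotent}, since $u,u'$ share a fixed chamber) and fixes $s$, and then concludes it is the identity; but the underlying mechanism — a unipotent element fixing $St_\tau(B)$ and the antipode $\breve\tau$ must fix an apartment — is the same in both arguments.
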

\begin{proof}
Let $A'$ be an apartment containing $s=Fix(k)\cap A$ and let $g=u'k'$ be the decomposition with
respect to $A'$. Then $Fix(k')\cap A' = s$ and $k^{-1}k'$ must fix $s$.
On the other hand, $u$ and $u'$ fix a neighborhood of $Fix(g)\cap s$ by
Proposition~\ref{prop:parabgp}
and $u^{-1}u'$ is unipotent by Corollary~\ref{cor:fixunipotent}.
It follows that the unipotent isometry $u^{-1}u' = k^{-1}k'$ fixes an apartment and
therefore must be the identity.
\end{proof}

Let $\tau\subset B$ be a $(m-1)$-dimensional face in $\partial Fix(g)$.
Let $ \{\xi\} :=\Si_\tau Fix(g)$. The point $\xi$ is a vertex in  $\Si_\tau B$.
Let $\hat\xi\in\Si_\tau B$ be a vertex antipodal to $\xi$.
Choose an apartment $A\subset B$ 
containing $\tau$ and such that $\xi,\hat\xi\in\Si_\tau A$.
Let $g=uk$ be the decomposition of $g$ with respect to the apartment $A$
given in Proposition~\ref{prop:parabgp}, that is, $u$ is unipotent and $Fix(k)\cap A$ is a 
singular sphere of the same dimension as $Fix(g)$.
The weights for roots in $A$ defined by the unipotent isometry $u$ 
as in Section~\ref{sec:unipotentisom} induce weights for the roots in $\Si_\tau A$
and we obtain a convex function $f_{\Si_\tau (Fix(u) \cap A)}$.
We define $\lambda_{g,\tau,\hat\xi}:= - f_{\Si_\tau (Fix(u) \cap A)} (\xi)$.
Notice that $\lambda_{g,\tau,\hat\xi}$ depends only on $\hat\xi$ and not in the 
chosen apartment $A$, this follows from Lemma~\ref{lem:samedecomp} and 
Theorem~\ref{thm:unipotentisom} applied to the isometry of $\Si_\tau B$ induced by $g$.
We also define the number 
$\lambda_{g,\tau} := \max\{ \lambda_{g,\tau,\hat\xi} \;|\; \hat\xi \text{ antipode of } \xi \text{ in }
\Si_\tau B\}$.

We say that an apartment $A$ {\em supports} $g$ if the following holds:
\begin{enumerate}[(i)]
\item 
$A$ supports the convex subcomplex $Fix(g)$.
\item 
Let $s\subset A$ be the $m$-dimensional singular sphere containing $Fix(g)\cap A$.
If $\tau$ is a $(m-1)$-dimensional face in $\partial(Fix(g)\cap A) \subset \partial Fix(g)$,
then with the notation above holds $\lambda_{g,\tau, \hat\xi} = \lambda_{g,\tau}$, where
$\{\xi\}:=\Si_\tau Fix(g) $ and $\{\xi,\hat\xi\}:=\Si_\tau s $ .
\end{enumerate}

\begin{remark}
Condition (ii) coincides with the
condition about non-reduced roots in the definition of apartments supporting $g$ 
for top-dimensional fixed point sets in
 Section~\ref{sec:topdim}. Hence, this definition generalizes the top-dimensional case.
\end{remark}

\begin{lemma}\label{lem:supportingapt2}
Any two points in $Fix(g)$ are contained in an apartment supporting $g$.
\end{lemma}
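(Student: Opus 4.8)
The plan is to start from the top-dimensional machinery already developed, but now work inside the link of a face. Given two points $x,y\in Fix(g)$, I first use Lemma~\ref{lem:supportingapt} to find an apartment $A_0$ supporting the convex subcomplex $Fix(g)$ with $x,y\in A_0$; let $s\subset A_0$ be the $m$-dimensional singular sphere containing $Fix(g)\cap A_0$. Condition (i) in the definition of ``$A$ supports $g$'' is then automatic for $A_0$; the whole point is to modify $A_0$, without losing $x,y$ or property (i), until condition (ii) also holds. Property (ii) is a condition to be checked face by face: for each $(m-1)$-dimensional face $\tau\subset\partial(Fix(g)\cap A_0)$ with $\{\xi,\hat\xi\}=\Si_\tau s$ we need $\lambda_{g,\tau,\hat\xi}=\lambda_{g,\tau}$, i.e.\ the antipode $\hat\xi$ of $\xi$ realized inside our apartment must be one that maximizes $\lambda_{g,\tau,\hat\xi}$ over all antipodes of $\xi$ in the link $\Si_\tau B$.

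Here is how I would enforce (ii) one face at a time. Fix such a $\tau$ and pick an antipode $\hat\xi'$ of $\xi$ in $\Si_\tau B$ with $\lambda_{g,\tau,\hat\xi'}=\lambda_{g,\tau}$. Using Lemma~\ref{lem:samedecomp} write $g=uk$ with respect to an apartment through $\tau$ realizing $\hat\xi'$ in its link; since $\hat\xi$ and $\hat\xi'$ are both antipodes of $\xi$ in the building $\Si_\tau B$, there is a unipotent element of $\Si_\tau B$ (coming from the unipotent subgroup $U_\xi$ of $\Si_\tau B$, which acts transitively on apartments of $\Si_\tau B$ through the chamber containing $\xi$) that carries the old link-apartment to one realizing $\hat\xi'$ and fixes $\Si_\tau Fix(g)=\{\xi\}$. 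Lift this to the building: by the Moufang property of $B$ and the canonical identification of root groups of $B$ with root groups of $\Si_\tau B$, one finds an element $v\in U_\tau$ (a product of root elements $U_\alpha$ with $\tau\subset\alpha$, $\tau\not\subset-\alpha$) such that $vA_0$ has the desired link at $\tau$, and $v$ fixes $St_\tau(A_0)\supset$ the relevant part of $Fix(g)$ near $\tau$. Replacing $A_0$ by $vA_0$ fixes the defect at $\tau$. The key observation making this iterable is that the modification at $\tau$ only changes the apartment ``beyond'' the wall spanned by $\tau$ and does not disturb property (i) nor the already-corrected condition at other faces on the other side; one processes the finitely many boundary faces in a suitable order (e.g.\ by working in the singular sphere $s$ and using that a convex subcomplex of a sphere has finitely many codimension-one boundary faces).

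The main obstacle I expect is bookkeeping the independence of $\lambda_{g,\tau,\hat\xi}$ from the chosen apartment and making precise that ``correcting'' the link at one face via an element of $U_\tau$ does not re-break condition (i) or an already-corrected face elsewhere. This is where one leans on Lemma~\ref{lem:samedecomp} (so that $g=uk$ has the same unipotent part in all apartments through $Fix(k)\cap A$) and on the fact, already noted in Remark~\ref{rem:unipotentpart} and Proposition~\ref{prop:coordunipotent}, that the relevant coordinate of $u$ in the link is read off from the action on $\Si_\mu B$ and is apartment-independent. One must also check that after a single correction one still has $x,y\in vA_0$: since $x,y\in Fix(g)\cap A_0\subset St_\tau(A_0)$ whenever $\tau$ is chosen among the boundary faces and $x,y$ lie in the interior part, $v$ fixes them; if $x$ or $y$ happens to lie on a boundary face being corrected, a small argument is needed, but the correcting element still fixes $\tau$ and hence the face, so $x,y$ survive. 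Assembling these, after finitely many steps we reach an apartment $A$ containing $x,y$, supporting $Fix(g)$, and satisfying (ii) at every boundary face, i.e.\ an apartment supporting $g$ and containing both points, which is the assertion.
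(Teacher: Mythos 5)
The overall strategy is the same as the paper's: start from an apartment supporting $Fix(g)$ containing both points, then fix condition~(ii) one boundary face at a time, arguing that a correction at $\tau$ does not spoil a correction already made at a different boundary face. However, the mechanism you use for the correction has a genuine gap. You propose to replace $A_0$ by $vA_0$ for a suitable $v\in U_\tau$ lifted from the link $\Si_\tau B$, and you justify that $x,y$ survive by the inclusion $Fix(g)\cap A_0\subset St_\tau(A_0)$. That inclusion is false in general: $St_\tau(A_0)$ is the intersection of the roots of $A_0$ containing $\tau$ in their interior, and since $\tau$ is only an $(m-1)$-face of the $m$-dimensional subcomplex $Fix(g)\cap A_0$, there is no reason for all of $Fix(g)\cap A_0$ to lie in that intersection (picture $Fix(g)\cap A_0$ filling up a large part of the singular $m$-sphere $s$, with $\tau$ on its boundary). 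Consequently a general $v\in U_\tau$ need not fix $x,y$, nor even map $Fix(g)\cap A_0$ to a subset of $Fix(g)$, so the replaced apartment may fail condition~(i) and may not contain the two given points.

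The paper sidesteps this entirely by enlarging the fixed set: it takes the singular hemisphere $h\subset A$ of dimension $m$ that contains all of $Fix(g)\cap A$ and has $\tau$ in its boundary, and then simply \emph{asserts the existence} of an apartment $A'\supset h$ with $\hat\xi\in\Si_\tau A'$. Because $A'\supset h\supset Fix(g)\cap A\ni x,y$, the new apartment automatically contains the two points and (via the maximality argument in Lemma~\ref{lem:supportingapt}) supports $Fix(g)$. If you want to keep the group-theoretic flavour of your construction, you would have to take $v$ to be a product of root elements $U_\alpha$ with the much stronger requirement $h\subset\alpha$ (not merely $\tau\subset\alpha$, $\tau\not\subset(-\alpha)$), so that $v$ fixes $h$ pointwise; you would then still have to show such a $v$ can realize the target link at $\tau$, which is essentially the existence statement the paper uses. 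Finally, the reason the per-face corrections do not interfere with one another is made precise in the paper by the observation that for any other codimension-one boundary face $\tau'$, one has $\tau'$ in the interior of $h$ and therefore $\Si_{\tau'}h=\Si_{\tau'}s'$ for every $m$-sphere $s'$ containing $h$; your remark that the modification ``only changes the apartment beyond the wall spanned by $\tau$'' gestures at this but needs that precise statement to close the iteration.
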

\begin{proof}
 By Lemma~\ref{lem:supportingapt}, for any two points in $Fix(g)$
there is an apartment $A$ supporting $Fix(g)$ containing them. 
Let $\tau$ be a $(m-1)$-dimensional face in $\partial(Fix(g)\cap A)\subset \partial Fix(g)$
and let $\{\xi\}=\Si_\tau Fix(g)$.
Let $\hat\xi\in\Si_\tau B$ be an antipode of $\xi$ such that 
$\lambda_{g,\tau, \hat\xi} = \lambda_{g,\tau}$.
Let $h\subset  A$ be the singular hemisphere of dimension $m$
containing $Fix(g)\cap A$ and such that 
$\tau \subset \partial h$. Notice that $\Si_\tau h = \Si_\tau Fix(g)=\{\xi\}$.
There exists an apartment $A'\subset B$ containing $h$ and such that
$\hat\xi\in\Si_\tau A'$. 
Then $A'$ is an apartment supporting $Fix(g)$
and satisfying condition (ii) for the face $\tau$.

Let $s'$ be the singular sphere of dimension $m$ in $A'$ containing $h$.
Suppose that $\tau'$ is  a face in a different codimension one boundary component of 
$\partial(Fix(g)\cap A)$ from $\tau$, and observe that $\Si_{\tau'} h =\Si_{\tau'} s'$.
Therefore if $A$ already satisfied condition (ii) for $\tau'$, then $A'$
still satisfied condition (ii) for $\tau'$.
We can repeat the construction until we get an apartment supporting $Fix(g)$
satisfying condition (ii) for all codimension one faces.
\end{proof}

\begin{theorem}\label{thm:classicalbuild}
Let $B$ be an irreducible spherical building of dimension at least $2$ 
and not of type $F_4, E_6,E_7,E_8$.
Let $g\in Isom_0(B)$ be an isometry such that $Fix(g)\subset B$ is not a subbuilding.
Let $x\in Fix(g)$ and let $A\subset B$ be an apartment
containing $x$ and supporting $g$. 
Let $u$ be the unipotent part of $g$ with respect to $A$ as in Proposition~\ref{prop:parabgp}.
Then the function $f(x) := f_{Fix(u)\cap A} (x)$ as defined in Section~\ref{sec:unipotentisom}
 does not depend on the choice of the apartment $A$ supporting $g$. 
In particular, $f$ defines a nicely convex function
in $Fix(g)$ and it has a unique minimum $x_0\in Fix(g)$, the {\em weighted incenter} of 
$Fix(g)$. Moreover, $\rad(Fix(g),x_0)\leq \pihalf$. 
\end{theorem}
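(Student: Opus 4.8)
The plan is to follow the same strategy as in the proof of Theorem~\ref{thm:topdim}, but replace the use of Lemma~\ref{lem:inclsubcompl} (which only controlled weights on top-dimensional subcomplexes) by Lemma~\ref{lem:vertexposcodim}, whose conclusion is precisely the statement we need for vertices in a singular sphere. First I would reduce to comparing two apartments $A,A'$ both supporting $g$ and sharing a chamber $\sigma\subset Fix(g)$ of the (full top-dimensional) singular sphere $s\supset Fix(g)\cap A$; since $A,A'$ both support $g$, we may assume $A\cap A'\supset s$, so in particular $s\subset A\cap A'$. By Lemma~\ref{lem:samedecomp} the decomposition $g=uk$ relative to $A$ and the decomposition $g=u'k'$ relative to $A'$ have the \emph{same} semisimple part $k=k'$, hence $u^{-1}u'$ is unipotent and fixes $s$; I would use this to see that $Fix(u)\cap A'$ is still a top-dimensional (in $s$) subcomplex and, exactly as in Theorem~\ref{thm:topdim}, that $Fix(u)\cap A' \subset Fix(g)\cap A' = Fix(u')\cap A'$ (the argument with $Fix(k)$ being a subbuilding and a boundary panel $\tau\subset\partial Fix(g)$ carries over verbatim).

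Now the core step. Passing to the link $\Si_\tau B$ along a codimension-one face $\tau$ of $\partial Fix(g)$, Lemma~\ref{lem:inductionarg} reduces the comparison of $f_{Fix(u)\cap A}$ and $f_{Fix(u')\cap A'}$ at an arbitrary point $x$ to the comparison at \emph{vertices}: write $x$ as a convex combination of the vertices $v_{i_1},\dots,v_{i_\ell}$ of the face containing $x$ in its interior, and if some $v_{i_j}\in\partial\alpha$ for the root $\alpha\in\Lambda_{Fix(u')\cap A'}$ realizing $f_{Fix(u')\cap A'}(x)$, then descend to $\Si_{v_{i_j}}B$, apply induction on dimension together with Theorem~\ref{thm:An} (links of non-exceptional buildings are joins of $A$- and non-exceptional-type buildings), and conclude equality there; so we may assume $\tau$ (hence every vertex $v_{i_j}$) lies in the interior of $\alpha$. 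Then I invoke Lemmata~\ref{lem:Dn2}/\ref{lem:Bn2} to propagate the value $2\lambda_{i_j}$ down to the smallest-type vertex $v_{i_r}$ with $\sin d(v_{i_r},\partial\alpha)=\lambda_{i_r}$, reducing the strict inequality $f_{Fix(u)\cap A}(x) < f_{Fix(u')\cap A'}(x)$ to the strict inequality $f_{Fix(u)\cap A}(v_{i_r}) < f_{Fix(u')\cap A'}(v_{i_r})$ at a single vertex $v:=v_{i_r}$ lying in the singular sphere $s$. At this point Lemma~\ref{lem:vertexposcodim}, applied with $K_1=Fix(u')\cap A'$, $K_2=Fix(u)\cap A$ (note $K_2\cap s\subset K_1\cap s$ from the inclusion established above, and the interiors-in-$s$ hypotheses hold because $A,A'$ support $g$), gives a root $\alpha$ with $\alpha\cap s\in\Lambda_{K_1\cap s}^{min}\cap\Lambda_{K_2\cap s}^{min}$; combining this with the weight bookkeeping for non-reduced roots (condition (ii) in the definition of an apartment supporting $g$, i.e.\ $\lambda_{g,\tau,\hat\xi}=\lambda_{g,\tau}$, which is exactly what forces $\mu_\beta\geq\bar\mu_\beta$ on the relevant panels, as in Theorem~\ref{thm:topdim}) yields the contradiction. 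Swapping $A$ and $A'$ then gives $f_{Fix(u)\cap A}=f_{Fix(u')\cap A'}$, so $f$ is well defined on $Fix(g)$; by Lemma~\ref{lem:supportingapt2} any two points of $Fix(g)$ lie in a common supporting apartment, and $f$ restricted to each such apartment is $f_{Fix(u)\cap A}$, which is nicely convex by Lemma~\ref{lem:distbound}. Hence $f$ is nicely convex on $Fix(g)$, and Lemma~\ref{lem:radiusminimum} delivers the unique minimum $x_0$ with $\rad(Fix(g),x_0)\leq\pihalf$.

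The main obstacle I expect is the vertex-reduction bookkeeping: one has to be careful that after descending to a link along $\tau$ and applying induction, and then propagating the $2\lambda_i$ values along the Dynkin diagram via Lemmata~\ref{lem:Dn2}/\ref{lem:Bn2}, one genuinely lands on a vertex $v$ that lies in the singular sphere $s$ (not merely in $A$), because Lemma~\ref{lem:vertexposcodim} is stated for vertices of $K_2\cap s$; this is where the hypothesis that the interior of $K_i\cap s$ lies in the interior of $K_i$ — guaranteed by $A,A'$ supporting $g$ — is essential, and it is the delicate point that does \emph{not} survive for $F_4$ (and is unverified for $E_6,E_7,E_8$), which is why the theorem is restricted to non-exceptional type. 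A secondary technical point is keeping the two weight systems $\{\mu_{1,\alpha}\},\{\mu_{2,\alpha}\}$ straight through all the reductions, but this is handled exactly as in Theorem~\ref{thm:topdim} using condition (ii).
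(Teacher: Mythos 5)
Your high-level strategy is the right one — replace Lemma~\ref{lem:inclsubcompl} by Lemma~\ref{lem:vertexposcodim}, reduce general points to vertices via Lemma~\ref{lem:inductionarg} and the propagation Lemmata~\ref{lem:Dn2}/\ref{lem:Bn2}, and use Lemma~\ref{lem:samedecomp}, condition~(ii), and Lemma~\ref{lem:supportingapt2} to finish. But there is a genuine gap in the reduction step. You claim that because $A,A'$ both support $g$, ``we may assume $A\cap A'\supset s$, so in particular $s\subset A\cap A'$,'' and from this you deduce $k=k'$ via Lemma~\ref{lem:samedecomp}. This reduction is not justified: the singular spheres $s=Fix(k)\cap A$ and $s'=Fix(k')\cap A'$ are determined by $Fix(g)\cap A$ and $Fix(g)\cap A'$ respectively, and nothing forces them to coincide (or to be contained in $A\cap A'$) even after passing to a chain of apartments supporting $g$. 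The paper never makes this assumption. Instead it keeps $s\neq s'$ in play, shows directly that $Fix(u)\cap s'\subset Fix(k)\cap s'$ (and hence $\subset Fix(u')\cap s'$) by the same boundary-panel argument as in Theorem~\ref{thm:topdim}, and only afterwards uses Theorem~\ref{thm:unipotentisom} to pass from $f_{Fix(u)\cap A}(x)$ to $f_{Fix(u)\cap A'}(x)$, so that Lemma~\ref{lem:vertexposcodim} can be applied with $K_1=Fix(u')\cap A'$ and $K_2=Fix(u)\cap A'$, both living in the \emph{same} apartment $A'$ with the same singular sphere $s'$.

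This second point exposes a related flaw: you apply Lemma~\ref{lem:vertexposcodim} with $K_1=Fix(u')\cap A'$ and $K_2=Fix(u)\cap A$, which are subcomplexes of two different apartments. The lemma requires $K_1,K_2$ to sit in a single Coxeter complex $S$ with a common singular sphere $s\subset S$. Your missing bridge is exactly the step $f_{Fix(u)\cap A}(x)=f_{Fix(u)\cap A'}(x)$ supplied by Theorem~\ref{thm:unipotentisom}, which you never invoke. Finally, the way the contradiction is actually extracted in positive codimension is not the $\mu_\beta\geq\bar\mu_\beta$ panel-comparison of Theorem~\ref{thm:topdim}: after Lemma~\ref{lem:vertexposcodim} produces the boundary hemisphere $\alpha\cap s'$, one descends to $\Si_{\partial h}$ via Lemma~\ref{lem:inductionarg}, identifies the two values with $\lambda_{g,\tau,\xi_1}$ and $\lambda_{g,\tau,\xi_2}$, and contradicts maximality $\lambda_{g,\tau,\xi_1}=\lambda_{g,\tau}$ from condition~(ii). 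Your phrase ``yields the contradiction'' glosses over this mechanism, which is genuinely different from the top-dimensional case.
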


\begin{proof}
Let $A'$ be another apartment supporting $g$ and containing $x\in Fix(g)$.
Write $g=uk$ with respect to $A$ and $g=u'k'$ with respect to $A'$.
Let $s=Fix(k)\cap A$ and $s'=Fix(k')\cap A'$.

As in the top-dimensional case, we want to show that $Fix(u)\cap s' \subset Fix(k)\cap A'=Fix(k)\cap s'$.
The proof is the same, using the fact that $A'$ supports $Fix(g)$:
Suppose not, then there is a face $\tau$ of full dimension in $\partial(Fix(k)\cap s')$
and two faces $\nu_0, \nu_1\subset  Fix(u)\cap s'$ of full dimension in $s'$ 
such that $\nu_0\cap\nu_1=\tau$,
$\nu_0\not\subset Fix(k)\cap s'$ and $\nu_1\subset Fix(k)\cap Fix(u)$.
This implies that $u$ fixes $St_\tau B$ pointwise. 
It follows,
$g\nu_0= u(k\nu_0) = k\nu_0 \neq \nu_0$, hence, $\nu_0\notin Fix(g)$.
Therefore $\tau\subset \partial(Fix(g)\cap A')\subset \partial Fix(g)$ because $A'$ supports $Fix(g)$.
On the other hand, $Fix(k)\subset B$ is a subbuilding and therefore there exists a chamber
$\nu'\subset Fix(k)$ such that $\nu'\cap\nu_1=\tau$.
But $u$ must also fix $\nu'$, hence, $g\nu'=uh\nu'=\nu'$ and $\nu'\in Fix(g)$.
This contradicts the fact that $\tau\subset \partial Fix(g)$.
So we conclude that $Fix(u)\cap s' \subset Fix(k)\cap s'$ and in particular
$Fix(u)\cap s' \subset Fix(g)\cap A'=Fix(u')\cap s'$.

Let us consider first the case when $x\in Fix(g)$ is a vertex.
Suppose that $f_{Fix(u')\cap A'}(x) >  f_{Fix(u)\cap A}(x)$.
Let $f_{Fix(u)\cap A'}$ be the function in $Fix(u)\cap A'$ with the weights defined by $u$.
Then $f_{Fix(u)\cap A}(x) = f_{Fix(u)\cap A'}(x)$ by Theorem~\ref{thm:unipotentisom}.
Now we apply Lemma~\ref{lem:vertexposcodim} to $K_1=Fix(u')\cap A'$ and $K_2=Fix(u)\cap A'$,
let $\alpha\in\Lambda_{K_1}^{min}$ be a root such that 
$f_{K_1}(x) = -\mu_{\alpha}\sin d(x,\partial\alpha)$,
then the Lemma implies that 
$h:=\alpha\cap s' \in \Lambda_{K_1\cap s'}^{min}\cap \Lambda_{K_2\cap s'}^{min}$.

Notice that $\alpha\in \Lambda_{CH(K_1,\partial h)}$ and therefore
$f_{K_1}(x) \geq f_{CH(K_1,\partial h)}\geq -\mu_{\alpha}\sin d(x,\partial\alpha)$.
It follows that $f_{K_1}(x) = f_{CH(K_1,\partial h)}(x)$.
Let $\{\xi\}:= \Si_{\partial h}(K_i\cap s') = \Si_{\partial h}(Fix(g)\cap A') = \Si_{\partial h} Fix(g)$.
By Lemma~\ref{lem:inductionarg} we have
$f_{K_1}(x) =  f_{CH(K_1,\partial h)}(x) = \sin d(x,\partial h)f_{\Si_{\partial h} K_1}(\xi)$
and 
$f_{K_2}(x) \geq  f_{CH(K_2,\partial h)}(x) = \sin d(x,\partial h)f_{\Si_{\partial h} K_2}(\xi)$.
Our assumption $f_{K_1}(x) >  f_{K_2}(x)$
implies $f_{\Si_{\partial h} K_1}(\xi) > f_{\Si_{\partial h} K_2}(\xi)$.

Let $\tau$ be a face in $\partial h \cap (K_1\cap s') \cap (K_2\cap s')\subset \partial Fix(g)$.
Let $\xi_1\in\Si_\tau B$ be the vertex such that $\Si_\tau s' = \{\xi,\xi_1\}$.
Then $-\lambda_{g,\tau,\xi_1} = f_{\Si_\tau (Fix(u')\cap A')} (\xi) = f_{\Si_{\partial h} K_1}(\xi)$.
Since $K_2\cap s' = Fix(u)\cap s' \subset Fix(k)\cap A'$ and $Fix(k)$ is a subbuilding,
we find an apartment $A''\subset Fix(k)$ containing $\tau$ and $\xi\in\Si_\tau A''$.
Let $\xi_2\in \Si_\tau A''$ be the antipode of $\xi$.
Then 
$-\lambda_{g,\tau,\xi_2} = f_{\Si_\tau (Fix(u)\cap A'')} (\xi) = 
f_{\Si_\tau (Fix(u)\cap A')} (\xi)= f_{\Si_{\partial h} K_2}(\xi)$
(the second equality follows from Theorem~\ref{thm:unipotentisom}).
It follows that $\lambda_{g,\tau,\xi_2} > \lambda_{g,\tau,\xi_1}$.
Since $A'$ supports $g$, condition (ii) implies that $\lambda_{g,\tau,\xi_1} = \lambda_{g,\tau}$.
Hence $\lambda_{g,\tau,\xi_2} >\lambda_{g,\tau} $, contradicting the definition
of $\lambda_{g,\tau} $.
Thus, $f_{Fix(u')\cap A'}(x) \leq  f_{Fix(u)\cap A}(x)$ and interchanging the roles of $A$ and $A'$,
we obtain $f_{Fix(u')\cap A'}(x) =  f_{Fix(u)\cap A}(x)$.
We have shown the theorem in the case when $x\in Fix(g)$ is a vertex.

Now we proceed as in the proof of Theorem~\ref{thm:Dn} to show the general case.
Let $x\in Fix(g)$ and let $\tau\subset Fix(g)$ be the face containing $x$ in its interior.
Let $v_{i_1},\dots, v_{i_k}$ be the vertices of $\tau$ with $v_{i_j}$ of type $i_j$
and $i_1<\dots<i_k$. After identifying $\tau$ with a subset of the unit
round sphere in $\R^k$, we can write $x=\sum_{j=1}^k a_j v_{i_j}$ with $a_j\geq 0$.
Suppose again that $f_{Fix(u')\cap A'}(x) >  f_{Fix(u)\cap A}(x)$.

Let $\alpha\in \Lambda_{Fix(u')\cap A'}$ be a root such that 
$f_{Fix(u')\cap A'} (x) = -\mu_\alpha \sin d(x,\partial\alpha)$.
Suppose first that there is a vertex $v=v_{i_j}$ of $\tau$ with $v\in \partial\alpha$.
Then by Lemma~\ref{lem:inductionarg}, we have
$f_{Fix(u')\cap A'}=\sin d(x,v)f_{\Si_v(Fix(u')\cap A')}(\ora{vx})$.
The link $\Si_v B$ has again no factors of exceptional type, 
then using induction on the rank of the building applied to the isometry
of $\Si_v B$ induced by $g$, we obtain
$f_{\Si_v(Fix(u')\cap A')}(\ora{vx}) = f_{\Si_v(Fix(u)\cap A)}(\ora{vx})$.
It follows again by Lemma~\ref{lem:inductionarg} that
$f_{Fix(u)\cap A}(x) \geq \sin d(x,v)f_{\Si_v(Fix(u)\cap A)}(\ora{vx}) =
\sin d(x,v)f_{\Si_v(Fix(u')\cap A')}(\ora{vx}) = f_{Fix(u')\cap A'}(x)$, 
contradicting our assumption $f_{Fix(u)\cap A} (x) < f_{Fix(u')\cap A'} (x)$.
Hence, $\tau$ is contained in the interior of $\alpha$.

Recall that by Lemmata~\ref{lem:Bn} and \ref{lem:Dn}, 
$\mu_\alpha\sin d(v_{i_j},\partial\alpha)$ can only take at most the 
two values $\lambda_{i_j}, 2\lambda_{i_j}$.
Let $r$ be the smallest number such that $\mu_\alpha\sin d(v_{i_r},\partial\alpha)=\lambda_{i_r}$.
Then by Lemmata~\ref{lem:Bn2} and \ref{lem:Dn2}, 
$\mu_\alpha\sin d(v_{i_j},\partial\alpha)=\lambda_{i_j}$ for all $j\geq r$.
Therefore
\begin{align*}
 f_{Fix(u')\cap A'}(x) &= -\mu_\alpha\sin d(x,\partial\alpha)= 
-\sum_{j=1}^k a_j\mu_\alpha\sin d(v_{i_j},\partial\alpha) 
=-2\sum_{j=1}^{r-1}a_j\lambda_{i_j}  -\sum_{i=r}^ka_j\lambda_{i_j} \\
& > f_{Fix(u)\cap A}(x) = \max_{\beta\in\Lambda_{Fix(u)\cap A}}\{-\mu_\beta\sin d(x,\partial\beta)\} \\
&=
\max_{\beta\in\Lambda_{Fix(u)\cap A}}\{-\sum_{j=1}^k a_j\mu_\beta\sin d(v_{i_j},\partial\beta)  \}.
\end{align*}
It follows that for each $\beta\in\Lambda_{Fix(u)\cap A}$, there must be a $j\geq r$ such that
$\mu_\beta\sin d(v_{i_j},\partial\beta) = 2\lambda_{i_j}$.
Then again by Lemma~\ref{lem:Dn2}, $\mu_\beta\sin d(v_{i_r},\partial\beta) = 2\lambda_{i_r}$
for all $\beta\in\Lambda_{Fix(u)\cap A}$. 
Thus, $f_{Fix(u)\cap A}(v_{i_r}) = -2\lambda_{i_r} < -\lambda_{i_r} = 
-\mu_\alpha\sin d(v_{i_j},\partial\alpha) \leq f_{Fix(u')\cap A'}(v_{i_r})$, 
contradicting the case for vertices in $Fix(g)$.
We conclude in the general case that $f_{Fix(u')\cap A'}(x) =  f_{Fix(u)\cap A}(x)$.
\end{proof}

\section{Some special cases and applications}\label{sec:applications}

\subsection{Long root subgroups}

Recall from Theorem~\ref{thm:commutator}, that if $\alpha$ is a non-reduced root,
 then we have some
flexibility in defining the root group $U_{2\alpha} \subset U_\alpha$.
The next proposition implies that there is a unique maximal such subgroup and it coincides with
the pointwise stabilizer of the ball of radius $\pihalf$ containing $\alpha\subset B$.
This gives a geometric definition of $U_{2\alpha}$.

\begin{proposition}\label{prop:pihalfball}
Let $B$ be an irreducible spherical building of dimension at least two,
a Moufang generalized triangle or a Moufang generalized quadrangle
with associated root system $\Phi$.
Let $\alpha\subset B$ be a root and let $x_\alpha$ be its center.

\noindent If $\Phi$ is reduced and $\alpha\in\Phi$ is a long root, then 
$U_\alpha = Fix_{Isom(B)}(B_{\pihalf}(x_\alpha))$.

\noindent If $\Phi$ is non-reduced and $\alpha\in\Phi$ is a non-reduced root, then
$U_{2\alpha} \subset Fix_{Isom(B)}(B_{\pihalf}(x_\alpha)) \subset U_\alpha$. 
Moreover, we can replace $U_{2\alpha}$ with $Fix_{Isom(B)}(B_{\pihalf}(x_\alpha))$
and Theorem~\ref{thm:commutator} remains valid.
\end{proposition}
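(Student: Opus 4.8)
The plan is to prove the two containments separately, after first translating "fixing the ball $B_{\pihalf}(x_\alpha)$" into a statement about the link at $x_\alpha$. The key observation is that every point of $B_{\pihalf}(x_\alpha)$ lies at distance $\leq\pihalf<\pi$ from $x_\alpha$, hence on the unique geodesic issuing from $x_\alpha$ in its direction, and every direction in $\Si_{x_\alpha}B$ occurs this way; so an isometry fixes $B_{\pihalf}(x_\alpha)$ pointwise if and only if it fixes $x_\alpha$ and acts trivially on $\Si_{x_\alpha}B$. For the easy inclusion $Fix_{Isom(B)}(B_{\pihalf}(x_\alpha))\subseteq U_\alpha$ I would note that $B_{\pihalf}(x_\alpha)$ is a convex subcomplex (in each apartment through $x_\alpha$ it is the root centred at $x_\alpha$, and these glue convexly), that $\alpha\subseteq B_{\pihalf}(x_\alpha)$, and that any chamber $\sigma$ with $\sigma\cap\alpha$ a panel $p\not\subseteq\partial\alpha$ satisfies $\sigma\subseteq B_{\pihalf}(x_\alpha)$: indeed $p$ meets the open ball $B_{<\pihalf}(x_\alpha)$, so $p$ is not contained in the boundary of the convex subcomplex $B_{\pihalf}(x_\alpha)$, and a chamber of $B$ meeting the interior of a convex subcomplex is contained in it. Hence a pointwise stabiliser of $B_{\pihalf}(x_\alpha)$ fixes $\alpha$ and all such $\sigma$, i.e.\ lies in $U_\alpha$.

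For the reverse inclusion in the reduced case with $\alpha$ long, take $g\in U_\alpha$. By definition $g$ fixes $\alpha$ pointwise, and $x_\alpha$ lies in the relative interior of the hemisphere $\alpha$ inside any apartment $A\supseteq\alpha$, so $\Si_{x_\alpha}\alpha=\Si_{x_\alpha}A$ is a full apartment of the building $\Si_{x_\alpha}B$ fixed pointwise by $g$, and the chambers of $\Si_{x_\alpha}B$ adjacent to it are directions of chambers of $B$ sharing an interior panel of $\alpha$, which $g$ fixes as well. The remaining claim is that $g$ acts trivially on all of $\Si_{x_\alpha}B$. Here $\Si_{x_\alpha}B$ is again Moufang, and using the canonical identifications of root groups with those of links together with the commutator relations (Theorem~\ref{thm:commutator}), the action of $U_\alpha$ on $\Si_{x_\alpha}B$ is carried by root groups $U_\gamma$ of $\Si_{x_\alpha}B$; the hypothesis that $\alpha$ is long forces every relevant $\gamma=a\alpha+b\beta\in\Phi$ to have $\langle\gamma,x_\alpha\rangle\geq 0$, so no such $U_\gamma$ moves $x_\alpha$ and the induced action is trivial. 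I expect this to be the main obstacle, and the only place the root‑length hypothesis is genuinely used; the cleanest route is probably to reduce, by passing to links of panels in $\partial\alpha$, to the rank‑two Moufang polygons and to read the statement off the explicit commutator relations of \cite{Tits:rootdata}.

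For a non‑reduced root $\alpha$ the same reformulation applies, but $U_\alpha$ is no longer abelian, and the point is to pin down $Fix_{Isom(B)}(B_{\pihalf}(x_\alpha))$ as the maximal admissible $U_{2\alpha}$. I would first obtain $U_{2\alpha}\subseteq Fix_{Isom(B)}(B_{\pihalf}(x_\alpha))\subseteq U_\alpha$: the right‑hand containment is the easy inclusion above, while for the left‑hand one the root elements of $U_{2\alpha}$ are, via the identifications of Section~\ref{sec:rootsystem} and Theorem~\ref{thm:commutator}, carried by the long root $2\alpha$, hence as in the reduced‑long case they fix $B_{\pihalf}(x_{2\alpha})=B_{\pihalf}(x_\alpha)$. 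To upgrade this I would then show $Fix_{Isom(B)}(B_{\pihalf}(x_\alpha))\subseteq Z(U_\alpha)$: for $g$ in this stabiliser and any root element $g_\beta$, expanding $[g,g_\beta]$ with the commutator relations shows it lies in a root group acting non‑trivially on a neighbourhood of $x_\alpha$, yet $g$ fixes such a neighbourhood, so $[g,g_\beta]=1$. Maximality is then automatic, since any subgroup allowed as $U_{2\alpha}$ in Theorem~\ref{thm:commutator} centralises $U_\alpha$ and therefore fixes a neighbourhood of $x_\alpha$, hence sits inside $Fix_{Isom(B)}(B_{\pihalf}(x_\alpha))$. Finally, having verified $1\neq[U_\alpha,U_\alpha]\subseteq Fix_{Isom(B)}(B_{\pihalf}(x_\alpha))\subseteq Z(U_\alpha)$, this subgroup is a legitimate — and the largest — choice of $U_{2\alpha}$, so all the assertions of Theorem~\ref{thm:commutator} persist with it in place of $U_{2\alpha}$.
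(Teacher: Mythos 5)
Your easy inclusion $Fix_{Isom(B)}(B_{\pihalf}(x_\alpha))\subset U_\alpha$ is fine and matches the paper's (one‑line) observation. The other direction is where your proposal diverges from the paper and, as written, has real gaps.

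For the hard containment ($U_\alpha$, resp.\ $U_{2\alpha}$, $\subset Fix_{Isom(B)}(B_{\pihalf}(x_\alpha))$), your sketch via the link $\Si_{x_\alpha}B$ and the commutator relations is not an argument yet: $g\in U_\alpha$ is a \emph{single} root‑group element, so the claim that its induced action on $\Si_{x_\alpha}B$ ``is carried by root groups $U_\gamma$ with $\gamma=a\alpha+b\beta$'' has no clear meaning, and I don't see how passing to panels in $\partial\alpha$ and quoting the polygon commutator relations closes it. The paper's proof here is quite different and is the real content: it evaluates the nicely convex function $f$ from Theorem~\ref{thm:unipotentisom} at $x_\alpha$. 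Since $\alpha$ is long (resp.\ $g\in U_{2\alpha}$ so $\mu_\alpha=2$), $f(x_\alpha)=-\mu_\alpha$ is the minimum possible value $-\max_\beta\mu_\beta$; evaluating $f(x_\alpha)=\max_{\beta\in\Lambda_{Fix(g)\cap A'}}\{-\mu_\beta\sin d(x_\alpha,\partial\beta)\}$ in any other apartment $A'$ through $x_\alpha$ forces equality throughout, hence every root in $\Lambda_{Fix(g)\cap A'}$ is centred at $x_\alpha$, giving $Fix(g)\cap A'=B_{\pihalf}(x_\alpha)\cap A'$ and so $Fix(g)=B_{\pihalf}(x_\alpha)$. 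This is what the root‑length hypothesis actually buys.

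For the non‑reduced case, your route to $Fix_{Isom(B)}(B_{\pihalf}(x_\alpha))\subset Z(U_\alpha)$ breaks precisely on the one commutator you need, $[g,g_\alpha]$ with $g_\alpha\in U_\alpha$: here $[g,g_\alpha]\in[U_\alpha,U_\alpha]\subset U_{2\alpha}$, and $U_{2\alpha}$ \emph{does} fix a neighbourhood of $x_\alpha$, so your ``it lies in a root group acting non‑trivially near $x_\alpha$'' contradiction does not materialise. The paper treats this case (its $d(x_\alpha,x_\beta)=0$ case) with a different trick: choose $\gamma$ with $d(x_\alpha,x_\gamma)=\pi/4$ and, using Tits' Lemma~2.1, write $g$ (up to a factor in $U_{2\alpha-\gamma}$) as a commutator $[u,g']$ with $u\in U_\gamma$ and $g'\in\bar U_{2(\alpha-\gamma)}$, then show each factor commutes with $h\in U_\alpha$. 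You also skip the other cases ($d=\pi/2$, $d=3\pi/4$), which the paper verifies explicitly; the $d=\pi/2$ case in particular has its own geometric argument using that $CH\bigl(B_{\pihalf}(x_\alpha)\cup St_{x_\beta}B\bigr)=B$. Finally, your maximality argument (``any valid $U_{2\alpha}$ centralises $U_\alpha$ and therefore fixes a neighbourhood of $x_\alpha$'') contains an unjustified implication; the correct statement is simply that the convex‑function argument above works verbatim for \emph{any} subgroup $U_{2\alpha}$ satisfying Theorem~\ref{thm:commutator}, which directly places it inside $Fix_{Isom(B)}(B_{\pihalf}(x_\alpha))$.
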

\begin{proof}
 By the definition of root subgroup, 
it is clear that $Fix_{Isom(B)}(B_{\pihalf}(x_\alpha)) \subset U_\alpha$
for any root $\alpha$.
Let now $1\neq g\in U_\alpha$ (or $U_{2\alpha}$ in the non-reduced case)
 as in the statement of the proposition.
Let $f$ be the function on $Fix(g)$ given by Theorem~\ref{thm:unipotentisom}.
Let $A$ be an apartment containing the root $\alpha$.
Then $f(x_\alpha) = f_{Fix(g)\cap A }(x_\alpha) = f_\alpha (x_\alpha) = -\mu_\alpha$.
By the hypothesis of the proposition $\mu_\alpha$ is the norm of the longest root in $\Phi$.
Now for any other apartment $A'$ containing $x_\alpha$ we have
$ -\mu_\alpha = f(x_\alpha) = f_{Fix(g)\cap A' }(x_\alpha) =
\max_{\beta\in\Lambda_{Fix(g)\cap A'}}\{-\mu_\beta \sin d(x_\alpha,\partial\beta)\}
 \geq \max_{\beta\in\Lambda_{Fix(g)\cap A'}}\{-\mu_\beta\} \geq -\mu_\alpha$.
The equality implies that $x_\alpha$ must be the center of every root in $\Lambda_{Fix(g)\cap A'}$.
That is, $Fix(g)\cap A' = B_{\pihalf}(x_\alpha) \cap A'$. It follows that $Fix(g)=B_{\pihalf}(x_\alpha)$.

Now we prove the second assertion. For a non-reduced root $\alpha$, let 
$\bar U_{2\alpha}:=Fix_{Isom(B)}(B_{\pihalf}(x_\alpha))$. We have to verify the commutator relations
for these subgroups. Let $\beta$ be another root and let $x_\beta$ be its center.
Then $d(x_\alpha,x_\beta)\in\{0,\frac{\pi}{4},\pihalf,\frac{3\pi}{4},\pi\}$.

Case $d(x_\alpha,x_\beta)=\frac{\pi}{4}$: 
Then $[\bar U_{2\alpha},U_\beta]\subset [U_\alpha,U_\beta]=1$.

Case $d(x_\alpha,x_\beta)=\pihalf$: Let $g\in\bar U_{2\alpha}$ and $h\in U_\beta$.
Then $h$ fixes $x_\alpha$ and therefore stabilizes $B_{\pihalf}(x_\alpha)$. 
Since $g$ fixes $B_{\pihalf}(x_\alpha)$ pointwise, 
it follows that $[g,h]$ also fixes $B_{\pihalf}(x_\alpha)$ pointwise.
Similarly, $g$ fixes $x_\beta$ and therefore stabilizes $St_{x_\beta}B$. 
On the other hand $h$ fixes $St_{x_\beta}B$ pointwise and
it follows that $[g,h]$ also fixes $St_{x_\beta}B$ pointwise.
We conclude that $Fix([g,h])\supset B_{\pihalf}(x_\alpha)\cup St_{x_\beta}B$.
But the convex hull of $B_{\pihalf}(x_\alpha)\cup St_{x_\beta}B$ is $B$, hence $Fix([g,h])=B$
and $[g,h]=1$. We obtain $[\bar U_{2\alpha},U_\beta]=1$.

Case $d(x_\alpha,x_\beta)=\frac{3\pi}{4}$: 
Let $g\in\bar U_{2\alpha}$ and $h\in U_\beta$. Then $[h,g]=g_1g_2$ with
$g_i\in U_{\beta+i\alpha}$ (see e.g.\ \cite[Prop.\ 11.17]{Weiss:sphbuild}).
By \cite[Lemma 2.1]{Tits:rootdata}, $g_1$ is conjugated to $g$ and therefore
it fixes a ball of radius $\pihalf$. This implies that $g_1\in \bar U_{2(\beta+\alpha)}$. 
Hence $[\bar U_{2\alpha},U_\beta]\subset \langle \bar U_{2(\beta+\alpha)}, U_{\beta+2\alpha}\rangle$.

Case $d(x_\alpha,x_\beta)=0$: Let $g\in\bar U_{2\alpha}$ and $h\in U_\alpha$.
Choose a root $\gamma$ with $d(x_\alpha,x_\gamma)=\frac{\pi}{4}$ and let $u\in U_\gamma$.
Then by \cite[Lemma 2.1]{Tits:rootdata}, there is a $g'\in U_{\alpha-\gamma}$ 
conjugate to $g$ (and therefore $g'\in \bar U_{2(\alpha-\gamma)}$)
and a $k\in U_{2\alpha-\gamma}$ such that $[u,g']=gk$.
Since $[U_\gamma,U_\alpha]=1$ and $[U_{2\alpha-\gamma},U_\alpha]=1$, 
the elements $u$ and $k$ commute with $h$. Moreover, by the second case above
$[\bar U_{2(\alpha-\gamma)},U_\alpha]=1$ and we also see that $g'$ commutes with $h$.
It follows that $[g,h]=1$ and therefore $[\bar U_{2\alpha},U_\alpha]=1$, in other words,
$\bar U_{2\alpha} \subset Z(U_\alpha)$.
\end{proof}

\subsection{Commuting unipotent elements}

The following result is well known in the setting of algebraic groups. 
We give here a geometric proof that works for any spherical building. 

\begin{proposition}\label{prop:commutingunipotent}
 The product of two commuting unipotent isometries is again unipotent.
\end{proposition}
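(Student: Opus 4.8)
The plan is to produce a single chamber $\sigma_0$ fixed by both $g$ and $h$; granting this, Corollary~\ref{cor:fixunipotent} gives $g,h\in U_{\sigma_0}$, whence $gh\in U_{\sigma_0}$ is unipotent. First I would reduce to the irreducible case: in a join decomposition $B=B_1\circ\dots\circ B_k$ (possibly with a sphere factor) every wall, and hence every root and every root group, is supported in a single factor, so a unipotent isometry preserves each $B_i$ and decomposes as a join of unipotent isometries of the $B_i$, the relation $gh=hg$ is inherited factorwise, and a join of unipotent isometries is unipotent. Thus it suffices to treat $B$ irreducible of dimension $\ge 2$ (the remaining cases being handled by Remark~\ref{rem:polygons} or being trivial, root groups then being absent), and we may assume $g\ne 1\ne h$.

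Since $g$ is a nontrivial unipotent isometry, $Fix(g)$ is a proper top-dimensional convex subcomplex, so by Theorem~\ref{thm:unipotentisom} it carries the nicely convex function $f:=f_{Fix(g)}$ (built from the weights attached to $g$) with a unique minimum $x_0$, at which $f(x_0)=\inf f<0$. As $f$ vanishes identically on $\partial Fix(g)$ and is continuous, $f(x_0)<0$ forces $x_0$ to lie in the interior of $Fix(g)$; writing $\tau_0$ for the face of which $x_0$ is an interior point, it follows that $St_{\tau_0}(B)\subset Fix(g)$, i.e.\ $g$ fixes every chamber containing $\tau_0$. Next I would check that $h$ fixes $x_0$. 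Since $h$ commutes with $g$ it maps $Fix(g)=Fix(hgh^{-1})$ onto itself; and since $h$ is type-preserving and — adopting the intrinsic choice $U_{2\alpha}=Fix_{Isom(B)}(B_{\pihalf}(x_\alpha))$ of Proposition~\ref{prop:pihalfball} — carries the coordinate data of $g$ (Proposition~\ref{prop:coordunipotent}) to that of $hgh^{-1}=g$, it preserves the weight assignment $\alpha\mapsto\mu_\alpha$, hence the function $f$: thus $f\circ h=f$ and $h$ fixes the unique minimum $x_0$. Being type-preserving, $h$ then fixes $\tau_0$ pointwise.

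It remains to pass from the fixed face $\tau_0$ to a common fixed chamber. I would use that $Fix(h)$ is a \emph{pure} top-dimensional subcomplex — every face lies in a chamber of $Fix(h)$: indeed, $\tau_0$ and a chamber of $Fix(h)$ lie in a common apartment $A$ supporting $Fix(h)$ by Lemma~\ref{lem:supportingapt}, and in the Coxeter complex $A$ a short argument (a generic geodesic from $\tau_0$ towards that chamber immediately enters the interior of a chamber $\sigma\supset\tau_0$, which the subcomplex $Fix(h)\cap A$ must then contain entirely) exhibits a chamber $\sigma_0$ with $\tau_0\subset\sigma_0\subset Fix(h)$. Since $\sigma_0$ contains $\tau_0$ it lies in $St_{\tau_0}(B)\subset Fix(g)$ by the previous paragraph, so $\sigma_0\subset Fix(g)\cap Fix(h)$; by Corollary~\ref{cor:fixunipotent} we get $g,h\in U_{\sigma_0}$, and therefore $gh\in U_{\sigma_0}$ is unipotent.

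The step requiring genuine care is the $h$-equivariance of the weights $\mu_\alpha$ used to deduce $f\circ h=f$ — the only point where the non--simply-laced types matter — which rests on the naturality of the coordinate maps of Proposition~\ref{prop:coordunipotent} together with the intrinsic description of $U_{2\alpha}$ from Proposition~\ref{prop:pihalfball}; in the simply-laced case all weights equal $1$, $x_0$ is simply the incenter of $Fix(g)$, which is canonical, and this step is vacuous. Everything else — the reduction to the irreducible case, the location of $x_0$ in the interior, the purity of $Fix(h)$, and the final deduction — is routine given the results already proved.
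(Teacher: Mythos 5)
Your proposal is correct and uses essentially the same strategy as the paper: the $h$-invariance of the weighted-incenter function $f$ from Theorem~\ref{thm:unipotentisom} forces $h$ to fix the incenter $x_0$ of $Fix(g)$, whence a chamber in $Fix(g)\cap Fix(h)$ is produced and Corollary~\ref{cor:fixunipotent} finishes. The only (cosmetic) divergence is in how that common chamber is located: the paper symmetrically takes the weighted incenters $x_1,x_2$ of $Fix(g_1),Fix(g_2)$, notes each is fixed by the other isometry, and uses the midpoint of $x_1x_2$, which is interior in both fixed sets; you instead use a single incenter plus $St_{\tau_0}(B)\subset Fix(g)$ and a purity argument for $Fix(h)$ to find a chamber through $\tau_0$ in both — both routes rest on the same ingredients (Corollary~\ref{cor:star}/Corollary~\ref{cor:fixunipotent} and the equivariance of the weights), and your explicit reduction to the irreducible case, which the paper leaves tacit, is a welcome addition.
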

\begin{proof}
Let $g_1,g_2$ be two commuting unipotent isometries. 
Then $g_i$ stabilizes $Fix(g_{3-i})$.
Let $f_i$ be the function on $Fix(g_i)$
given by Theorem~\ref{thm:unipotentisom} and let $x_i$ be the corresponding weighted incenter
of $Fix(g_i)$.

Let $\{i,j\}=\{1,2\}$.
Let $A$ be an apartment containing $x\in Fix(g_i)$.
Then $f_i(g_{j}x)= f_{Fix(g_i)\cap g_{j}A}(g_{j}x) = f_{Fix(g_j^{-1}g_ig_j)\cap A}(x)=
 f_{Fix(g_i)\cap A}(x) = f_i(x)$. Hence, the function $f_i$ is $g_j$-invariant. Therefore
$g_j$ must fix the unique minimum $x_i$ and $x_1x_2\subset Fix(g_1)\cap Fix(g_2)$.
Notice that $x_i$ is an interior point of $Fix(g_i)$ by definition. It follows that the midpoint
$x_0$ of the segment $x_1x_2$ is interior in $Fix(g_1)$ and in $Fix(g_2)$.
This implies that there is a chamber $\sigma\subset Fix(g_1)\cap Fix(g_2)$ containing $x_0$.
Then, by Corollary~\ref{cor:fixunipotent}, $g_i\in U_\sigma$ and in particular 
$g_1g_2\in U_\sigma$.
\end{proof}

\subsection{Jordan decomposition}\label{sec:jordan}

In this section we want to consider special examples of spherical buildings that include the 
buildings associated to algebraic groups and isometries for which there exists a Jordan decomposition.
First we will explain the setting that occurs in the algebraic groups and then we state the results in
a purely geometric manner forgetting the algebraic group structure.

Let $G$ be a semisimple algebraic group defined over an algebraically closed field $k$.
Let $B_{G,k}$ denote the associated spherical building. 
The faces of $B_{G,k}$ correspond to parabolic subgroups and the chambers to minimal
parabolic subgroups, that is, Borel subgroups.
An apartment corresponds to a maximal torus, the faces contained in the apartment are the parabolics
containing the maximal torus.
The group $G(k)$ acts on $B_{G,k}$ by type-preserving isometries. 
An element $g\in G(k)$ has fixed point set a subbuilding if and only if it is a semisimple element.
Let $s\in G(k)$ be semisimple, the fixed point set $Fix(s)$ consists on all apartments corresponding
to maximal tori containing $s$.
There is a torus $S\subset G$ such that $C_G(S)=C_G(s)$. The torus $S$ corresponds to
a singular sphere $\varsigma\subset B_{G,k}$ and $Fix(s)$ is the union of the apartments 
containing $\varsigma$.
In particular $Fix(s)$ is always top-dimensional.

Let now $G$ be a semisimple algebraic group defined over an arbitrary field $k$. 
Suppose further that $G$ is $k$-split, that is, $G$ contains a $k$-split 
(diagonalizable over $k$) maximal torus. Let $\bar k$ be the algebraic closure of $k$.
We have that $B_{G,k}$ is a subbuilding of $B_{G,\bar k}$. 
Since $G$ is $k$-split, $B_{G,k}$ is top-dimensional in $B_{G,\bar k}$. 
Let $s\in G(k)$ be a semisimple element. 
In general, $Fix(s)\subset B_{G,k}$ is not top-dimensional anymore.
Suppose first that $Fix(s)\subset B_{G,k}$ is top-dimensional. 
Then by the above discussion, the apartments in the fixed point set of the action
$s\acts B_{G,\bar k}$ must contain the singular sphere 
$\varsigma\subset B_{G,\bar k}$. This implies that the torus $S$ is $k$-defined and
$\varsigma\subset B_{G,k}$. The fixed point set $Fix(s)$ is the union of the apartments in $
B_{G, k}$ containing $\varsigma$.

Suppose now that the field $k$ is perfect. Let $g\in G(k)$, then $g$ has a unique
Jordan decomposition $g=su$ with $s\in G(k)$ semisimple, $u\in G(k)$ unipotent and $s,u$ 
commute with each other. 
In the case of algebraically closed fields, this implies that
$Fix(g)$ is always top-dimensional and therefore it is a subbuilding or has circumradius
$\leq \pihalf$ by Theorem~\ref{thm:topdim}. 
We generalize this conclusion to perfect fields and non-split groups 
in Propositions~\ref{prop:fixsetjordan} and \ref{prop:fixsetnonsplit}.

Motivated by this discussion, we make the following definition. We say that an isometry
$g\in Isom(B)$ of a spherical building $B$ is {\em split} if $Fix(s)$ is the union of all apartments
containing a singular sphere $\varsigma\subset B$.
In particular, the fixed point set of a split isometry 
factorizes as a spherical join $Fix(k)\cong \Si_{\varsigma} B\circ \varsigma$.

In the following results we consider isometries $g$ of spherical buildings
that admit a kind of {\em Jordan decomposition}, that is, can be written as $g=uk$ with
$u$ unipotent, $Fix(k)$ is a subbuilding and $u$ and $k$ commute.

\begin{lemma}\label{lem:fixuk}
Let $B$ be an irreducible spherical building.
Let $k$ be an isometry, whose fixed point set is a subbuilding, and let
$u$ be a unipotent isometry. Suppose that $u$ and $k$ commute.
Then $Fix(u)\cap Fix(k)$ is a top-dimensional 
subcomplex of the building $B'=Fix(k)$ with its thick structure.
\end{lemma}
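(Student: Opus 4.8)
The plan is to work inside the subbuilding $B'=Fix(k)$ and show that $u$ restricts to a well-defined isometry of $B'$ whose fixed point set is the top-dimensional subcomplex we want. First I would observe that since $u$ and $k$ commute, $u$ stabilizes $Fix(k)=B'$, hence $u|_{B'}$ is an isometry of the spherical building $B'$ (with its natural building structure inherited from $B$, cf.\ the discussion in Section~\ref{sec:subcomplexbuilding} on subbuildings). So $Fix(u)\cap Fix(k)=Fix(u|_{B'})$ is a convex subcomplex of $B'$. The whole point is dimension: I must show that $Fix(u|_{B'})$ has the same dimension as $B'$. Equivalently, I need to produce a chamber of $B'$ that is fixed by $u$.

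The key step is to find a point of $B'$ fixed by $u$ that is an \emph{interior} point of $Fix(u)$ in $B$. For this I would invoke Theorem~\ref{thm:unipotentisom} applied to the unipotent isometry $u$ of $B$: it gives a nicely convex function $f$ on $Fix(u)$ with a unique minimum, the weighted incenter $x_0$, which by definition is an interior point of $Fix(u)$. Now the commutation $uk=ku$ forces $k$ to preserve $Fix(u)$, and (as in the proof of Proposition~\ref{prop:commutingunipotent}) one checks that the function $f$ is $k$-invariant: for $x\in Fix(u)$ and an apartment $A\ni x$, one has $f(kx)=f_{Fix(u)\cap kA}(kx)=f_{Fix(k^{-1}uk)\cap A}(x)=f_{Fix(u)\cap A}(x)=f(x)$, using $k^{-1}uk=u$ and that the weights for $u$ are apartment-independent. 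Hence $k$ fixes the unique minimum $x_0$, so $x_0\in Fix(k)=B'$. Since $x_0$ is an interior point of $Fix(u)$ in $B$, the link $\Sigma_{x_0}Fix(u)\subset\Sigma_{x_0}B$ is a subbuilding of full dimension; intersecting with $\Sigma_{x_0}B'$ (itself a subbuilding of $\Sigma_{x_0}B$ of dimension $\dim B'-1$ relative to the smallest face of $B'$ containing $x_0$), one still gets a top-dimensional set, and in particular there is a chamber $\sigma\subset Fix(u)\cap B'$ containing $x_0$. That chamber is top-dimensional in $B'$, which proves the claim.

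The main obstacle I anticipate is the bookkeeping around the point $x_0$: one must be careful that the weighted incenter $x_0$ of $Fix(u)$ in $B$ being an interior point of $Fix(u)$ really does guarantee the existence of a chamber of $B$ (not merely a lower-dimensional face) lying in $Fix(u)$ through $x_0$, and then that this chamber, since it lies in $Fix(k)=B'$, is a chamber of $B'$ in the intended sense. This is essentially the argument already used at the end of the proof of Proposition~\ref{prop:commutingunipotent} (where $x_0$ interior in both $Fix(g_1)$ and $Fix(g_2)$ yields a common chamber via Corollary~\ref{cor:fixunipotent}), so I would reuse it verbatim, with $g_1=u$ and the roles of $g_2$ replaced by the (possibly non-unipotent) isometry $k$; the only thing needed from $k$ is that it preserves $Fix(u)$ and fixes $x_0$, both of which we have established. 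A small point to address is that $Fix(k)$ with its thick structure is genuinely a spherical building in which "top-dimensional subcomplex" makes sense, but that is exactly the content of the cited fact that subbuildings carry an induced building structure (cf.\ \cite[Proposition 2.13]{LeebRamos-Cuevas:centerconj}).
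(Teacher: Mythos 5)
Your argument correctly identifies the right object to look at, namely the weighted incenter $x_0$ of $Fix(u)$ from Theorem~\ref{thm:unipotentisom}, and correctly establishes that $k$ fixes $x_0$, so $x_0\in Fix(u)\cap Fix(k)$ is a good interior point. This is indeed how the paper obtains the dimension statement, and your $k$-invariance computation of $f$ is the one used in Proposition~\ref{prop:commutingunipotent}. Two issues, one minor and one serious.

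The minor one: your claim that $\Sigma_{x_0}Fix(u)\cap\Sigma_{x_0}B'$ is still top-dimensional because both factors are subbuildings is not justified as written --- the intersection of a full-dimensional subbuilding with a lower-dimensional subbuilding can very well drop dimension. What you actually need, and what the paper uses, is the much stronger statement $St_{x_0}B\subset Fix(u)$ coming from Corollary~\ref{cor:star} (so that $\Sigma_{x_0}Fix(u)$ is \emph{all} of $\Sigma_{x_0}B$, not merely a full-dimensional subbuilding). Once you know this, $\Sigma_{x_0}(Fix(u)\cap Fix(k))=\Sigma_{x_0}Fix(k)$ and the conclusion is immediate. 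Replace your link argument with this.

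The serious gap: the lemma asserts not just top-dimensionality, but that $Fix(u)\cap Fix(k)$ is a \emph{subcomplex of $B'$ with respect to its thick structure}. You dismiss this at the very start by writing ``So $Fix(u)\cap Fix(k)=Fix(u|_{B'})$ is a convex subcomplex of $B'$.'' But a subcomplex with respect to the polyhedral structure $B'$ inherits from $B$ need not be a subcomplex for the thick structure of $B'$: the thick structure of a subbuilding has fewer walls, and a boundary panel of $Fix(u)\cap Fix(k)$ that sits on a $B$-wall might a priori lie in the interior of a thick chamber of $B'$. The paper handles this by a separate argument: for a codimension-one boundary face $\tau$ of $Fix(u)\cap Fix(k)$ in $B'$, one picks full-dimensional faces $\sigma,\sigma'\subset Fix(k)$ containing $\tau$ with $\sigma\subset Fix(u)$ and $\sigma'\not\subset Fix(u)$. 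Since $u$ stabilizes $Fix(k)$ (as you noted) and $u\sigma'\neq\sigma'$, and also $u\sigma'\neq\sigma$ (else $u\sigma'=\sigma=u\sigma$ would force $\sigma'=\sigma$), one gets three pairwise distinct faces $\sigma,\sigma',u\sigma'$ of $B'$ containing $\tau$, which is exactly what it means for $\tau$ to lie on a wall of the thick structure. Without some argument of this kind, the lemma as stated is not proved.
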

\begin{proof}
Let $x_0$ be the weighted incenter of $Fix(u)$ given by Theorem~\ref{thm:unipotentisom}. 
Then $k$ fixes $x_0$ because it commutes with $u$. 
By definition $x_0$ must be an interior point of $Fix(u)$, therefore by Corollary~\ref{cor:star},
$St_{x_0} B\subset Fix(u)$.
Since $Fix(k)$ is a subbuilding, it follows that $x_0$ is an interior point of $Fix(u)\cap Fix(k)$
and $Fix(u)\cap Fix(k)$ is of full dimension in $Fix(k)$.

Let $\tau\subset \partial(Fix(u)\cap Fix(k))$ be a face of full dimension and let 
$\sigma,\sigma'\subset Fix(k)$ be faces of full dimension containing $\tau$ and such that 
$\sigma\subset Fix(u)$, $\sigma'\not\subset Fix(u)$. 
Observe that $u$ stabilizes $Fix(k)$ because $u$ and $k$ commute.
Then $\sigma,\sigma',u\sigma'$ are three
pairwise distinct faces in $Fix(k)$ containing $\tau$. It follows that $\tau$ is contained in a wall
with respect to the thick structure of $Fix(k)$.
\end{proof}

Notice that if $k$ is a split isometry with $Fix(k)\cong \Si_{\varsigma} B\circ \varsigma$,
then a top-dimensional subcomplex $K\subset Fix(k)\subset B$ is a subcomplex
with respect to the thick structure of $Fix(k)$ if and only if $K$ contains the singular
sphere $\varsigma$.
In the next result we see that the converse of Lemma~\ref{lem:fixuk} is also true if 
the isometry $k$ is split.

\begin{lemma}
Let $B$ be an irreducible spherical building.
Let $k$ be a split isometry with $Fix(k)\cong \Si_{\varsigma} B\circ \varsigma$.
If the fixed point set of a unipotent isometry $u$ contains the singular sphere $\varsigma$,
then $u$ and $k$ commute. 
\end{lemma}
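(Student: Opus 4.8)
The plan is to reduce the statement to the commutation of $k$ with individual root‑group elements and then to use that a nontrivial element of $U_\alpha$ cannot stabilize an apartment containing $\alpha$.

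If $u=1$ there is nothing to prove, so assume $u\neq1$. Since $u$ is unipotent it fixes a chamber, so $Fix(u)$ is a top‑dimensional convex subcomplex, and by hypothesis $\varsigma\subset Fix(u)$; hence Lemma~\ref{lem:supportingapt} gives an apartment $A$ supporting $Fix(u)$ with $\varsigma\subset A$, so that $\varsigma\subset Fix(u)\cap A$. Fix a chamber $\sigma\subset Fix(u)\cap A$ (so $u\in U_\sigma$ by Corollary~\ref{cor:fixunipotent}) and a minimal gallery in $A$ from $\sigma$ to its antipode, and write $u=u_1\cdots u_d$ accordingly. By Proposition~\ref{prop:coordunipotent} every factor $u_i\neq1$ lies in a root group $U_{\alpha_i}$ with $\alpha_i\in\Lambda_{Fix(u)\cap A}$, so in particular $\varsigma\subset\alpha_i$. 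It therefore suffices to prove that $[v,k]=1$ for every root $\alpha\subset B$ with $\varsigma\subset\alpha$ and every $v\in U_\alpha$; multiplying these relations then yields $uk=ku$.

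So fix such an $\alpha$ and $v$, and recall that $Fix(k)$ is the union of all apartments of $B$ that contain $\varsigma$. Since $\varsigma\subset\alpha$, every apartment containing $\alpha$ contains $\varsigma$ and hence lies in $Fix(k)$; in particular $\alpha\subset Fix(k)$, so $k$ fixes $\alpha$ pointwise, $k\alpha=\alpha$, and $kU_\alpha k^{-1}=U_{k\alpha}=U_\alpha$. Consequently $[v,k]=v\,(kv^{-1}k^{-1})\in U_\alpha$. Now let $A'$ be any apartment with $\alpha\subset A'$. Then $\varsigma\subset A'$, so $A'\subset Fix(k)$ and $k$ fixes $A'$ pointwise; moreover $v^{-1}$ also fixes $\alpha$ pointwise, so $v^{-1}A'$ is again an apartment containing $\alpha\supset\varsigma$ and therefore also lies in $Fix(k)$, whence $k$ fixes $v^{-1}A'$ pointwise. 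For $x\in A'$ we then get $[v,k]x=v\big(k(v^{-1}(k^{-1}x))\big)=v\big(k(v^{-1}x)\big)=v(v^{-1}x)=x$, so $[v,k]$ fixes $A'$ pointwise. Since $U_\alpha$ acts simply transitively on the apartments containing $\alpha$, the element $[v,k]\in U_\alpha$, which stabilizes $A'$, must be trivial. This gives $[v,k]=1$ and completes the argument.

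The step I expect to need the most care is the verification that $[v,k]$ fixes $A'$ pointwise: it is essential that not only $A'$ but also its translate $v^{-1}A'$ still contains $\varsigma$, which is precisely why one needs $\varsigma\subset\alpha$ rather than merely $\varsigma\subset Fix(v)$. It is also worth emphasizing why the reduction to root elements is needed at all: working directly with $u$ one only obtains that $[u,k]$ fixes every apartment through $\varsigma$ (equivalently, fixes $Fix(k)$ pointwise), which is not enough to conclude $[u,k]=1$ because the pointwise stabilizer of an apartment in $Isom(B)$ is in general nontrivial; it is the simple transitivity of the single root group $U_\alpha$ on apartments through $\alpha$ that finally forces the commutator to be trivial.
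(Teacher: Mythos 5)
Your proof is correct and, while it converges to the same underlying mechanism as the paper's, the final step is carried out differently. Both proofs make the same reduction to a single root‑group element supported by a root $\alpha\supset\varsigma$, via Proposition~\ref{prop:coordunipotent}, and both observe that $k$ fixes $\alpha$ pointwise so that the commutator lands in $U_\alpha$. Where the paper then passes to the link: it notes that $kuk^{-1}$ and $u$ induce the same action on $\Si_\varsigma B$ (because $k$ acts trivially there) and invokes the injectivity of the restriction homomorphism $U_\alpha\to U_{\Si_\varsigma\alpha}$, you instead work directly with an apartment $A'\supset\alpha$: you check by hand that $k$ fixes both $A'$ and $v^{-1}A'$ pointwise, deduce that $[v,k]$ fixes $A'$ pointwise, and conclude from simple transitivity that $[v,k]=1$. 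These are two packagings of the same fact --- the paper itself derives the injectivity of the link restriction from simple transitivity --- but your version avoids the detour through the link and spells out the apartment computation explicitly, which makes the role of the hypothesis $\varsigma\subset\alpha$ (as opposed to merely $\varsigma\subset Fix(u)$) more transparent. Both proofs require irreducibility of $B$ at the same spot: you use it via the Moufang property to get simple transitivity, the paper uses it to get injectivity of the restriction to the link.
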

\begin{proof}
 Let $A\subset B$ be an apartment containing $\varsigma$ and such that $K=Fix(u)\cap A$ is
top-dimensional. Then by Proposition~\ref{prop:coordunipotent}, $g$ is a product of root elements
of roots in $\Lambda_K = \{\alpha\subset A\;|\; K\subset A\}\subset 
\{\alpha\subset A\;|\; \varsigma\subset A\}$. Hence, we may assume that $u\in U_\alpha$
for some $\alpha\subset A$ containing $\varsigma$.
Then $kuk^{-1}\in U_\alpha$ because $k$ fixes $\alpha$.
Further, the action of $kuk^{-1}$ and $u$ on $\Si_\varsigma B$ is the same because $k$ acts
as the identity on $\Si_\varsigma B$. Since $B$ is irreducible, this implies that 
$kuk^{-1}=u$.
\end{proof}

The following proposition applies in particular to the buildings
associated to algebraic groups $G$ defined over algebraically closed fields $k$
and isometries $g\in G(k)$.

\begin{proposition}\label{prop:fixsetjordan}
Let $B$ be an irreducible spherical building.
Let $k$ be a split isometry and let $u$ be a unipotent isometry such that 
$u$ and $k$  commute. Let $g=uk$. Then $Fix(g)=Fix(u)\cap Fix(k)$.
\end{proposition}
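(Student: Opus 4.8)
The inclusion $Fix(u)\cap Fix(k)\subseteq Fix(g)$ is immediate: if $ux=x=kx$ then $gx=ukx=ux=x$. For the converse the plan is to pull the argument down to the link of the singular sphere $\varsigma$. First note that $k$ fixes $Fix(k)$ pointwise, so the restriction of $g=uk$ to $Fix(k)$ coincides with that of $u$; hence
$$Fix(g)\cap Fix(k)=Fix\!\left(u|_{Fix(k)}\right)=Fix(u)\cap Fix(k),$$
and it suffices to prove $Fix(g)\subseteq Fix(k)$. Next I would put $\varsigma$ inside $Fix(g)$: by Lemma~\ref{lem:fixuk} the set $Fix(u)\cap Fix(k)$ is a top-dimensional subcomplex of $B'=Fix(k)$ with respect to the thick structure of $B'$, and by the remark following that lemma every such subcomplex contains $\varsigma$. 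So $\varsigma\subseteq Fix(u)\cap Fix(k)\subseteq Fix(g)$, and $g,u,k$ all fix $\varsigma$ pointwise.

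Let $\bar g,\bar u$ be the isometries induced on $\Sigma_\varsigma B$. Since $k$ fixes $Fix(k)=\Sigma_\varsigma B\circ\varsigma$ pointwise it fixes the join factor $\Sigma_\varsigma B$ pointwise, so $\bar g=\bar u$; moreover $\bar u$ is unipotent (a unipotent isometry is a product of root elements for roots containing $\varsigma$, which restrict to root elements of $\Sigma_\varsigma B$), and, using $\varsigma\subseteq Fix(u)$ and the convexity of fixed point sets, $\Sigma_\varsigma(Fix(u))=Fix(\bar u)$. Combining these, $Fix(u)\cap Fix(k)=Fix(\bar u)\circ\varsigma=Fix(\bar g)\circ\varsigma$. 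Thus the proposition reduces to the identity $Fix(g)=Fix(\bar g)\circ\varsigma$, i.e.\ to the assertion that $Fix(g)$ splits as a join over $\varsigma$.

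This is the heart of the matter, and I would prove it by induction on $\dim B$. Given $x\in Fix(g)$, let $\bar x\in\varsigma$ be a nearest point (so $d(x,\bar x)=d(x,\varsigma)\le\pihalf<\pi$ and $\overrightarrow{\bar x x}\perp\varsigma$), and let $\tau$ be the face of $\varsigma$ whose interior contains $\bar x$. Then $g,u,k$ fix $\tau$ pointwise and act on $\Sigma_\tau B$; there $k|_{\Sigma_\tau B}$ is again split, with singular sphere $\Sigma_\tau\varsigma$ (because $\Sigma_\tau(Fix(k))=\Sigma_\varsigma B\circ\Sigma_\tau\varsigma$ is the union of the apartments of $\Sigma_\tau B$ containing $\Sigma_\tau\varsigma$), $u|_{\Sigma_\tau B}$ is unipotent, and the two commute. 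The inductive hypothesis — with the base cases $\dim\Sigma_\tau B\le 1$ treated directly (a split isometry of a building of rank $\le 1$ is the identity, so there $\bar g=\bar u$ and the claim is trivial) and a reducible link handled by the decomposition of a fixed point set as a spherical join of fixed point sets in irreducible buildings — gives $Fix(g|_{\Sigma_\tau B})=Fix(u|_{\Sigma_\tau B})\cap Fix(k|_{\Sigma_\tau B})$, that is, $\Sigma_\tau(Fix(g))=\Sigma_\tau(Fix(u))\cap\Sigma_\tau(Fix(k))$. Since $\overrightarrow{\bar x x}$ lies in $\Sigma_\tau(Fix(g))$ and is perpendicular to $\varsigma$, it then lies in $\Sigma_\tau(Fix(u))$ and in $\Sigma_\tau(Fix(k))$; as $u$ and $k$ fix $\bar x$, each of them fixes the whole geodesic issuing from $\bar x$ in that direction, which passes through $x$. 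Hence $x\in Fix(u)\cap Fix(k)$, and since $x$ was arbitrary, $Fix(g)\subseteq Fix(u)\cap Fix(k)$, which together with the easy inclusion finishes the proof.

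The main obstacle I anticipate is making the inductive step on links fully rigorous: checking that $u|_{\Sigma_\tau B}$ really is unipotent and $k|_{\Sigma_\tau B}$ really is split (in particular, identifying $\Sigma_\tau(Fix(k))$ with the union of the apartments of $\Sigma_\tau B$ through $\Sigma_\tau\varsigma$, and reducing a reducible link to its irreducible join factors), as well as the low-rank base cases. By contrast the elementary points — that $d(x,\varsigma)\le\pihalf$, and that a fixed tangent direction at a fixed point promotes to a fixed geodesic by convexity of the fixed point sets — are routine.
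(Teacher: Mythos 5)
Your proof is correct in outline but takes a genuinely different route from the paper's, which gives a short non-inductive chamber argument. The paper picks any chamber $\sigma\subset B$ meeting $Fix(u)\cap Fix(k)$ in a boundary panel $\tau$, invokes Lemma~\ref{lem:fixuk} to place $\tau$ in a wall of the thick structure of $Fix(k)\cong\Si_\varsigma B\circ\varsigma$ (so $St_\tau B\subset Fix(k)$), and concludes that $g$ agrees with $u$ on $\sigma\subset Fix(k)$ and hence cannot fix $\sigma$; since $Fix(g)$ is a convex subcomplex containing the top-dimensional $Fix(u)\cap Fix(k)$, this already forbids $Fix(g)$ from being any larger. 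You instead extract $\varsigma\subset Fix(u)\cap Fix(k)$ from the same Lemma~\ref{lem:fixuk} together with the remark after it, project an arbitrary $x\in Fix(g)$ to $\varsigma$, and run an induction on $\dim B$ in the link $\Si_\tau B$ to show that $u$ and $k$ each fix the geodesic $\bar x x$. That approach works and has the virtue of making the join picture $Fix(g)\cong Fix(\bar g)\circ\varsigma$ explicit, but it forces you to verify that the Jordan-type hypotheses descend to links (you do flag this), and your justification of the rank-$1$ base case is actually wrong as stated: a split isometry of a rank-$1$ building need not be the identity (its fixed set can be a single antipodal pair $\varsigma$). The base case still goes through, but for a different reason: a unipotent isometry commuting with $k$ fixes one point of $\varsigma$, hence (preserving $\varsigma$) its antipode too, and therefore is the identity, forcing $g=k$ there. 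So both proofs hinge on Lemma~\ref{lem:fixuk}, but the paper's route is considerably shorter and avoids the bookkeeping you correctly anticipated.
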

\begin{proof}
Clearly, $Fix(g)\supset Fix(u)\cap Fix(k)$.
Let $\sigma\subset B$ be a chamber such that 
$\sigma \cap (Fix(u)\cap Fix(k))$ is a panel $\tau \subset \partial(Fix(u)\cap Fix(k))$.
By Lemma~\ref{lem:fixuk}, $\tau$ is contained in a wall of the thick structure of 
$Fix(k)\cong \Si_{\varsigma} B\circ \varsigma$.
This implies that there is a panel $\nu\subset \Si_{\varsigma} B$ such that
$\tau \subset \nu\circ \varsigma \subset Fix(k)\cong \Si_{\varsigma} B\circ \varsigma$.
In particular, $St_\tau B\subset St_\nu(\Si_\varsigma B)\circ\varsigma \subset Fix(k)$. 
This in turn implies that $\sigma\subset Fix(k)$.
Since, $\sigma\not\subset Fix(u)\cap Fix(k)$, 
it follows that $\sigma\not\subset Fix(g)$.
Therefore, we conclude that $Fix(g)=Fix(u)\cap Fix(k)$.
\end{proof}

Let again $G$ be a semisimple algebraic group defined over a field $k$,
but we do not assume that $G$ is $k$-split.
Let $s\in G(k)$ be a semisimple element. 
Even if $Fix(s)\subset B_{G,k}$ is top-dimensional, it may not be top-dimensional in $B_{G,\bar k}$. 
Thus, the isometry $s$ of $B_{G,k}$ must not be split. 
Nevertheless, we can use Proposition~\ref{prop:fixsetjordan} applied to $B_{G,\bar k}$ to conclude 
that for a perfect field $k$ (that is, when we have a Jordan decomposition), the fixed point set 
$Fix(g)$ is either a subbuilding or it has circumradius $\leq \pihalf$.

\begin{proposition}\label{prop:fixsetnonsplit}
Let $\tilde B$ be an irreducible spherical building
Let $\tilde k$ be a split isometry 
and let $\tilde u$ be a unipotent isometry such that 
$\tilde u$ and $\tilde k$  commute. 
Suppose further that $\tilde u,\tilde k$ stabilize 
a subbuilding $B\subset \tilde B$, in particular, their restrictions to $B$
induce isometries $u,k\in Isom(B)$. Assume also that $u$ is unipotent. Let $g=uk$. 
Then $Fix(g)=Fix(k)\cap Fix(u)\subset B$ 
is either a subbuilding or it has circumradius $\leq \pihalf$.
\end{proposition}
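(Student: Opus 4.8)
The plan is to deduce the statement from Proposition~\ref{prop:fixsetjordan}, applied inside $\tilde B$, together with the unipotent case Theorem~\ref{thm:unipotentisom}. Since $\tilde u$ and $\tilde k$ commute, $\tilde g:=\tilde u\tilde k$ commutes with each of them, so Proposition~\ref{prop:fixsetjordan} (in $\tilde B$) gives $Fix(\tilde g)=Fix(\tilde u)\cap Fix(\tilde k)$, which by Lemma~\ref{lem:fixuk} is a top-dimensional convex subcomplex of the subbuilding $B':=Fix(\tilde k)$. As $B$ is stabilized by $\tilde u$ and $\tilde k$ it is $\tilde g$-stable, and intersecting the last identity with $B$ gives
$$Fix(g)=Fix(\tilde g)\cap B=\bigl(Fix(\tilde u)\cap B\bigr)\cap\bigl(Fix(\tilde k)\cap B\bigr)=Fix(u)\cap Fix(k),$$
which is the asserted equality; moreover $u$ and $k$ commute, being restrictions of commuting isometries. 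By the reduction to the irreducible case I may assume $B$ is irreducible with $\dim B\ge 2$ (the cases $\dim B\le 1$ being trivial or covered by Remark~\ref{rem:polygons}). I then distinguish two cases according to whether $u=\tilde u|_B$ is trivial.

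Suppose first that $u\neq 1$. By Theorem~\ref{thm:unipotentisom} there is a nonconstant nicely convex function $f_u$ on $Fix(u)$ with a unique minimum $x_u$, its weighted incenter; since $x_u$ is an interior point of $Fix(u)$ one has $f_u(x_u)<0$. Exactly as in the proof of Proposition~\ref{prop:commutingunipotent}, the fact that $k$ commutes with $u$ makes $f_u$ a $k$-invariant function (conjugation by $k$ carries the data defining $f_u$ to itself), so $k$ fixes $x_u$; hence $x_u\in Fix(u)\cap Fix(k)=Fix(g)$. The restriction of $f_u$ to the closed convex subcomplex $Fix(g)$, which is connected, is again nicely convex, and it is nonconstant unless $Fix(g)=\{x_u\}$ (as $x_u$ is the unique minimum of $f_u$ with $f_u(x_u)<0$). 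In the exceptional case $\rad(Fix(g))=0$; otherwise Lemma~\ref{lem:radiusminimum}, applied to $f_u|_{Fix(g)}$ whose minimum is still attained at $x_u$, gives $\rad(Fix(g),x_u)\le\pihalf$. So $Fix(g)$ has circumradius $\le\pihalf$.

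It remains to treat the case $u=1$, i.e.\ $\tilde u$ acting trivially on $B$; then $g=k$ and $Fix(g)=Fix(k)=Fix(\tilde k)\cap B$, and I claim this is a subbuilding of $B$ (possibly the empty building). Since $\tilde k$ is split, $Fix(\tilde k)\cong\Si_\varsigma\tilde B\circ\varsigma$ is a subbuilding of $\tilde B$ with empty boundary, and for every vertex $p\in Fix(\tilde k)$ the link $\Si_p Fix(\tilde k)=Fix(\tilde k|_{\Si_p\tilde B})$ is again of split type. Invoking the fact that a convex subcomplex without boundary is a subbuilding, it suffices to check that each link $\Si_p(Fix(\tilde k)\cap B)=\Si_p Fix(\tilde k)\cap\Si_p B$ is a subbuilding of $\Si_p B$; as $\Si_p B$ is a subbuilding of $\Si_p\tilde B$ stabilized by the split isometry $\tilde k|_{\Si_p\tilde B}$, this reduces by induction on the rank of $B$ to the rank-one case, where $Fix(\tilde k)$ is either all of $\tilde B$ or a single apartment and its intersection with $B$ is visibly a subbuilding (or a point). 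This completes the proof.

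The step I expect to be the main obstacle is this last case: verifying that $Fix(\tilde k)\cap B$ is a subbuilding of $B$. Intersections of subbuildings are not subbuildings in general, so one really has to use that $\tilde k$ is split and that splitness is inherited by links, and to run the induction cleanly one needs the equivalence between being a subbuilding and being a convex subcomplex without boundary in the precise form of \cite{LeebRamos-Cuevas:centerconj}. The case $u\neq 1$, by contrast, is easy; the essential point there is that the weighted incenter of $Fix(u)$ lies in $B$ precisely because $u$ itself — not merely $\tilde u$ — is assumed unipotent, so that one can work with $f_u$ directly on $B$ instead of having to project, as in the proof of Corollary~\ref{cor:fixedptssubgp}, a circumcenter of $Fix(\tilde g)$ that a priori lies only in $\tilde B$.
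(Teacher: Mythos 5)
Your proof takes essentially the same route as the paper for the main case: derive $Fix(g)=Fix(u)\cap Fix(k)$ from Proposition~\ref{prop:fixsetjordan}, observe that $k$ fixes the weighted incenter $x_u$ of $Fix(u)$ because $u$ and $k$ commute, and conclude that $Fix(g)$ is contained in the ball $B_{\pihalf}(x_u)$. You finish slightly less directly than the paper does, though: the paper simply writes the chain $\rad(Fix(g))\leq\rad(Fix(g),x_u)\leq\rad(Fix(u),x_u)\leq\pihalf$, which uses only the inclusion $Fix(g)\subset Fix(u)$ and Theorem~\ref{thm:unipotentisom}, whereas you re-apply Lemma~\ref{lem:radiusminimum} to the restriction $f_u|_{Fix(g)}$. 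Your route works, but it forces you to deal with two technicalities that the direct chain avoids entirely: you must observe that $f_u|_{Fix(g)}$ is nonconstant (or handle $Fix(g)=\{x_u\}$ separately, as you do) and you assert without justification that $Fix(g)$ is connected, a hypothesis of Lemma~\ref{lem:radiusminimum}. Connectedness does hold — every point of $Fix(g)$ is within $\pihalf$ of $x_u$, so the geodesics to $x_u$ lie in $Fix(g)$ — but you only know this via $\rad(Fix(u),x_u)\leq\pihalf$, which is exactly the estimate you could have used to short-circuit the argument.

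The genuine addition in your write-up is the case $u=1$, i.e., $\tilde u$ acting trivially on $B$, which the paper's proof passes over in silence (Theorem~\ref{thm:unipotentisom} requires a nontrivial unipotent, so the paper's argument does not literally apply there). Your reduction — splitness is inherited by links, so by rank induction and the ``no boundary implies subbuilding'' criterion $Fix(\tilde k)\cap B$ is a subbuilding of $B$ — is the right idea and genuinely fills a gap in the paper's exposition. The weakest point is the assertion that $\tilde k$ induces a \emph{split} isometry on $\Si_p\tilde B$ for every fixed $p$, which is plausible from the join structure $Fix(\tilde k)\cong\Si_\varsigma\tilde B\circ\varsigma$ but is stated rather than proved; similarly the ``convex subcomplex with empty boundary is a subbuilding'' criterion is invoked at a level of generality that deserves a precise citation. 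These are details rather than errors, and your treatment of this corner case is a mild improvement over the paper.
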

\begin{proof}
 By Proposition~\ref{prop:fixsetjordan}, 
$Fix(g)=B\cap Fix(\tilde u\tilde k) = B\cap Fix(\tilde k)\cap Fix(\tilde u)=Fix(k)\cap Fix(u)$.
Theorem~\ref{thm:unipotentisom} implies that $Fix(u)$
has a unique weighted incenter $x_0$. 
Since $u$ and $k$ commute, $k$ fixes $x_0$ and therefore
$x_0\in Fix(g)$. It follows that 
$\rad(Fix(g))\leq \rad(Fix(g),x_0)\leq \rad(Fix(u),x_0)\leq\pihalf$ 
again by Theorem~\ref{thm:unipotentisom}.
\end{proof}

\begin{remark}
Proposition~\ref{prop:fixsetnonsplit} in the case of semisimple Lie groups 
gives another proof of \cite[Proposition 4.1.5]{Eberlein:nonpositive}, see also
\cite[Lemma 12.3]{Mostow:rigidity}.
\end{remark}

Suppose we are in a setting where a Jordan decomposition always exists. 
It is a natural question to ask for a geometric way of finding this decomposition.\
This is what \cite[Problem 2.19.11]{Eberlein:nonpositive} is about in the case of
symmetric spaces of noncompact type.
Let us rephrase the statement of the \cite[Conjecture 2.19.11]{Eberlein:nonpositive} 
in our notation. Let $G$ be a noncompact semisimple Lie group and let $B$ be its associated
spherical building. $B$ is the Tits boundary of the symmetric space $X=G/K$,
where $K$ is a maximal compact subgroup. Let $g\in G$ be a parabolic isometry. Then
the fixed point set $Fix(g)$ of $g$ in $B$ has circumradius $\leq \pihalf$ (cf.\ Section~\ref{sec:intro}).
Let $x_0$ be the unique circumcenter of the set of circumcenters of $Fix(g)$
and let $\tau\subset B$ be the face containing $x_0$ in its interior.
Let $A\subset B$ be an apartment containing $\tau$. 
Let $g=uk$ be the decomposition given by Proposition~\ref{prop:parabgp} with respect to the 
apartment $A$, where $u\in U_\tau$ and $k\in L_\tau H$.
Then the conjecture asks if $k$ is semisimple and $g=uk$ is the Jordan decomposition of $g$.
As stated the conjecture cannot be true as we can see in the following example.

\begin{example}
 Let 
$g=\left(\begin{smallmatrix}
a&&1\\ &a^{-2}& \\ &&a
\end{smallmatrix} \right) \in SL(3,\R)$ with $a\neq 1$. Then the fixed point set $Fix(g)$
in $B=\tits (SL(3,\R)/SO(3))$ is a root $\alpha\subset B$. 
The unipotent part $u$ in the Jordan decomposition of $g=us$ 
is an element in the root group $U_\alpha$.
The center of $\alpha$ is the center
of a chamber $\sigma$. Let $A$ be an apartment such that $A\cap \alpha = \sigma$.
Then the decomposition $g=u'k'$ with respect to this apartment cannot be the Jordan decomposition
because $Fix(u')\cap A = \sigma$ and $u'$ cannot be a root element.
On the other hand, the decomposition with respect to any apartment containing $\alpha$
is the Jordan decomposition.
\end{example}

In the example above, the key to obtain the Jordan decomposition
was to choose an apartment supporting the fixed point set $Fix(g)$. 
This works in general for a split algebraic group $G$ over a perfect field and $g\in G$ with 
top-dimensional fixed point set. This is the assertion of the following proposition,
thus giving a solution of \cite[Problem 2.19.11]{Eberlein:nonpositive} in this case.
Notice that it is actually not important whether the apartment contains the circumcenter of
the fixed point set or not.

\begin{proposition}
Let $B$ be an irreducible spherical building.
Let $g$ be an isometry admitting a decomposition $g=uk$ with $u$ unipotent,
$k$ split and such that $u$ and $k$  commute. 
Then $g=uk$ is the decomposition as in Proposition~\ref{prop:parabgp} with respect 
to any apartment supporting $Fix(g)$.
\end{proposition}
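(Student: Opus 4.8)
\emph{Strategy and reduction.} The plan is to show that the hypothesis ``$A$ supports $Fix(g)$'' already forces $k$ to fix $A$ pointwise; once this is known, the two decompositions coincide on the nose. First, by Proposition~\ref{prop:fixsetjordan} we have $Fix(g)=Fix(u)\cap Fix(k)$, and by Lemma~\ref{lem:fixuk} this is a top-dimensional subcomplex of the subbuilding $Fix(k)=\Si_\varsigma B\circ\varsigma$; since $Fix(k)$ is a union of apartments it is top-dimensional in $B$, and, being a subcomplex of $Fix(k)$ for the thick structure of $Fix(k)$, it contains $\varsigma$. In particular $u$ and $k$ fix $\varsigma$ pointwise, and $Fix(g)=C'\circ\varsigma$ for a convex subcomplex $C'\subset\Si_\varsigma B$. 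Now let $A$ be any apartment supporting $Fix(g)$. Then $Fix(g)\cap A$ is top-dimensional in $A$, so it contains a chamber $\sigma\subset Fix(g)\subset Fix(u)$, whence $u\in U_\sigma$ by Corollary~\ref{cor:fixunipotent}, and $g$ stabilizes $\sigma$, so the decomposition of Proposition~\ref{prop:parabgp} is defined for $A$. I claim it suffices to prove $\varsigma\subset A$, equivalently $A\subset Fix(k)$, equivalently $k\in\hat H=Fix_{Isom(B)}(A)$: for then $g=uk$ is a product of an element of $U_\sigma$ and an element of $\hat H$, and since $U_\sigma\cap\hat H=\{1\}$ this is exactly the decomposition $\hat P_\sigma=U_\sigma\hat H$; moreover the characterizing features hold, namely $Fix(k)\cap A=A$ is the singular sphere of dimension $\dim Fix(g)$ containing $Fix(g)\cap A$, $Fix(u)\cap A=Fix(u)\cap Fix(k)\cap A=Fix(g)\cap A$, and $Fix(g)\cap A\not\subset\partial Fix(u)$ because a point $x$ in the interior of $Fix(g)\cap A$ inside $A$ lies outside $\partial(Fix(g)\cap A)=\partial Fix(g)\cap A$, so $St_xB\subset Fix(g)\subset Fix(u)$ by Corollary~\ref{cor:star} and $x$ is interior in $Fix(u)$.

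\emph{The key fact $\varsigma\subset A$.} I would prove this by induction on $\dim B$. Suppose $\varsigma\not\subset A$. A chamber of $Fix(g)\cap A$ is a chamber of $C'\circ\varsigma$, hence has a face contained in $\varsigma$, so $\varsigma\cap A\neq\emptyset$; since $\varsigma\cap A$ is a proper closed convex subcomplex of the connected sphere $\varsigma$ it is not open in $\varsigma$, so there is a point $p\in\varsigma\cap A$ with $\Si_p\varsigma\not\subset\Si_p A$. As $p\in\varsigma\subset Fix(g)$, the isometries $g,u,k$ all fix $p$ and induce isometries of $\Si_p B$ with $g_p=u_p k_p$, where $u_p$ is unipotent (choosing by Corollary~\ref{cor:fixunipotent} a chamber $\tau$ of $Fix(u)$ having $p$ as a face, $u\in U_\tau$, and restriction to $\Si_pB$ carries root groups into root groups), $u_p$ and $k_p$ commute, and $k_p$ is split with singular sphere $\Si_p\varsigma$ and $Fix(k_p)=\Si_p Fix(k)$; moreover $Fix(g_p)=\Si_p Fix(g)$. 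Because taking the link at $p$ commutes with the polyhedral boundary of a convex subcomplex, $A$ supporting $Fix(g)$ implies that the apartment $\Si_p A$ supports $Fix(g_p)$. Applying the induction hypothesis componentwise over the join decomposition of $\Si_p B$ (its building factors all have dimension $<\dim B$, and the sphere factor is fixed by all the isometries involved) gives $\Si_p\varsigma\subset\Si_p A$, a contradiction. The base case $\dim B=1$ is direct: either $\varsigma=\emptyset$ and $k=1$ so there is nothing to prove, or $\varsigma$ is a pair of opposite vertices, $Fix(g)$ is a root with $\partial Fix(g)=\varsigma$, and a supporting apartment of a root contains it, since otherwise $Fix(g)\cap A$ would be a proper subarc of $Fix(g)$ with an endpoint interior to $Fix(g)$, violating $\partial(Fix(g)\cap A)=\partial Fix(g)\cap A$.

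\emph{Main obstacle.} The whole weight of the argument sits in the key fact $\varsigma\subset A$, and within its proof the two delicate points are: verifying that the isometries induced at a fixed point $p\in\varsigma$ retain the structure ``$g_p=u_p k_p$ with $u_p$ unipotent, $k_p$ split, commuting,'' with $\Si_p\varsigma$ the singular sphere of $k_p$; and checking that ``$A$ supports $Fix(g)$'' descends to ``$\Si_p A$ supports $Fix(g_p)$,'' i.e.\ that $\partial(\Si_p C)=\Si_p(\partial C)$ for a convex subcomplex $C\ni p$. With these in place the induction closes, and together with the reduction this yields the proposition.
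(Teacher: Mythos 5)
Your reduction to proving $\varsigma\subset A$, and the deduction from there, follows the paper's own strategy, and the final uniqueness step (via $U_\sigma\cap\hat H=\{1\}$, where the paper instead checks that $u^{-1}u'=kk'^{-1}$ is a unipotent isometry fixing $A$, hence trivial) is fine. The problem is your proof of $\varsigma\subset A$: the very first step, the assertion that a chamber of $Fix(g)\cap A$ ``is a chamber of $C'\circ\varsigma$, hence has a face contained in $\varsigma$'', is false. You are conflating two polyhedral structures. The chambers of $Fix(g)\cap A$ are chambers of $B$, and these are strictly smaller than the chambers $\sigma_1\circ\varsigma$ of $Fix(k)$ in its thick structure; a $B$-chamber of $Fix(g)$ need not meet $\varsigma$ at all. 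For instance in type $A_2$ with $\varsigma=\{p,-p\}$, the middle chamber of a half-apartment from $p$ to $-p$ is a $B$-chamber of $Fix(g)$ disjoint from $\varsigma$. So $\varsigma\cap A\neq\emptyset$ does not follow, and the induction never gets off the ground.

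What actually forces $\varsigma\subset A$ is the boundary half of the supporting condition, $\partial(Fix(g)\cap A)=\partial Fix(g)\cap A$, not the dimension condition, and it does so directly with no induction. By Lemma~\ref{lem:fixuk} the boundary $\partial Fix(g)$ is carried by walls of the thick structure of $Fix(k)\cong\Si_\varsigma B\circ\varsigma$, and every such thick wall contains $\varsigma$ (the thick Weyl group acts trivially on the $\varsigma$-factor). Since $Fix(g)\cap A$ is a proper convex subcomplex of $A$ it has a codimension-one boundary face $\beta\subset\partial(Fix(g)\cap A)=\partial Fix(g)\cap A$; thus $\beta$ lies in $A$ and in some thick wall $W\supset\varsigma$. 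The singular $(n-1)$-sphere of $B$ spanned by $\beta$ is unique, and it is simultaneously $W$ and a wall of $A$; hence $W\subset A$ and $\varsigma\subset W\subset A$. Once this is in place, the whole inductive machinery, including the several plausible-but-unverified claims that the decomposition $g=uk$ and the supporting condition descend to links $\Si_p B$, is unnecessary.
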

\begin{proof}
 By Lemma~\ref{lem:fixuk},
 $Fix(g)$ is a subcomplex of $Fix(k)\cong \Si_{\varsigma} B\circ \varsigma$ with respect to its 
thick structure. It follows that any apartment $A$ supporting $Fix(g)$ must contain the singular
sphere $\varsigma$. This in turn implies that $A\subset Fix(k)$. 
Let $g=u'k'$ be the decomposition with respect to $A$.
Then $u^{-1}u'=kk'^{-1}$ fixes $A$. 
By Proposition~\ref{prop:fixsetjordan} we have
$Fix(g)=Fix(u)\cap Fix(k)$, hence,
$Fix(u)\cap A = Fix(g)\cap A = Fix(u')\cap A$ and it follows that $u^{-1}u'=kk'^{-1}$ is a unipotent isometry
by Corollary~\ref{cor:fixunipotent}. Then $u^{-1}u'=kk'^{-1}$ must be the identity.
\end{proof}

Taking an apartment supporting $Fix(g)$ does not work anymore in the general case. 
Actually, it is not
possible to extract the Jordan decomposition of $g$ just by considering its fixed point set as the
following example shows. 

\begin{example}
Let 
$g=\left(\begin{smallmatrix}
R&Id_2\\ &R \end{smallmatrix} \right) \in SL(4,\R)$, 
where $Id_2\in SL(2,\R)$ is the identity matrix and
$\pm Id_2\neq R\in SO(2)$ is a rotation.   
The fixed point set of $g$ consists of only one point.
It is the vertex of $B=\tits (SL(4,\R)/SO(4))$ 
corresponding to the plane $\langle e_1,e_2\rangle \subset \R^4$. 
\end{example}


\bibliography{../../MyBibliography}
\bibliographystyle{amsalpha}

\noindent {\small
\textsc{Mathematisches Institut, Universit\"at M\"unchen, Theresienstr. 39, 
D-80333 M\"unchen, Germany}\\
{\em E-mail:} {\texttt cramos@mathematik.uni-muenchen.de}}

\end{document}